\documentclass[a4paper,11pt]{article}

\usepackage[
    a4paper,
    top=1.05in,
    bottom=1.05in,
    left=1.00in,
    right=1.00in
]{geometry}
\usepackage[T1]{fontenc}

\usepackage{amsmath,amsthm,mathtools}
\usepackage{amssymb}
\usepackage{newtxtext,newtxmath}
\usepackage{microtype}
\microtypesetup{protrusion=true,expansion=true}
\usepackage[new]{old-arrows}

\usepackage{mathrsfs}
\usepackage{slashed}
\usepackage{upgreek}
\allowdisplaybreaks[4]
\usepackage{enumitem}
\setlist{itemsep=0.25em, topsep=0.45em, parsep=0pt}
\usepackage{scalerel}
\usepackage[usestackEOL]{stackengine}
\usepackage[thinc]{esdiff}

\usepackage[all]{xy}
\usepackage{graphicx,psfrag}
\usepackage{pstool}
\usepackage{tikz-cd}
\usepackage[font=small,labelfont=bf,labelsep=period]{caption}

\usepackage{titlesec}
\titleformat{\section}{\normalfont\Large\bfseries}{\thesection}{0.75em}{}
\titleformat{\subsection}{\normalfont\large\bfseries}{\thesubsection}{0.75em}{}
\titlespacing*{\section}{0pt}{2.6ex plus 0.8ex minus 0.2ex}{1.2ex plus 0.2ex}
\titlespacing*{\subsection}{0pt}{2.0ex plus 0.6ex minus 0.2ex}{0.9ex plus 0.2ex}

\usepackage{fancyhdr}
\fancypagestyle{plain}{%
  \fancyhf{}%
  \fancyfoot[C]{\thepage}%
}

\usepackage[dvipsnames]{xcolor}
\definecolor{linkblue}{RGB}{20,67,128}
\definecolor{citegreen}{RGB}{25,105,80}
\definecolor{urlpurple}{RGB}{100,55,125}
\usepackage[colorlinks=true,
            linkcolor=linkblue,
            citecolor=citegreen,
            urlcolor=urlpurple,
            plainpages=false]{hyperref}
\usepackage{bookmark}

\usepackage{array,tabularx,booktabs}

\usepackage{setspace}

\usepackage[titletoc,title]{appendix}
\usepackage{titletoc}

\usepackage[capitalize,nameinlink,noabbrev]{cleveref}

\newcommand{\GHconvtext}{\mathrm{Gromov\text{--}Hausdorff}}
\newcommand{\pGHconvtext}{\mathrm{pointed\text{--}Gromov\text{--}Hausdorff}}

 \newcommand{\R}{\ensuremath{\mathbb{R}}}
 
 \def\d{\mathrm{d}}

 \def\RR{\mathcal{R}}

 \def\scal{\mathrm{R}}
 
 \def\loc{\mathrm{loc}}
  \def\diam{\mathrm{diam}}

 \newcommand{\RP}{\ensuremath{\mathbb{RP}}}

 \newcommand{\ba}{\begin{align*}}
 \newcommand{\ea}{\end{align*}}
 \newcommand{\na}{\nabla}

\newcommand{\lc}{\left(}
\newcommand{\rc}{\right)}
 
\newcommand{\ep}{\epsilon}

\def\vol{\mathrm{Vol}}

\newcommand{\tr}{\text{tr}}

\newcommand{\Rm}{\ensuremath{\mathrm{Rm}}}
\newcommand{\Ric}{\ensuremath{\mathrm{Ric}}}

\renewcommand{\t}{\mathfrak{t}}

\def\MS{\mathcal{S}}

\DeclareMathOperator{\GH}{GH}

 \makeatletter
 \def\ExtendSymbol#1#2#3#4#5{\ext@arrow 0099{\arrowfill@#1#2#3}{#4}{#5}}

 \makeatother

\def\aint{\,\ThisStyle{\ensurestackMath{%
  \stackinset{c}{.2\LMpt}{c}{.5\LMpt}{\SavedStyle-}{\SavedStyle\phantom{\int}}}%
  \setbox0=\hbox{$\SavedStyle\int\,$}\kern-\wd0}\int}

\DeclarePairedDelimiter\abs{\lvert}{\rvert}%
\makeatletter
\let\oldabs\abs
\def\abs{\@ifstar{\oldabs}{\oldabs*}}
\makeatother

\numberwithin{equation}{section}

\theoremstyle{plain}
\newtheorem{thm}{Theorem}[section]

\theoremstyle{definition}

\theoremstyle{remark}

\titlecontents{part}[0pt]  
{\addvspace{10pt}\bfseries} 
{\makebox[3em][l]{PART-\thecontentslabel}}      
{}                          
{}                          

\providecommand{\C}{\ensuremath{\mathbb{C}}}

\newcommand{\ii}{\ensuremath{\sqrt{-1}}}
\newcommand{\pp}{\bar\partial}
\DeclareMathOperator{\nul}{Null}

\DeclareMathOperator{\sing}{sing}
\DeclareMathOperator{\reg}{reg}
\DeclareMathOperator{\lie}{Lie}

\theoremstyle{plain}
\newtheorem{theorem}[thm]{Theorem}
\newtheorem{corollary}[thm]{Corollary}
\newtheorem{proposition}[thm]{Proposition}
\newtheorem{lemma}[thm]{Lemma}
\newtheorem{conjecture}[thm]{Conjecture}

\theoremstyle{definition}
\newtheorem{definition}[thm]{Definition}

\theoremstyle{remark}
\newtheorem{remark}[thm]{Remark}

\title{Gromov--Hausdorff Limits of Noncollapsed K\"ahler--Ricci Flows and the Geometry of Ricci Shrinkers}
\author{Yu Li \quad and \quad Junsheng Zhang}
\date{}

\begin{document}
\maketitle
\begin{abstract}
We study the metric geometry of finite-time singularities of volume-noncollapsed K\"ahler--Ricci flows and the geometry at infinity of Ricci shrinkers. For a compact volume-noncollapsed K\"ahler--Ricci flow approaching its first singular time, we prove that the time slices converge to a unique Gromov--Hausdorff limit, canonically identified with the time-zero slice of the Ricci-flow spacetime completion. Its regular part is identified with the complement of the null locus, and we show that the singular set has Hausdorff codimension at least four.

For K\"ahler--Ricci shrinkers, we relate the geometry at infinity to the associated Fano fibration. We prove local smooth convergence to a K\"ahler cone metric on the locus where the fibration is biholomorphic, and characterize asymptotic conicality by biholomorphicity outside a compact set.

For Ricci shrinkers with bounded scalar curvature, we show that local compactness of the time-zero slice implies pointed Gromov--Hausdorff convergence of the full self-similar flow to that slice. As applications, we prove uniqueness of the tangent space at infinity for K\"ahler--Ricci shrinkers with bounded scalar curvature and Euclidean volume growth whose soliton vector field generates an $S^1$-action, and for all four-dimensional Ricci shrinkers with bounded scalar curvature.
\end{abstract}

\tableofcontents

\section{Introduction}

Ricci flow provides a canonical way to deform a Riemannian metric, and the
formation of singularities makes it necessary to understand geometric limits
in the absence of uniform curvature bounds. There are two complementary
aspects of this problem. One may rescale around regions of high curvature and
study tangent flows, which describe the local models of singularity formation;
or one may keep the original scale and ask whether the time slices themselves
converge to a canonical metric space at the singular time. The second question
is global and is generally more delicate: without a uniform lower Ricci
curvature bound, distances at different times need not be directly comparable,
and even the uniqueness of a sequential Gromov--Hausdorff limit is not
automatic.

This paper studies this global metric problem for noncollapsed
K\"ahler--Ricci flows and, in parallel, the corresponding problem at infinity
for Ricci shrinkers, that is, complete shrinking gradient Ricci solitons. The two principal tools are the time-zero slice
of a Ricci-flow spacetime completion \cite{FL1} and the polarized Fano fibration structure for K\"ahler--Ricci shrinkers \cite{SunZhang}.

\medskip
\noindent\textbf{The singular-time limit of a noncollapsed
K\"ahler--Ricci flow.}

Let $(X^n,(\omega_t)_{t\in[-1,0)})$ be a K\"ahler--Ricci flow on a compact
complex manifold of complex dimension $n$, satisfying
\[
    \partial_t\omega_t=-\Ric(\omega_t),
\]
and developing its first singularity at $t=0$. We assume that the flow is
volume-noncollapsed in the sense that
\begin{equation}\label{eq--global-volume-non-collapsing}
    \lim_{t\nearrow0}|X|_{\omega_t}>0.
\end{equation}
This condition has a precise algebro-geometric interpretation: it is
equivalent to
\begin{equation}\label{eq:numerical-noncollapsing}
    \text{the class $\alpha=[\omega_{-1}]+K_X$ being big and nef.}
\end{equation}
The complex geometry already determines a large regular region of the
singular-time limit. By Collins--Tosatti \cite{CT},
\begin{equation}\label{eq:smooth-convergence}
    \omega_t
    \xrightarrow[t\nearrow0]{C^\infty_{\loc}(X\setminus\nul(\alpha))}
    \omega_0,
\end{equation}
where $\omega_0$ is a smooth K\"ahler metric on
$X\setminus\nul(\alpha)$ and extends to a positive $(1,1)$-current on $X$.
Here $\nul(\alpha)$ is the null locus of $\alpha$, namely the union of the
positive-dimensional irreducible analytic subvarieties $V\subset X$ satisfying
\[
    \int_V\alpha^{\dim V}=0.
\]
We write
\[
    X^\circ:=X\setminus\nul(\alpha).
\]

Recent diameter and noncollapsing estimates
\cite{GPSS1,GPSS-ii,GPSS2,guedj-to,vu2024} imply that
\begin{equation}\label{eq:diameter}
    \diam(X,\omega_t)\le C,
    \qquad t<0,
\end{equation}
and that, for every $\ep>0$, $r\in(0,1]$, and $x\in X$,
\begin{equation}\label{eq:volume-noncollapsing}
    |B_{\omega_t}(x,r)|_{\omega_t}
    \ge C_\ep^{-1}r^{2n+\ep}.
\end{equation}
In particular, the family of time slices is Gromov--Hausdorff precompact. If
$t_j\nearrow0$, after passing to a subsequence one obtains
\[
    (X,d_{\omega_{t_j}})
    \xrightarrow[j\to\infty]{\GHconvtext}
    (X_{\GH},d_{\GH}).
\]
A priori, however, the limit may depend on the sequence $\{t_j\}$. The local
smooth convergence in \eqref{eq:smooth-convergence} identifies an open part of
any such limit, but it does not by itself determine the metric completion or
rule out additional points arising from the degenerating region.

A canonical candidate for the missing singular-time space is supplied by the
spacetime completion introduced in \cite{FL1}. More precisely, one equips
$X\times[-0.99,0)$ with a spacetime distance $d^*$ defined in terms of
conjugate heat kernel measures and the $W_1$-Wasserstein distance. Its metric
completion is a parabolic metric space
\[
    (Z,d_Z,\mathfrak t),
\]
and the completion adds points only at time zero. The time-zero slice
\[
    Z_0:=\mathfrak t^{-1}(0)
\]
carries a canonical intrinsic distance $d^Z_0$, again defined using conjugate
heat kernel measures. The resulting space $(Z_0,d^Z_0)$ is isometric to
Bamler's asymptotic boundary \cite[Section~2.6]{bamler3}; see \cite[Lemma~9.1]{FL1}.

Our first theorem shows that the time-zero slice $(Z_0,d^Z_0)$ is the unique
Gromov--Hausdorff limit of $(X,d_{\omega_t})$ as $t\nearrow0$ and
identifies the smooth locus of the limit. A related convergence result for three-dimensional singular Ricci flows was proved in \cite{Li26}.

\begin{theorem}[Canonical singular-time limit]\label{thm--all}
There exist maps
\begin{equation}\label{lem--preliminary}
    X^\circ
    \overset{\Phi}{\longhookrightarrow}
    Z_0
    \overset{h}{\longrightarrow}
    X_{\GH},
\end{equation}
such that $\Phi: (X^{\circ},d_{\omega_0}) \longrightarrow (Z_0^{\reg}, d^Z_0)$ is bijective and $1$-Lipschitz, and
\(
    h:(Z_0,d^Z_0)\longrightarrow(X_{\GH},d_{\GH}) \text{ is an isometry.}
\)
Consequently,
\[
    (X,d_{\omega_t})
    \xrightarrow[t\nearrow0]{\GHconvtext}
    (Z_0,d^Z_0),
\]
and the singular-time Gromov--Hausdorff limit is unique.
\end{theorem}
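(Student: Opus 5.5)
The plan is to build both maps from the conjugate heat kernel description of $Z_0$ and then prove that $h$ is an isometry by comparing $d_{\omega_{t_j}}$ with $d^Z_0$ in both directions.

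\emph{Construction of $\Phi$.} For $x\in X^\circ$, the flow converges smoothly near $x$ as $t\nearrow0$ by \eqref{eq:smooth-convergence}. Hence the points $(x,t)$ form a $d^*$-Cauchy family: conjugate heat kernels based at $(x,t)$ and $(x,t')$ are $W_1$-close, because the curvature is bounded in a fixed neighborhood of $x$ up to time $0$. We set $\Phi(x):=\lim_{t\nearrow0}(x,t)\in Z_0$. Local curvature bounds together with the noncollapsing \eqref{eq:volume-noncollapsing} make $\Phi(x)$ a regular point, and $\Phi$ is injective since $d^Z_0$ is bounded below by the local smooth distance at small scales. The $1$-Lipschitz property follows from two facts. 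First, $d^Z_0$ is bounded above by $\liminf_t d_{\omega_t}$; this uses monotonicity of $W_1$ distances of conjugate heat kernels backward in time. Second, $d_{\omega_t}(x,y)$ is at most the $\omega_t$-length of an $\omega_0$-almost-minimizing path in $X^\circ$, and that length converges to the $\omega_0$-length. Surjectivity onto $Z_0^{\reg}$ is the nontrivial part. At a regular point $z\in Z_0$ the flow has bounded curvature on a parabolic neighborhood, so $z$ is the limit of $(x_t,t)$ with $x_t$ converging in $X$. I would then exclude $x_0\in\nul(\alpha)$: near the null locus the Collins--Tosatti degeneration forces the volume of $\omega_t$-balls of fixed size, or curvature, to blow up, which is incompatible with the smooth model at $z$.

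\emph{Construction of $h$.} Fix a subsequence with $(X,d_{\omega_{t_j}})\to X_{\GH}$. A point $z\in Z_0$ is a limit of spacetime points $(x_j,t_j)$, and we let $h(z)$ be the GH-limit of the $x_j$. The key two-sided estimate is
\[
 |d_{\omega_{t_j}}(x_j,y_j)-d^*((x_j,t_j),(y_j,t_j))|\to0
\]
uniformly. The upper bound $d^*\le d_{\omega_t}+o(1)$ is standard, from $W_1$ contraction and the variance bound of conjugate heat kernels in terms of $\sqrt{t-s}$.

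\emph{Main obstacle.} The reverse bound, $d_{\omega_t}\le d^Z_0+o(1)$, is what I expect to be hard, since there is no lower Ricci bound. I would combine three ingredients. The first is that $X^\circ$ is dense of full measure in both spaces and the convergence there is smooth. The second is the volume noncollapsing \eqref{eq:volume-noncollapsing} together with a Hausdorff codimension estimate for $X\setminus X^\circ$ relative to $d_{\omega_t}$, which gives almost geodesic convexity of $X^\circ$: for most pairs of points, $\omega_t$-geodesics between them avoid a small neighborhood of $\nul(\alpha)$, by a Cheeger–Colding-type segment inequality on the Kähler side. The third is the diameter bound \eqref{eq:diameter}. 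Together these show that $d_{\GH}$ restricted to $X^\circ$ equals the length metric of $\omega_0$ completed. The same density argument shows that $d^Z_0$ restricted to $\Phi(X^\circ)$ is also the induced length metric, because $Z_0$ is a length space whose singular set has small codimension. Therefore $h\circ\Phi$ is an isometry on dense subsets, which extends to an isometry of completions. Surjectivity of $h$ follows from density and compactness.

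Since the limit $(Z_0,d^Z_0)$ does not depend on the subsequence, GH-uniqueness follows.
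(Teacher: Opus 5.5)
Your treatment of $\Phi$ is essentially the paper's. One correction: the exclusion of $\nul(\alpha)$ should use the two-sided bound $C_x^{-1}\omega_{-1}\le\omega_t\le C_x\omega_{-1}$ near $x$ that comes from the regular flow box. That bound gives $\int_V\omega_t^k\ge c>0$ for any subvariety $V\ni x$, which contradicts $\int_V\omega_t^k\to\int_V\alpha^k=0$. Volumes of balls do not blow up; that is not the mechanism.

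\textbf{Isometry of $h$.} This is where the genuine gap is. Defining $h(z)$ through an arbitrary sequence $(x_j,t_j)\to z$ in $d^*$ is not a priori well defined. Your ``key two-sided estimate'' also compares $d_{\omega_{t_j}}$ with the wrong distance: $d^*$ is extrinsic, it only sees $W_1$-distances at earlier times $\tau\le t_j$, and on $Z_0$ one only has $d_Z\le d^Z_0$.

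More importantly, the step you single out as the main obstacle is immediate once you use $H$-centers.
\begin{itemize}
\item Let $x_j,y_j$ be $H_{2n}$-centers of $z,w\in Z_0$ at time $t_j$. Then $d^{t_j}_{W_1}(\delta_{x_j},\nu_{z;t_j})\le\sqrt{H_{2n}|t_j|}$, and likewise for $y_j$.
\item Hence $|d_{\omega_{t_j}}(x_j,y_j)-d^{t_j}_{W_1}(\nu_{z;t_j},\nu_{w;t_j})|\le2\sqrt{H_{2n}|t_j|}$.
\item The function $s\mapsto d^s_{W_1}(\nu_{z;s},\nu_{w;s})$ is nondecreasing with limit $d^Z_0(z,w)$.
\end{itemize}
So GH-limits of $H$-centers realize $d^Z_0$ exactly. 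Applied to separated sets, this shows $Z_0$ is totally bounded, since their images are separated in the compact $X_{\GH}$. A diagonal argument on a countable dense set then gives the isometric embedding $h$, with no lower Ricci bound needed.

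The route you propose instead cannot be carried out as stated. A Cheeger--Colding segment inequality requires a Ricci lower bound, which is unavailable here. Also, that $Z_0$ is a length space, and that $d^Z_0$ is the length metric of its regular part, are not known a priori. Both are consequences of identifying $Z_0$ with a GH-limit of length spaces, so invoking them is circular.

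\textbf{Surjectivity of $h$.} This is the real content of uniqueness, and ``density and compactness'' does not address it. That $X^\circ$ has full $\omega_t$-measure gives nothing uniform in $j$. You need every ball $B_{\omega_{t_j}}(x_j,r_0)$ of fixed radius to contain a point $y_j$ whose worldline has a limit $y^j\in Z_0$, with $(y_j,t_j)$ an $H$-center of $y^j$ for some $H$ independent of $j$.

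The paper obtains this as follows.
\begin{itemize}
\item It plays the noncollapsing bound \eqref{eq:volume-noncollapsing} against the curvature-radius estimate $|\{r_{\Rm}<r\}\cap(X\times\{t\})|_{\omega_t}\le C r^{2-\ep}$ of \cite[Theorem~1.12(b)]{FL1}.
\item This produces $y_j$ with $r_{\Rm}(y_j,t_j)\ge\delta$, hence $|\Rm|\le\delta_0^{-2}$ on $B_{\omega_{t_j}}(y_j,\delta_0)\times[t_j,0)$.
\item Compactness of $Z_0$ and $W_1$-monotonicity then place some $h(z)$ within $3r_0/2$ of any $x^\infty\in X_{\GH}$. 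This contradicts a gap $B_{d_{\GH}}(x^\infty,2r_0)\cap h(Z_0)=\emptyset$.
\end{itemize}
Without this quantitative supply of uniformly regular points, your argument gives neither surjectivity of $h$ nor independence of the limit from the sequence $\{t_j\}$.
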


Let us describe the main points of the proof. For $x\in X^\circ$, the local
curvature bound following from \eqref{eq:smooth-convergence} implies that the
worldline $(x,t)$ has a limit in the spacetime completion. This defines the
map $\Phi:X^\circ\to Z_0$, and spacetime regularity shows that its image is
contained in $Z_0^{\reg}$ and $\Phi$ is injective. Conversely, if $z\in Z_0^{\reg}$, the regular
spacetime flow through $z$ determines a fixed point $x\in X$ at all nearby
negative times. A local two-sided metric bound near $x$, together with the
cohomological characterization of the null locus, rules out
$x\in\nul(\alpha)$. This proves that $\Phi$ is a bijection from $X^{\circ}$ onto $Z_0^{\reg}$. The $1$-Lipschitz property of $\Phi$ is standard.

To compare $Z_0$ with an arbitrary sequential Gromov--Hausdorff limit, we
choose a countable dense subset of $Z_0$ and represent each of its points by
$H$-centers on the slices $(X,\omega_{t_j})$. Passing to limits defines the
map $h$. The definition of $d^Z_0$ and the monotonicity of the Wasserstein
distance imply that $h$ preserves distances. The essential issue is
surjectivity. If a point of $X_{\GH}$ were separated from $h(Z_0)$, the
volume lower bound \eqref{eq:volume-noncollapsing}, combined with the
quantitative curvature-radius estimate from \cite{FL1}, would produce a
regular point in a definite ball around it. The worldline of this regular
point determines a point of $Z_0$ whose image lies in the same ball, yielding
a contradiction. This argument applies to every time sequence and is the
mechanism behind the uniqueness assertion.

Once the metric limit has been identified, a second problem is to understand
its singular set. The structure theory of noncollapsed Ricci flow limit spaces
provides a quantitative stratification by the Euclidean splitting of tangent
flows \cite{FL1,fang-li2}. In the present K\"ahler setting, the potentially
largest singular stratum would arise from the non-static cylindrical tangent
flow
\(
\overline{\mathcal C}^{2n-2},
\)
that is, the standard shrinking product Ricci flow on $S^2\times\mathbb R^{2n-2}$.
The decisive K\"ahler input of this paper is that such a tangent flow cannot occur.

The reason is both complex-analytic and cohomological. Smooth convergence to
$S^2\times\mathbb R^{2n-2}$ produces a limiting parallel complex structure,
and the K\"ahler splitting forces the model to be biholomorphic to
$\mathbb P^1\times\mathbb C^{n-1}$. A deformation argument for holomorphic
$\mathbb P^1$'s then gives, on a nearby time slice of the original flow, a
local holomorphic fibration by rational curves with trivial normal bundle.
All fibers represent the same homology class. Comparing their areas with the
evolving K\"ahler class shows that the limiting class $\alpha$ vanishes on
this fiber class. This contradicts the fact that the null locus of $\alpha$ cannot contain an open subset. Thus, the
$(2n-2)$-splitting cylindrical model is excluded.

Combining this exclusion with the quantitative stratification and covering
estimates of \cite{FL1,fang-li2} yields the following codimension-four
statement.

\begin{theorem}[Codimension-four estimate]\label{thm:codimension-four}
The Hausdorff dimension of the singular set $Z_0^{\sing} \subset Z_0$ with respect to $d^Z_0$ satisfies
    \[
        \dim_{\mathscr H}(Z_0^{\sing})\le 2n-4.
    \]
\end{theorem}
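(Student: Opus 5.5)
The plan is to combine the general structure theory of noncollapsed Ricci flow limit spaces from \cite{FL1,fang-li2} with one Kähler-specific exclusion. By \cite{FL1,fang-li2}, the singular set of $Z_0$ admits a stratification
\[
Z_0^{\sing}=\mathcal S^0\subset\cdots\subset\mathcal S^{2n-2},
\]
where $\mathcal S^k$ consists of points none of whose tangent flows splits off $\mathbb R^{k+1}$ isometrically. The general theory gives $\dim_{\mathscr H}\mathcal S^k\le k$. It also shows that the only tangent flow which splits off $\mathbb R^{2n-2}$ but is not $\mathbb R^{2n}$ is the cylinder $\overline{\mathcal C}^{2n-2}$, since $\mathbb R^{2n-1}$-splitting models are Euclidean. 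It further gives that $Z_0^{\sing}=\mathcal S^{2n-2}$ and that $\mathcal S^{2n-3}\setminus\mathcal S^{2n-4}=\emptyset$, because a $(2n-3)$-splitting tangent flow would have a $3$-dimensional shrinker factor that must be a cylinder or a quotient of one, which upgrades the splitting. Granting these, it suffices to show that \emph{no point of $Z_0$ has $\overline{\mathcal C}^{2n-2}$ as a tangent flow}. Then $Z_0^{\sing}=\mathcal S^{2n-4}$, and the covering estimate yields $\dim_{\mathscr H}\le 2n-4$. Here Theorem~\ref{thm--all} lets us work with the Hausdorff dimension with respect to $d^Z_0$ and compare it with the slices.

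To exclude the cylinder, argue by contradiction. Suppose $z\in Z_0$ has tangent flow $\overline{\mathcal C}^{2n-2}$. Then there are scales $r_i\to0$ and points $x_i\in X$, at times $t_i=-r_i^2$ up to a multiple, such that the parabolically rescaled flows converge smoothly on compact subsets of the regular part to the shrinking cylinder $S^2\times\mathbb R^{2n-2}$. By the $\epsilon$-regularity in \cite{FL1}, this convergence is smooth on large parabolic balls. Pulling back the complex structures $J$ along the convergence maps, $J$ is parallel and uniformly bounded, so Arzelà--Ascoli gives a limiting parallel complex structure $J_\infty$ for which the cylinder metric is Kähler. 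By de~Rham/Kähler splitting, $J_\infty$ preserves the $TS^2$ factor, so the limit is biholomorphic to $\mathbb P^1\times\mathbb C^{n-1}$.

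The fiber $\mathbb P^1\times\{0\}$ has trivial normal bundle, so it is unobstructed, with $H^1(N)=0$ and $H^0(N)=\mathbb C^{n-1}$. By Kodaira stability of such rational curves under small deformations of the complex structure, for large $i$ there is a smooth family of holomorphic rational curves $C_y\subset (X,J)$, with $y$ in a ball of $\mathbb C^{n-1}$. These curves are close to the fibers and sweep out an open set $U\subset X$. All $C_y$ are homologous, say to a class $[C]$. The area at time $t$ is $\int_C\omega_t=[\omega_{-1}]\cdot C+(t+1)\cdot(-K_X\cdot C)$, which is affine in $t$ and tends to $\alpha\cdot C$ as $t\to0$. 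At time $t_i$ the area is $\approx 4\pi r_i^2\cdot c\to0$, since the $S^2$ factor at time $-1$ of the model has area $8\pi$. Because the homology class ranges over a discrete lattice and the areas at a fixed earlier time are bounded, after passing to a subsequence the class $[C]$ is fixed. Since $\int_C\omega_{t_i}\to0$, affineness gives $\alpha\cdot C=0$. Then every $C_y$ has $\int_{C_y}\alpha=0$, so $U\subset\nul(\alpha)$. This contradicts bigness of $\alpha$, since $\nul(\alpha)$ is a proper analytic subset by \cite{CT}.

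The main obstacle is the deformation step. Smooth convergence holds only in rescaled coordinates and up to diffeomorphisms, so I must make sure the complex structures converge in $C^{k,\beta}$ and that the Kodaira deformation produces genuine $J$-holomorphic curves in $X$ rather than in an approximate model. I would handle this by solving the $\bar\partial$-equation for sections of the normal bundle via the implicit function theorem on a fixed compact piece $\mathbb P^1\times B$ of the model, using surjectivity of the linearized operator from $H^1(\mathcal O(0)^{n-1})=0$. A secondary subtlety is making the affine-in-$t$ area argument rigorous when the classes could vary with $i$. I would fix this by working at a single large $i$ and comparing $\int_C\omega_{t}$ for $t\in[t_i,0)$. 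Positivity of $\omega_t$ on $C$ forces the affine function to stay positive on $[t_i,0)$, while its value at $t_i$ is $O(r_i^2)$ and its slope $-K_X\cdot C$ is an integer up to normalization. This forces $\alpha\cdot C=0$ once $r_i$ is small enough.
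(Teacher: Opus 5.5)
\textbf{Where your proposal stands.} Your exclusion of $\overline{\mathcal C}^{2n-2}$ is correct and takes a different route from the paper's. The gap is in the step from that exclusion to a Hausdorff bound with respect to $d^Z_0$.

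\textbf{The cylinder exclusion: a valid alternative.} The paper's proof of Theorem~\ref{thm:exclude} works at one fixed small scale. It gets one holomorphic family from Lemma~\ref{lem--rigidity} and transports $\mathbb P^1(0)$ by the flow of $-\nabla f_t$. It then uses the strong uniqueness of cylindrical tangent flows \cite{fang-li-unique} to bound the $|t|^{-1}g_t$-area of the transported, non-holomorphic spheres for all $t\in[-\tau_0,0)$. Wirtinger then gives $|t|^{-1}\alpha\cdot[\mathbb P^1]-K_X\cdot[\mathbb P^1]\le C$ for one fixed class.

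You instead use only the sequence of scales defining the tangent flow and produce holomorphic curves $C_i$ at every scale. You then fix the class by compactness. Since $N_{C_i}$ has degree $0$, $-K_X\cdot C_i=2$, so $\int_{C_i}\omega_{-1}$ is bounded. Hence the classes $[C_i]$ lie in a bounded subset of the lattice $H_2(X;\mathbb Z)/\mathrm{tors}$ and are finitely many. Along a subsequence of constant class,
$\alpha\cdot C=\int_{C_i}\omega_{t_i}-|t_i|(-K_X\cdot C)\to0$.
This avoids the strong uniqueness theorem altogether, which is a real simplification.

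Two corrections to this part:
\begin{itemize}
\item The class relation is $[\omega_t]=\alpha+tK_X$; your formula has the sign of the $K_X$ term reversed.
\item Keep the subsequence argument, not the ``single large $i$'' alternative. That alternative fails because $\alpha$ is only a real class. At one scale you learn only that $\alpha\cdot C_i\ge0$ is small, and integrality of $K_X\cdot C_i$ does not force it to vanish.
\end{itemize}

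\textbf{The gap: passing to a $d^Z_0$-dimension bound.} The general theory bounds the singular set with respect to $d_Z$: codimension four in spacetime, which is only $\le 2n-2$ in the slice. Since $d_Z\le d^Z_0$ on $Z_0$, an upper bound on $d_Z$-dimension says nothing about $d^Z_0$-dimension.

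What is available in the slice is the Minkowski-type estimate \cite[Corollary~4.24(b)]{fang-li2},
$\bigl|B^*_\delta(\MS^{\ep,2n-4}_{\delta,\ep})\cap Z_0\bigr|_{d^Z_0}\le C_\ep\delta^{4-\ep}$.
To turn this into a covering number for $\MS^\ep$, you need a lower bound on the volume of $d^Z_0$-balls centred on singular points. This is Lemma~\ref{lem:volume}, $|B_{d^Z_0}(p,r)|_{d^Z_0}\ge c_\ep r^{2n+\ep}$, and it is not part of the general theory. Its proof uses three things:
\begin{itemize}
\item the K\"ahler noncollapsing estimate \eqref{eq:volume-noncollapsing};
\item the curvature-radius estimate \cite[Theorem~1.12(b)]{FL1}, to show that a definite fraction of $B_{\omega_{t_j}}(p_j,r/2)$ has $r_{\Rm}\ge\theta r$;
\item an $H$-center argument showing that $\Phi$ maps this good region into $B_{d^Z_0}(p,r)$, so the volume survives in the limit.
\end{itemize}
Appealing to Theorem~\ref{thm--all} does not supply this, because Gromov--Hausdorff convergence alone does not control the limiting volume.

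\textbf{Stratification bookkeeping.} This part also needs repair.
\begin{itemize}
\item Odd strata collapse because the flat de~Rham factor of a K\"ahler tangent flow is $J_\infty$-invariant, hence even-dimensional. It is not a consequence of the classification of three-dimensional shrinkers: $S^3/\Gamma\times\R^{2n-3}$ is $(2n-3)$-splitting and does not upgrade.
\item In the symmetric stratification of \cite{FL1}, in which the quantitative estimates are stated, a static cone splitting $\R^{2n-4}$ (for example $\C^{n-2}\times\C^2/\Gamma$) is $(2n-2)$-symmetric. So proving $\MS=\MS^{2n-4}$ also requires ruling out static tangent flows at points of $Z_0$. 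The paper does this in Theorem~\ref{thm:exclude1} via finite arrival time.
\end{itemize}
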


\medskip
\noindent\textbf{K\"ahler--Ricci shrinkers and Fano fibrations.}

Ricci shrinkers arise as canonical blow-up models for finite-time
singularities of Ricci flow
\cite{EndersMullerTopping,Naber2010,bamler3,FL1}. At the same time, a
noncompact shrinker has its own global asymptotic problem: one would like to
understand the geometry of its blow-downs and, in particular, whether its
tangent space at infinity is unique. A shrinker generates a self-similar
Ricci flow on $(-\infty,0)$, so its geometry at infinity is encoded by the
behavior of this associated flow as $t\nearrow0$. This makes the time-zero
completion used above a natural tool in the noncompact setting as well.

For K\"ahler--Ricci shrinkers, there is an additional complex-algebraic
structure. Sun--Zhang \cite{SunZhang} associate a polarized Fano fibration with every K\"ahler--Ricci shrinker
\[
    \pi:X\longrightarrow Y,
\]
where $Y$ is an affine variety and $\pi$ is constructed from homogeneous
holomorphic functions for the soliton vector field. Using this polarized Fano fibration structure and extending techniques from compact K\"ahler--Ricci flows to the noncompact setting, we establish several structural results for K\"ahler--Ricci shrinkers without imposing curvature assumptions.

Let $X^\circ\subset X$
be the largest open subset on which $\pi$ is biholomorphic onto an open subset
of $Y$, and let
\[
    E:=X\setminus X^\circ
\]
be the exceptional set. Thus $X^\circ\ne\varnothing$ precisely when $\pi$ is
birational. We show in Proposition~\ref{prop:maximal-volume-growth-characterization}
that this is equivalent to maximal volume growth.
Let $\psi^t$, $t\in[-1,0)$, be the self-similar biholomorphisms generated by
$(2|t|)^{-1}\nabla f$, normalized by $\psi^{-1}=\mathrm{id}$, and set
\[
    \omega_t:=|t|(\psi^t)^*\omega.
\]
Then $\partial_t\omega_t=-\Ric(\omega_t)$. The first question is whether the
self-similar metrics develop a genuine cone structure on the part of $X$
seen by the Fano fibration.

\begin{theorem}[Cone limit on the isomorphism locus]
\label{thm:local-smooth-convergence}
The metrics $\omega_t$ converge to a K\"ahler cone metric $\omega_0$ in
$C^\infty_{\loc}(X^\circ)$ as $t\nearrow0$.
\end{theorem}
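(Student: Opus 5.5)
We adapt the compact Collins--Tosatti argument to the noncompact self-similar flow, with the Fano fibration playing the role of the big and nef class. The overall strategy is to write $\omega_t$ as a Monge--Ampère flow against a fixed reference form on $X^\circ$, obtain local uniform estimates there, and identify the limit with the cone metric pulled back from $Y$.

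\textbf{Step 1: the reference cone metric and the potential.} By Sun--Zhang, $Y$ is an affine variety with a good torus action containing the holomorphic flow of $J\nabla f$. The soliton field induces a Reeb-type vector field $\xi$ on $Y$. Choose a $\xi$-homogeneous radial function $r_Y^2$ of degree $2$ on $Y$; one can build it from finitely many homogeneous holomorphic functions $z_i$ of weights $d_i$, e.g. $r_Y^2=\big(\sum_i |z_i|^{2/d_i}\big)$ suitably smoothed. Set $\hat\omega_0:=\pi^*\tfrac{\ii}{2}\partial\pp r_Y^2$. This form is smooth and semipositive on $X$ and strictly positive on $X^\circ$. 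Scaling and homogeneity give $(\psi^t)^*\pi^*r_Y^2=|t|^{-1}\pi^*r_Y^2$, so $\hat\omega_0$ is preserved by $|t|(\psi^t)^*$. Since $\omega=\tfrac{\ii}{2}\partial\pp f$ up to normalization on a shrinker (one has $\omega=\ii\partial\pp f$ after taking $\Ric+\ii\partial\pp f=\omega$ and using the potential of $\omega$), we write
\[
\omega_t=|t|(\psi^t)^*\omega=\hat\omega_0+\ii\partial\pp\varphi_t,\qquad \varphi_t:=|t|(\psi^t)^*f-\tfrac12\pi^*r_Y^2 .
\]
Differentiating in $t$ and using $\omega^n=e^{-f+c}\Omega$-type identities, $\varphi_t$ satisfies a parabolic complex Monge--Ampère equation
\[
\partial_t\varphi_t=\log\frac{(\hat\omega_0+\ii\partial\pp\varphi_t)^n}{\Omega}+(\text{explicit terms}),
\]
where $\Omega$ is a fixed volume form.

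\textbf{Step 2: uniform $C^0$ and first-order bounds on compacts of $X^\circ$.} We compare $|t|(\psi^t)^*f$ with $\pi^*r_Y^2/2$. The known estimate $f\sim r^2/4$, together with the comparison of $r^2$ with $\pi^*r_Y^2$ on $X^\circ$, should give $|\varphi_t|\le C$ locally uniformly. I expect this comparison to follow from degree considerations of homogeneous functions: the $z_i$ have polynomial growth controlled by $f^{d_i/2}$ via the three-circle/degree theory in Sun--Zhang. Next, a Chern--Lu or Tsuji-type trick with the weight $\log|s|^2$ gives $\omega_t\ge c\,\hat\omega_0$ away from $E$. Here $s$ is a holomorphic section, or a homogeneous function pulled back from $Y$, vanishing on $E$. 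Combined with the volume form bound $\omega_t^n\le C\Omega$ from the equation, this yields two-sided metric bounds on compacts of $X^\circ$.

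\textbf{Step 3: higher derivatives and identification of the limit.} With two-sided bounds, Evans--Krylov/Calabi estimates give $C^\infty_{\loc}(X^\circ)$ bounds, and subsequential limits exist. For uniqueness, note that $\partial_t\varphi_t$ has a sign after adding a bounded correction, which gives monotone convergence as in Collins--Tosatti. Alternatively, we use that $|t|(\psi^t)^*f$ converges since $\nabla f$-flow lines on $X^\circ$ correspond to radial rays on $Y$. The limit $\omega_0$ is invariant under the scaling $|t|(\psi^t)^*$, i.e. $\mathcal L_{r\partial_r}\omega_0=2\omega_0$, and it is Kähler with $J\nabla$ of a degree-$2$ potential. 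Hence $\omega_0$ is a Kähler cone metric with Reeb field $\xi$.

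\textbf{Main obstacle.} The main obstacle is Step 2's lower bound $\omega_t\ge c\,\hat\omega_0$ near $E$ in the noncompact setting. The maximum principle for the Chern--Lu quantity requires controlling behaviour at spatial infinity, where $\psi^t$ pushes everything outward. My first attempt would be to work entirely with scale-invariant quantities, so that the estimate at time $t$ on a compact $K$ is equivalent to the estimate at time $-1$ on $\psi^t(K)$. We would then apply the maximum principle on sublevel sets of $f$ with a barrier built from $f$ itself, using $|\nabla f|^2\le f$ and $\Delta f\le n$.
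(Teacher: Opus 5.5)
Your architecture matches the paper's: a Schwarz-type lower bound for $\omega_t$ against a form pulled back from $Y$, the volume bound $\omega_t^n\le\omega_{-1}^n$ from $\scal_{\omega_t}\ge0$, interior estimates, and identification of the limit through the rescaled potential. There is, however, a genuine gap. The step you flag as the main obstacle is where the content of the theorem lies, and your proposal does not contain the idea that resolves it.

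A maximum principle for a Chern--Lu quantity on a noncompact manifold needs a priori control of that quantity at spatial infinity. The inequalities $|\nabla f|^2\le f$ and $\Delta f\le n$ say nothing about how fast a holomorphic map grows. The missing ingredient is a growth estimate for the homogeneous generators $h_\alpha$ of $\iota\circ\pi$ and for their derivatives. The paper builds it in three steps.
\begin{enumerate}
\item It proves $\int_X|h|^2e^{-\tau f}\omega^n<\infty$ for every $\tau\in(0,1]$ by integrating the divergence of $|h|^2e^{-\tau f}\nabla f$ over sublevel sets of $f$.
\item It proves a mean-value and gradient estimate for $(\Delta_f+\mu)v=0$ on balls of radius $(1+\rho)^{-1}$.
\item It applies the barrier $\log(|h|^2+\delta)-\lambda\log f-\ep f$ to obtain $|h|\le C(1+\rho)^\lambda$, and hence $|\partial h_\alpha|^2_\omega\le C(1+\rho)^m$.
\end{enumerate}

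With this estimate in hand, the argument runs as follows.
\begin{enumerate}
\item Take the flat form $\omega_Y=\iota^*\omega_{\mathbb C^N}$ and set $u=\tr_{\omega_t}\pi^*\omega_Y$. Then $(\partial_t-\Delta_{\omega_t})\log(u+1)\le0$ holds globally, with no Tsuji weight.
\item Since $(\partial_t-\Delta_{\omega_t})F=-n$, the function $Q=\log(u+1)-F-(n+1)t$ satisfies $(\partial_t-\Delta_{\omega_t})Q\le-1$.
\item The identity $|\partial h_\alpha|^2_{\omega_t}(x)=|t|^{\lambda_\alpha-1}|\partial h_\alpha|^2_\omega(\psi^t(x))$, together with the growth bound, gives $Q\to-\infty$ as $F\to\infty$, uniformly for $t\in[-1,t_0]$.
\item The maximum principle on $\{F\le R\}$ then yields $\log(u+1)\le C+F$.
\item Finally, $\partial_tF=-|t|\scal_{\omega_t}\le0$ gives $F(x,t)\le\rho^2(x)$, which turns this into a uniform bound on $\pi^{-1}(K)$.
\end{enumerate}
You mention polynomial growth of the $z_i$ only as an expectation, and you use it for the $C^0$ bound, which does not need it, rather than here.

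Step 1 also contains an error. The identity $\omega=\ii\partial\pp f$ holds only for the Gaussian shrinker. The soliton equation gives $\omega-\ii\partial\pp f=\Ric(\omega)$, so $[\omega]=2\pi c_1(X)$. This class is nonzero, for instance, on the Feldman--Ilmanen--Knopf shrinkers, where the zero section has positive area. Your $\varphi_t$ therefore does not satisfy $\omega_t=\hat\omega_0+\ii\partial\pp\varphi_t$; the correct identity is $\omega_t=\ii\partial\pp F(\cdot,t)+|t|\Ric(\omega_t)$.

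In addition, a degree-two $\xi$-homogeneous function on $Y$ is in general not smooth, or not strictly plurisubharmonic, at the vertex $o$. So $\hat\omega_0$ is singular along $\pi^{-1}(o)$, and a non-flat cone target has curvature blowing up there. Both facts obstruct a global Chern--Lu argument.

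None of this machinery is needed:
\begin{itemize}
\item The paper works with the metrics directly, with no Monge--Amp\`ere reduction. It uses the smooth but non-scale-invariant form $\pi^*\omega_Y$; scale invariance is unnecessary, because the bound is only required on a fixed compact set at each time.
\item All higher bounds come from the Sherman--Weinkove interior estimates.
\item The limit is unique simply because $\partial_t\omega_t=-\Ric(\omega_t)$ is uniformly bounded in $C^k(K)$, so no monotonicity argument is needed.
\item The cone structure follows from $\nabla^2F_0=g_0$ and $|\nabla F_0|^2_{g_0}=2F_0$.
\end{itemize}
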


The key analytic issue is
to control the homogeneous holomorphic functions defining $\pi$ on a
noncompact shrinker. If $h$ has weight $\lambda$, then its real and imaginary
parts are eigenfunctions of the weighted Laplacian. We first prove weighted
$L^2$ integrability, then combine a local weighted mean-value estimate with a
maximum-principle barrier to obtain the sharp polynomial growth bound
\[
    |h(x)|\le C(1+\rho(x))^\lambda,
    \qquad
    \rho=\sqrt{f+n}.
\]
These growth estimates allow us to apply the maximum principle and obtain a global Schwarz-type lower bound for $\omega_t$:
\[
    \omega_t\ge C_K^{-1}\pi^*\omega_Y
    \quad\text{on }\pi^{-1}(K)
\]
for every compact set $K\Subset Y$; see
Proposition~\ref{prop:schwarz-lower-bound-general}. On $X^\circ$ the form
$\pi^*\omega_Y$ is positive definite. The scalar-curvature nonnegativity gives
an upper bound for the volume form $\omega_t^n$, and together these estimates
yield a local two-sided metric bound. Interior estimates for the
K\"ahler--Ricci flow then give smooth convergence. Finally, the limiting
rescaled potential satisfies
\[
    \nabla^2_{g_0}F_0=g_0,
    \qquad
    |\nabla F_0|_{g_0}^2=2F_0,
\]
which identifies $(g_0,\omega_0)$ as a K\"ahler cone metric.


The same estimate leads to a complex-analytic characterization of
asymptotic conicality.

\begin{theorem}[Asymptotically conical criterion]\label{thm:ac-criterion}
A noncompact K\"ahler--Ricci shrinker $(X,g,f)$ is asymptotically
conical if and only if its associated Fano fibration
$\pi:X\to Y$ is biholomorphic outside a compact set.
\end{theorem}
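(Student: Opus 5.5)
We prove both directions of Theorem~\ref{thm:ac-criterion} separately, taking Theorem~\ref{thm:local-smooth-convergence} and the Schwarz-type lower bound of Proposition~\ref{prop:schwarz-lower-bound-general} as given. Throughout, we use that asymptotic conicality of $(X,g,f)$ is equivalent to smooth convergence of the self-similar flow $\omega_t=|t|(\psi^t)^*\omega$, as $t\nearrow0$, on $X\setminus K_0$ for some compact $K_0$, with limit a K\"ahler cone metric on the end. This equivalence holds because $\psi^t$ is the flow of $(2|t|)^{-1}\nabla f$, so pulling back by $\psi^t$ and rescaling by $|t|$ is the same as blowing down along level sets of $f$.

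\emph{($\Leftarrow$)} Suppose $\pi$ is biholomorphic outside a compact set, so that $E=X\setminus X^\circ$ is compact. Fix a compact $K_0\supset E$ with $K_0$ invariant under the backward flow, for instance $K_0=\{f\le A\}$ with $A$ large. The $\psi^t$ for $t\in[-1,0)$ move points outward, since $f\circ\psi^t\approx f/|t|$. Hence for any compact $L\subset X\setminus K_0$ the set $L$ itself stays in $X^\circ$, and Theorem~\ref{thm:local-smooth-convergence} gives $C^\infty$ convergence of $\omega_t$ on $L$ to the cone metric $\omega_0$.

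To turn this into asymptotic conicality, we compare $g$ with $\omega_0$ on the end $\{f\ge A\}$ via the homothety relation $(\psi^t)^*$, using the corresponding self-similarity of $\omega_0$. The annulus $\{A\le f\le 4A\}$, pushed forward by $(\psi^{t})^{-1}$, covers the region $\{f\sim A/|t|\}$. Smooth convergence on this fixed annulus therefore gives
\[
|\nabla^k(g-g_0)|_{g_0}=o(r^{-k})
\]
on the end, where $r^2\sim f$, which is precisely the asymptotically conical condition. The main subtlety is checking that the Reeb/homothety structure of $\omega_0$ is compatible with $\psi^t$. This follows from the limiting identities $\nabla^2F_0=g_0$ and $|\nabla F_0|^2=2F_0$ derived in the proof of Theorem~\ref{thm:local-smooth-convergence}.

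\emph{($\Rightarrow$)} Suppose $g$ is asymptotic to a cone $C(\Sigma)$ with link $\Sigma$ smooth. Then the end of $X$ is biholomorphic to the end of the cone with its limiting complex structure, and by the standard theory (Conlon--Deruelle--Sun type arguments) the cone is an affine variety $C_0$ with an isolated singularity at most. We would then show:
\begin{enumerate}[label=(\roman*)]
\item $X$ has maximal volume growth, so by Proposition~\ref{prop:maximal-volume-growth-characterization} the map $\pi$ is birational.
\item The homogeneous holomorphic functions defining $\pi$ separate points and tangents on the end.
\end{enumerate}
For (ii), holomorphic functions on the cone of given weight are approximated on the end of $X$ by homogeneous holomorphic functions. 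These are produced by a H\"ormander $L^2$-correction with the weight $e^{-f}$ together with the polynomial growth bound $|h|\le C(1+\rho)^\lambda$ proved earlier. Since the cone functions embed $C_0\setminus\{o\}$, their approximants embed $\{f\ge A\}$, and $\pi$ is injective and immersive there. Points of $E$ therefore lie in $\{f\le A\}$, which is compact.

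The main obstacle is step (ii): matching the $\psi^t$-weight spectrum on $X$ with that on the cone and ensuring the approximating homogeneous functions factor through $\pi$. We would first try to argue directly that the coordinate ring of $Y$, graded by weights, equals that of $C_0$, using the convergence of $\omega_t$ to the cone to pass weight decompositions to the limit.
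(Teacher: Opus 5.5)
\textbf{The ($\Leftarrow$) direction is correct, by a different route from the paper.} The paper uses Theorem~\ref{thm:local-smooth-convergence} only to get a bound $\sup_{\Sigma\times[-1,0)}|\Rm_{\omega_t}|\le A$ on a single compact level set $\Sigma=\{\rho=C_0\}\subset X\setminus E$. It then transports this bound outward along the flow of $\nabla f$: using $\scal\ge0$, the flow time $T$ from $\Sigma$ to $x$ satisfies $e^{-T}\le C_0^2/\rho^2(x)$. This gives quadratic decay $|\Rm_g|\le C\,d_g(p,\cdot)^{-2}$, and the paper concludes by citing the conical-structure results of \cite{KW,CSunD}.

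You instead read the asymptotically conical condition directly off $C^\infty$ convergence on a fixed annulus. This avoids that citation and identifies the asymptotic cone as $(X^\circ,g_0)$ restricted to the end.

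For this you need the exact self-similarity $|s|(\psi^s)^*g_0=g_0$ on the end. The identities $\nabla^2F_0=g_0$ and $|\nabla F_0|^2=2F_0$ do not give this by themselves. They show that $g_0$ is a cone with radial field $\nabla_{g_0}F_0$, but not that this field is the generator $\nabla f$ of $\psi^t$. Either of the following fills the gap:
\begin{itemize}
\item note that $\nabla_{g_t}F(\cdot,t)=\nabla_g f$ for every $t$, so in the limit $\nabla_{g_0}F_0=\nabla_g f$;
\item or pass to the limit $t\nearrow0$ in $|s|(\psi^s)^*g_t=g_{-|s||t|}$, using that $X^\circ$ is invariant under the flow of $\nabla f$ by equivariance of $\pi$.
\end{itemize}
A small slip: it is $\psi^t$, not $(\psi^t)^{-1}$, that carries $\{A\le f\le 4A\}$ out to $\{f\sim A/|t|\}$.

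\textbf{The ($\Rightarrow$) direction is not proved.} Two claims carry all the weight and are left unestablished.
\begin{itemize}
\item You assert that the end of $X$ is biholomorphic to the end of the cone. Metric asymptotic conicality does not give this for free: the ends of AC Kähler smoothings of cones are not biholomorphic to the cone end.
\item Step (ii), which you yourself flag as unresolved, is the construction of $\xi$-homogeneous holomorphic functions on $X$, with weights matching those on the cone, that approximate the embedding functions of $C_0$ and hence embed $\{f\ge A\}$.
\end{itemize}
Together these are exactly the content of the Conlon--Deruelle--Sun theorem, and the paper handles this direction simply by citing \cite{CSunD}.

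Your worry about whether the approximants factor through $\pi$ is not a real issue. Every holomorphic function on $X$ is constant on the compact connected fibres of $\pi$ and descends to the normal variety $Y$. Moreover, once some holomorphic functions embed $U=\{f>A\}$, connectedness of the fibres gives $\pi^{-1}(\pi(U))=U$, and $\pi|_U$ is then biholomorphic onto an open set. But without \cite{CSunD}, or a complete H\"ormander-type argument including the weight matching, the forward implication remains only a sketch.
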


The forward implication was established in \cite{CSunD}. For the converse,
if $\pi$ is biholomorphic outside a compact set, choose a large regular level
set of the soliton potential which lies in $X^\circ$. Theorem~\ref{thm:local-smooth-convergence} gives a uniform curvature bound for the
self-similar flow on this level set up to time zero. Transporting this bound
outward by the soliton diffeomorphisms and using the quadratic growth of the
potential yields
\[
    |\Rm_g|(x)\le C\,d_g(p,x)^{-2}.
\]
The conical structure theorem in \cite{KW} then implies that
the shrinker is asymptotically conical.

Theorem~\ref{thm:ac-criterion} has several consequences. It gives, for
example, another proof of the asymptotic conicality of K\"ahler--Ricci
shrinker surfaces with maximal volume growth, a result previously established in
\cite{munteanu2015geometry,CCD,LW26}. More importantly, together with the
uniqueness theorem of Esparza \cite{esparza2025}, we prove in Corollary~\ref{cor:uniqueness-shrinker-structures} that any two K\"ahler--Ricci
shrinkers on the same complex manifold are equivalent by a biholomorphism as
soon as one of them is asymptotically conical.

We say that a complex manifold $X$ has no compact positive-dimensional
analytic subvarieties at infinity if there exists a compact subset
$K\Subset X$ such that $X\setminus K$ contains no compact
positive-dimensional analytic subset. This condition is automatic for the
underlying complex manifold of every asymptotically conical
K\"ahler--Ricci shrinker; see \cite{CSunD}. Since the fibers of a Fano fibration are
compact analytic subvarieties, this condition forces the fibration to be
finite outside a compact set; connectedness of the fibers and normality of
the base then imply that it is biholomorphic there. We consequently obtain
the following uniqueness theorem.

\begin{theorem}
\label{thm:special-uniqueness}
If a complex manifold $X$ contains no compact positive-dimensional analytic
subvarieties at infinity, then, up to biholomorphism, there is at most one
K\"ahler--Ricci shrinker on $X$. In particular, for each complex manifold
$X$ appearing in Table~\ref{tab:known-ac-shrinkers}, the listed
asymptotically conical shrinker is, up to biholomorphism, the unique
K\"ahler--Ricci shrinker on $X$.
\end{theorem}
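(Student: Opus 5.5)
The argument chains three ingredients: the remark preceding the statement (biholomorphicity of $\pi$ outside a compact set), Theorem~\ref{thm:ac-criterion}, and Esparza's uniqueness theorem \cite{esparza2025}, used as in Corollary~\ref{cor:uniqueness-shrinker-structures}. Let $(X,g_1,f_1)$ and $(X,g_2,f_2)$ be two Kähler--Ricci shrinkers on $X$, with Sun--Zhang Fano fibrations $\pi_i:X\to Y_i$ \cite{SunZhang}. If $X$ is compact, every analytic subvariety lies in a compact set, so the hypothesis is vacuous there. In that case I would either treat the compact case separately or read the statement as concerning noncompact $X$; the table consists of noncompact examples.

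\textbf{Step 1: $\pi_i$ is biholomorphic outside a compact set.} Choose $K\Subset X$ such that $X\setminus K$ contains no compact positive-dimensional analytic subsets. Each fiber $\pi_i^{-1}(y)$ is a compact analytic subvariety of $X$, since $\pi_i$ is proper with compact fibers as part of the Fano fibration structure. Any fiber meeting $X\setminus K$ must therefore either lie in a compact neighbourhood of $K$ or be finite. I would make this precise via upper semicontinuity of fiber dimension. The set $Z=\{x:\dim_x\pi_i^{-1}(\pi_i(x))>0\}$ is analytic, and it is a union of positive-dimensional compact fiber components. Each such component meets $K$, so by properness $Z\subset\pi_i^{-1}(\pi_i(K))$, which is compact. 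Outside this compact set $\pi_i$ is finite. The fibers of $\pi_i$ are connected, so outside $\pi_i^{-1}(\pi_i(K))$ the map is injective onto its image. Moreover $\pi_i$ has connected fibers and $Y_i$ is normal, so by Zariski's main theorem a finite bimeromorphic map onto a normal space is biholomorphic. The one subtlety is that $\pi_i$ restricted to the complement is a proper finite map onto the open set $Y_i\setminus\pi_i(K)$. This holds because $\pi_i$ is surjective and proper.

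\textbf{Step 2: asymptotic conicality.} By Theorem~\ref{thm:ac-criterion}, both shrinkers $g_1$ and $g_2$ are asymptotically conical.

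\textbf{Step 3: uniqueness.} Apply Corollary~\ref{cor:uniqueness-shrinker-structures}, which combines Esparza's theorem with Theorem~\ref{thm:ac-criterion}. It gives a biholomorphism of $X$ identifying the two shrinkers, since one of them, and in fact both, is asymptotically conical. For the second assertion, each manifold in Table~\ref{tab:known-ac-shrinkers} carries an asymptotically conical shrinker. By \cite{CSunD}, the underlying manifold then has no compact positive-dimensional subvarieties at infinity, so the first assertion applies.

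The main obstacle is Step 1, specifically the passage from ``no compact subvarieties outside $K$'' to ``$\pi$ is an isomorphism outside a compact set.'' The delicate point is making sure the exceptional set really is compact. I would use properness of $\pi_i$ together with the fact that $\pi_i(K)$ is compact in the affine variety $Y_i$.
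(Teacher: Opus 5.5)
Your noncompact argument is essentially the paper's proof, and it is correct. Fibres over $Y\setminus\pi(K)$ are compact, connected, and contain no positive-dimensional compact analytic subset, so they are points. The restriction of $\pi$ to $\pi^{-1}(Y\setminus\pi(K))$ is then a proper bijection onto an open subset of $Y$, hence biholomorphic. Theorem~\ref{thm:ac-criterion} and Corollary~\ref{cor:uniqueness-shrinker-structures} then finish the argument.

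The gap is the compact case, which you leave open. For compact $X$ the hypothesis does not exclude the case: it holds automatically with $K=X$. So the theorem genuinely asserts that a compact Fano manifold carries at most one K\"ahler--Ricci shrinker up to biholomorphism. You therefore cannot simply read the statement as concerning noncompact $X$ only.

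Your Steps 1--3 cannot reach this case. The Fano fibration of a compact shrinker maps onto a point, so $\pi$ is biholomorphic nowhere. Theorem~\ref{thm:ac-criterion} is stated for noncompact shrinkers, and Esparza's theorem concerns asymptotically conical ones. The missing ingredient is the Tian--Zhu uniqueness theorem for K\"ahler--Ricci solitons on compact manifolds \cite{TianZ1}. This is exactly what the paper invokes for the compact case before turning to the noncompact argument you reproduced.
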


For reference, Table~\ref{tab:known-ac-shrinkers} summarizes, to the best of
our knowledge, the currently known explicit families of smooth asymptotically conical K\"ahler--Ricci shrinkers. The rows are not
disjoint: several classical examples occur as special cases of the more
general constructions. In the table, $B$ is a compact K\"ahler--Einstein Fano manifold of
Fano index $i_B$, and $L\to B$ is a line bundle satisfying
$L^{\otimes i_B}\simeq K_B$, unless otherwise specified.

\begin{center}
\begin{minipage}{\textwidth}
\captionof{table}{Currently known explicit constructions of asymptotically conical K\"ahler--Ricci shrinkers.}
\label{tab:known-ac-shrinkers}
\scriptsize
\renewcommand{\arraystretch}{1.2}
\begin{tabularx}{\linewidth}
{@{}>{\raggedright\arraybackslash}p{0.20\linewidth}
>{\raggedright\arraybackslash}X
>{\raggedright\arraybackslash}p{0.15\linewidth}@{}}
\toprule
Construction & Underlying complex manifold  & Reference \\
\midrule
Gaussian shrinker
&
$X=\mathbb C^n$.
&
Classical
\\[0.3em]

Calabi ansatz on line bundles over Fano K\"ahler--Einstein manifolds
&
$X=\operatorname{Tot}(L^{\otimes m})$ with $1\leq m<i_B$.
For $B=\mathbb P^{n-1}$ and
$L=\mathcal O_{\mathbb P^{n-1}}(-1)$, this includes the
Feldman--Ilmanen--Knopf shrinkers on
$\operatorname{Tot}(\mathcal O_{\mathbb P^{n-1}}(-m))$,
$1\leq m<n$.
&
\cite{koiso1990,Cao96,FIK03,Yang12}
\\[0.3em]

Equal direct sums
&
$X=\operatorname{Tot}((L^{\otimes m})^{\oplus r})$ with $r\geq2$ and
$rm<i_B$.
&
\cite{Li10}
\\[0.3em]

Line bundle over products
&
$X=\operatorname{Tot}(\bigotimes_i\pi_i^*(L_i^{\otimes q_i}))$, with the $q_i$ satisfying suitable numerical constraints.
&
\cite{DancerWang11}
\\[0.3em]

Line bundle over toric Fano
&
$X=\operatorname{Tot}(L^{\otimes m})$, with $1\leq m<i_B$, where $B$ is a toric Fano manifold that need not admit a K\"ahler--Einstein metric.
&
\cite{FutakiWang11,Futaki21}
\\[0.3em]

Unequal direct sums
&
$X=\operatorname{Tot}(L^{\otimes m_1}\oplus L^{\otimes m_2})$ with
$m_1>m_2>0$ and $m_1+m_2<i_B$.
&
\cite{Cifarelli25}
\\
\bottomrule
\end{tabularx}
\end{minipage}
\end{center}

\medskip
\noindent\textbf{The time-zero slice of a Ricci shrinker.}

We next return to a general, not necessarily K\"ahler, Ricci shrinker
$(M^n,g,f)$ with bounded scalar curvature. We normalize it by
\[
    \Ric(g)+\nabla^2f=\frac12g,
    \qquad
    \scal+|\nabla f|^2=f,
\]
and consider the associated self-similar Ricci flow
$(M,(g_t)_{t<0})$. The heat kernel estimates of
\cite{liwang2020heat1,liwang2024heat} allow the spacetime-completion theory of
\cite{FL1} to be applied to this noncompact flow. We obtain a completion
$(Z,d_Z,\mathfrak t)$ and a pointed time-zero slice
\[
    (Z_0,d^Z_0,\bar p).
\]
The rescaled potential extends continuously to $Z_0$ and is given by the
canonical radial function
\[
    F(z)=\frac14\bigl(d^Z_0(z,\bar p)\bigr)^2.
\]
On the regular part $Z_0^{\reg}$ of $Z_0$, the soliton equation passes to the limit and
gives
\[
    \nabla^2_{g_0^Z}F=\frac12g_0^Z.
\]
Thus $(Z_0^{\reg},g_0^Z)$ is a smooth metric cone.
Self-similarity induces a one-parameter family of homotheties of $Z_0$, so
$Z_0$ is topologically a cone. Moreover, every worldline has a time-zero
limit, defining a continuous proper surjection
\[
    \Phi:M\longrightarrow Z_0.
\]
For fixed $x,y\in M$,
\[
    d^Z_0(\Phi(x),\Phi(y))
    =\lim_{t\nearrow0}d_{g_t}(x,y).
\]

A useful point is that local compactness of the canonical time-zero slice is
precisely enough to turn these pointwise distance limits into full pointed
Gromov--Hausdorff convergence.
\begin{theorem}\label{thm--locally compact criterion}
Let $(M^n,g,f)$ be a Ricci shrinker with bounded scalar curvature.  If $(Z_0,d^Z_0)$ is locally
compact, then
\[
    (M,d_{g_t},p)
    \xrightarrow[t\nearrow0]{\pGHconvtext}
    (Z_0,d^Z_0,\bar p).
\]
Consequently, if $(Z_0,d^Z_0)$ is locally compact, then it is the unique tangent space at infinity of $(M,g)$.
\end{theorem}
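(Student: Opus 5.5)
We use the structure of the time-zero slice described just before the statement. Four facts are taken as given: the continuous proper surjection $\Phi:M\to Z_0$; the pointwise distance limits $d^Z_0(\Phi(x),\Phi(y))=\lim_{t\nearrow0}d_{g_t}(x,y)$; the radial formula $F=\frac14 (d^Z_0(\cdot,\bar p))^2$, with $F$ the continuous extension of the rescaled potential $|t|f\circ\psi^t$; and the cone structure of $Z_0$ given by the self-similar homotheties. The goal is to build, for each $R$ and $\ep>0$ and all $t$ sufficiently close to $0$, an $\ep$-Gromov--Hausdorff approximation between $B_{g_t}(p,R)$ and $B_{d^Z_0}(\bar p,R)$. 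The map $\Phi$ itself, restricted to a finite net, will serve as the approximation.

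\textbf{Step 1: a finite net.} Since $Z_0$ is locally compact and carries a length structure (it is a cone whose homotheties fix $\bar p$), closed balls $\bar B(\bar p,R)$ are compact; if needed, we obtain this from local compactness together with the dilations. Choose a finite $\ep$-net $\{z_i\}$ of $\bar B(\bar p,R+1)$ and, using surjectivity of $\Phi$, points $x_i\in M$ with $\Phi(x_i)=z_i$. Because the set is finite, the pointwise limit gives $|d_{g_t}(x_i,x_j)-d^Z_0(z_i,z_j)|<\ep$ for all $t$ close to $0$. The map $x_i\mapsto z_i$ is therefore an $\ep$-isometry onto an $\ep$-dense subset of the $Z_0$ side.

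\textbf{Step 2: density on the $M$ side.} This is the main obstacle. We must show that $\{x_i\}$ is $C\ep$-dense in $B_{g_t}(p,R)$ uniformly for $t$ near $0$, so that no point of $M$ escapes the net. We argue by contradiction. Suppose there are $t_k\nearrow0$ and points $y_k\in B_{g_{t_k}}(p,R)$ with $d_{g_{t_k}}(y_k,x_i)\ge C\ep$ for all $i$.

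First, boundedness: along the flow, $d_{g_t}(p,\cdot)^2$ is comparable to $4|t|f\circ\psi^t$ up to $o(1)$, by the standard shrinker distance estimates $\frac12(d-c)\le\sqrt f\le\frac12(d+c)$ rescaled. Hence $|t_k|f(\psi^{t_k}(y_k))$ stays bounded. Equivalently, each $y_k$ lies in a fixed compact set of $M$ after applying the soliton diffeomorphisms. The worry is that the $y_k$ drift to infinity in $M$; this is what the self-similarity controls.

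Next, we pass to a limit point in $Z_0$. We use the parabolic structure of $(Z,d_Z,\t)$: the points $(y_k,t_k)$ have bounded $d^*$-distance to $(p,t_k)$, so by compactness of bounded sets in $Z$ (following from local compactness of $Z_0$ and the completion near $\t=0$) they subconverge to some $z\in Z_0$. We then compare $d_{g_{t_k}}(y_k,x_i)$ with $d_Z$. The upper bound $d_{g_t}\ge$ (Wasserstein distance of conjugate heat kernels) follows from the $W_1$-monotonicity, as in the proof of Theorem~\ref{thm--all}. The converse inequality, up to $o(1)$, is the genuinely hard point. Here we use the monotonicity of $d_{g_t}$ along the flow, controlled by the bounded scalar curvature: Perelman-type distance distortion estimates give $d_{g_s}\le d_{g_t}+C\sqrt{|s-t|}$ for $s>t$ away from high-curvature regions. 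Combining this with $H$-center estimates, $d_{g_{t_k}}(y_k,x_i)$ should approach $d^Z_0(z,z_i)$. Since some $z_i$ is $\ep$-close to $z$, we reach a contradiction for $C$ large.

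\textbf{Step 3: conclusion.} Steps 1 and 2 together give a $C\ep$-GH approximation of pointed balls, which yields the pointed convergence for every sequence $t\nearrow0$. Since $(M,d_{g_t},p)$ is isometric to $(M,|t|^{1/2}d_g,\psi^t(p))$, and the basepoint shift is bounded by $O(\sqrt{|t|})$ after scaling, these are exactly the blow-downs of $(M,g)$. It follows that every tangent space at infinity is isometric to $(Z_0,d^Z_0,\bar p)$, proving uniqueness.
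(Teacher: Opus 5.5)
Your Steps 1 and 3 are fine. In Step 1 you take a finite $\ep$-net of the compact ball $\overline B_{d^Z_0}(\bar p,R+1)$, lift it by surjectivity of $\Phi$, and use pointwise convergence on finitely many pairs. The gap is Step 2, which you rightly flag as the crux but do not close.

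\textbf{The inequality direction is reversed.} To contradict $d_{g_{t_k}}(y_k,x_i)\ge C\ep$ you need an \emph{upper} bound on time-$t_k$ distances in terms of time-zero data. That is the easy direction. $W_1$-monotonicity, together with the $H$-centers $(x,s)$ of $\Phi(x)$ (with $H$ uniform thanks to the scalar curvature bound), gives Lemma~\ref{lem:dis}(2): $d_{g_t}(x,y)\le d^Z_0(\Phi(x),\Phi(y))+C|t|^{1/2}$ for \emph{all} $x,y\in M$, with no curvature localization. The estimate you invoke instead, $d_{g_s}\le d_{g_t}+C\sqrt{s-t}$ for $s>t$, bounds later distances by earlier ones. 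That would require a lower Ricci bound, which bounded scalar curvature does not give. Perelman's distortion lemma controls the opposite direction, and only under local Ricci upper bounds that fail near a singular limit point.

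\textbf{The limit is taken in the wrong metric.} Your limit $z$ is obtained in $d_Z$, but $\{z_i\}$ is a $d^Z_0$-net. Since only $d_Z\le d^Z_0$ holds, $d_Z$-convergence gives no control of $d^Z_0(z,z_i)$ or $d^Z_0(\Phi(y_k),z_i)$; this is exactly the point of the remark after Lemma~\ref{lem:dis}. Closing your argument along these lines would need Lemma~\ref{lem:disa} (local compactness makes the two topologies on $Z_0$ agree), which you never invoke. Also, compactness of bounded subsets of $(Z,d_Z)$ is not a consequence of local compactness of $(Z_0,d^Z_0)$, as you claim.

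\textbf{The missing ingredient.} What you need is the uniform radial control $|d_{g_t}(p,x)-d^Z_0(\bar p,\Phi(x))|\le C_1|t|^{1/2}$ for all $x\in M$. It follows from $\partial_tF=-|t|\scal\in[-C_0,0]$, from $F(x,t)\to F(\Phi(x))$ (Lemma~\ref{lem:continuityF}), and from \eqref{eq:distanceestimate}. Your boundedness paragraph misplaces this: $\psi^{t_k}(y_k)$ does escape to infinity. It is $y_k$ itself that stays in a compact set, because $f(y_k)\le F(y_k,t_k)+C_0$, and that comes from the scalar curvature bound, not from self-similarity.

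\textbf{How to repair it.} With radial control no contradiction argument is needed. For $y\in B_{g_t}(p,R)$ and $t$ near $0$ you get $\Phi(y)\in B_{d^Z_0}(\bar p,R+1)$. So $d^Z_0(\Phi(y),z_i)\le\ep$ for some $i$, and Lemma~\ref{lem:dis}(2) gives $d_{g_t}(y,x_i)\le\ep+C|t|^{1/2}$ directly. Combined with Step 1, this bounds the distortion of $\Phi$ on $B_{g_t}(p,R)$ by $5\ep+2C|t|^{1/2}$. Surjectivity plus radial control give $B_{d^Z_0}(\bar p,R-C_1|t|^{1/2})\subset\Phi(B_{g_t}(p,R))$, which finishes the approximation.

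\textbf{Comparison with the paper.} Repaired this way, your net argument is a valid alternative route. The paper instead proves uniform convergence of $d_{g_t}(x,y)$ to $d^Z_0(\Phi(x),\Phi(y))$ on the compact set $\Phi^{-1}(\overline B_{d^Z_0}(\bar p,R))$ by a compactness argument. That requires continuity of $\Phi$ into $(Z_0,d^Z_0)$, i.e.\ Lemma~\ref{lem:disa}. The net version uses local compactness only through compactness of closed $d^Z_0$-balls.
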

The proof uses the proper map $\Phi$. Closed $d^Z_0$-balls have compact
preimages in $(M,g)$; on each such preimage, the pointwise convergence of
distances is upgraded to uniform convergence by a compactness argument. The
potential estimate supplies uniform radial control and shows that the image
of a $g_t$-ball under $\Phi$ contains, and is contained in, a ball of almost
the same radius in $Z_0$. Thus $\Phi$ itself becomes a pointed
Gromov--Hausdorff approximation. By self-similarity, local compactness of
$Z_0$ also implies that the shrinker has a unique tangent space at infinity.
This reduces the metric uniqueness problem, under the bounded scalar
curvature assumption, to establishing local compactness of the canonical
slice.

For a K\"ahler--Ricci shrinker $X$, we compare the associated Fano fibration $\pi:X\to Y$ with the map $\Phi$ constructed above. If the scalar curvature is bounded,
then the restriction of $\Phi$ is biholomorphic from $X^\circ$ onto
$Z_0^{\reg}$. If the full Riemannian curvature has subquadratic growth, then the diameter of compact
fibers of the Fano fibration goes to zero in the metrics
$\omega_t$, allowing us to identify $Y$ with $Z_0$. This result gives a
concrete link between the affine variety $Y$ produced by algebraic geometry
and the canonical metric boundary produced by the Ricci flow.
\begin{theorem}\label{bounded curvature-homome}
   Let $(X,g)$ be a K\"ahler--Ricci shrinker with bounded scalar curvature. Then the following hold.
   \begin{enumerate}[label=\textnormal{(\arabic*)}]
       \item  The restriction $\Phi|_{X^\circ}$ is a biholomorphism onto $Z_0^{\reg}$.
       \item The Fano fibration $\pi$ factors through $\Phi$; that is, there is a continuous map $\Phi_0:Z_0\to Y$ such that $\pi=\Phi_0\circ \Phi$.
\item If, in addition, $(X,g)$ has subquadratic Riemannian curvature
growth, namely,
\[
    \lim_{x\to\infty} f(x)^{-1}|\Rm|(x)=0,
\]
then $\Phi_0:Y\to Z_0$ is a homeomorphism. Here \(Y\) is endowed with
its analytic topology, while \(Z_0\) is endowed with the topology
induced by the restriction of the ambient spacetime distance \(d_Z\).
   \end{enumerate}
\end{theorem}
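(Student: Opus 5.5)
We prove the three parts in order. Parts (1) and (2) use the Schwarz-type lower bound of Proposition~\ref{prop:schwarz-lower-bound-general} together with Theorem~\ref{thm:local-smooth-convergence}. Part (3) additionally needs a diameter estimate for the compact fibers.

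\emph{Part (1).} For $x\in X^\circ$, Theorem~\ref{thm:local-smooth-convergence} gives smooth convergence $\omega_t\to\omega_0$ near $x$. Hence the worldline of $x$ has uniformly bounded curvature near $t=0$. By the same argument as for $\Phi$ in Theorem~\ref{thm--all}, $\Phi(x)\in Z_0^{\reg}$, and $\Phi$ is injective and smooth there. The spacetime regular structure identifies $g^Z_0$ near $\Phi(x)$ with $\lim g_t=g_0$. Since $J$ is fixed and $\omega_t$ converges smoothly, $\Phi|_{X^\circ}$ is a local isometry of K\"ahler manifolds and is holomorphic.

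For surjectivity onto $Z_0^{\reg}$, take $z\in Z_0^{\reg}$. The regular flow near $z$ is a fixed point $x\in X$ with a two-sided metric bound $C^{-1}\omega_{-1}\le\omega_t\le C\omega_{-1}$ near $x$ for $t$ close to $0$; bounded scalar curvature controls the flow. We must show $x\notin E$. Suppose $x\in E$. Then $x$ lies in a positive-dimensional compact fiber of $\pi$, or in a region where $\pi$ is not locally injective. I would use the self-similarity $\omega_t=|t|(\psi^t)^*\omega$ and the weight structure: a compact fiber $V$ through $x$ has $\int_V\omega_t^{\dim V}=|t|^{\dim V}\int_{\psi^t(V)}\omega^{\dim V}$. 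The $\psi^t(V)$ are compact fibers over points of $Y$ approaching the vertex, so their volumes stay bounded and this tends to $0$. That contradicts the two-sided bound near $x$, since $V$ passes through $x$ and has a definite local area lower bound. Exactly this cohomological/area argument is the step I expect to be the main obstacle, in particular the case where $\pi$ is finite but non-injective near $x$. Handling that case needs normality of $Y$ and connectedness of the fibers, as in the discussion preceding Theorem~\ref{thm:special-uniqueness}.

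\emph{Part (2).} It suffices that $\Phi(x)=\Phi(y)$ implies $\pi(x)=\pi(y)$, plus continuity. The Schwarz bound $\omega_t\ge C_K^{-1}\pi^*\omega_Y$ on $\pi^{-1}(K)$ gives
\[
d_Y(\pi(x),\pi(y))\le C_K\, d_{g_t}(x,y)
\]
for $x,y$ in a fixed compact set. Here we use that the $g_t$-geodesics stay in a controlled region by the potential estimate and properness of $\Phi$. Letting $t\nearrow0$ gives $d_Y(\pi x,\pi y)\le C_K d^Z_0(\Phi x,\Phi y)$. Since $\Phi$ is a proper continuous surjection, $\Phi_0$ is well defined and locally Lipschitz, hence continuous.

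\emph{Part (3).} Injectivity of $\Phi_0$ requires that each fiber $\pi^{-1}(y)$ collapses to a point, i.e. $\diam_{\omega_t}\pi^{-1}(y)\to0$. For positive-dimensional fibers I would use the self-similar pullback again: $\diam_{\omega_t}(V)=|t|^{1/2}\diam_g(\psi^t V)$. Subquadratic curvature growth, combined with the rescaled $\omega_t$-curvature on $\psi^t V$ being $o(1)\cdot$(scale), should force $\diam_g(\psi^tV)=o(|t|^{-1/2})$. Concretely, the fibers lie at distance $\sim|t|^{-1/2}$ on the shrinker, and a compact complex curve in a region of curvature $o(r^{-2})$ with bounded area, obtained via the class computation, has diameter $o(r)$ by a monotonicity argument. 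A continuous bijection $Z_0\to Y$ between locally compact Hausdorff spaces that is proper, with properness coming from $F$ and the weight function on $Y$, is a homeomorphism. Local compactness of $Z_0$ follows from properness of $\Phi$ and compactness of preimages of balls.
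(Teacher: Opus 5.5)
Your decomposition is the paper's. Part (1) uses smooth convergence on $X^\circ$ plus the two-sided bound and vanishing fibre volume. Part (2) uses the Schwarz bound with confinement of $g_t$-geodesics. Part (3) uses self-similar area invariance and monotonicity. However, part (3) has a genuine gap.

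\textbf{The gap in (3).} Injectivity of $\Phi_0$ needs $d_{g_t}(x_1,x_2)\to0$ for any two points of a fibre. You pass from ``the fibre $V$'' to ``a compact complex curve'' without justification, and the monotonicity/packing argument is dimension-sensitive. Let $V$ be a compact $k$-dimensional analytic set with fixed $\omega$-volume, and set $K(t)=\sup_{\{f\le 2A_0|t|^{-1}\}}|\Rm_g|$. Monotonicity at the curvature scale $\rho_t\simeq\min\{1,K(t)^{-1/2}\}$ gives at most $C\rho_t^{-2k}$ disjoint balls. Hence it yields only $\diam_{g_t}(V)\lesssim|t|^{1/2}(1+K(t))^{(2k-1)/2}$. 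Subquadratic growth $|t|K(t)\to0$ forces this to zero only when $k=1$.

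So for fibres of dimension at least two you must first join $x_1$ and $x_2$ by a finite chain of compact curves $P_1,\dots,P_m$ inside the fibre. Then use $d_{g_t}(x_1,x_2)\le\sum_i\diam_{g_t}(P_i)$ and apply the curve estimate to each link. The area of $\psi^t(P_i)$ is constant because $\psi^t$ is holomorphic and isotopic to the identity. This is exactly where the paper invokes rational chain connectedness of the fibres of the Fano fibration, \cite[Corollary~1.4]{HM07}.

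\textbf{Topology in (2)--(3).} Statement (3) concerns the $d_Z$-topology. The bound $d_Y(\pi x,\pi y)\le C_K\,d^Z_0(\Phi x,\Phi y)$ that you obtain by letting $t\nearrow0$ only gives continuity for the finer $d^Z_0$-topology. In that topology neither continuity of $\Phi$ nor local compactness of $Z_0$ is available. Local compactness of $(Z_0,d^Z_0)$ is precisely the extra hypothesis of Proposition~\ref{prop:local-compactness-pgh}. So the ``proper continuous bijection'' argument cannot be run there.

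Use $d_Z$ throughout instead. By Lemma~\ref{lem:dis}, $\Phi:X\to(Z_0,d_Z)$ is continuous, proper and surjective, hence a closed quotient map. Consequently:
\begin{itemize}
\item $\Phi_0$ is $d_Z$-continuous, because $\pi=\Phi_0\circ\Phi$ is continuous;
\item $(Z_0,d_Z)$ is locally compact;
\item $\Phi_0$ is proper, since $\Phi_0^{-1}(K)=\Phi(\pi^{-1}(K))$.
\end{itemize}
This is the paper's argument; you name the right ingredient (properness of $\Phi$) but derive continuity in the wrong topology.

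\textbf{Corrections in (1).}
\begin{itemize}
\item As $t\nearrow0$, $\psi^t$ pushes fibres outward ($h_\alpha\circ\psi^t=|t|^{-\lambda_\alpha/2}h_\alpha$), not toward the vertex. What you need is the exact identity $\int_V\omega_t^k=|t|^k\int_V\omega^k$, which follows from homotopy invariance.
\item The two-sided bound near $x$ comes from the curvature-radius bound at the regular point, not from the scalar-curvature bound.
\end{itemize}
Your handling of points where $\pi$ is finite is what the paper uses implicitly: connected fibres and normality of $Y$ put every point of $E$ on a positive-dimensional component of its fibre.
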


\medskip
\noindent\textbf{Unique tangent spaces at infinity in the K\"ahler case.}

It is conjectured in \cite[Conjecture~6.2]{SunZhang} that every K\"ahler--Ricci shrinker has a unique tangent space at infinity and that the
limit is a metric cone. To prove the uniqueness, the discussion above shows that it is enough to prove
pointed precompactness, or equivalently local compactness of the time-zero
slice. We establish the required noncollapsing estimate under a natural set of
additional hypotheses. The main ingredient is the K\"ahler reduction technique introduced in \cite{SunZhang}.

The first step is a complex-analytic characterization of maximal volume
growth. K\"ahler reduction by a rational Hamiltonian vector field identifies
all sufficiently high level-set quotients with a fixed compact K\"ahler
orbifold $X_{z_0}$. The reduced K\"ahler classes have the form
\[
    -(K_{X_{z_0}}+D)+zL,
\]
where $D$ is an effective $\mathbb Q$-divisor and $L$ is a semiample
$\mathbb Q$-line bundle. The coarea formula shows that the volume-growth
exponent of the shrinker is determined by the numerical dimension of $L$. In
particular, Proposition~\ref{prop:maximal-volume-growth-characterization} proves the equivalence
\[
    \operatorname{AVR}(X,g)>0
    \quad\Longleftrightarrow\quad
    \dim Y=\dim X
    \quad\Longleftrightarrow\quad
    L\ \text{is big}.
\]
This is the noncompact counterpart of the equivalence between volume
noncollapse and the bigness of the limiting K\"ahler class in
\eqref{eq:numerical-noncollapsing}.

The reduced metrics,
after a change of variables, satisfy a twisted K\"ahler--Ricci flow on the
fixed compact orbifold. The assumption that the soliton vector field generates an $S^1$-action, together with scalar curvature bounds, controls the density
appearing in the corresponding Monge--Amp\`ere equation and gives a uniform
upper bound for the time derivative of the K\"ahler potential. This yields a uniform
$L^p$ bound for
the quotient volume forms. The compact K\"ahler noncollapsing theory
\cite{GPSS2,GPSS-ii} then gives lower volume bounds for balls in the quotient.
Lifting horizontally through the $S^1$-fibration and applying the coarea
formula in the potential direction produces the scale-invariant estimate
\[
    |B_g(x,\sigma r)|_g
    \ge C_\ep\sigma^{2n+\ep}r^{2n},
    \qquad r=d_g(p,x)\ge1,
\]
proved in Theorem~\ref{thm-shrinker-volume-noncollapsing}. This estimate gives
uniform ball-packing on bounded regions of the rescaled metrics and hence
pointed sequential compactness. As in the proof of
Theorem~\ref{thm--all}, the canonical slice $(Z_0,d^Z_0)$ embeds
isometrically into every sequential limit. The latter is locally compact, so
$Z_0$ is locally compact as well; Proposition~\ref{prop:local-compactness-pgh} then upgrades the subsequential compactness
to convergence of the full family and identifies the limit with
$(Z_0,d^Z_0,\bar p)$.

\begin{theorem}[Unique tangent space at infinity]\label{thm:tangentspace}
Let $(X,g,f)$ be a K\"ahler--Ricci shrinker with maximal volume
growth and bounded scalar curvature. Suppose that the soliton vector field
$J\nabla f$ generates an $S^1$-action. Then $X$ admits a unique tangent space
at infinity in the Gromov--Hausdorff sense. More precisely,
\[
    (X,\lambda^{-2}g,p)
    \xrightarrow[\lambda\to\infty]{\pGHconvtext}
    (Z_0,d^Z_0,\bar p).
\]
\end{theorem}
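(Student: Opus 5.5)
The plan is to reduce the statement to Theorem~\ref{thm--locally compact criterion}: it suffices to show that $(Z_0,d^Z_0)$ is locally compact. By self-similarity, $(X,\lambda^{-2}g,p)$ is isometric to $(X,d_{g_t},p)$ with $t=-\lambda^{-2}$, up to the diffeomorphism $\psi^t$ and the fact that $p$ is a minimum point of $f$, which is fixed by the flow of $\nabla f$. So the conclusion is exactly the pointed convergence in Theorem~\ref{thm--locally compact criterion}. Local compactness will come from a uniform ball-packing estimate for the rescaled metrics. Together with an isometric embedding of $Z_0$ into every sequential limit, this shows that $Z_0$ is a closed subset of a proper space.

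\textbf{Step 1: noncollapsing.} Invoke Theorem~\ref{thm-shrinker-volume-noncollapsing}, which gives
\[
|B_g(x,\sigma r)|_g\ge C_\ep\sigma^{2n+\ep}r^{2n}\quad\text{for } r=d_g(p,x)\ge1 .
\]
This estimate uses the hypotheses of maximal volume growth, the $S^1$-action generated by $J\nabla f$, and bounded scalar curvature. I also need an upper bound. The standard Cao--Zhou / Munteanu--Wang estimate for shrinkers gives $|B_g(p,r)|\le Cr^{2n}$. Rescaling by $\lambda^{-2}$, every ball $B_{\lambda^{-2}g}(x,s)$ with $x\in B(p,R)$ and $s\le 1$ then has volume at least $c(R)s^{2n+\ep}$, while $B(p,2R)$ has volume at most $CR^{2n}$. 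Points near $p$, where $d_g(p,x)<1$, need a separate argument: either use a ball centred at a point at distance about $\lambda$, or use the fact that $B(p,s)$ contains balls around such points. A Vitali packing argument then bounds uniformly the number of disjoint $s$-balls in $B_{\lambda^{-2}g}(p,R)$. By Gromov's criterion, the family $(X,\lambda^{-2}g,p)$ is precompact in the pointed Gromov--Hausdorff topology, and every sequential limit $(X_\infty,d_\infty,p_\infty)$ is proper.

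\textbf{Step 2: embedding $Z_0$ into the limits.} Fix a sequence $t_j\nearrow0$ with limit $X_\infty$. Repeat the construction of $h$ from the proof of Theorem~\ref{thm--all}. Choose a countable dense subset of $Z_0$ and represent each of its points by $H$-centers on the slices $(X,g_{t_j})$; alternatively, use the map $\Phi$ and the limit formula $d^Z_0(\Phi(x),\Phi(y))=\lim d_{g_t}(x,y)$. Pass to a diagonal subsequence. The definition of $d^Z_0$ via Wasserstein distances of conjugate heat kernel measures, together with the $H$-center estimates (available through the heat kernel bounds of \cite{liwang2020heat1,liwang2024heat}), shows that the resulting map is distance preserving on the dense set. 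It therefore extends to an isometric embedding $h:Z_0\to X_\infty$, with $h(\bar p)=p_\infty$ because $F=\frac14 (d^Z_0(\cdot,\bar p))^2$ matches the potential. Since $Z_0$ is complete, being the time-zero slice of a completion, $h(Z_0)$ is closed in the proper space $X_\infty$. Hence $Z_0$ is proper, in particular locally compact, and Theorem~\ref{thm--locally compact criterion} finishes the proof.

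\textbf{Main obstacle.} The hard part is Step 1, specifically uniformity of the noncollapsing near the vertex region and at all scales $s\le1$ after rescaling. Theorem~\ref{thm-shrinker-volume-noncollapsing} is only stated for $r=d_g(p,x)\ge1$. For $x$ with $d_g(p,x)\le\lambda$, a small rescaled ball around $x$ either lies far out, where the estimate applies, or sits in a fixed compact set, where smooth geometry gives a bound. For the latter I will compare $B(x,s\lambda)$ with $B(x',s\lambda/2)$ for some $x'$ satisfying $d(p,x')\approx s\lambda/2$, which reduces to the previous case. The second delicate point is that the Step 2 embedding requires the $H$-center/Wasserstein comparison on a noncompact flow. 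I will take this from the setup in \cite{FL1} adapted to shrinkers, exactly as it is used for $\Phi$.
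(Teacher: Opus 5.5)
Your proposal is correct and follows the paper's proof essentially step for step. The paper rescales Theorem~\ref{thm-shrinker-volume-noncollapsing} to obtain Lemma~\ref{lem:noncollapsed1} and combines it with the volume upper bound \eqref{eq:volume-upper-bound} in a packing argument (Lemma~\ref{lem:sequence}). It then embeds $(Z_0,d^Z_0)$ isometrically into a sequential limit via $H$-centers as in Proposition~\ref{prop:embh}, deduces local compactness from the closedness of the image, and concludes with Proposition~\ref{prop:local-compactness-pgh}.

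The only difference is cosmetic and concerns the vertex region. The paper simply observes that at most one center of disjoint $\sigma$-balls can lie in $B_{g_t}(p,\sigma)$, and uses $\sigma/2$-balls for the remaining centers. You instead compare with a ball around a farther point.
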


The method used to prove Theorem~\ref{thm:codimension-four} also yields a codimension-four Hausdorff estimate for the
singular set of this tangent space.

\medskip
\noindent\textbf{Four-dimensional Ricci shrinkers.}

Our final results concern general four-dimensional Ricci shrinkers and do not
use a K\"ahler structure. Let $(M^4,g,f)$ have bounded scalar curvature. In
this dimension, the scalar-curvature bound implies bounded Riemannian
curvature \cite{munteanu2015geometry}, but it does not directly give the
uniform volume estimates needed for pointed Gromov--Hausdorff precompactness
of the blow-downs.

The starting point is again the time-zero slice $Z_0$. If
$z\in Z_0\setminus\{\bar p\}$ and one blows up the spacetime at $z$, a suitably
normalized version of $F$ converges on the regular part of the tangent flow to
a function with unit parallel gradient. Hence, every tangent flow at $z$
splits off a line. In real dimension four, the remaining factor is a
three-dimensional shrinking soliton. The classification of such solitons implies that the tangent flow at each
$z\ne\bar p$ is unique; see Proposition~\ref{prop:1splitting}.

This splitting result organizes the geometry of the ends of $M$. Each end is
of one of three types. A conical end corresponds to a smooth region of the
time-zero slice and has quadratic curvature decay. A cylindrical end is
characterized by the occurrence of a quotient round cylinder
$S^3/\Gamma\times\mathbb R$ as a tangent flow. The remaining ends are called
generic; their singular tangent models are modeled on
$S^2\times\mathbb R^2$, $\mathbb{RP}^2\times\mathbb R^2$, or the twisted
quotient model. The rigidity theory of round cylinders
\cite{liwang2024rigidity} prevents a generic end from developing a sufficiently
accurate spherical cylindrical region.

The main local input is a canonical neighborhood theorem. At a point in a
fixed annular region of the self-similar spacetime where the scalar curvature
is sufficiently large, the rescaled flow is close to
\[
    N^3\times\mathbb R,
\]
where $N^3$ is a three-dimensional $\kappa$-solution; see
Theorem~\ref{thm:canonical}. The splitting direction is again generated by
the normalized potential. We use this theorem to prove uniform covering
estimates for the rescaled time slices. Conical and cylindrical ends are
handled directly by their model geometries. On a generic end, the canonical
neighborhood theorem reduces the high-curvature region to neck and
Bryant-type models, while a uniform local $L^1$ scalar-curvature estimate
controls the number of disjoint regions of definite size. This gives
sequential pointed Gromov--Hausdorff compactness.

Finally, the spacetime completion identifies the limit. Given any sequential
pointed Gromov--Hausdorff limit, the $H$-center construction used in
Theorem~\ref{thm--all} gives an isometric embedding of
$(Z_0,d^Z_0)$ into that limit. Since the latter is locally compact, the
canonical slice is locally compact. Proposition~\ref{prop:local-compactness-pgh} then applies directly and shows that the
entire family $(M,d_{g_t},p)$ converges to $(Z_0,d^Z_0,\bar p)$. We obtain:

\begin{theorem}[Four-dimensional tangent space at infinity]\label{thm:4d}
Every four-dimensional Ricci shrinker $(M^4,g,f)$ with bounded scalar
curvature has a unique tangent space at infinity in the Gromov--Hausdorff
sense. More precisely,
\[
    (M,\lambda^{-2}g,p)
    \xrightarrow[\lambda\to\infty]{\pGHconvtext}
    (Z_0,d^Z_0,\bar p).
\]
\end{theorem}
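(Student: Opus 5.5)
The plan follows the outline in the introduction, in three steps: sequential precompactness, identification of every subsequential limit with $Z_0$, and an upgrade to convergence of the whole family. By self-similarity, $(M,\lambda^{-2}g,p)$ is isometric to $(M,d_{g_t},\psi^t(p))$ with $t=-\lambda^{-2}$. Since $\psi^t(p)$ stays within bounded $g_t$-distance of the minimum point of $f$ (potential estimate), it suffices to study $(M,d_{g_t},p)$ as $t\nearrow0$.

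\textbf{Step 1 (precompactness).} For every $R$ and $\epsilon$, I will show that the number of disjoint $\epsilon$-balls in $B_{g_t}(p,R)$ is bounded uniformly in $t\in[-1,0)$. By \cite{munteanu2015geometry}, bounded scalar curvature gives bounded $|\Rm|$. By scaling, I then work in a fixed annulus $\{a\le F\le b\}$ of the self-similar spacetime, where $F$ is the rescaled potential.

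\begin{itemize}
\item At points with scalar curvature below a threshold, the curvature radius is definite. Volume noncollapsing of the heat kernel (Nash entropy bounds from \cite{liwang2020heat1,liwang2024heat}) then gives a lower volume bound for balls. Since the total volume of the annulus is at most $C$, packing follows there.
\item At high-curvature points, Theorem~\ref{thm:canonical} says the flow is modeled on $N^3\times\mathbb R$ with $N^3$ a $\kappa$-solution.
\item I split ends via Proposition~\ref{prop:1splitting} and the three end types. On conical ends, $Z_0$ is smooth and the curvature decays quadratically, so packing is automatic. On cylindrical ends, the $S^3/\Gamma\times\mathbb R$ structure gives explicit packing. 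On generic ends, the neck and Bryant-type regions have controlled geometry on each piece, and a local $L^1$ bound $\int_{B}\scal\, d\mu_{g_t}\le C$ on the rescaled annulus (obtained by integrating $\scal=\Delta f$-type identities, i.e. $\scal+\Delta f=n/2$, over sublevel sets) bounds the number of disjoint regions of size $\epsilon$ with $\scal\sim\epsilon^{-2}$.
\end{itemize}

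Gromov's criterion then yields, for each $t_j\nearrow0$, a pointed GH-convergent subsequence with a limit $(X_\infty,d_\infty,p_\infty)$ that is proper.

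\textbf{Step 2 (identification).} As in the proof of Theorem~\ref{thm--all}, I take a countable dense subset of $Z_0$ and represent each point by $H$-centers on the slices $(M,g_{t_j})$. Monotonicity of $W_1$ under the conjugate heat flow and the definition of $d^Z_0$ show that the limits define an isometric embedding $h:(Z_0,d^Z_0)\to X_\infty$, with $h(\bar p)=p_\infty$. Here the bounded scalar curvature and the heat kernel bounds guarantee that the $H$-centers stay in bounded regions. A closed ball in $Z_0$ is then isometric to a closed subset of a compact ball in $X_\infty$, so $Z_0$ is locally compact.

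\textbf{Step 3 (conclusion).} Theorem~\ref{thm--locally compact criterion} now gives pointed GH convergence of the whole family $(M,d_{g_t},p)\to(Z_0,d^Z_0,\bar p)$. Translating back through self-similarity gives the statement with $\lambda^{-2}=|t|$.

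\textbf{Main obstacle.} I expect the hardest part to be the packing estimate on generic ends in Step 1. One must exclude the scenario where many small high-curvature regions accumulate, which the canonical neighborhood theorem alone does not prevent. The plan is to combine the $L^1$ scalar-curvature bound with the fact that each $\epsilon$-ball centred in a region of curvature scale $\lesssim\epsilon$ carries $\int\scal\gtrsim c\epsilon^{2}$ in dimension four. This holds because the $N^3\times\mathbb R$ model has volume $\sim\epsilon^4$ and $\scal\sim\epsilon^{-2}$. Balls centred at points with curvature scale $\gg\epsilon$ are handled by volume noncollapsing, using the rigidity of round cylinders \cite{liwang2024rigidity} to keep the models of the generic end away from degenerate collapsed necks.
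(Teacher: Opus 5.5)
Your Steps 2 and 3 agree with the paper: $H$-centers give an isometric embedding of $(Z_0,d^Z_0)$ into any sequential limit, so $Z_0$ is locally compact, and Theorem~\ref{thm--locally compact criterion} finishes. Your conical and cylindrical cases match Cases 1--2 of Proposition~\ref{prop:sequential}. The gap is in the generic-end packing, and it occurs in two places.

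\textbf{The $L^1$ bound.} It cannot be obtained the way you propose. Tracing $|t|\Ric_{g_t}+\nabla^2F=\frac12 g_t$ and integrating over $\{F(\cdot,t)\le b\}$ gives
\[
|t|\int_{\{F\le b\}}\scal_{g_t}\,dV_{g_t}=\frac n2\,|\{F\le b\}|_{g_t}-\int_{\{F=b\}}|\nabla F|.
\]
So the best you obtain is $\int\scal_{g_t}\,dV_{g_t}\le C|t|^{-1}$. Equivalently, on the shrinker itself, $\int_{\{f\le r\}}\scal\le 2|\{f\le r\}|\le Cr^2$, whereas a bound uniform in $t$ requires $\int_{\{f\le r\}}\scal\le Cr$. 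The two terms on the right cancel exactly on a cone (e.g.\ $F=|x|^2/4$ on $\R^n$). Hence the needed estimate concerns the next-order term and does not follow from the identity. The paper gets it from a genuinely four-dimensional Ricci-flow estimate, \cite[Theorem 6.31]{fang-li2}, applied on the spacetime ball $B^*(\bar p,C_3)\supset V(\sigma)$.

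\textbf{The per-ball lower bound.} Your claim $\int_{B_{g_t}(x,\epsilon)}\scal\gtrsim\epsilon^2$ at high-curvature points fails as stated.
\begin{itemize}
\item As $t\nearrow0$ the curvature scale there is $\sim\sqrt{|t|}\ll\epsilon$, so Theorem~\ref{thm:canonical} only describes a ball of radius $\sim\sqrt{|t|}$.
\item Your heuristic ``volume $\sim\epsilon^4$, $\scal\sim\epsilon^{-2}$'' holds only at curvature scale comparable to $\epsilon$. An $S^2\times\R^2$ region with a tiny sphere has volume $\sim|t|\epsilon^2$.
\item Since the $\R$-factor of the model is the $\nabla F$ direction, the $\scal$-mass of $B_{g_t}(x,\epsilon)$ is roughly $\epsilon$ times the length of the neck/cap chain of $\{F(\cdot,t)=s\}$ inside the ball.
\item Nothing in your argument rules out a level-set slice through $x$ of intrinsic extent $\ell\ll\epsilon$, e.g.\ a short neck that closes up or is capped off. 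Then the mass is only $\sim\epsilon\ell$, and you have no control on the number of such balls.
\end{itemize}

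The missing idea is the paper's dichotomy. It fixes an integral curve $\gamma$ of $\nabla F/|\nabla F|^2$ and defines the level-set extent $l_t(s)$. It proves $\mu_t(B_{g_t}(x,2\epsilon))\ge c_\epsilon$ only when $l_t(s)\ge\epsilon$. This is done by chaining canonical neighborhoods along a level-set curve of length $\ge\epsilon/2$, then transporting it to nearby levels using $C^{-1}\le|\nabla F|\le C$, $|\nabla^2F|\le C$ and the coarea formula. All remaining points are covered by $O(\sigma^{-1}\epsilon^{-1})$ balls centered on $\gamma$.

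Relatedly, the rigidity of round cylinders is needed at the high-curvature points, not at the low-curvature balls where you place it. It excludes $S^3/\Gamma\times\R$ and ancient-oval models, whose tiny level sets carry negligible $\scal$-mass.
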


Thus, Theorems~\ref{thm--all},~\ref{thm:tangentspace}, and~\ref{thm:4d} fit
into a common picture: the time-zero slice of the spacetime completion is a
canonical metric object, and the principal task is to prove enough compactness
to identify it with the metric limit of the time slices. In the compact
K\"ahler setting, compactness follows from global diameter and volume
estimates. For K\"ahler--Ricci shrinkers, it is obtained from the Fano fibration and
K\"ahler reduction. In dimension four, it follows from the structure of the
ends and the canonical neighborhood theorem.

\medskip
\noindent\textbf{Organization of the paper.}

Section~\ref{sec--2} constructs the comparison map between the time-zero slice and an
arbitrary sequential singular-time limit and proves
Theorem~\ref{thm--all}. Section~\ref{sec--3} excludes the
$S^2\times\mathbb R^{2n-2}$ tangent flow by means of holomorphic
$\mathbb P^1$-perturbations and a coarea argument, and then proves
Theorem~\ref{thm:codimension-four}. Section~\ref{sec:schwarz-generic}
develops the analytic theory of homogeneous holomorphic functions on a
K\"ahler--Ricci shrinker, establishes the Schwarz estimate for the Fano
fibration, and proves Theorems~\ref{thm:local-smooth-convergence},~\ref{thm:ac-criterion}, and~\ref{thm:special-uniqueness}. Section~\ref{sec:generalshrinker} constructs
the spacetime completion of a shrinker with bounded scalar curvature, studies
the canonical map $\Phi:M\to Z_0$, and proves the local-compactness convergence
criterion, Theorem~\ref{thm--locally compact criterion}. In Section~\ref{sec--6}, we compare $Z_0$ with the Fano fibration base and prove Theorem~\ref{bounded curvature-homome}. Section~\ref{sec--7} uses
K\"ahler reduction to prove the required volume estimate and
Theorem~\ref{thm:tangentspace}. Section~\ref{sec--8} studies the ends of
four-dimensional shrinkers, proves the canonical neighborhood theorem and
sequential compactness, and concludes with Theorem~\ref{thm:4d}.

\subsection*{Acknowledgements}
Both authors thank Song Sun for many helpful discussions. Junsheng Zhang also
thanks the IASM at Zhejiang University for the hospitality during his visit in
the summer of 2026, where part of this work was done. Yu Li is supported by
the National Key R\&D Program of China (Grant No.~2025YFA1018200),
NSFC-12522105, and research funds from the University of Science and
Technology of China and the Chinese Academy of Sciences.

\section{Uniqueness of the Singular-Time Gromov--Hausdorff Limit}\label{sec--2}

\subsection{The time-zero slice \texorpdfstring{$(Z_0,d^Z_0)$}{(Z0,d0)}}\label{sec:time-zero-slice}

In this subsection, we recall the definition of the metric space $(Z_0,d^Z_0)$ from \cite{FL1}, namely the time-zero slice of the Ricci-flow completion with respect to the spacetime distance.

For the given K\"ahler--Ricci flow $(X^n,(\omega_t)_{t\in[-1,0)})$, let $g_t$ denote the associated K\"ahler metric. We assume that the entropy is bounded below by $-Y$, that is,
\begin{align*}
\inf_{\tau \in (0, 1]}\boldsymbol{\mu}(g_{-1}, \tau) \ge -Y,
\end{align*}
where $\boldsymbol{\mu}$ is Perelman's functional. Since $X$ is compact, such a lower bound always exists. We write $d_{\omega_t}$ for the distance on $X$ induced by $\omega_t$.

We consider the $d^*$-distance on $X \times [-0.99,0)$, defined as in \cite[Definition 3.2]{FL1}. More precisely, for any $x^*=(x,t)$ and $y^*=(y,s)$ in $X\times[-0.99,0)$ with $s\le t$, we define
	\begin{align}\label{eq:spacetime-distance-compact}
		d^*(x^*,y^*):=\inf_{-0.99\le \tau\le s}
        \max\left\{\sqrt{t-\tau},
        d_{W_1}^{\tau}(\nu_{x^*;\tau},\nu_{y^*;\tau})\right\}.\index{$\ensuremath{d^*}$}
	\end{align}
Here, $\nu$ denotes the conjugate heat kernel measure of the Ricci flow (see \cite[Definition 2.6]{FL1}), and $d^t_{W_1}$ is the $W_1$-Wasserstein distance with respect to $(X,\omega_t)$ (see \cite[Definition 2.1]{FL1}). Taking $\tau=-0.99$ in \eqref{eq:spacetime-distance-compact} shows that $X\times [-0.99,0)$ has bounded diameter with respect to $d^*$ (see \cite[(9.1)]{FL1}). By \cite[Proposition 3.9(2)]{FL1}, the time function $\mathfrak{t}$ is $2$-H\"older continuous with respect to $d^*$; that is,
\begin{equation*}
\left|\mathfrak{t}(x^*)-\mathfrak{t}(y^*)\right| \leq d^*(x^*, y^*)^2.
\end{equation*}

We then define $(Z,d_Z,\t)$ to be the metric completion of $X \times [-0.98,0)$ with respect to $d^*$. By construction, the completion adds only the points in $Z_0=\t^{-1}(0) \subset Z$. Moreover, it can be proved that $(Z,d_Z,\t)$ is a noncollapsed K\"ahler--Ricci flow limit space over $[-0.98,0]$ in the sense of \cite[Definition 3.21]{FL1}; see \cite[Section 9]{FL1}.

As in \cite[Definition 6.1]{FL1}, for each $t \in (-0.98, 0]$, we can define on $Z_t:=\t^{-1}(t) \subset Z$ a distance function:
\begin{align}\label{eq:time-slice-distance}
d^{Z}_t(x,y):=\lim_{s \nearrow t} d_{W_1}^{s}(\nu_{x;s},\nu_{y;s}) \in [0,+\infty],\index{$d^Z_t$}
\end{align}
for $x,y \in Z_t$, where the limit exists by \cite[Lemma 5.29]{FL1}. For $t \in (-0.98,0)$, the space $(Z_t,d^Z_t)$ is precisely $(X,d_{\omega_t})$.
As shown in \cite{FL1}, we have the following regular--singular decomposition in $Z_{(-0.98,0]}$:
	\begin{align*}
Z\supset\t^{-1}((-0.98,0])=:Z_{(-0.98,0]}=\RR \sqcup \MS.
	\end{align*}
Here, $\RR$ carries the structure of a Ricci flow spacetime $(\RR, \t, \partial_\t, g^Z)$. In our case, $\RR_{(-0.98,0)}=X \times (-0.98,0)$, and $g^Z_t=\omega_t$ for every $t\in(-0.98,0)$. Moreover, the singular set $\MS$ has codimension at least four with respect to $d_Z$ \cite[Theorem 1.10]{FL1}, and is contained in $Z_0$. In general, $(Z_0, d^Z_0)$ is a complete extended metric space; see \cite[Proposition 6.6]{FL1}. For simplicity, we define
	\begin{equation}\label{eq:regular-singular-decomposition}
Z_0^{\reg}:=Z_0\cap\RR,
\qquad
Z_0^{\sing}:=Z_0\cap\MS.
\end{equation}

\subsection{Identification of the regular part of \texorpdfstring{$Z_0$}{Z0}}
We define a natural embedding \(\Phi:X^\circ \to Z_0\). The following lemma establishes that it is well defined, and the subsequent proposition identifies \(X^\circ\) with \(Z_0^{\reg}\).
\begin{definition}[The embedding $\Phi$] \label{def:embed}
For any $x \in X^\circ$, we define
	\begin{align*}
\Phi(x):=\lim_{t \nearrow 0} (x, t)\in Z_0,
	\end{align*}
where the limit is taken with respect to $d_Z$.
\end{definition}

The following lemma shows that $\Phi$ is well defined.

\begin{lemma}\label{lem:contain1}
For any $x \in X^\circ$, the limit of $(x, t)$ as $t \nearrow 0$ exists and is contained in $Z_0^{\reg}$. Moreover, $\Phi$ is injective.
\end{lemma}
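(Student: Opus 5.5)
The plan is to use the local smooth convergence \eqref{eq:smooth-convergence} near $x$ to get uniform curvature control on a spacetime neighborhood of the worldline $(x,t)$, $t\in[t_0,0)$. From this we show the worldline is $d_Z$-Cauchy as $t\nearrow0$ and that its limit lies in $\RR$.

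\textbf{Step 1: uniform local bounds.} Fix $x\in X^\circ$ and choose a small compact coordinate ball $K\Subset X^\circ$ around $x$. By \eqref{eq:smooth-convergence} there are constants $C,r_0>0$ and $t_0<0$ with $|\Rm_{\omega_t}|\le C$ on $K\times[t_0,0)$. Moreover $C^{-1}\omega_0\le\omega_t\le C\omega_0$ on $K$, so $B_{\omega_t}(x,r_0)\subset K$ for all $t\in[t_0,0)$. Hence the curvature radius $r_{\Rm}$ at $(x,t)$ is bounded below uniformly. By Perelman's pseudolocality-type conjugate heat kernel estimates in \cite{FL1} (curvature control at scale $r$ implies that $\nu_{(x,t);s}$ is concentrated near $x$ for $s\in[t-\epsilon r^2,t]$), we get
\[
d_{W_1}^{s}(\nu_{(x,t);s},\nu_{(x,t');s})\le C\sqrt{|t-t'|}
\]
for $t,t'\in[t_0,0)$ and suitable $s$ slightly below $\min\{t,t'\}$. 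The simplest route is to invoke the statement in \cite{FL1} that along a worldline with bounded curvature, $d^*$ is comparable to $\sqrt{|t-t'|}$. I would cite the corresponding comparison between $d^*$ and the parabolic distance in regular regions (the smooth-region distance estimate in \cite{FL1}, e.g.\ the one used to identify $\RR$ with the smooth flow). Hence $d^*((x,t),(x,t'))\le C\sqrt{|t-t'|}$, the family is Cauchy, and since $Z$ is complete the limit $\Phi(x)\in Z_0$ exists.

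\textbf{Step 2: the limit is regular.} The curvature radius in $Z$ is lower semicontinuous, or at least the regular set is characterized by a positive lower bound on curvature radius, as in \cite[Section 9]{FL1} or the $\epsilon$-regularity in the definition of a Ricci flow limit space. From Step 1, the parabolic neighborhoods $B_{\omega_t}(x,r_0/2)\times[t-\epsilon r_0^2,t]$ have uniformly bounded geometry. Their $d_Z$-limit therefore contains a smooth parabolic neighborhood of $\Phi(x)$, namely $B_{\omega_0}(x,r_0/2)\times[-\epsilon r_0^2,0]$ with the flow extended smoothly to $t=0$ on $K$. This shows $\Phi(x)\in\RR\cap Z_0=Z_0^{\reg}$. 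In fact it identifies a neighborhood of $\Phi(x)$ in $\RR$ with $K^\circ\times(t_0,0]$, with $\partial_\t$ being the static vector field. The main obstacle I expect is precisely this identification: matching the abstract spacetime structure of $\RR$ near time zero with the concrete smooth extension of $(\omega_t)$ over $K$ to $t=0$. I would handle it via uniqueness of the $\partial_\t$-flow: $\RR$ is open, and its worldlines are the integral curves of $\partial_\t$. These coincide with $t\mapsto(y,t)$ for $t<0$, so the limit points $\Phi(y)$, $y\in K^\circ$, are exactly the endpoints of these worldlines.

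\textbf{Step 3: injectivity.} Let $x\ne y$ in $X^\circ$. By Step 2, $\Phi(x)$ and $\Phi(y)$ are regular points, and the worldlines of $\partial_\t$ through them, traced backward, are $(x,t)$ and $(y,t)$. If $\Phi(x)=\Phi(y)$, uniqueness of backward integral curves of $\partial_\t$ in the smooth spacetime $\RR$ forces $(x,t)=(y,t)$ for $t$ near $0$, a contradiction. Alternatively, and more concretely, one can use \eqref{eq:time-slice-distance}: the measures $\nu_{\Phi(x);s}$ converge to $\delta_x$ as $s\nearrow0$ (heat kernel concentration at a regular point). Hence $d^Z_0(\Phi(x),\Phi(y))\ge\lim_{t\nearrow0}d_{\omega_t}(x,y)-o(1)$, and this limit is positive since $d_{\omega_t}(x,y)\ge c>0$ by the two-sided bound on $K$ when $x,y$ are close, and by a separating neighborhood argument otherwise. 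Either route gives injectivity.
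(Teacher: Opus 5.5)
Your proposal is correct and follows essentially the same route as the paper. The local curvature bound near $x$ makes $(x,s)$ an $H$-center of $(x,t)$, which the paper turns into $d_Z((x,t),(x,s))\le C\sqrt{t-s}$ via \cite[Proposition 2.21(i)]{FL1} and \cite[Lemma 3.13]{FL1}. The uniform lower bound on $r_{\Rm}(x,t)$ then places the limit in $\RR$ directly by \cite[Theorem 7.15]{FL1}, so the identification you flag as the main obstacle in Step 2 is not needed for this lemma. For injectivity, the paper argues that $d_Z((x,t),(y,t))\to0$ and the regularity of $\Phi(x)$ force $y\in U_x$, after which local distance comparison gives $x=y$; this is essentially your second route, and your first route via uniqueness of backward integral curves of $\partial_\t$ in $\RR$ is an equally valid variant.
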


\begin{proof}
For any $x\in X^\circ$, \cite[Theorem~4.1]{CT} gives a small neighborhood $U_x$ of $x$ and a constant $C_x>0$ such that
\begin{equation} \label{eq:curvaturebound}
    |\Rm|_{g_t}\leq C_x \text{ on } U_x\times [-1,0).
\end{equation}
For $-0.98<s<t<0$, it follows from \cite[Proposition 2.21(i)]{FL1} that $(x, s)$ is an $H$-center of $(x, t)$, where $H=H(C_x, n)>0$ (see \cite[Definition 2.13]{FL1} for the definition of $H$-centers). It then follows from \cite[Lemma 3.13]{FL1} that
	\begin{align*}
d_Z \lc (x, t), (x, s) \rc \le \max\{1,\sqrt{H}\}\sqrt{t-s}.
	\end{align*}
Since $(Z, d_Z)$ is complete, this implies that the limit, denoted by $z$, of $(x, t)$ as $t \nearrow 0$ exists and is contained in $Z_0$. Furthermore, $z$ is regular because, by \eqref{eq:curvaturebound}, the curvature radius $r_{\Rm}(x, t) \ge c_x>0$ for a constant $c_x$ depending on $U_x$ and $C_x$ (for the definition of the curvature radius, see \cite[Definition~2.10]{FL1}). Thus, $z \in \RR$ by \cite[Theorem 7.15]{FL1}.

For any $y \in X^{\circ}$ with $y \ne x$, suppose $\Phi(y)=\Phi(x)$. Then, by Definition \ref{def:embed}, we have $\lim_{t \nearrow 0}d_Z((x,t),(y,t))=0$. Since $\Phi(x)$ is a regular point in $Z$, this implies $y \in U_x$. However, the distance comparison on $U_x$ would imply $x=y$, a contradiction.
\end{proof}

\begin{proposition}\label{prop:regular-part-identification}
The map $\Phi:(X^{\circ},d_{\omega_0}) \to (Z_0^{\reg}, d^Z_0)$ is bijective and $1$-Lipschitz. Here, $d_{\omega_0}$ is the Riemannian distance on $X^{\circ}$ induced by the limiting metric $\omega_0$ from \eqref{eq:smooth-convergence}.
\end{proposition}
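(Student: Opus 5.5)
The proof has three parts: the $1$-Lipschitz bound, injectivity and image in $Z_0^{\reg}$, and surjectivity onto $Z_0^{\reg}$. Lemma~\ref{lem:contain1} already shows that $\Phi$ maps $X^\circ$ injectively into $Z_0^{\reg}$, so the real work is the Lipschitz bound and surjectivity.

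\textbf{Lipschitz bound.} Fix $x,y\in X^\circ$ and a $d_{\omega_0}$-almost minimizing curve $\gamma\subset X^\circ$ joining them. By compactness of $\gamma$ and \eqref{eq:smooth-convergence}, $\omega_t\to\omega_0$ smoothly near $\gamma$, so
\[
\limsup_{t\nearrow 0} d_{\omega_t}(x,y)\le L_{\omega_0}(\gamma)\le d_{\omega_0}(x,y)+\ep .
\]
Next I compare with $d^Z_0$. By \eqref{eq:time-slice-distance} and the monotonicity of the $W_1$-distance between conjugate heat kernel measures (\cite[Proposition 2.5]{FL1}, or the corresponding statement there), for $s<t<0$ we have $d^s_{W_1}(\nu_{x^*;s},\nu_{y^*;s})\le d_{\omega_t}(x,y)$ whenever $x^*=(x,t)$ and $y^*=(y,t)$. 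The points $(x,t)$ and $(y,t)$ converge to $\Phi(x)$ and $\Phi(y)$. Using continuity of conjugate heat kernel measures in $d_Z$ (\cite[Section 5]{FL1}), I pass to the limit and obtain
\[
d^Z_0(\Phi(x),\Phi(y))\le \liminf_{t\nearrow0} d_{\omega_t}(x,y).
\]
Combining the two displays gives the $1$-Lipschitz property.

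\textbf{Surjectivity.} Let $z\in Z_0^{\reg}$. Since $\RR$ is a Ricci flow spacetime, there is a parabolic neighborhood of $z$ of the form $U\times(-r^2,0]$, on which $\RR$ is smooth and $|\Rm|\le C$. Its negative-time part lies in $\RR_{(-0.98,0)}=X\times(-0.98,0)$. The worldline of $\partial_\t$ through $z$ is therefore a path $t\mapsto(x(t),t)$. On $X\times(-0.98,0)$ the vector field $\partial_\t$ is the standard time vector field, so $x(t)\equiv x$ is a fixed point of $X$ and $z=\lim_{t\nearrow0}(x,t)$. Moreover, there is a neighborhood $V\ni x$ in $X$ on which $g_t$ converges smoothly as $t\nearrow0$ to a smooth Riemannian metric $g^Z_0$. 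In particular $C^{-1}\omega_{-1}\le\omega_t\le C\omega_{-1}$ on $V$ for $t$ near $0$.

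\textbf{Main obstacle: showing $x\notin\nul(\alpha)$.} I expect this to be the hardest step and would argue by contradiction. Suppose $x\in V'$ for some irreducible positive-dimensional subvariety $V'$ with $\int_{V'}\alpha^{k}=0$, where $k=\dim V'$. Since the Kähler classes satisfy $[\omega_t]=[\omega_{-1}]+(t+1)\cdot 2\pi c_1(K_X)$, which tend to $\alpha$ up to normalization, we get
\[
\int_{V'}\omega_t^{k}\longrightarrow\int_{V'}\alpha^{k}=0 .
\]
On the other hand, the lower bound $\omega_t\ge C^{-1}\omega_{-1}$ on $V$ gives
\[
\int_{V'}\omega_t^k\ge\int_{V'\cap V}\omega_t^k\ge C^{-k}\int_{V'\cap V}\omega_{-1}^k>0,
\]
uniformly in $t$, because $V'\cap V$ is a nonempty open piece of a $k$-dimensional subvariety. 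This is a contradiction, so $x\in X^\circ$ and $\Phi(x)=z$.

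The delicate points are two. First, one must justify that $\RR$ near $z$ really coincides with $X\times(-r^2,0)$ with the trivial time vector field; this should follow from the identification $\RR_{(-0.98,0)}=X\times(-0.98,0)$ together with uniqueness of worldlines. Second, one must check that the convergence of $\omega_t$ near $x$ is genuinely two-sided, which comes from smoothness of $g^Z$ on $\RR$ up to time $0$.
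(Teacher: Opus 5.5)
Your proposal is correct. Injectivity and surjectivity onto $Z_0^{\reg}$ follow the paper's route: injectivity comes from Lemma~\ref{lem:contain1}. For surjectivity, the backward $\partial_\t$-worldline of $z\in Z_0^{\reg}$ is $(x,t)$ for a fixed $x$, and regularity near $z$ gives $C^{-1}\omega_{-1}\le\omega_t\le C\omega_{-1}$ near $x$. The Collins--Tosatti volume argument on a subvariety $V\ni x$ with $\int_V\alpha^{\dim V}=0$ then rules out $x\in\nul(\alpha)$.

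The only real difference is the $1$-Lipschitz bound. The paper calls it standard and cites \cite[Proposition~6.8]{FL1}: on the regular part $d^Z_0$ locally coincides with the Riemannian distance of $g^Z_0$. Combined with the smooth identification of $(X^\circ,\omega_0)$ with $(Z_0^{\reg},g^Z_0)$ via $\Phi$, subdividing curves gives the bound. You instead prove directly
\[
d^Z_0(\Phi(x),\Phi(y))\le\liminf_{t\nearrow0}d_{\omega_t}(x,y)\le\limsup_{t\nearrow0}d_{\omega_t}(x,y)\le d_{\omega_0}(x,y).
\]
The upper bound comes from smooth convergence along an almost-minimizing curve in the connected set $X^\circ$. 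The lower bound uses four ingredients:
\begin{itemize}
\item $W_1$-monotonicity, $d^s_{W_1}(\nu_{(x,t);s},\nu_{(y,t);s})\le d_{\omega_t}(x,y)$;
\item the triangle inequality at a fixed time $s$;
\item the fact that $d^s_{W_1}(\nu_{(x,t);s},\nu_{\Phi(x);s})\to0$ as $t\nearrow0$, which follows from the variational definition of $d^*$ exactly as in the proof of Lemma~\ref{lem:continuityF};
\item letting $s\nearrow0$.
\end{itemize}
This argument is valid and more self-contained: it uses only the definition of $d^Z_0$ and Lemma~\ref{lem:contain1}, and does not need the smooth structure of $Z_0^{\reg}$ or that $\Phi^*g^Z_0=\omega_0$. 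The paper's citation yields more, namely that $\Phi$ is a local isometry onto $Z_0^{\reg}$, but it leans on the structure theory of \cite{FL1}. For the record, the monotonicity you need is the one the paper quotes as \cite[Proposition~2.12]{FL1}.
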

\begin{proof}
By Lemma~\ref{lem:contain1} and \cite[Proposition 6.8]{FL1}, it remains to prove $Z_0^{\reg} \subset \Phi(X^\circ)$.
Let $z \in Z_0^{\reg}$. Since $\RR$ carries the structure of a Ricci flow spacetime $(\RR, \t, \partial_\t, g^Z)$, there exists a sufficiently small $\ep>0$ such that the flow $z_t \in \RR_t$ of $z$ under $\partial_\t$ is defined for $t \in [-\ep, 0]$. Since $\RR_{(-0.98,0)}=X \times (-0.98,0)$, we may assume that $z_t=(x, t) \in X \times \{t\}$ for every $t\in[-\ep,0)$.

The limit of $(x, t)$ as $t \nearrow 0$ is $z$, that is, $\Phi(x)=z$. Moreover, there exists a constant $c>0$ such that $r_{\Rm}(x, t)>c$ for every $t\in[-\ep,0)$. The Ricci flow equation then gives uniform lower and upper bounds for $\omega_t$ on a neighborhood of $x$; that is, there is a neighborhood $U_x$ of $x$ and a constant $C_x>0$ such that, for every $t\in [-1,0)$, we have on $U_x$
\begin{equation}
C_x^{-1}\omega_{-1}\leq \omega_t\leq C_x\omega_{-1}.
\end{equation}
Arguing as in \cite[Proof of Theorem~1.5]{CT} by considering the volume of the null locus, we conclude that $x\in X^\circ$. Indeed, suppose that $x\in\nul(\alpha)$. Then there exists an irreducible $k$-dimensional analytic subvariety $V\subset X$ with $x\in V$ and $\int_V\alpha^k=0$. On the one hand,
\[
\int_V\omega_t^k
\geq \int_{V\cap U_x}\omega_t^k
\geq C_x^{-k}\int_{V\cap U_x}\omega_{-1}^k>0.
\]
On the other hand, $\int_V\omega_t^k\to\int_V\alpha^k=0$, a contradiction.
\end{proof}

\subsection{Construction of the comparison map \texorpdfstring{$h$}{h}}

By \eqref{eq:diameter} and \eqref{eq:volume-noncollapsing}, for every sequence $t_j \nearrow 0$, there exists a subsequence, still denoted by $t_j$, for which $(X, d_{\omega_{t_j}})$ converges to $(X_{\GH}, d_{\GH})$ in the Gromov--Hausdorff sense. Note that the limit $(X_{\GH}, d_{\GH})$ may depend on the time sequence $\{t_j\}$ and hence may not be unique a priori.
In this subsection, we fix a time sequence $\{t_j\}$ with $t_j \nearrow 0$ such that
\begin{align} \label{eq:grsequence}
   (X, d_{\omega_{t_j}}) \xrightarrow[t_j \nearrow 0]{\GHconvtext} (X_{\GH}, d_{\GH}).
\end{align}

\begin{proposition}\label{prop:embh}
There exists an isometric embedding
\begin{align*}
h: (Z_0, d^Z_0) \longrightarrow (X_{\GH}, d_{\GH}).
\end{align*}
\end{proposition}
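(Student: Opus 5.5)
We will build $h$ first on a countable dense subset of $Z_0$ using $H$-centers, show that it preserves $d^Z_0$ there, and then extend it by continuity. Fix a countable dense set $\{z_k\}\subset Z_0$; recall that $(Z_0,d^Z_0)$ is a complete metric space, with finite distances thanks to the diameter bound. Since $Z_0$ lies in the $d_Z$-completion of $X\times[-0.98,0)$, each $z_k$ is a $d_Z$-limit of spacetime points. For each $j$ we choose a point $x_{k,j}\in X$ such that $(x_{k,j},t_j)$ is an $H_n$-center of $z_k$. Here we use the conjugate heat kernel measure $\nu_{z_k;t}$ on $Z_t=(X,d_{\omega_t})$ for $t<0$, which is defined on the limit space, and \cite[Definition 2.13]{FL1} together with the existence of $H$-centers. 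Then
\[
d^{t_j}_{W_1}\bigl(\delta_{x_{k,j}},\nu_{z_k;t_j}\bigr)\le \sqrt{H_n|t_j|}\longrightarrow 0 .
\]
Let $\phi_j:(X,d_{\omega_{t_j}})\to X_{\GH}$ be $\varepsilon_j$-GH approximations with $\varepsilon_j\to0$. Because $X_{\GH}$ is compact, a diagonal argument gives a subsequence along which $\phi_j(x_{k,j})\to y_k\in X_{\GH}$ for every $k$. We set $h(z_k):=y_k$.

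For distance preservation, we use the triangle inequality for $W_1$ to get
\[
\bigl|d_{\omega_{t_j}}(x_{k,j},x_{l,j})-d^{t_j}_{W_1}(\nu_{z_k;t_j},\nu_{z_l;t_j})\bigr|\le 2\sqrt{H_n|t_j|}.
\]
The definition \eqref{eq:time-slice-distance} gives $d^{t_j}_{W_1}(\nu_{z_k;t_j},\nu_{z_l;t_j})\to d^Z_0(z_k,z_l)$. This limit is monotone in $s$ by \cite[Lemma 5.29]{FL1}, since $W_1$-distances of conjugate heat kernel measures are nondecreasing in $s$. It therefore holds along any sequence $s=t_j\nearrow0$, and not merely for a chosen one. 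Combining this with the GH approximation, we obtain $d_{\GH}(y_k,y_l)=d^Z_0(z_k,z_l)$. Thus $h$ is an isometry on a dense set and extends uniquely to an isometric embedding of the completion $Z_0$ into the complete space $X_{\GH}$.

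The main technical point is to make sure that $H$-centers and conjugate heat kernels based at points of $Z_0$ are legitimate, and that the distance limit \eqref{eq:time-slice-distance} can be evaluated along the given sequence $t_j$. For the first issue, we plan to approximate $z_k$ by $z_{k,m}=(x,s_m)\in X\times(-0.98,0)$ with $d_Z(z_{k,m},z_k)\to0$. By \cite{FL1}, $d_Z$-convergence implies $W_1$-convergence of the conjugate heat kernel measures at every earlier time $t<s_m$. One can then take $H$-centers of $z_{k,m}$ at time $t_j$ for $m$ large relative to $j$, which avoids working with measures on $Z$ directly. A minor additional point is that the subsequence extraction must be consistent with the fixed convergence \eqref{eq:grsequence}. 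This is harmless, because $h$ is only required to exist, and any further subsequence has the same GH limit.
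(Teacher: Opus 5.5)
\textbf{Gap: separability of $(Z_0,d^Z_0)$ is assumed, not proved.} Your first step fixes a countable $d^Z_0$-dense subset $\{z_k\}\subset Z_0$, but nothing you cite guarantees one exists. Completeness and finiteness of $d^Z_0$ do not imply separability.

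Compactness of $(Z,d_Z)$ does not help either. It only makes $Z_0$ separable for $d_Z$. Since $d_Z\le d^Z_0$ on $Z_0$, the $d^Z_0$-topology may be strictly finer, so a $d_Z$-dense countable set need not be $d^Z_0$-dense. This is exactly why the paper distinguishes the two distances: $H$-center estimates control $d_Z$ but not the limit defining $d^Z_0$. Without a $d^Z_0$-dense countable set, your diagonal extraction defines $h$ only on a countable set that may not be dense, and the extension-by-continuity step fails. In the paper, closing this point is the first half of the proof.

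\textbf{How to close it.} Run your own construction first on an arbitrary finite $\ep$-separated set $\{z_i\}_{i=1}^N\subset Z_0$. Your distance computation shows the images $h(z_i)$ are $\ep$-separated in the compact space $X_{\GH}$. Hence $N$ is bounded by the $\ep$-packing number of $X_{\GH}$, and $(Z_0,d^Z_0)$ is totally bounded. Being complete, it is compact and therefore separable. Only then does the paper choose a countable dense set and argue exactly as you do. The compactness of $(Z_0,d^Z_0)$ obtained this way is also used again in the surjectivity argument.

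Alternatively, you could avoid separability altogether with an ultrafilter. Fix a nonprincipal ultrafilter on $\mathbb N$ and define $h(z)$ as the ultralimit of $\phi_j(x_{z,j})$ for every $z\in Z_0$ at once; this exists because $X_{\GH}$ is compact. Your estimate then applies verbatim, since $d_{\omega_{t_j}}(x_{z,j},x_{w,j})\to d^Z_0(z,w)$ along the full sequence.

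The rest of your argument agrees with the paper: the $W_1$ triangle inequality, evaluating the time-zero limit along $t_j$, and the subsequence bookkeeping. You should write $H_{2n}$-centers rather than $H_n$-centers, since $X$ has real dimension $2n$.
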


\begin{proof}
We first prove that $(Z_0, d^Z_0)$ is separable, that is, it contains a countable dense subset. For every $\ep>0$, choose an arbitrary finite $\ep$-separated set $\{z_i\}_{i=1}^N \subset Z_0$.

For $z_1$, we set $x_{1,j}^*$ to be an $H_{2n}$-center of $z_1$ at $X \times \{t_j\}$. After passing to a subsequence of $\{t_j\}$, we may assume that $x_{1,j}^*$ converges to $x_1 \in X_{\GH}$ in the Gromov--Hausdorff sense.

Repeating this process for $z_2,z_3,\ldots$ and taking a diagonal subsequence, still denoted by $\{t_j\}$, we conclude that, for each $z_i$, the $H_{2n}$-center $x_{i,j}^* \in X \times \{t_j\}$ converges to $x_i \in X_{\GH}$ in the Gromov--Hausdorff sense.

Define $h:\bigcup_i\{z_i\}\to\bigcup_i\{x_i\}$ by
\begin{align*}
h(z_i)=x_i
\end{align*}
for every integer $i$ with $1\le i\le N$. By \eqref{eq:time-slice-distance}, $h$ is isometric on this set, in the sense that
\begin{align*}
d_{\GH}\lc h(z_{i_1}), h(z_{i_2}) \rc=d_{\GH}(x_{i_1}, x_{i_2})=d^Z_0(z_{i_1}, z_{i_2}),\quad \forall\,1\le i_1,i_2\le N.
\end{align*}
In particular, $\{x_i\}$ is an $\ep$-separated set of $X_{\GH}$. Since $(X_{\GH},d_{\GH})$ is compact (see \eqref{eq:diameter}), we conclude that $N$ must be finite. Since $(Z_0,d^Z_0)$ is complete, it is compact and, in particular, separable.

We can now choose a countable dense subset $\{z_i\}_{i \ge 1}$ of $Z_0$. By the same argument as above, there exists a map $h:\bigcup_i\{z_i\}\to X_{\GH}$ such that
\begin{align} \label{eq:subiso}
d_{\GH}\lc h(z_{i_1}), h(z_{i_2}) \rc=d^Z_0(z_{i_1}, z_{i_2}),\quad \forall i_1, i_2 \ge 1.
\end{align}
By \eqref{eq:subiso}, we can extend $h$ to an isometric embedding from $Z_0$ to $X_{\GH}$ since $\{z_i\}_{i \ge 1}$ is dense. This completes the proof.
\end{proof}

We next prove the following proposition.
\begin{proposition}\label{prop:surj}
The map $h$ constructed in Proposition~\ref{prop:embh} is surjective.
\end{proposition}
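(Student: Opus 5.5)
We argue by contradiction, following the strategy sketched in the introduction. Since $h$ is an isometric embedding and $Z_0$ is compact (shown in the proof of Proposition~\ref{prop:embh}), the image $h(Z_0)$ is closed in $X_{\GH}$. Suppose $y\in X_{\GH}\setminus h(Z_0)$. Then there is $r_0>0$ with
\[
B_{d_{\GH}}(y,2r_0)\cap h(Z_0)=\varnothing .
\]
Choose $y_j\in X$ converging to $y$ along the Gromov--Hausdorff approximations of \eqref{eq:grsequence}. By \eqref{eq:volume-noncollapsing}, $|B_{\omega_{t_j}}(y_j,r_0)|_{\omega_{t_j}}\ge c(r_0)>0$ uniformly in $j$.

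The next step is to produce regular points with a definite curvature scale inside these balls. Here we invoke the quantitative curvature-radius (volume) estimate from \cite{FL1}: for noncollapsed flows, the set $\{r_{\Rm}(\cdot,t)<s\}$ has $\omega_t$-volume at most $C s^{4-\ep}$ on bounded regions. We use it in the form that this volume tends to $0$ as $s\to0$ uniformly in $t$, which should follow from \cite[Theorem 1.10]{FL1} or its quantitative version. Choosing $s$ small compared with $c(r_0)$, we obtain points $x_j\in B_{\omega_{t_j}}(y_j,r_0)$ with $r_{\Rm}(x_j,t_j)\ge s$.

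Then we follow the worldline of $x_j$ forward in time. Curvature bounded by $s^{-2}$ on a parabolic neighborhood of size $s$ gives, by \cite[Proposition 2.21]{FL1} and \cite[Lemma 3.13]{FL1} as in Lemma~\ref{lem:contain1}, that $(x_j,t)$ stays in a regular region. The difficulty is that this only controls times in $[t_j,t_j+cs^2]$, not all the way to $0$. I would handle this by applying the argument at time $t_j$ and then using \cite[Theorem 7.15]{FL1} to say that points of $Z$ with $r_{\Rm}\ge s$ have a regular backward/forward neighborhood of definite size. Concretely, pass to the limit point $w\in Z$ of $(x_j,t_j)$; here $(Z,d_Z)$ is compact, and $t_j\to0$ forces $w\in Z_0$. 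Lower semicontinuity of $r_{\Rm}$ under spacetime convergence gives $w\in Z_0^{\reg}$, with $r_{\Rm}(w)\ge s$. By Proposition~\ref{prop:regular-part-identification}, $w=\Phi(x)$ for some $x\in X^\circ$.

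Finally, we compare positions. Take an $H_{2n}$-center of $w$ at time $t_j$, which defines $h(w)$ in the construction of Proposition~\ref{prop:embh}. We need it to be $d_{\omega_{t_j}}$-close to $x_j$, say within $Cs+o(1)$. This holds because $d_Z((x_j,t_j),w)\to0$ and the curvature bound near $x_j$ makes $\nu_{(x_j,t_j)}$ concentrated within $C\sqrt{t}$-scales, so $d_{W_1}$ closeness of conjugate heat kernels converts to pointwise closeness of centers. Consequently
\[
d_{\GH}(h(w),y)\le r_0+Cs<2r_0,
\]
contradicting the choice of $r_0$. One caveat: $h$ was built on a dense set and extended, so we should approximate $w$ by dense points $z_i$ and use continuity. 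The main obstacle is the last two steps, namely ensuring that the regular point $(x_j,t_j)$ really lands in the same ball of $X_{\GH}$ after passing to $Z_0$. This relies on $t_j$ and the time-$0$ slice being comparable near regular points, which is where the uniform curvature-radius lower bound is essential.
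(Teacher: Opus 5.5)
There is a gap at the step you yourself flag as the main obstacle. From $d_Z((x_j,t_j),w)\to0$, the definition of $d^*$ only gives some $\tau_j\le t_j$ with $\tau_j\to0$ and $d^{\tau_j}_{W_1}(\nu_{w;\tau_j},\nu_{(x_j,t_j);\tau_j})\to0$. The map $\tau\mapsto d^{\tau}_{W_1}$ is nondecreasing, so this bound says nothing at the later time $t_j$. Concentration of $\nu_{(x_j,t_j)}$ also only concerns times before $t_j$. Your sentence ``$d_{W_1}$ closeness converts to closeness of centers'' therefore does not yield $d_{\omega_{t_j}}(x_j,q_j)\to0$ for an $H_{2n}$-center $q_j$ of $w$ at time $t_j$. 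To close the step you have to use the regularity of $w$ at time $t_j$. Write $w=\Phi(x)$ with $x\in X^\circ$, which you noted but never used. Uniform curvature bounds along $(x,t)$ make $(x,t_j)$ an $H$-center of $w$, and they also give $d_Z((x_j,t_j),(x,t_j))\to0$. Since $d_Z$-convergence within a time slice near a regular point forces convergence in the manifold (as in the injectivity part of Lemma~\ref{lem:contain1}), you get $d_{\omega_{t_j}}(x_j,x)\to0$.

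The detour through $w$ is also unnecessary. Your worry that the curvature bound only reaches $[t_j,t_j+cs^2]$ is unfounded. The curvature radius controls a two-sided parabolic neighborhood and $t_j\nearrow0$, so once $|t_j|<s^2$ you already have $|\Rm|\le s^{-2}$ on $B_{\omega_{t_j}}(x_j,s)\times[t_j,0)$. This is what the paper uses. By Lemma~\ref{lem:contain1}, the worldline of $x_j$ converges to $\Phi(x_j)\in Z_0^{\reg}$, and $(x_j,t_j)$ is an $H$-center of $\Phi(x_j)$ with $H$ independent of $j$. Compactness of $(Z_0,d^Z_0)$ then gives a dense point $z_{i_0}$ with $d^Z_0(\Phi(x_j),z_{i_0})\le r_0/2$ for infinitely many $j$. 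Now monotonicity runs in the right direction, $d^{t_j}_{W_1}\le d^Z_0$, so $d_{\omega_{t_j}}(x_j,x^*_{i_0,j})\le r_0/2+C\sqrt{|t_j|}$. This finishes the contradiction and also removes your caveat about $h$ being defined only on a dense set. (Minor: the time-slice estimate in \cite[Theorem~1.12(b)]{FL1} is $Cr^{2-\ep}$, not $Cr^{4-\ep}$; only its decay matters.)
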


\begin{proof}
We prove the surjectivity of $h$ by contradiction. Since $(Z_0, d^Z_0)$ is compact and $h$ is isometric, $h(Z_0)$ is compact. Thus, if $h(Z_0) \ne X_{\GH}$, there exist $x^\infty\in X_{\GH}$ and $r_0>0$ such that
\begin{align} \label{eq:contr1}
B_{d_{\GH}}(x^\infty,2r_0) \cap h(Z_0) =\emptyset.
\end{align}
By definition, we can find $x_j \in X$ such that $(x_j,t_j)$ converges to $x^\infty$ in the Gromov--Hausdorff sense. By \cite[Theorem 1.12(b)]{FL1}, we have
\begin{align} \label{eq:density}
\left|\{r_{\Rm}<r\}\cap (X\times\{t\})\right|_{\omega_t} \le C(n,Y,\ep) r^{2-\ep}
\end{align}
for every $t\in(-0.01,0)$, $\ep\in(0,1)$, and $r\in(0,0.01]$. Here, the key fact we use to apply \cite[Theorem 1.12(b)]{FL1} is that $(Z, d_Z)$ has bounded diameter with respect to $d_Z$, see \cite[Equation (9.1)]{FL1}, and hence is compact; see \cite[Theorem~3.20]{FL1}. Thus, we can cover $Z_{[-0.01, 0]}$ by finitely many balls with radius $0.1$ and apply \cite[Theorem 1.12(b)]{FL1} for each ball.

On the other hand, it follows from \eqref{eq:volume-noncollapsing} that
\begin{align} \label{eq:density1}
\left|B_{\omega_{t_j}}(x_j,r_0)\right|_{\omega_{t_j}} \ge c r_0^{2n+0.01},
\end{align}
where $c>0$ is a constant depending only on the K\"ahler--Ricci flow $(X,\omega_t)$. Combining \eqref{eq:density} and \eqref{eq:density1}, we conclude that there exists $\delta>0$ such that, for each $j$, we can find $y_j \in B_{\omega_{t_j}}(x_j,r_0)$ such that
\begin{align*}
r_{\Rm}(y_j, t_j) \ge \delta.
\end{align*}
By the definition of the curvature radius, there exist an integer $j_0$ and a constant $\delta_0>0$ such that, for every $j\ge j_0$,
\begin{align*}
|\Rm| \le \delta_0^{-2} \quad \text{on} \quad B_{\omega_{t_j}}(y_j, \delta_0) \times [t_j, 0).
\end{align*}
Thus, by the proof of Lemma~\ref{lem:contain1}, the limit, denoted by $y^j$, of $(y_j, t)$ as $t \nearrow 0$ exists and belongs to $Z_0^{\reg}$. Moreover, $(y_j, t_j)$ is an $H$-center of $y^j$ for a constant $H$ independent of $j$.

From the construction of $h$, there exists a dense countable subset $\{z_i\}_{i \ge 1}$ of $Z_0$ such that $h(z_i)$ is defined to be the Gromov--Hausdorff limit of $x^*_{i,j} \in X \times \{t_j\}$, where $x^*_{i,j}$ is an $H_{2n}$-center of $z_i$. Since $(Z_0, d^Z_0)$ is compact, there exists $z_{i_0}$ such that
\begin{align}\label{eq:density2}
d^Z_0(y^j, z_{i_0}) \le r_0/2
\end{align}
for infinitely many $j$. By taking a subsequence of $\{t_j\}$, we may assume that \eqref{eq:density2} holds for all $j$.

It follows from the monotonicity \cite[Proposition 2.12]{FL1} that
\begin{align*}
d_{\omega_{t_j}}(y_j, x^*_{i_0,j}) \le C \sqrt{|t_j|}+r_0/2.
\end{align*}
From this, we conclude that a Gromov--Hausdorff limit of $(y_j,t_j)$, denoted by $y^\infty$, satisfies
\begin{align*}
d_{\GH}(y^\infty, h(z_{i_0})) \le r_0/2.
\end{align*}
Since $y_j \in B_{\omega_{t_j}}(x_j,r_0)$, we have
\begin{align*}
d_{\GH}(x^\infty, h(z_{i_0})) \le 3r_0/2,
\end{align*}
which contradicts \eqref{eq:contr1} and completes the proof.
\end{proof}
\begin{remark}
   In the proof above, the almost sharp volume lower bound \eqref{eq:density1} is not essential; all that is needed is a lower bound depending only on the radius and independent of time.
\end{remark}
Combining Propositions~\ref{prop:embh} and~\ref{prop:surj}, we obtain the following result, which, together with Proposition~\ref{prop:regular-part-identification}, implies Theorem~\ref{thm--all}.

 \begin{theorem}
The Gromov--Hausdorff limit of $(X, d_{\omega_t})$ as $t \nearrow 0$ exists and is isometric to $(Z_0, d^Z_0)$. In particular, the Gromov--Hausdorff limit is unique.
 \end{theorem}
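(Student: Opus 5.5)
The plan is to derive this from Propositions~\ref{prop:embh} and~\ref{prop:surj} together with a standard subsequence argument. Fix an arbitrary sequence $t_j\nearrow0$. By \eqref{eq:diameter} and \eqref{eq:volume-noncollapsing}, the family $\{(X,d_{\omega_t})\}_{t<0}$ is uniformly totally bounded. Gromov's precompactness theorem therefore gives a subsequence converging to some compact $(X_{\GH},d_{\GH})$ as in \eqref{eq:grsequence}.

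Apply Proposition~\ref{prop:embh} to this subsequence. Its proof may pass to further subsequences, which is harmless since a further subsequence has the same limit. This yields an isometric embedding $h:(Z_0,d^Z_0)\to(X_{\GH},d_{\GH})$, and Proposition~\ref{prop:surj} shows $h$ is onto. Hence $h$ is an isometry, and every subsequential Gromov--Hausdorff limit of $(X,d_{\omega_{t_j}})$ is isometric to the fixed compact space $(Z_0,d^Z_0)$. Note that $Z_0$ depends only on the flow, not on the sequence.

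Next I would upgrade to convergence of the whole family. Suppose $d_{\GH}\bigl((X,d_{\omega_t}),(Z_0,d^Z_0)\bigr)\not\to0$ as $t\nearrow0$. Then there are $\ep_0>0$ and $t_j\nearrow0$ with
\[
d_{\GH}\bigl((X,d_{\omega_{t_j}}),(Z_0,d^Z_0)\bigr)\ge\ep_0 .
\]
By precompactness some subsequence converges to a limit $X_{\GH}$, and the previous step shows $X_{\GH}$ is isometric to $Z_0$. This contradicts the lower bound $\ep_0$, because the Gromov--Hausdorff distance is a metric on isometry classes of compact spaces. So the limit exists, equals $(Z_0,d^Z_0)$, and is unique.

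The only point needing care, and the one I expect to be the main obstacle, is checking that the subsequence extractions inside the proofs of Propositions~\ref{prop:embh} and~\ref{prop:surj} stay compatible with the chosen convergent sequence. This works because Gromov--Hausdorff limits are preserved under passing to subsequences. It also uses that the compactness of $Z_0$ established in the proof of Proposition~\ref{prop:embh} is intrinsic and independent of the sequence. With these checks the argument is formal.
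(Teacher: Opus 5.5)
Your proposal is correct and follows the paper's route. The paper's proof is exactly the combination of Proposition~\ref{prop:embh} (an isometric embedding $h$ of $Z_0$ into any sequential limit) and Proposition~\ref{prop:surj} (surjectivity of $h$), applied to an arbitrary subsequential limit supplied by \eqref{eq:diameter} and \eqref{eq:volume-noncollapsing}. The only difference is that you spell out the standard subsequence argument upgrading ``every subsequential limit is isometric to the fixed compact space $(Z_0,d^Z_0)$'' to convergence of the whole family, a step the paper leaves implicit.
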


\section{Structure of the Singular-Time Limit \texorpdfstring{$Z_0=X_{\GH}$}{Z0=XGH}}\label{sec--3}
In this section, we prove Theorem~\ref{thm:codimension-four}.

\subsection{Holomorphic \texorpdfstring{$\mathbb P^1$}{P1}-perturbations and exclusion of \texorpdfstring{$(2n-2)$}{2n-2}-splitting tangent flows}
Let $B_r^k$ denote the ball of radius $r$ centered at the origin in $\C^k$, and set
\begin{equation*}
    V^k_r:=\mathbb P^1 \times B_r^k.
\end{equation*}
Let $(g_0,J_0)$ denote the standard K\"ahler structure on $V^k_r$, and let $\{z_i\}_{i=1}^{k}$ denote the coordinates on $B_r^k$, which can also be naturally viewed as $J_0$-holomorphic functions on $V_r^k$.

\begin{lemma}\label{lem--rigidity}
 There exists a constant $\delta=\delta(k)$ such that the following holds. Let $(g,J)$ be a K\"ahler structure on $V^k_4$ such that
 \begin{equation}\label{eq--closeness}
     \|g-g_0\|_{C^3(V_2^k)}+\|J-J_0\|_{C^3(V_2^k)}\leq \delta.
 \end{equation}
 Then there exists a neighborhood $U$ of $\mathbb P^1 \times \{0\}$ which is $J$-biholomorphic to $V^k_1$.
\end{lemma}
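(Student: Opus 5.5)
The plan is to deform the rational curve $\mathbb P^1\times\{0\}$ to a $J$-holomorphic curve and then produce a family of such curves with trivial normal bundle, which together fill out a product neighbourhood.

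\textbf{Step 1: deformation of the central curve.} With respect to $J_0$, the curve $C_0=\mathbb P^1\times\{0\}$ has normal bundle $N\cong\mathcal O^{\oplus k}$. Hence $H^1(C_0,N)=0$ and $H^0(C_0,N)\cong\C^k$, the latter spanned by the constant translations in the $z_i$ directions. Because $J$ is integrable (it is a Kähler structure) and $C^3$-close to $J_0$ on $V_2^k$, I would set up the problem as a nonlinear Cauchy--Riemann equation. The unknown is a map $u:\mathbb P^1\to V^k_2$ close to the inclusion $u_0(w)=(w,0)$, written as $u=\exp_{u_0}(\xi)$ with $\xi$ a section of $u_0^*TV$. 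The equation is $\bar\partial_J u=0$, modulo reparametrizations of $\mathbb P^1$. I would fix the reparametrization gauge, for instance by requiring that $\xi$ be normal to $C_0$, or that $u$ pass through three fixed slices.

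\textbf{Step 2: implicit function theorem.} The linearization at $(J_0,u_0)$ is the $\bar\partial$-operator on $T\mathbb P^1\oplus\mathcal O^{\oplus k}$, acting between $W^{2,p}$ and $W^{1,p}$. This operator is surjective, because $H^1(\mathcal O(2))=H^1(\mathcal O)=0$. After the gauge is fixed, its kernel is $\C^k$, coming from the constant sections $a\in\C^k$. The implicit function theorem, applied with $J$ as a parameter, then gives the following. For $\|J-J_0\|_{C^3}\le\delta$ and every $a\in B^k_{1+\eta}$, there is a unique nearby $J$-holomorphic sphere $C_a$, depending smoothly on $(J,a)$, with $C_a$ $C^1$-close to $\mathbb P^1\times\{a\}$. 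Here $\delta$ depends only on $k$ because the model is fixed and compactness in $a$ holds. The $C^3$ control of $J$ is needed to make the nonlinear terms Lipschitz in $W^{2,p}$.

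\textbf{Step 3: the fibration.} Define $F:\mathbb P^1\times B^k_{1+\eta}\to V^k_2$ by $F(w,a)=u_a(w)$. This map is $C^1$-close to the identity, so it is a diffeomorphism onto a neighbourhood $U'$ of $C_0$; this is the step where the uniqueness in the implicit function theorem enters. The curves $C_a$ therefore foliate $U'$ by $J$-holomorphic $\mathbb P^1$'s. Next, define $p:U'\to B^k_{1+\eta}$ by $p(C_a)=a$. I need $p$ to be holomorphic, and this is where I expect the main obstacle to lie. The parameter $a$ is a real parametrization of the moduli, so I must replace it by a genuinely holomorphic coordinate. I would argue that the local moduli space of $J$-holomorphic curves near $C_0$ is a complex manifold, because the unobstructed deformation theory of compact curves in a complex manifold gives a Douady space that is smooth of dimension $h^0(N)=k$. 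The universal family then maps biholomorphically onto $U'$, and the leaf-space map $p$ is holomorphic onto an open set $D\subset\C^k$.

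\textbf{Step 4: trivialization.} To identify $U'$ with $\mathbb P^1\times(\text{ball})$, I would use the $J$-holomorphic functions $\tilde z_i$ pulled back from the Douady coordinates; these are $C^1$-close to the $z_i$. The map $p$ is a proper holomorphic submersion whose fibres are $\mathbb P^1$. By Fischer--Grauert rigidity of $\mathbb P^1$, it is locally trivial in the holomorphic sense. To get a global trivialization over a ball, I would choose three disjoint holomorphic sections; these exist by the same implicit function argument applied to the graphs of constant maps $\{w_\ell\}\times B^k$, deformed to nearby $J$-complex submanifolds transverse to the fibres. Sending these three sections to $0,1,\infty$ fibrewise gives a biholomorphism $p^{-1}(D)\cong\mathbb P^1\times D$. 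Finally, shrink $D$ to a ball: since $D$ contains a round ball that, after a linear change of coordinates, can be rescaled to $B^k_1$, we take $U=p^{-1}(\text{that ball})$.
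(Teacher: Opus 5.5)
Your overall strategy is sound, but it takes a genuinely different route from the paper.

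\textbf{Comparison with the paper.} The paper constructs only one $J$-holomorphic sphere $\mathbb P^1_J$ near $\mathbb P^1\times\{0\}$, via McDuff--Salamon. It then proves $N_{\mathbb P^1_J}\cong\mathcal O^{\oplus k}$ by combining $c_1(N)=0$, upper semicontinuity of $h^0(N\otimes\mathcal O(-1))$ and Grothendieck splitting. Finally it quotes the neighbourhood theorems of Grauert and Hwang.

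You instead build the whole family $\{C_a\}$ by the implicit function theorem, so the foliation is immediate. Triviality of the normal bundles also comes for free. The variation fields $\partial_a u_a$ lie in the kernel of the linearized operator, which for the Kähler (hence integrable) $J$ is the Dolbeault operator of $u_a^*TX$. So they give holomorphic sections spanning $N_{C_a}$ at every point, and a globally generated degree-zero bundle on $\mathbb P^1$ is trivial. You then replace the Grauert--Hwang theorem by Kodaira--Douady deformation theory plus Fischer--Grauert.

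What each route buys: the paper's is shorter given its citations, but its neighbourhood has unspecified size, so strictly it is a neighbourhood of $\mathbb P^1_J$. Yours, once completed, gives a product neighbourhood of definite size that really contains $\mathbb P^1\times\{0\}$.

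\textbf{Points that need repair.}
(i) With $\xi$ normal and the domain structure $j$ fixed, $\bar\partial_J u=0$ is overdetermined, because the linearization misses the $\Lambda^{0,1}\otimes T\mathbb P^1$ component. Only your three-slice gauge works as stated.

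(ii) Smoothness of the Douady space at $C_a$ needs $H^1(C_a,N_{C_a})=0$ for $J$, not just for $J_0$. This follows from the triviality of $N_{C_a}$ noted above. Alternatively, use openness of surjectivity of the linearized operator: its cokernel is $H^1(u^*TX)$, which surjects onto $H^1(N_{C_a})$.

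(iii) This is the serious one. When $k\ge 2$, the three disjoint holomorphic sections do not come from ``the same implicit function argument.'' A $J$-complex $k$-fold near $\{w_\ell\}\times B^k$ solves an overdetermined $\bar\partial$-system. Its linearization, $\bar\partial$ from functions to $(0,1)$-forms on a ball, has image consisting only of $\bar\partial$-closed forms, so it is not onto. The implicit function theorem therefore does not apply without a Newlander--Nirenberg-type use of integrability.

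The step is dispensable, though. Fischer--Grauert triviality over a small ball in the leaf space, followed by rescaling, already gives the lemma at the precision of the paper's proof, namely a neighbourhood of a $J$-holomorphic fibre. If you want a ball of definite size, note that a locally trivial holomorphic $\mathbb P^1$-bundle has structure group $\mathrm{PGL}_2(\mathbb C)$. Over a ball it is therefore holomorphically trivial by Grauert's Oka principle.
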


\begin{proof}
   Using \cite{McDuffSalamon2012} and  arguing as in \cite[Corollary~2.3]{CCD}, we find that, for $\delta$ sufficiently small, there exists an embedded $J$-holomorphic sphere $\mathbb P^1_J$ inside $V^k_2$ which is $\Psi(\delta)$-close to $\mathbb P^1 \times \{0\}$, in the sense that there exists a smooth section $\xi_J$ of the tangent bundle restricted on $\mathbb P^1\times \{0\}$ such that 
    \[\mathbb P_J^1=\exp(\xi_J),\quad \|\xi_J\|_{C^3(P^1\times \{0\}}=\Psi(\delta).\] Let $N_{\mathbb P_J^1}$ denote the holomorphic normal bundle of $\mathbb P^1_J$ with respect to the complex structure $J$. Then we claim that $N_{\mathbb P_J^1}\simeq \bigoplus_{j=1}^{k}\mathcal{O}$. Since $c_1(N_{\mathbb P_J^1})$ is a topological invariant, for $\delta$ sufficiently small,
    \begin{equation}\label{eq:normal-c1-zero}
        c_1(N_{\mathbb P_J^1})=0.
    \end{equation}
By considering $\bar\partial_J$-harmonic sections with respect to the metric induced by the K\"ahler metric $g$ and arguing by contradiction, we find that, for $\delta$ sufficiently small,
\begin{equation}\label{eq:normal-upper-semicontinuity}
   \dim H^0(\mathbb P^1_J,  N_{\mathbb P_J^1}\otimes \mathcal{O}(-1))\leq \dim H^0(\mathbb P^1,  N_{\mathbb P^1}\otimes \mathcal{O}(-1))=0.
\end{equation}
More precisely, let $(g_i, J_i)$ be a sequence of K\"ahler structures satisfying \eqref{eq--closeness} for $\delta_i = \Psi(i^{-1})$. In the following, using the exponential map to identify the smooth ambient complex vector bundle, we may assume that we are working with Dolbeault operators vary on a fixed complex vector bundle. By the construction, we have for any $p\geq 1$ 
    \(
\|\bar\partial_{J_i}-\bar\partial_0\|_{W^{1,p}\to L^p}
\longrightarrow 0.
\)
Let $\{s_{i,j}\}_{j=1}^{N_i}$ be an orthonormal basis of the vector space $H^0(\mathbb P^1_{J_i},N_{\mathbb P^1_{J_i}} \otimes \mathcal{O}(-1))$. After passing to a subsequence, we may assume that $s_{i,j}$ converges weakly in $W^{1,p}$ and strongly in $L^p$ to $s_{\infty,j}$ for $j=1,\ldots,N_\infty$, where $N_\infty=\limsup_{i\to\infty}N_i$. The limits are orthonormal and, in particular, linearly independent vectors in the space
\(
H^0\!\left(\mathbb P^1, N_{\mathbb P^1} \otimes \mathcal{O}(-1)\right).
\)
    Combining \eqref{eq:normal-c1-zero} and \eqref{eq:normal-upper-semicontinuity} with the classical Grothendieck splitting theorem for holomorphic vector bundles on $\mathbb P^1$, we obtain
\begin{equation*}
    N_{\mathbb P_J^1}\simeq \bigoplus_{j=1}^{k}\mathcal{O}.
\end{equation*}
We may then apply Grauert's theorem \cite{grauert} and \cite[Theorem~1.7]{hwang} to conclude that a neighborhood of $\mathbb P^1_J$ is biholomorphic to a neighborhood of the zero section in $\mathbb P^1\times\mathbb C^k$. After a simple rescaling, the statement follows.
\end{proof}

Next, we consider our model space:
\begin{align*}
\mathcal C^{2n-2}_{-1}:=(\bar M,\bar g,\bar f)=\left(S^{2} \times \R^{2n-2}, g_{\mathrm{round}} \times g_{\mathrm{flat}}, \frac{|\vec{x}|^2}{4}+\log 2 \right),
\end{align*}
where $g_{\mathrm{round}}$ is the round metric on $S^{2}$ with constant Gaussian curvature $1/2$, and $g_{\mathrm{flat}}$ is the Euclidean metric on $\R^{2n-2}$. The vector $\vec{x}=(x_1,\ldots,x_{2n-2})$ denotes the standard coordinate vector on $\R^{2n-2}$.

We define $\mathcal C^{2n-2}=(\bar M, (\bar g_t)_{t<0}, (\bar f_t)_{t<0})$ to be the associated Ricci flow, where $t=0$ corresponds to the singular time, and the potential function is given by
\begin{align*}
\bar f_t:=\frac{|\vec{x}|^2}{4|t|}+\log 2.
\end{align*}

Let the completion of $(\bar M, (\bar g_t)_{t \in (-\infty, 0)})$ be $\overline{\mathcal C}^{2n-2}$, equipped with the spacetime distance $d_{\mathcal C}^*$ defined as in \eqref{eq:spacetime-distance-compact}. A direct verification shows that the metric completion adds only the singular set $\R^{2n-2} \times \{0\}$ to $ \mathcal C^{2n-2}$. We then define the base point $p^*$ as the limit of $(\bar p, t)$ as $t \nearrow 0$ with respect to $d_{\mathcal C}^*$, where $\bar p \in \bar M$ is a minimum point of $\bar f_{-1}$. The point $p^*$ is independent of the choice of $\bar p$. Moreover, for any $t<0$,
	\begin{align*}
\nu_{p^*;t}=(4\pi |t|)^{-n} e^{-\bar f_t} \,\mathrm{d}V_{\bar g_t}.
	\end{align*}

We next prove the following result. For the precise definitions of tangent flows and isometry, we refer the reader to \cite[Definitions~1.8 and~5.21]{FL1}.

\begin{theorem}\label{thm:exclude}
    For the metric completion $(Z,d_Z,\t)$ of $(X^n,(\omega_t)_{t\in [-1,0)})$, no tangent flow is isometric to $\overline{\mathcal C}^{2n-2}$.
\end{theorem}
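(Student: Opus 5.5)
We argue by contradiction. Suppose that at some point $z\in Z_0$ (necessarily singular, since tangent flows at points of $\RR$ are static Euclidean) there is a sequence of scales $r_i\searrow 0$ along which the rescaled flows converge to $\overline{\mathcal C}^{2n-2}$. By the definition of tangent flows in \cite{FL1}, the convergence is smooth on the regular part. Hence, for any fixed compact region of $\mathcal C^{2n-2}$ at a negative time, say $\mathbb P^1\times B^{n-1}_4$ at time $-1$, there exist diffeomorphisms onto open subsets of $X\times\{-r_i^2\}$ (after adjusting by $z$'s time $0$) such that the pulled back metrics $r_i^{-2}g_{-r_i^2}$ converge in $C^\infty$ to $g_{\mathrm{round}}\times g_{\mathrm{flat}}$. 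The complex structures $J$ are parallel for the rescaled metrics, with uniformly bounded derivatives, so after passing to a subsequence they converge to a parallel complex structure $J_\infty$ compatible with the product metric. A parallel orthogonal complex structure on $S^2\times\R^{2n-2}$ preserves the de Rham splitting; it restricts to the standard structure on $S^2$, up to orientation, and to a constant structure on $\R^{2n-2}$. After composing with an isometry, we may therefore assume $J_\infty=J_0$ on $\mathbb P^1\times\C^{n-1}$. Consequently, for large $i$, hypothesis \eqref{eq--closeness} of Lemma~\ref{lem--rigidity} holds with $k=n-1$.

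Lemma~\ref{lem--rigidity} then gives an open set $U_i\subset X$ that is $J$-biholomorphic to $V^{n-1}_1=\mathbb P^1\times B^{n-1}_1$, and it is a holomorphic object in $X$ independent of time. Its fibers $\mathbb P^1\times\{w\}$ are holomorphic rational curves $C_w\subset X$, all homologous, with class $[C]\in H_2(X,\mathbb Z)$. Along the flow, $[\omega_t]=[\omega_{-1}]-(t+1)2\pi c_1(X)$ (up to normalization), which is affine in $t$ and converges to $\alpha$. Thus
\[
\int_{C_w}\omega_t=\langle[\omega_t],[C]\rangle\longrightarrow \alpha\cdot[C]\qquad (t\nearrow 0).
\]
At time $t_i=-r_i^2$, the $\omega_{t_i}$-area of $C_w$ is approximately $r_i^2\cdot\mathrm{Area}(S^2,g_{\mathrm{round}})=8\pi r_i^2$, because $C_w$ is $C^3$-close to a model fiber. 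The area is also affine in $t$ with slope $-2\pi c_1(X)\cdot[C]$ (up to a normalization constant). To conclude that $\alpha\cdot[C]=0$, we compare two nearby times: the model shrinking cylinder gives $\frac{d}{dt}\mathrm{Area}_{\omega_t}(C)\approx -\int_C\mathrm{Ric}$, which for the model equals exactly the rate that makes the area vanish at time $0$. Since both the true area and the model area are affine in $t$ (the true one exactly, the model one because the $S^2$ factor shrinks linearly) and agree to first order at $t_i$, with errors $o(r_i^2)$ in value and $o(1)$ in slope, extrapolating to $t=0$ gives $|\alpha\cdot[C]|\le o(r_i^2)$. Because this is a fixed number, it must be $0$. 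Alternatively, one can use $c_1(X)\cdot[C]=\deg T\mathbb P^1+\deg N=2$, which follows from trivial normal bundle and is exact, together with the exact area at $t_i$, to show that $\alpha\cdot[C]=O(o(r_i^2))$.

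Finally, since every $C_w$, $w\in B_1^{n-1}$, is an irreducible curve with $\int_{C_w}\alpha=0$, we get $C_w\subset\nul(\alpha)$. The union of the $C_w$ is the open set $U_i$, so $\nul(\alpha)$ contains a nonempty open set. This is impossible: $\nul(\alpha)$ is a proper analytic subset, since $\alpha$ is big and nef and $\nul(\alpha)$ equals the non-Kähler locus by Collins--Tosatti. This contradiction proves the theorem.

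The main obstacle is the first step: extracting from the tangent-flow convergence in \cite{FL1}, which is phrased for spacetime limits, an honest $C^3$-closeness of both $g$ and $J$ on a full $\mathbb P^1\times B_2^{n-1}$ at a single time slice. In particular, we must make sure the whole $S^2$ factor lies in the regular part where the convergence is smooth. This holds because the $S^2$ factor is compact and lies in the regular region of the model at negative time. The second delicate point is making the area comparison precise, and using $c_1\cdot[C]=2$ seems the cleanest way to do it.
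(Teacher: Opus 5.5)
There is a genuine gap at the step ``Because this is a fixed number, it must be $0$.'' Your sphere $C$ is a fiber of $U_i$, which Lemma~\ref{lem--rigidity} produces at the scale $r_i$. So its class $[C]=[C_i]$ depends on $i$. For each fixed $i$, your comparison at the single time $t_i=-r_i^2$ only gives $0\le \alpha\cdot[C_i]\le \epsilon_i r_i^2$ with some $\epsilon_i>0$, and that is no contradiction.

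Nor can you fix one curve $C_{i_0}$ and repeat the comparison at later times $t_j$. The tangent-flow convergence at scale $r_j$ only controls a region of size $O(r_j)$ around an $H$-center of $z$, and nothing keeps $C_{i_0}$ inside that region. Indeed, if $\alpha\cdot[C_{i_0}]>0$, its $\omega_{t_j}$-area stays bounded below and it cannot sit in a cylindrical region at scale $r_j$. Exact vanishing needs information about a single class at all small times. The real difficulty therefore lies here, not in extracting the $C^3$-closeness, which is routine.

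The paper handles this as follows. It fixes one holomorphic sphere $\mathbb P^1(0)$ at a single time $-\tau_0$ and transports it by the flow of $-\nabla_{g_t}f_t$, where $f_t$ is the potential at $z_0$. The strong uniqueness of cylindrical tangent flows \cite[Theorem~1.2]{fang-li-unique} then bounds the $|t|^{-1}g_t$-area of the transported spheres uniformly for all $t\in[-\tau_0,0)$. Wirtinger's inequality and $|t|^{-1}[\omega_t]=|t|^{-1}\alpha-K_X$ give $|t|^{-1}\alpha\cdot[\mathbb P^1(0)]-K_X\cdot[\mathbb P^1(0)]\le C$, which forces $\alpha\cdot[\mathbb P^1(0)]=0$ as $t\nearrow0$.

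Alternatively, you can complete your single-scale route with a discreteness argument that is currently missing.
\begin{itemize}
\item $-K_X\cdot[C_i]$ takes the fixed value dictated by the trivial normal bundle, so $\int_{C_i}\omega_{-1}=\alpha\cdot[C_i]-K_X\cdot[C_i]$ is uniformly bounded.
\item The currents of integration over $C_i$ are positive with bounded mass. Hence their real classes lie in a bounded subset of the lattice formed by the image of $H_2(X,\mathbb Z)$ in $H_2(X,\R)$, so they range over a finite set.
\item Then $\alpha\cdot[C_i]$ takes finitely many values and tends to $0$, so it vanishes for all large $i$.
\end{itemize}
With this addition your argument would avoid the strong uniqueness theorem entirely. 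As written, however, the conclusion $\alpha\cdot[C]=0$ is not justified.
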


\begin{proof}
Suppose otherwise. Then a tangent flow at $z_0 \in Z$ is isometric to $\overline{\mathcal C}^{2n-2}$. Since the singular set is entirely contained in $Z_0$, we must have $z_0 \in Z_0$.

By the smooth convergence (see \cite[Theorem 1.5]{FL1}), there exists a limiting parallel complex structure $J_\infty$ on $S^{2} \times \R^{2n-2}$, obtained as the smooth convergence of the original complex structure $J$ on $X$. Note that $J_\infty$ must preserve the decomposition $S^{2} \times \R^{2n-2}$, since otherwise the tangent flow splits off more than $2n-2$ lines; see \cite[Lemma 11.7]{FL1}. Thus, by the uniformization theorem, $(S^{2} \times \R^{2n-2}, J_\infty)$ is biholomorphic to the standard $\mathbb P^1 \times \C^{n-1}$.

We fix a small constant \(\tau_0>0\), to be determined later. For
\(\tau\in(0,\tau_0]\), let \(q_{-\tau}\) be an \(H_{2n}\)-center of \(p^*\).
If \(\tau_0\) is sufficiently small, then by the smooth convergence and
Lemma~\ref{lem--rigidity}, there exists an open neighborhood
\(U\) of \(q_{-\tau_0}\) which is biholomorphic to the standard model
\(
\mathbb P^1\times B_1^{n-1}.
\)
For each \(z\in B_1^{n-1}\), denote by
\(\mathbb P^1(z)\) the \(\mathbb P^1\)-fiber over \(z\) in
\(U\).

Let \(\phi_t\) be the time-dependent flow generated by
\(-\nabla_{g_t}f_t\), where $f_t$ is the potential function at $z_0$, with initial condition
\(\phi_{-\tau_0}=\mathrm{id}\). We consider
\[
\mathbb P_t^1:=\phi_t\bigl(\mathbb P^1(0)\bigr),
\]
which need not be holomorphic.
In particular,
\[
\mathbb P^1_{-\tau_0}=\mathbb P^1(0).
\]
Since \(\phi_t\) is generated by a vector field, \(\mathbb P_t^1\) is homologous to
\(\mathbb P^1_{-\tau_0}\). By the strong uniqueness theorem
\cite[Theorem 1.2]{fang-li-unique}, the volume of the spheres \(\mathbb P_t^1\) under the metric $|t|^{-1}g_t$ has a uniform upper bound. Therefore, there exists a constant \(C>0\) such
that
\begin{equation}\label{eq:fiber-volume-bound}
  \left|\mathbb P^1_{-\tau_0}\right|_{|t|^{-1}\omega_t}=\langle |t|^{-1}\omega_t, [\mathbb P_t^1]\rangle  \leq \left|\mathbb P^1_t\right|_{|t|^{-1}g_t}
    \leq C,
    \qquad
    \forall\, t\in[-\tau_0,0),
\end{equation}
where we used the Wirtinger inequality.

By the cohomological class relation,
\begin{equation}\label{eq:kahler-class-relation}
    |t|^{-1}[\omega_t]=|t|^{-1}\alpha-K_X.
\end{equation}
The bound
\eqref{eq:fiber-volume-bound} gives
\[
 |t|^{-1}\alpha\cdot [\mathbb P^1_{-\tau_0}]
 -
 K_X\cdot [\mathbb P^1_{-\tau_0}]
 \leq C.
\]
Letting \(t\to 0\), we conclude that
\begin{equation}\label{eq:fiber-alpha-zero}
  \alpha\cdot [\mathbb P^1_{-\tau_0}] =0,
\end{equation}
that is, $\mathbb P^1(0)\subset \nul(\alpha)$.
Since the fibers \(\mathbb P^1(z)\) are all homologous, we have $U\subset\nul(\alpha)$, a contradiction because $\nul(\alpha)$ cannot contain an open subset.
\end{proof}

We next recall from \cite[Sections~8 and~11]{FL1} that the singular set $\MS$ of $Z$ has the following natural stratification:
	\begin{equation} \label{eq:stra}
		\MS^0 \subset \MS^2 \subset \ldots \subset \MS^{2n-2}=\MS.
	\end{equation}
A point $z$ belongs to $\MS^k$ if and only if no tangent flow at $z$ is $(k+1)$-symmetric. Here, a tangent flow $(Z', d_{Z'}, z', \t')$ is said to be $k$-symmetric if one of the following holds:
		\begin{enumerate}[label=\textnormal{(\arabic*)}]
		\item $(Z',d_{Z'},z',\t')$ is $k$-splitting and is not a static cone.

		\item $(Z',d_{Z'},z',\t')$ is a static cone that is $(k-2)$-splitting.
	\end{enumerate}

We next prove the following theorem.

\begin{theorem}\label{thm:exclude1}
    For the metric completion $(Z,d_Z,\t)$ of $(X^n,(\omega_t)_{t\in [-1,0)})$, we have
 	\begin{equation*}
\MS^{2n-4}=\MS.
	\end{equation*}
\end{theorem}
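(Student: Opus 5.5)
The plan is to argue by contradiction, using the stratification \eqref{eq:stra}. Suppose $\MS^{2n-4}\subsetneq\MS$ and pick $z\in\MS\setminus\MS^{2n-4}$. By definition of the strata, some tangent flow $(Z',d_{Z'},z',\t')$ at $z$ is $(2n-3)$-symmetric. Since $\MS\subset Z_0$, we have $z\in Z_0$. By \cite[Sections~8 and~11]{FL1}, the general structure gives $\MS=\MS^{2n-2}$. We first reduce to the case where the tangent flow is exactly $(2n-2)$-symmetric, and then rule out each possible model.

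\emph{Step 1: parity reduction.} By the smooth convergence on the regular part \cite[Theorem~1.5]{FL1}, the complex structure $J$ of $X$ passes to a parallel complex structure $J_\infty$ on the regular part of $Z'$. The Euclidean factor of a splitting tangent flow is preserved by $J_\infty$, as in \cite[Lemma~11.7]{FL1} and the proof of Theorem~\ref{thm:exclude}. Hence the number of split-off lines is even. Therefore:
\begin{itemize}
\item a non-static $(2n-3)$-splitting tangent flow is in fact $(2n-2)$-splitting;
\item a static cone that is $(2n-5)$-splitting is in fact $(2n-4)$-splitting.
\end{itemize}
(The odd strata coincide with the preceding even ones, $\MS^{2k+1}=\MS^{2k}$, exactly as in the K\"ahler discussion of \cite[Section~11]{FL1}.)

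\emph{Step 2: the non-static case.} Write the tangent flow as $N^2\times\R^{2n-2}$ with $N^2$ a two-dimensional shrinking soliton. The possibilities for $N^2$ are:
\begin{itemize}
\item $N^2$ flat: then the tangent flow is Euclidean $\R^{2n}$, and $z$ would be regular by the $\epsilon$-regularity in \cite{FL1}, a contradiction.
\item $N^2=\RP^2$: the model is non-orientable, while the regular part of a K\"ahler tangent flow carries the parallel K\"ahler form and is therefore orientable; this is excluded.
\item $N^2=S^2$: the tangent flow is isometric to $\overline{\mathcal C}^{2n-2}$, which is excluded by Theorem~\ref{thm:exclude}.
\end{itemize}

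\emph{Step 3: the static case (main obstacle).} Here the tangent flow is a Ricci-flat static cone $\R^{2n-4}\times C(Y^3)$. I would first try to show that this case is incompatible with $z\in\MS\setminus\MS^{2n-4}$, via one of two routes:
\begin{itemize}
\item invoke the dimension convention in \cite{FL1}, under which a static cone splitting off $k-2$ lines only contributes to $\MS^{k}$ in a way that already forces $z\in\MS^{2n-4}$;
\item use the K\"ahler structure on the four-dimensional cross-section: $C(Y^3)$ is a flat K\"ahler cone $\C^2/\Gamma$, and the codimension-four regularity for static cones in \cite[Theorem~1.10]{FL1} places such points in the lower stratum.
\end{itemize}
This is the step I expect to be the main obstacle. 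The difficulty is matching the shift built into the definition of $k$-symmetry for static cones with the K\"ahler parity argument of Step 1. Once this is settled, no $(2n-3)$-symmetric tangent flow exists at any singular point, so $\MS^{2n-4}=\MS$.
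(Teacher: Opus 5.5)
Your Steps~1 and~2 match what the paper does. The paper applies the K\"ahler parity implicitly, so that a point of $\MS^{2n-2}\setminus\MS^{2n-4}$ has a tangent flow that is either non-static and $(2n-2)$-splitting, or a static cone that is $(2n-4)$-splitting. It then forces the first alternative to be $\overline{\mathcal C}^{2n-2}$ and excludes it by Theorem~\ref{thm:exclude}.

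The genuine gap is Step~3, and neither of your routes closes it.
\begin{itemize}
\item Route (i) contradicts the definition you are working with. A static cone that is $(2n-4)$-splitting is in particular $(2n-5)$-splitting, hence $(2n-3)$-symmetric by clause~(2). So its base point lies outside $\MS^{2n-4}$, and no dimension convention pushes it into the lower stratum.
\item Route (ii) fails on both counts. Flat K\"ahler cones $\C^2/\Gamma$ exist (e.g.\ $\Gamma=\{\pm1\}$), so the K\"ahler structure does not rule this model out. Moreover, \cite[Theorem~1.10]{FL1} is only the general statement $\MS=\MS^{2n-2}$, which is compatible with static $\R^{2n-4}\times\C^2/\Gamma$ tangent flows: at interior times of a Ricci flow limit space such static orbifold points are exactly the top stratum.
\end{itemize}
So nothing about the cone model itself excludes this case.

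The missing idea is a fact you recorded but never used: $z\in Z_0$. Since $\MS\subset Z_0$ and $Z_0$ is the terminal slice, every tangent flow at $z$ lives only over $\t'\le 0$. That is, it has finite arrival time in the sense of \cite[Definition~7.17]{FL1}, and such a tangent flow is not a static cone in the sense of clause~(2).

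Hence at points of $Z_0$ only clause~(1) of $k$-symmetry is available. The condition $z\notin\MS^{2n-4}$ forces a non-static $(2n-3)$-splitting tangent flow, which is $(2n-2)$-splitting by your parity argument. Step~2 then gives the contradiction via Theorem~\ref{thm:exclude}. This one-line observation is how the paper disposes of the static case, and with it your Step~3 becomes unnecessary.
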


\begin{proof}
By \eqref{eq:stra}, it remains to prove that
 	\begin{equation*}
\MS^{2n-2}\setminus\MS^{2n-4}=\emptyset.
	\end{equation*}
Indeed, for any $z\in\MS^{2n-2}\setminus\MS^{2n-4}$, it follows from the definition that some tangent flow at $z$ is either $(2n-2)$-splitting and non-static or a static cone that is $(2n-4)$-splitting.

If it is the first case, then such a tangent flow must be $\overline{\mathcal C}^{2n-2}$, which is impossible by Theorem~\ref{thm:exclude}. The second case can also be excluded, since a tangent flow at $z$ has finite arrival time; see \cite[Definition 7.17]{FL1}.
\end{proof}

\subsection{Hausdorff dimension estimate for the singular set}

Using Theorem~\ref{thm:exclude1}, we now estimate the Hausdorff measure of $Z_0^{\sing}$. First, we prove the almost Euclidean lower bound for the volume of $d^Z_0$-balls; see \cite[Definition 5.33]{FL1} for the volume definitions.

\begin{lemma}\label{lem:volume}
For every $\ep>0$, there exists $c_\ep>0$ such that
\begin{equation}\label{eq:gh-limit-volume-noncollapsing}
    \left|B_{d^Z_0}(p,r)\right|_{d^Z_0}
    \geq c_\ep r^{2n+\ep}
\end{equation}
for every $p\in Z_0$ and every $r\in(0,1]$. In particular, $Z^{\reg}_0$ is an open dense subset of $Z_0$.
\end{lemma}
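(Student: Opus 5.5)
The plan is to transfer the time-slice volume lower bound \eqref{eq:volume-noncollapsing} to $Z_0$ through the Gromov--Hausdorff convergence $(X,d_{\omega_t})\to(Z_0,d^Z_0)$ of Theorem~\ref{thm--all}. I would use the fact that the volume on $Z_0$ in \cite[Definition 5.33]{FL1} is the Riemannian volume of $g^Z_0$ on $Z_0^{\reg}$; via Proposition~\ref{prop:regular-part-identification} this is the $\omega_0$-volume on $X^\circ$, and it is the limit of $\omega_t^n$ there.

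\textbf{Step 1: realize a ball at a fixed time.} Fix $p\in Z_0$, $r\in(0,1]$ and $\ep>0$. Choose $t_j\nearrow0$ and $x_j\in X$ such that $(x_j,t_j)$ are $H_{2n}$-centers of $p$. By Theorem~\ref{thm--all} and the construction of $h$ in Proposition~\ref{prop:embh}, the points $x_j$ converge to $p$ in the Gromov--Hausdorff sense. The estimate \eqref{eq:volume-noncollapsing} then gives
\[
|B_{\omega_{t_j}}(x_j,r/2)|_{\omega_{t_j}}\ge C_\ep^{-1}(r/2)^{2n+\ep}.
\]

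\textbf{Step 2: discard the almost-singular part.} By \eqref{eq:density}, the set $\{r_{\Rm}<s\}$ in the slice $t_j$ has $\omega_{t_j}$-volume at most $C s^{2-\ep'}$. Choose $s=s(r,\ep)$ so small that this is less than half of the lower bound in Step 1. Then the set
\[
G_j:=B_{\omega_{t_j}}(x_j,r/2)\cap\{r_{\Rm}(\cdot,t_j)\ge s\}
\]
has volume at least $\tfrac12 C_\ep^{-1}(r/2)^{2n+\ep}$. On $G_j$ the curvature is bounded by $s^{-2}$ on backward-forward parabolic neighborhoods up to time $0$. As in the proof of Proposition~\ref{prop:surj}, each $y\in G_j$ then has worldline limit $\Phi(y)\in Z_0^{\reg}$, with $d_{\omega_{t_j}}$-distances and $d^Z_0$-distances differing by $C\sqrt{|t_j|}$. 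Hence $\Phi(G_j)\subset B_{d^Z_0}(p,r)$ for large $j$.

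\textbf{Step 3: compare volumes.} This is the main obstacle. Since $|\Rm|\le s^{-2}$ on a uniform neighborhood of $G_j\times[t_j,0)$, the metrics $\omega_t$ for $t\in[t_j,0)$ are uniformly equivalent there. Moreover, $\partial_t\omega_t^n=-\scal\,\omega_t^n$ with $|\scal|\le Cs^{-2}$, so
\[
\omega_0^n\ge e^{-Cs^{-2}|t_j|}\,\omega_{t_j}^n \quad\text{on } G_j.
\]
Because $\Phi$ is injective (Lemma~\ref{lem:contain1}) and is the identity of $X^\circ$ in the identification of Proposition~\ref{prop:regular-part-identification}, we get
\[
|B_{d^Z_0}(p,r)|\ge |\Phi(G_j)|_{\omega_0}\ge e^{-Cs^{-2}|t_j|}\,|G_j|_{\omega_{t_j}}.
\]
Letting $j\to\infty$ yields \eqref{eq:gh-limit-volume-noncollapsing} with $c_\ep=\tfrac12C_\ep^{-1}2^{-2n-\ep}$. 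The delicate points are the containment $G_j\subset X^\circ$, which follows from Proposition~\ref{prop:regular-part-identification} because $\Phi(y)$ is regular, and the matching of the volume in \cite[Definition 5.33]{FL1} with the $\omega_0$-volume on $Z_0^{\reg}$.

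\textbf{Density of $Z_0^{\reg}$.} Openness follows because $\RR$ is open in $Z$ by \cite{FL1}. For density, the argument above shows that every ball $B_{d^Z_0}(p,r)$ contains $\Phi(G_j)\neq\varnothing$, which is a set of regular points.
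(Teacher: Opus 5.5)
Your overall plan matches the paper's proof: take centers of $p$ at times $t_j$, use \eqref{eq:volume-noncollapsing}, discard $\{r_{\Rm}<s\}$ via \cite[Theorem~1.12(b)]{FL1}, push the good set forward by $\Phi$, and compare volumes by smooth convergence. Steps~1 and~3 are fine. Using the global bound \eqref{eq:density} with $s=s(r,\ep)$, instead of the paper's scale-invariant local bound $C\theta^{1.99}r^{2n}$, is harmless, because the final constant does not depend on $s$.

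The gap is the containment $\Phi(G_j)\subset B_{d^Z_0}(p,r)$ in Step~2. For this you need, uniformly in $y\in G_j$,
\[
d^Z_0\bigl(\Phi(y),p\bigr)\le d_{\omega_{t_j}}(y,x_j)+o(1).
\]
Monotonicity of the $W_1$-distance (\cite[Propositions~2.12 and~2.21(ii)]{FL1}, as used in Proposition~\ref{prop:surj}) gives only the opposite inequality:
\[
d_{\omega_{t_j}}(y,x_j)\le d^Z_0\bigl(\Phi(y),p\bigr)+C\sqrt{|t_j|}.
\]
It bounds the slice at time $t_j$ by the slice at time $0$, not conversely. The bound $|\Rm|\le s^{-2}$ near $y$ does not close this either. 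It only makes $(y,t_j)$ an $H$-center of $\Phi(y)$ and controls distances at scales below $s$. A minimizing path from $y$ to $x_j$ crosses the high-curvature region, where distances may expand between $t_j$ and $0$. The convergence $d^{t}_{W_1}(\nu_{a;t},\nu_{b;t})\to d^Z_0(a,b)$ holds for each fixed pair $a,b$, but with no uniform rate. So the two-sided $C\sqrt{|t_j|}$ comparison you assert is not available; compare the remark after Lemma~\ref{lem:dis}.

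The missing idea is a compactness argument that uses the defining limit only for a fixed pair.

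1. Suppose the containment fails along a subsequence, and pick $y_j\in G_j$ with $d^Z_0(\Phi(y_j),p)\ge r$.
2. Since $(Z_0,d^Z_0)$ is compact (Proposition~\ref{prop:embh}), pass to a subsequence with $\Phi(y_j)\to y_\infty$ in $d^Z_0$. Then $d^Z_0(y_\infty,p)\ge r$.
3. Let $p_j'$ be an $H_{2n}$-center of $y_\infty$ at time $t_j$. Applying \eqref{eq:time-slice-distance} to the fixed pair $(y_\infty,p)$ gives $d^Z_0(y_\infty,p)=\lim_j d_{\omega_{t_j}}(p_j',x_j)$.
4. The available direction of monotonicity gives $d_{\omega_{t_j}}(p_j',y_j)\le d^Z_0(y_\infty,\Phi(y_j))+C\sqrt{|t_j|}\to0$.
5. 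Hence $d^Z_0(y_\infty,p)\le\limsup_j d_{\omega_{t_j}}(y_j,x_j)\le r/2$, a contradiction.

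With this replacement, Step~3 goes through as written. So does your density argument for $Z_0^{\reg}$, which relies on the same containment.
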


\begin{proof}
Fix $p\in Z_0$, $r\in(0,1]$, and a sequence
$t_j\nearrow0$. Let $p_j$ be an $H_{2n}$-center of $\nu_{p;t_j}$. By
\cite[Theorem~1.12(b)]{FL1}, for every sufficiently small
$\theta>0$,
\begin{equation}\label{eq:bad-curvature-volume}
\left|
    \{r_{\Rm}<\theta r\}
    \cap B_{\omega_{t_j}}(p_j,r/2)
\right|_{\omega_{t_j}}
\leq C\theta^{1.99}r^{2n}.
\end{equation}
On the other hand, \eqref{eq:volume-noncollapsing} gives
\[
    \left|B_{\omega_{t_j}}(p_j,r/2)\right|_{\omega_{t_j}}
    \geq C_\ep^{-1}(r/2)^{2n+\ep}.
\]
Choose $\theta=\theta(\ep,r)>0$ so small that the right-hand side of
\eqref{eq:bad-curvature-volume} is at most one-half of this lower
bound. It follows that
\begin{equation}\label{eq:good-curvature-volume}
\left|
    \{r_{\Rm}\geq\theta r\}
    \cap B_{\omega_{t_j}}(p_j,r/2)
\right|_{\omega_{t_j}}
\geq c_\ep r^{2n+\ep},
\end{equation}
where $c_\ep>0$ is independent of $p$, $r$, and $j$.

Since $\theta r>0$ is fixed, the good-curvature region in
\eqref{eq:good-curvature-volume} is contained in $X^\circ$ for all
sufficiently large $j$, and hence $\Phi$ is defined on this region. We
claim that, for all sufficiently large $j$,
\begin{equation}\label{eq:good-region-contained-in-limit-ball}
\Phi\left(
    \{r_{\Rm}\geq\theta r\}
    \cap B_{\omega_{t_j}}(p_j,r/2)
\right)
\subset B_{d^Z_0}(p,r).
\end{equation}
Indeed, otherwise there would exist a subsequence and points
\[
    y_j\in
    \{r_{\Rm}\geq\theta r\}
    \cap B_{\omega_{t_j}}(p_j,r/2)
\]
such that $\Phi(y_j)\notin B_{d^Z_0}(p,r)$. After passing to a further
subsequence, we may assume that
$\Phi(y_j)\to y_\infty\in Z_0$ and hence
$d^Z_0(y_\infty,p)\geq r$. Let $p_j'$ be an $H_{2n}$-center of
$\nu_{y_\infty;t_j}$. By \eqref{eq:time-slice-distance},
\begin{align*}
    d^Z_0(y_\infty,p)
    &=\lim_{j\to\infty}d_{\omega_{t_j}}(p_j',p_j)\\
    &\leq
    \limsup_{j\to\infty}
    \left(
        d_{\omega_{t_j}}(p_j',y_j)
        +d_{\omega_{t_j}}(y_j,p_j)
    \right).
\end{align*}
Moreover, \cite[Proposition~2.21(ii)]{FL1} gives
\[
    d_{\omega_{t_j}}(p_j',y_j)
    \leq
    d^Z_0\bigl(y_\infty,\Phi(y_j)\bigr)
    +C\sqrt{|t_j|}
    \longrightarrow0.
\]
Therefore, $d^Z_0(y_\infty,p)\leq r/2$, a contradiction. This proves
\eqref{eq:good-region-contained-in-limit-ball}. Passing to the limit in
\eqref{eq:good-curvature-volume}, using smooth convergence on the region
where $r_{\Rm}\geq\theta r$, completes the proof.
\end{proof}

\begin{theorem}
The Hausdorff dimension of the singular set $Z_0^{\sing}$ with respect to $d^Z_0$ satisfies
\begin{equation}\label{eq:dimension}
    \dim_{\mathscr H}\bigl(Z_0^{\sing})\leq 2n-4.
\end{equation}
\end{theorem}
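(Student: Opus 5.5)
We combine Theorem~\ref{thm:exclude1} with the quantitative stratification and covering estimates of \cite{FL1,fang-li2}, and use Lemma~\ref{lem:volume} to pass from spacetime statements to the slice $(Z_0,d^Z_0)$.

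\textbf{Step 1: cover the singular set by few parabolic balls.} By Theorem~\ref{thm:exclude1}, $\MS=\MS^{2n-4}$. Every point of $Z_0^{\sing}=Z_0\cap\MS$ therefore has no $(2n-3)$-symmetric tangent flow. By the quantitative stratification of \cite[Section~8]{FL1} and \cite{fang-li2}, such points lie in the quantitative stratum $\MS^{2n-4}_{\ep,r}$ for some fixed $\ep>0$ and all small $r$. The Minkowski-type covering estimate for quantitative strata then applies. For every $\eta>0$ and $r\in(0,1)$, the set $Z_0^{\sing}\cap Z_{[-0.01,0]}$ can be covered by at most $C(\eta)r^{-(2n-4)-\eta}$ parabolic $d_Z$-balls of radius $r$, centered at points of $Z_0$. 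Here we use the compactness of $(Z,d_Z)$, as in the proof of Proposition~\ref{prop:surj}, to reduce to finitely many unit-scale balls.

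\textbf{Step 2: compare $d_Z$-balls with $d^Z_0$-balls.} We must show that for $z,w\in Z_0$ with $d_Z(z,w)<r$ we have $d^Z_0(z,w)\le Cr$. From the definition \eqref{eq:spacetime-distance-compact}, $d_Z(z,w)<r$ gives a time $\tau\ge -r^2$ at which the $W_1$-distance of the conjugate heat kernel measures is less than $r$. Monotonicity of $W_1$ \cite[Proposition~2.12]{FL1}, together with \eqref{eq:time-slice-distance}, then yields
\[
d^Z_0(z,w)\le r+C\sqrt{r^2}\le Cr .
\]
If needed, we instead use $H_{2n}$-centers as in the proof of Lemma~\ref{lem:volume}, which gives the same control up to $C\sqrt{|\tau|}$. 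Consequently each $d_Z$-ball of radius $r$ centered in $Z_0$ meets $Z_0$ inside a $d^Z_0$-ball of radius $Cr$. We therefore obtain $N(r)\le C(\eta)r^{-(2n-4)-\eta}$ balls of $d^Z_0$-radius $Cr$ covering $Z_0^{\sing}$.

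\textbf{Step 3: conclude.} Summing over this cover gives $\mathscr H^{2n-4+2\eta}_{Cr}(Z_0^{\sing})\le C r^{\eta}\to0$. Since $\eta>0$ is arbitrary, \eqref{eq:dimension} follows. Lemma~\ref{lem:volume} can serve as a substitute for the counting: if the available covering estimate is phrased as a volume bound on the $r$-tubular neighborhood of the stratum rather than as a count of balls, the lower bound $|B_{d^Z_0}(p,r)|\ge c_\ep r^{2n+\ep}$ converts it into a packing count by the standard disjoint-balls argument.

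\textbf{Main obstacle.} The hardest step is the first one: invoking the covering estimate of \cite{fang-li2} in the form needed. The theorem there is stated for the spacetime stratum $\MS^{k}$ in terms of $d_Z$ and the spacetime (parabolic) measure, whereas we need a bound for the intersection with the single slice $Z_0$. We would first try to cite the Minkowski bound directly for the $\ep$-quantitative stratum restricted to a slice, which is how it appears in \cite[Theorem~1.12]{FL1}-type statements. Failing that, we would run the covering argument at each scale using $\mathfrak t$-centered parabolic balls and count only those centered on $Z_0$.
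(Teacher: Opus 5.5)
\textbf{Gap in Step 2.} The inequality $d^Z_0(z,w)\le C\,d_Z(z,w)$ does not follow from monotonicity. \cite[Proposition~2.12]{FL1} says that $s\mapsto d^s_{W_1}(\nu_{z;s},\nu_{w;s})$ is \emph{nondecreasing}. So a small $W_1$-distance at an earlier time $\tau\ge -r^2$ gives no control on the limit $s\nearrow0$ that defines $d^Z_0$. Monotonicity yields only the opposite inequality, $d_Z\le d^Z_0$ on $Z_0$ \cite[Lemma~6.4]{FL1}; the paper stresses exactly this in the remark following Lemma~\ref{lem:dis}.

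Your $H$-center fallback has the same defect. As used in Lemma~\ref{lem:volume}, \cite[Proposition~2.21(ii)]{FL1} bounds distances at time $t_j$ by $d^Z_0$, not conversely. No reverse comparison $d^Z_0\le C d_Z$ is available, and the paper never uses one. Consequently a cover of $Z_0^{\sing}$ by $d_Z$-balls says nothing about $d^Z_0$-diameters. Since $d_Z\le d^Z_0$, a Hausdorff-dimension bound with respect to $d_Z$ is the \emph{weaker} statement and does not imply the theorem.

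\textbf{Secondary issue in Step 1.} Step 1 also claims a single Minkowski-type cover of all of $Z_0^{\sing}$, which would need a uniform $\ep$. Theorem~\ref{thm:exclude1} excludes only exact tangent flows. Each singular point therefore lies in $\MS^{\ep}:=\bigcap_{0<\delta<\ep}\MS^{\ep,2n-4}_{\delta,\ep}$ for some point-dependent $\ep$. You must work with $\MS^{2n-4}=\bigcup_m\MS^{1/m}$ and countable subadditivity.

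\textbf{The fix.} It is the route you mention only as an aside at the end of Step 3, and it is what the paper does.
\begin{enumerate}
\item Use the slice-wise tube estimate \cite[Corollary~4.24(b)]{fang-li2}. After a finite cover of the compact $Z$, it bounds the $d^Z_0$-volume of $B^*_\delta(\MS^{\ep,2n-4}_{\delta,\ep})\cap Z_0$ by $C_\ep\delta^{4-\ep}$, where $B^*_\delta$ is a $d_Z$-neighborhood.
\item Now $d_Z\le d^Z_0$ points the right way: the $d^Z_0$-$\delta$-neighborhood of $\MS^\ep$ lies inside this $d_Z$-neighborhood.
\item Take a maximal $d^Z_0$-$\delta$-separated set in $\MS^\ep$. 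The disjoint $d^Z_0$-balls of radius $\delta/2$ around its points lie in the tube. Lemma~\ref{lem:volume} gives each one volume at least $c_\ep(\delta/2)^{2n+\ep}$. Hence the number of points is at most $C_\ep\delta^{-(2n-4+2\ep)}$.
\item This gives a cover of $\MS^\ep$ by $d^Z_0$-balls directly, so $\mathscr H^{2n-4+\sigma}_{d^Z_0}(\MS^\ep)=0$ for $\sigma>2\ep$.
\item Monotonicity of $\MS^\ep$ in $\ep$ and the countable union $\bigcup_m\MS^{1/m}$ then finish the proof.
\end{enumerate}
So the volume-plus-packing argument is not an optional substitute for counting balls. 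It is the mechanism that converts spacetime ($d_Z$) information into $d^Z_0$ information without any reverse comparison of the two metrics.
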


\begin{proof}
By \cite[Corollary~4.24(b)]{fang-li2}, for every $z_0\in Z$ and every
$r>0$ satisfying $\t(z_0)-2r^2>-0.98$, one has
\begin{equation}\label{eq:quantitative-stratum-volume-local}
\left|
    B^*_{\delta r}
    \bigl(\MS^{\ep,2n-4}_{\delta r,\ep r}\bigr)
    \cap B_Z^*(z_0,r)\cap Z_t
\right|_{d_t^Z}
\leq
C(n,Y,\ep)\delta^{4-\ep}r^{2n}
\end{equation}
for every $\delta\in(0,\ep)$ and every $t\in[-1,0]$. Here,
$B_s^*(A)$ denotes the $s$-neighborhood of $A$ with respect to $d_Z$,
and $\MS^{\ep,2n-4}_{\delta r,\ep r}$ is the quantitative singular
stratum from \cite[Definition~8.11]{FL1}.

Since $Z_0$ is compact and $Z$ has bounded $d_Z$-diameter, $Z_0$ can be
covered by finitely many spacetime balls of a fixed sufficiently small
radius. After applying
\eqref{eq:quantitative-stratum-volume-local} over this finite cover,
rescaling the parameters by the fixed covering radius, and adjusting the
constant, we obtain
\begin{equation}\label{eq:quantitative-stratum-volume-global}
\left|
    B^*_{\delta}
    \bigl(\MS^{\ep,2n-4}_{\delta,\ep}\bigr)
    \cap Z_0
\right|_{d^Z_0}
\leq C_\ep\delta^{4-\ep}
\end{equation}
for all sufficiently small $\delta>0$.

By \cite[Lemma~6.4]{FL1},
\[
    d_Z(x,y)\leq d^Z_0(x,y),
    \qquad x,y\in Z_0.
\]
Consequently,
\begin{equation}\label{eq:quantitative-stratum-tubular-d0}
\left|
    \left\{
        y\in Z_0:
        d^Z_0\bigl(y,\MS^{\ep,2n-4}_{\delta,\ep}\bigr)<\delta
    \right\}
\right|_{d^Z_0}
\leq C_\ep\delta^{4-\ep}.
\end{equation}
Define
\begin{equation}\label{eq:fixed-quantitative-stratum}
    \MS^{\ep}
    :=
    \bigcap_{0<\delta<\ep}
    \MS^{\ep,2n-4}_{\delta,\ep}.
\end{equation}
Since $\MS^{\ep}$ is contained in every set appearing in the
intersection, \eqref{eq:quantitative-stratum-tubular-d0} implies
\begin{equation}\label{eq:fixed-stratum-tubular-volume}
\left|
    \left\{
        y\in Z_0:
        d^Z_0(y,\MS^{\ep})<\delta
    \right\}
\right|_{d^Z_0}
\leq C_\ep\delta^{4-\ep}
\end{equation}
for all sufficiently small $\delta>0$.

We now estimate the Hausdorff measure of $\MS^{\ep}$. Let
$\{p_i\}_{i=1}^{N_\delta}\subset\MS^{\ep}$ be a maximal
$\delta$-separated set with respect to $d^Z_0$. Then the balls
$B_{d^Z_0}(p_i,\delta/2)$ are pairwise disjoint, while the balls
$B_{d^Z_0}(p_i,\delta)$ cover $\MS^{\ep}$. By
\eqref{eq:fixed-stratum-tubular-volume} and
Lemma~\ref{lem:volume}, for every $\ep>0$,
\[
    N_\delta c_\ep \delta^{2n+\ep}
    \leq C_\ep\delta^{4-\ep}.
\]
Hence
\begin{equation}\label{eq:fixed-stratum-covering-number}
    N_\delta
    \leq C_{\ep}
    \delta^{-(2n-4+2\ep)}.
\end{equation}
Let $\sigma>2\ep$. Denote by
$\mathscr H^s_{\rho,d^Z_0}$ the $s$-dimensional Hausdorff content at
scale $\rho$. From the covering by the balls $B_{d^Z_0}(p_i,\delta)$ and
\eqref{eq:fixed-stratum-covering-number},
\[
\mathscr H^{2n-4+\sigma}_{2\delta,d^Z_0}(\MS^{\ep})
\leq
N_\delta(2\delta)^{2n-4+\sigma}
\leq
C_{\ep}\delta^{\sigma-2\ep}.
\]
Letting $\delta\downarrow0$ gives
\begin{equation}\label{eq:fixed-stratum-hausdorff-zero}
    \mathscr H_{d^Z_0}^{2n-4+\sigma}(\MS^{\ep})=0
    \qquad\text{for every }\sigma>2\ep.
\end{equation}

The definitions imply that
\begin{equation}\label{eq:quantitative-strata-monotonicity}
    \MS^{\ep}\subset\MS^{\ep'}
    \qquad\text{whenever }0<\ep'\leq\ep.
\end{equation}
Moreover,
\begin{equation}\label{eq:qualitative-quantitative-union}
    \MS^{2n-4}
    =\bigcup_{m=1}^{\infty}\MS^{1/m}.
\end{equation}
Fix $\sigma>0$ and $m\geq1$. Choose $j\geq m$ so large that
$2/j<\sigma$. By \eqref{eq:quantitative-strata-monotonicity},
$\MS^{1/m}\subset\MS^{1/j}$, and therefore
\eqref{eq:fixed-stratum-hausdorff-zero} gives
\[
    \mathscr H_{d^Z_0}^{2n-4+\sigma}(\MS^{1/m})=0.
\]
Using Theorem~\ref{thm:exclude1},
\eqref{eq:qualitative-quantitative-union}, and countable subadditivity of
Hausdorff measure, we conclude that
\[
\mathscr H_{d^Z_0}^{2n-4+\sigma}(Z_0^{\sing})
=
\mathscr H_{d^Z_0}^{2n-4+\sigma}(\MS)
\leq
\sum_{m=1}^{\infty}
\mathscr H_{d^Z_0}^{2n-4+\sigma}(\MS^{1/m})
=0.
\]
Since $\sigma>0$ is arbitrary, the Hausdorff dimension estimate follows.
\end{proof}

\section{K\"ahler--Ricci Shrinkers via Fano Fibrations}
\label{sec:schwarz-generic}

In this section, we prove Theorems~\ref{thm:local-smooth-convergence},~\ref{thm:ac-criterion}, and~\ref{thm:special-uniqueness}. The key point is
to establish polynomial growth estimates for homogeneous holomorphic
functions on a K\"ahler--Ricci shrinker and then a Schwarz-type lower bound
for $\omega_t$ using the polarized Fano fibration structure. The main
technique is to apply the maximum principle on a noncompact manifold, using
suitable barrier functions supplied by the soliton potential.

\subsection{Conventions and the Fano fibration}

Throughout this section, let \((X^n,\omega,J,f)\) be a noncompact
K\"ahler--Ricci shrinker, and let \(g\) be the associated K\"ahler metric. We
use the complex convention throughout this section:
\[
    \Delta_\omega=\tr_\omega \ii\partial\bar\partial,
    \qquad
    \scal_\omega=\tr_\omega\Ric(\omega).
\]
Thus \(\Delta_\omega=\frac12\Delta_g\), where \(\Delta_g\) is the Riemannian
Laplacian. Throughout this section, \(\nabla u\) denotes the Riemannian
gradient, and \(|\nabla u|^2\) its Riemannian norm. For a real function \(u\),
we denote by \(\nabla^{1,0}u\) its \((1,0)\)-gradient, so that
\[
    |\nabla^{1,0}u|^2=\frac12|\nabla u|^2.
\]
We normalize the shrinker by
\begin{align}\label{eq:identitiesric}
    \Ric(\omega)+\ii\partial\bar\partial f=\omega,
    \qquad
    \scal_\omega+|\nabla^{1,0}f|^2=f+n.
\end{align}
Set
\[
    \rho:=\sqrt{f+n}.
\]
We may assume that $\rho>0$, since otherwise $(X,g)$ is isometric to the Gaussian soliton. For a function \(u\), we write
\[
    \Delta_f u:=\Delta_\omega u-\frac12(\nabla f)(u).
\]
Thus \(\Delta_f\) is one half of the Riemannian weighted Laplacian
\(u\mapsto\Delta_g u-(\nabla f)(u)\). With this notation, the chosen
normalization gives the simple identity
\[
    \Delta_f f=-f.
\]
By \cite{CaoZhou} and \cite{HaslhoferMueller2011} (see also
\eqref{eq:distanceestimate}), \(\rho\) is proper and
\(\rho\simeq d_g(p,\cdot)\) outside a compact set. We also use the standard
nonnegativity \(\scal_\omega\ge0\) of the scalar curvature of a Ricci shrinker \cite{Chen2009Strong}. Consequently,
\[
    |\nabla^{1,0}f|\le \rho,
    \qquad
    |\nabla^{1,0}\rho|\le\frac12.
\]

Let \(\psi^t\), \(t\in[-1,0)\), be the self-similar biholomorphisms generated
by \((2|t|)^{-1}\nabla f\), normalized by \(\psi^{-1}=\mathrm{id}\). Thus
\[
    \frac{d}{dt}\psi^t=(2|t|)^{-1}\nabla f\circ\psi^t.
\]
Set
\[
    \omega_t:=|t|(\psi^t)^*\omega,
    \qquad
    F(x,t):=|t|\rho^2(\psi^t(x)).
\]
Under this convention,
\(\mathcal L_{\frac12\nabla f}\omega=\ii\partial\bar\partial f\).
Then \(\partial_t\omega_t=-\Ric(\omega_t)\), and
\begin{equation}\label{eq:parabolic-F}
    \Delta_{\omega_t}F=n-|t|\scal_{\omega_t},
    \qquad
    \partial_tF=-|t|\scal_{\omega_t},
    \qquad
    (\partial_t-\Delta_{\omega_t})F=-n.
\end{equation}

We use the Fano fibration associated with a K\"ahler--Ricci shrinker, constructed in
\cite[Section~3]{SunZhang}. Let \(\mathbb T\) be the torus generated by the
soliton vector field \(\xi=J\nabla f\), and let
\[
    \pi:X\longrightarrow Y
\]
be the fibration constructed from homogeneous holomorphic functions. Let
\(X^\circ\subset X\) be the largest open set on which \(\pi\) is
biholomorphic onto an open subset of \(Y\), and let
\[
    E:=X\setminus X^\circ
\]
be the exceptional set of \(\pi\). Choose a \(\mathbb T\)-equivariant affine
embedding
\[
    \iota:Y\hookrightarrow\mathbb C^N
\]
given by nonconstant homogeneous generators of the coordinate ring of \(Y\),
and write
\[
    \iota\circ\pi=(h_1,\ldots,h_N).
\]
The functions \(h_\alpha\) are holomorphic and homogeneous, with positive
weights \(\lambda_\alpha\), that is,
\[
    \xi(h_\alpha)=\sqrt{-1}\lambda_\alpha h_\alpha.
\]
The map \(\pi\) is proper. We let
\[
    \omega_Y:=\iota^*\omega_{\mathbb C^N}.
\]
Then its pullback
\[
    \pi^*\omega_Y
    =\sqrt{-1}\sum_\alpha\partial h_\alpha\wedge
      \bar\partial\overline{h_\alpha}
\]
is a smooth semipositive form on \(X\).

\subsection{Homogeneous functions}

\begin{lemma}[Homogeneity and weighted \(L^2\)]\label{lem:homogeneous-flow-growth}
Let \(h\) be a holomorphic function satisfying
\[
    \xi(h)=\sqrt{-1}\lambda h
\]
for some \(\lambda\ge0\). Then
\[
    \nabla f(|h|^2)=2\lambda |h|^2,
    \qquad
    \Delta_f h=-\frac{\lambda}{2}h.
\]
Moreover, for every \(\tau\in(0,1]\),
\[
    \int_X|h|^2e^{-\tau f}\,\omega^n<\infty.
\]
\end{lemma}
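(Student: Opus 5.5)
The plan has two parts: first derive the two identities pointwise from holomorphicity and the definition $\xi=J\nabla f$, then obtain the weighted $L^2$ bound by integrating a divergence identity over the compact sublevel sets of $f$.

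\textbf{Pointwise identities.} Since $h$ is holomorphic, $dh(JV)=\sqrt{-1}\,dh(V)$ for every real vector $V$. Applying this with $V=\nabla f$ gives
\[
    \sqrt{-1}\lambda h=\xi(h)=dh(J\nabla f)=\sqrt{-1}\,(\nabla f)(h),
\]
so $(\nabla f)(h)=\lambda h$. Because $\nabla f$ is real and $\lambda$ is real, we also have $(\nabla f)(\bar h)=\lambda\bar h$. The Leibniz rule then gives $\nabla f(|h|^2)=2\lambda|h|^2$. Holomorphicity gives $\ii\partial\bar\partial h=0$, hence $\Delta_\omega h=0$. With the convention $\Delta_f=\Delta_\omega-\frac12\nabla f$, this yields $\Delta_f h=-\frac{\lambda}{2}h$. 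The only care needed in this step is tracking the sign and factor conventions fixed at the start of the section.

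\textbf{Weighted $L^2$ bound.} I will integrate the divergence of $W:=|h|^2e^{-\tau f}\nabla f$ over the sublevel sets $\Omega_s=\{f\le s\}$. These sets are compact because $\rho$ is proper, and for almost every $s$ their boundary is smooth by Sard's theorem. The Riemannian form of $\Delta_f f=-f$ is $\Delta_g f=|\nabla f|^2-2f$. Combining this with $\nabla f(|h|^2)=2\lambda|h|^2$ gives
\[
    \operatorname{div}W=e^{-\tau f}|h|^2\bigl(2\lambda+(1-\tau)|\nabla f|^2-2f\bigr).
\]
By $\scal\ge0$ and \eqref{eq:identitiesric}, $|\nabla f|^2=2|\nabla^{1,0}f|^2\le 2(f+n)$. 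Since $\tau\le1$, this implies
\[
    \operatorname{div}W\le 2e^{-\tau f}|h|^2\bigl(\lambda+n-\tau f\bigr).
\]
The boundary flux $\int_{\partial\Omega_s}|h|^2e^{-\tau s}|\nabla f|\ge0$ is nonnegative, so the divergence theorem gives
\[
    \int_{\Omega_s}(\tau f-\lambda-n)|h|^2e^{-\tau f}\,\omega^n\le0 .
\]

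\textbf{Absorption step.} Set $s_0:=2(\lambda+n+1)/\tau$ and split $\Omega_s$ into $\{f\le s_0\}$ and $\{s_0<f\le s\}$. On the outer region the factor satisfies $\tau f-\lambda-n\ge\frac{\tau f}{2}\ge1$. The inner region is a fixed compact set. Therefore
\[
    \int_{\{s_0<f\le s\}}|h|^2e^{-\tau f}\,\omega^n\le (\lambda+n)\int_{\{f\le s_0\}}|h|^2e^{-\tau f}\,\omega^n,
\]
and the right-hand side is independent of $s$. Letting $s\to\infty$ yields $\int_X|h|^2e^{-\tau f}\omega^n<\infty$.

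\textbf{Main obstacle.} The delicate point is getting the sign of the divergence computation right. The argument relies on two facts pulling the same way: $\scal\ge0$, which gives the upper bound on $|\nabla f|^2$, and the factor $1-\tau\ge0$, which makes that upper bound usable. Together they ensure that the growth term $-\tau f$ dominates and that the boundary flux has the favorable sign. No growth estimate for $h$ is needed at this stage.
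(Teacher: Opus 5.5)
Your proposal is correct and follows the paper's proof essentially step for step. The two identities come from the same holomorphicity argument. The weighted $L^2$ bound uses the same divergence identity for $|h|^2e^{-\tau f}\nabla f$ on sublevel sets $\{f\le s\}$, the same nonnegative boundary flux, and the same absorption beyond a large threshold. The only difference is that the paper keeps the exact factor $\lambda+(1-\tau)n-\tau f-(1-\tau)\scal_\omega$ and bounds its absolute value on the inner compact set, whereas you first drop the curvature term using $\scal_\omega\ge0$.

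One cosmetic point: the normalization $\scal_\omega+|\nabla^{1,0}f|^2=f+n$ only gives $f\ge -n$. So on $\{f\le s_0\}$ the factor $\lambda+n-\tau f$ is bounded by $\lambda+2n$, not $\lambda+n$. This does not affect finiteness.
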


\begin{proof}
Since \(h\) is holomorphic, \(\d h(JV)=\sqrt{-1}\,\d h(V)\) for every real
vector field \(V\). Taking \(V=\nabla f\) and using \(\xi=J\nabla f\), we get
\[
    \sqrt{-1}\,\nabla f(h)=\xi(h)=\sqrt{-1}\lambda h,
\]
hence \(\nabla f(h)=\lambda h\). This gives
\(\nabla f(|h|^2)=2\lambda|h|^2\). Since \(h\) is holomorphic,
\(\Delta_\omega h=0\), and therefore
\[
    \Delta_f h=\Delta_\omega h-\frac12\nabla f(h)
    =-\frac{\lambda}{2}h.
\]

It remains to prove the weighted \(L^2\) statement. For
\(\tau\in(0,1]\), the preceding identities and \eqref{eq:identitiesric} give
\begin{align*}
    &\operatorname{div}_g\bigl(|h|^2e^{-\tau f}\nabla f\bigr)\\
    \qquad=&2|h|^2
    \bigl(\Delta_\omega f-\tau|\na^{1,0}f|^2+\lambda\bigr)e^{-\tau f}\\
    \qquad=&2|h|^2\bigl(\lambda+(1-\tau)n-\tau f
    -(1-\tau)\scal_\omega\bigr)e^{-\tau f}.
\end{align*}

Let \(D_R:=\{f\le R\}\). Since \(f\) is proper, \(D_R\) is compact. For a
regular value \(R\), integration by parts gives
\begin{align*}
    &\int_{\partial D_R}|h|^2e^{-\tau f}
    \langle\nabla f,\nu\rangle_g\,d\sigma_g=2\int_{D_R}|h|^2\bigl(\lambda+(1-\tau)n-\tau f
    -(1-\tau)\scal_\omega\bigr)e^{-\tau f}\,\omega^n.
\end{align*}
The left-hand side is nonnegative. Choose
\[
    B_\tau>\frac{\lambda+(1-\tau)n+1}{\tau}.
\]
Since \(\scal_\omega\ge0\), on \(X\setminus D_{B_\tau}\),
\[
    \tau f+(1-\tau)\scal_\omega-\lambda-(1-\tau)n\ge1.
\]
Therefore, for every regular value \(R>B_\tau\),
\begin{align*}
    \int_{D_R\setminus D_{B_\tau}}|h|^2e^{-\tau f}\,\omega^n
    \le{}&
    \int_{D_{B_\tau}}|h|^2
    \bigl|\lambda+(1-\tau)n-\tau f
    -(1-\tau)\scal_\omega\bigr|e^{-\tau f}\,\omega^n.
\end{align*}
The right-hand side is finite because \(D_{B_\tau}\) is compact. Letting
\(R\to\infty\) through regular values proves the claim.
\end{proof}

\begin{lemma}[Local mean-value estimate]\label{lem:local-mean-value-drift}
Let \(v\) be a real solution of
\[
    (\Delta_f+\mu)v=0
\]
on \(X\), where \(\mu\in\R\). Put
\[
    r_x=(1+\rho(x))^{-1}.
\]
Then there is a constant \(C=C(n,A,\mu)\), where \(-A\) is a lower bound for
the entropy of the Ricci shrinker, such that
\[
    |v(x)|^2+r_x^2|\na v(x)|^2
    \le
    Cr_x^{-2n}\int_{B_g(x,r_x)}v^2\,\omega^n.
\]
\end{lemma}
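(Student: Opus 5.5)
The plan is to rescale the ball $B_g(x,r_x)$ to unit size, observe that the drift term becomes a bounded lower-order perturbation, and then apply standard elliptic theory on a noncollapsed ball with bounded geometry. The key observation is that on $B_g(x,r_x)$ the drift satisfies $|\nabla f|\le 2\rho\le C r_x^{-1}$. This follows from $|\nabla^{1,0}f|\le\rho$ and the fact that $|\nabla\rho|$ is bounded, so $\rho$ varies by at most $O(1)$ on a ball of radius $r_x\le1$.

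Set $\tilde g:=r_x^{-2}g$ on $B_g(x,r_x)=B_{\tilde g}(x,1)$. In the rescaled metric the equation reads
\[
    \Delta_{\tilde g}v-\langle b,\nabla_{\tilde g}v\rangle_{\tilde g}+2\mu r_x^2 v=0,\qquad b:=r_x^2\nabla_g f,
\]
and we have $|b|_{\tilde g}=r_x|\nabla f|_g\le C$ and $|2\mu r_x^2|\le 2|\mu|$. So $v$ solves a uniformly elliptic equation with bounded drift and bounded zeroth-order coefficient on a unit ball.

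To run Moser iteration, or a De Giorgi--Nash type $L^\infty$--$L^2$ estimate, we need a uniform local Sobolev inequality on $B_{\tilde g}(x,1)$. I would obtain this from the entropy lower bound $-A$. The log-Sobolev inequality of the shrinker, together with the scale invariance of Perelman's $\mu$-functional at scales $\le1$, yields a uniform local Sobolev inequality, as in Li--Wang's heat kernel work on shrinkers. It also yields a volume noncollapsing bound $|B_g(x,r)|\ge c r^{2n}$ for $r\le1$. Given these, Moser iteration gives
\[
    \sup_{B_{\tilde g}(x,1/2)}v^2\le C\,|B_{\tilde g}(x,1)|^{-1}_{\tilde g}\int v^2\,d\mathrm{vol}_{\tilde g}\le C\int_{B_{\tilde g}(x,1)} v^2\,d\mathrm{vol}_{\tilde g}.
\]
Converting back to $g$ produces the factor $r_x^{-2n}$.

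For the gradient term I would use a Caccioppoli estimate followed by a second Moser step applied to $|\nabla v|^2$. This step is the main obstacle. The Bochner formula for $\Delta_f$ involves $\Ric_f=\Ric+\nabla^2 f=\tfrac12 g$, which is favorable: the weighted Bochner identity is
\[
    \tfrac12\Delta_{g,f}|\nabla v|^2=|\nabla^2v|^2+\langle\nabla v,\nabla\Delta_{g,f}v\rangle+\tfrac12|\nabla v|^2.
\]
It therefore gives $\Delta_{g,f}|\nabla v|^2\ge -C|\nabla v|^2$ without requiring any curvature bound. Running the same Moser iteration on $u=|\nabla v|^2$ in the rescaled ball, where the drift is again bounded, gives $\sup_{B(x,1/2)}|\nabla_{\tilde g}v|^2\le C\int_{B(x,3/4)}|\nabla_{\tilde g}v|^2$. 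Caccioppoli, again using only the bounded drift, then bounds the right-hand side by $C\int_{B(x,1)}v^2$. Finally, $|\nabla_{\tilde g}v|^2=r_x^2|\nabla_gv|^2$.

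The delicate point I expect is the Sobolev constant. It must be uniform over all centers $x$ and all scales $r_x\le1$, and it must depend only on $n$ and $A$. I would derive it from the $\mu$-functional bound on the static metric $g$ at scales $\tau\le1$. This requires no curvature assumption and works directly on $g$, since smaller scales only improve the log-Sobolev constant.
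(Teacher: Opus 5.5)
\textbf{Gap: the Sobolev constant and noncollapsing need a scalar-curvature bound.} Your argument fails at the step you flagged as delicate. The entropy bound does not give a uniform Sobolev inequality or volume noncollapsing ``without any curvature assumption''.

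The log-Sobolev inequality encoded in $\boldsymbol\mu(g,\tau)\ge -A$ controls $\int u^2\log u^2$ only by $\int \tau\bigl(4|\nabla u|^2+\scal\,u^2\bigr)$. Since $\scal\ge0$, the curvature term cannot be discarded. The Sobolev inequality one derives from it, as in Li--Wang, likewise carries $\scal\,u^2$ on the energy side. The $\kappa$-noncollapsing $|B_g(y,r)|\ge\kappa r^{2n}$ holds only under $\scal\le r^{-2}$ on the ball.

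Lowering the scale $\tau$ does nothing to this term. Entropy bounds at scales $\le1$ are compatible with collapsed unit balls in highly curved neck-like regions; think of time slices near a neckpinch. So neither your claim $|B_g(x,r)|\ge cr^{2n}$ for all $r\le1$ nor a curvature-free Sobolev constant on $B_{\tilde g}(x,1)$ is justified by what you wrote.

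\textbf{The fix.} The missing idea is the real reason for the scale $r_x=(1+\rho(x))^{-1}$: it is not only to bound the drift. The soliton identity $\scal_\omega+|\nabla^{1,0}f|^2=\rho^2$ gives $\scal_\omega\le\rho^2$, and $|\nabla\rho|\le 1/\sqrt2$, so $\scal_\omega\le C r_x^{-2}$ on $B_g(x,2r_x)$.

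After rescaling, $\scal_{\tilde g}\le C$, and the curvature term becomes a bounded lower-order term. This gives a uniform, unnormalized local Sobolev inequality on $B_{\tilde g}(x,1)$. Moser iteration then yields $\sup_{B_{\tilde g}(x,1/2)}v^2\le C\int_{B_{\tilde g}(x,1)}v^2$ directly, and the same bound gives noncollapsing at scale comparable to $r_x$ if you prefer the normalized form.

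With this inserted, the rest of your argument goes through: bounded drift, Moser for $v^2$, the weighted Bochner formula, and Caccioppoli. One minor point: in this section's normalization $\Ric_g+\nabla^2 f=g$, so the Bochner term is $|\nabla v|^2$, not $\tfrac12|\nabla v|^2$; this is harmless.

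\textbf{Comparison with the paper.} The paper keeps the weighted measure $e^{-f}\,dV_{\hat g}$, so $\widehat{\mathcal L}_f$ is symmetric and the drift never enters as a perturbation. It takes its Sobolev inequality from Munteanu--Wang, using $\Ric_{\hat g}+\nabla^2_{\hat g}f=r_x^2\hat g\ge0$ and the bounded oscillation of $f$. Because that Sobolev constant is volume-normalized, the paper then needs $|B_g(x,r_x)|\ge\kappa r_x^{2n}$. It obtains this from exactly the bound $\scal_\omega\le Cr_x^{-2}$ that your proposal omits.
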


\begin{proof}
This result is well-known to experts. We provide a sketch of the proof for
the reader's convenience.

Let \(w=v^2\). Then
\[
    \Delta_f w=2|\nabla^{1,0}v|^2-2\mu v^2
    \ge-2|\mu|w.
\]
Since \(|\nabla^{1,0}\rho|\le1/2\), the functions \(1+\rho\) and
\(1+\rho(x)\) are uniformly comparable on \(B_g(x,2r_x)\). Moreover,
\(|\nabla f|\le\sqrt{2}\rho\), so \(f\) has uniformly bounded oscillation on
this ball. Rescale the metric by
\[
    \hat g=r_x^{-2}g.
\]
Then
\[
    \Ric_{\hat g}+\na^2_{\hat g}f=r_x^2\hat g\ge0.
\]
Let
\[
    \mathcal L_f u:=\Delta_g u-(\nabla f)(u)=2\Delta_f u.
\]
With respect to \(\hat g\), the corresponding weighted Laplacian is
\(\widehat{\mathcal L}_f=r_x^2\mathcal L_f\), and
\[
    \widehat{\mathcal L}_f w
    =2|\nabla_{\hat g}v|_{\hat g}^2-4\mu r_x^2v^2
    \ge-4|\mu|w.
\]
We can therefore apply the standard De Giorgi--Nash--Moser iteration, thanks
to the Sobolev inequality of \cite[Lemma~3.2]{munteanu2012analysis} applied
to the rescaled smooth metric measure space
\((X,\hat g,e^{-f}\d V_{\hat g})\), to conclude that
\[
    w(x)
    \le
    \frac{C}{\abs{B_{\hat g}(x,1)}_{\hat g,f}}
    \int_{B_{\hat g}(x,1)}we^{-f}\,\d V_{\hat g}.
\]
Here, \(C=C(n,\mu)\), and
\[
    \abs{B_{\hat g}(x,1)}_{\hat g,f}
    :=\int_{B_{\hat g}(x,1)}e^{-f}\,\d V_{\hat g}.
\]
Since \(f\) has bounded oscillation on this ball, the weighted and unweighted
averages are uniformly comparable. Scaling back,
\[
    |v(x)|^2
    \le
    \frac{C}{\abs{B_g(x,r_x)}_g}
    \int_{B_g(x,r_x)}v^2\,\omega^n.
\]
Finally, \(\scal_\omega\le\rho^2\le Cr_x^{-2}\) on
\(B_g(x,2r_x)\). After replacing \(r_x\) by a fixed multiple, which only
changes the constant in the estimate, the standard \(\kappa\)-noncollapsing
of Ricci shrinkers at this scale \cite[Theorem~22]{liwang2020heat1} gives
\[
    \abs{B_g(x,r_x)}_g\ge\kappa r_x^{2n},
\]
where \(\kappa=\kappa(n,A)>0\). This proves
\[
    |v(x)|^2
    \le
    Cr_x^{-2n}\int_{B_g(x,r_x)}v^2\,\omega^n.
\]

For the gradient estimate, recall that \(\mathcal L_fv=-2\mu v\). The
weighted Bochner formula and the identity
\(\Ric_g+\na^2f=g\) give
\[
    \frac12\mathcal L_f|\na v|^2
    =|\na^2v|^2+(1-2\mu)|\na v|^2
    \ge-C(\mu)|\na v|^2.
\]
The same mean-value argument, applied to \(|\na v|^2\) on a smaller ball,
yields
\begin{align}\label{eq:moser2}
    |\na v(x)|^2
    \le
    Cr_x^{-2n}\int_{B_g(x,r_x/2)}|\na v|^2\,\omega^n.
\end{align}
Since \(\Delta_fv+\mu v=0\), a standard weighted integration by parts,
together with the bounded oscillation of \(f\), gives
\[
    \int_{B_g(x,r_x/2)}|\na v|^2\,\omega^n
    \le
    Cr_x^{-2}\int_{B_g(x,r_x)}v^2\,\omega^n.
\]
Combining this with \eqref{eq:moser2} completes the proof.
\end{proof}

\begin{corollary}[Polynomial growth]\label{cor:homogeneous-polynomial-growth}
Every homogeneous holomorphic function on \(X\) has polynomial growth. More
precisely, if \(h\) is a holomorphic function satisfying, for some
\(\lambda\ge0\),
\[
    \xi(h)=\sqrt{-1}\lambda h,
\]
then there is a constant \(C\) such that
\[
    |h(x)|\le C(1+\rho(x))^\lambda.
\]
\end{corollary}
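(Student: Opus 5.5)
The plan is to prove the bound first in a crude exponential form, using the weighted $L^2$ estimate of Lemma~\ref{lem:homogeneous-flow-growth} and the mean-value inequality of Lemma~\ref{lem:local-mean-value-drift}. We then sharpen it to the polynomial rate with a maximum-principle barrier argument, in which the crude bound controls the behaviour at infinity.

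\emph{Step 1 (crude bound).} Apply Lemma~\ref{lem:local-mean-value-drift} to $v=\operatorname{Re}h$ and $v=\operatorname{Im}h$, with $\mu=\lambda/2$, which is allowed by Lemma~\ref{lem:homogeneous-flow-growth}. On $B_g(x,r_x)$ the function $f$ has bounded oscillation, and $r_x^{-2n}\le C(1+\rho(x))^{2n}$. Hence, for any $\tau'\in(0,1]$,
\[
|h(x)|^2\le C(1+\rho(x))^{2n}e^{\tau' f(x)}\int_X|h|^2e^{-\tau' f}\omega^n\le C_{\tau'}e^{2\tau' f(x)}.
\]
So $|h|^2=O(e^{\tau f})$ for every $\tau>0$.

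\emph{Step 2 (sub/supersolutions).} Since $h$ is holomorphic, $\Delta_\omega|h|^2=|\partial h|^2\ge0$, and $\nabla f(|h|^2)=2\lambda|h|^2$. Therefore
\[
(\Delta_f+\lambda)|h|^2\ge0.
\]
For the barrier, use $\Delta_f\rho^2=-f=n-\rho^2$ and $|\nabla^{1,0}\rho^2|^2\le\rho^2$. A direct computation gives
\[
(\Delta_f+\lambda)\rho^{2\lambda}\le C\rho^{2\lambda-2},
\]
so the leading terms cancel exactly. To gain a strict sign, perturb the power: set $\Psi:=\rho^{2\lambda}-\rho^{2\lambda-2}\cdot K$, or more robustly $\Psi:=\rho^{2\lambda}(1-K\rho^{-2}\log\rho)$. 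Choose $K$ so that $(\Delta_f+\lambda)\Psi\le0$ outside a compact set $\{\rho\ge R_0\}$. The correction term contributes roughly $+K(\lambda-\lambda+1)\rho^{2\lambda-2}\log\rho$ with a favourable sign. I would fix the exact correction by computing $(\Delta_f+\lambda)\rho^{2\beta}\approx(\lambda-\beta)\rho^{2\beta}$ and choosing a lower-order term whose exponent $\beta<\lambda$ gives a positive multiple of $\rho^{2\beta}$ that absorbs the error. Separately, for small $\tau>0$,
\[
(\Delta_f+\lambda)e^{\tau f}\le e^{\tau f}\bigl(\tau n+\lambda-\tau(1-\tau)f\bigr)<0\quad\text{for }f\text{ large}.
\]

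\emph{Step 3 (maximum principle).} On $\Omega=\{\rho>R_0\}$ consider
\[
w=|h|^2-A\Psi-\epsilon e^{\tau f},
\]
with $\tau$ larger than the exponent from Step~1, so that $w\to-\infty$ at infinity. Choose $A$ so large that $w<0$ on $\partial\Omega$; this is possible since $|h|$ is bounded on compact sets. Then $(\Delta_f+\lambda)w\ge0$ in $\Omega$. A positive interior maximum is impossible, because there $\Delta_f w\le0$ while $\lambda w>0$; for $\lambda=0$ the weak maximum principle applies directly. Hence $w\le0$. Letting $\epsilon\to0$ gives $|h|^2\le A\Psi\le A\rho^{2\lambda}$, which is the claim.

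The main obstacle is Step~2. Because the exponent is sharp, the leading terms of $(\Delta_f+\lambda)\rho^{2\lambda}$ cancel, and the lower-order correction must be chosen carefully so that the barrier is a genuine supersolution while remaining comparable to $\rho^{2\lambda}$.
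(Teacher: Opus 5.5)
Steps 1 and 2 are sound. Step 1 is exactly the paper's subexponential bound. In Step 2 the plain choice $\Psi=\rho^{2\lambda}-K\rho^{2\lambda-2}$ already works: one has $(\Delta_f+\lambda)\rho^{2\beta}=(\lambda-\beta)\rho^{2\beta}+\beta(n+\beta-1)\rho^{2\beta-2}-\beta(\beta-1)\scal_\omega\rho^{2\beta-4}$ and $0\le\scal_\omega\le\rho^2$, so for $K$ large the term $-K\rho^{2\lambda-2}$ absorbs the $O(\rho^{2\lambda-2})$ error once $\rho$ is large. The logarithmic variant is unnecessary.

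\textbf{The gap is in Step 3.} The operator $\Delta_f+\lambda$ has a positive zeroth-order coefficient, and your justification of the maximum principle does not work. At a positive interior maximum of $w$ you have $\Delta_f w\le 0$ and $\lambda w>0$. These are perfectly consistent with $(\Delta_f+\lambda)w>0$, so there is no contradiction. The maximum principle for $\Delta+c$ requires $c\le 0$, or else a positive supersolution. As written, the conclusion $w\le 0$ is not justified for $\lambda>0$.

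\textbf{How to repair it.} You already have the missing ingredient. Since $\Psi>0$ and $(\Delta_f+\lambda)\Psi\le 0$ on $\Omega$, apply the maximum principle to $v:=w/\Psi$ instead of $w$. At an interior maximum $x_0$ of $v$ with $v(x_0)>0$, you have $\nabla v(x_0)=0$ and $\Delta_f v(x_0)\le 0$. The cross term vanishes there, so
\[
(\Delta_f+\lambda)w(x_0)=\Psi\,\Delta_f v(x_0)+v(x_0)\,(\Delta_f+\lambda)\Psi(x_0)\le 0,
\]
which contradicts $(\Delta_f+\lambda)w>0$. Moreover $v<0$ on $\partial\Omega$, and $v\to-\infty$ at infinity because $w\le-\frac{\epsilon}{2}e^{\tau f}$ eventually while $\Psi\le\rho^{2\lambda}$. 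Hence $v\le 0$, so $w\le 0$, and the rest of your argument (letting $\epsilon\to0$) goes through.

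\textbf{How the paper avoids the issue.} The paper uses that $\log(|h|^2+\delta)$ is plurisubharmonic, which gives $\Delta_f\log(|h|^2+\delta)\ge-\lambda$. Combined with $\Delta_f\log f\le -1$, this yields $\Delta_f G_{\delta,\epsilon}\ge\epsilon f>0$ for $G_{\delta,\epsilon}=\log(|h|^2+\delta)-\lambda\log f-\epsilon f$. This inequality has no zeroth-order term, so the ordinary maximum principle on $\{1\le f\le R\}$ applies directly. Log-plurisubharmonicity is also stronger than the linear inequality $(\Delta_f+\lambda)|h|^2\ge0$: passing to logarithms for a general subsolution would produce an unfavorable gradient term. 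As a result, the paper's barrier $\lambda\log f$ needs no lower-order correction, unlike your linear-scale barrier.
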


\begin{proof}
We first prove a subexponential pointwise bound. Fix \(\tau\in(0,1]\). By
Lemma~\ref{lem:homogeneous-flow-growth},
\[
    \int_X|h|^2e^{-\tau f}\,\omega^n<\infty.
\]
Let \(v\) be either the real or imaginary part of \(h\). Then
\((\Delta_f+\lambda/2)v=0\). Put
\[
    r_x=(1+\rho(x))^{-1}.
\]
By Lemma~\ref{lem:local-mean-value-drift},
\[
    |v(x)|^2
    \le
    Cr_x^{-2n}\int_{B_g(x,r_x)}v^2\,\omega^n.
\]
Since \(f\le f(x)+C\) on \(B_g(x,r_x)\), the preceding
\(L^2(e^{-\tau f})\) bound implies
\[
    |v(x)|^2
    \le
    C_\tau(1+\rho(x))^{2n}e^{\tau f(x)}
\]
for every \(\tau\in(0,1]\).

We now turn this subexponential bound into a polynomial one by a barrier
argument. The case \(h\equiv0\) is trivial, so assume \(h\not\equiv0\). By
Lemma~\ref{lem:homogeneous-flow-growth}, for every \(\delta>0\),
\[
    \nabla f\bigl(\log(|h|^2+\delta)\bigr)
    =\frac{2\lambda|h|^2}{|h|^2+\delta}
    \le2\lambda.
\]
Combined with the fact that
\(\Delta_\omega\log(|h|^2+\delta)\ge0\), since \(h\) is holomorphic, this
gives
\[
    \Delta_f \log(|h|^2+\delta)\ge-\lambda.
\]
On \(\{f\ge 1\}\),
\[
    \Delta_f \log f
    =\frac{\Delta_ff}{f}-\frac{|\nabla^{1,0}f|^2}{f^2}
    \le-1.
\]
For \(\ep>0\) and \(0<\delta\le1\), set
\[
    G_{\delta,\ep}
    :=\log(|h|^2+\delta)-\lambda\log f-\ep f.
\]
Then, on \(\{f\ge 1\}\),
\[
    \Delta_f G_{\delta,\ep}\ge\ep f>0.
\]
On the outer boundary \(\{f=R\}\), the subexponential estimate above, with
\(0<\tau<\min\{\ep,1\}\), gives
\[
    G_{\delta,\ep}
    \le
    C+2n\log\bigl(1+\sqrt{R+n}\bigr)-\lambda\log R
    -(\ep-\tau)R,
\]
and hence \(G_{\delta,\ep}\to-\infty\) as \(R\to\infty\). The maximum
principle on \(\{1\le f\le R\}\), followed by \(R\to\infty\) through regular
values, therefore gives
\[
    G_{\delta,\ep}
    \le
    \sup_{\{f=1\}}G_{\delta,\ep}
    \le C_1,
\]
where \(C_1\) is independent of \(\delta\) and \(\ep\). Letting
\(\delta\to0\) and then \(\ep\to0\), we get
\[
    |h|^2\le Cf^\lambda
\]
on \(\{f\ge 1\}\). Since \(f=\rho^2-n\), enlarging \(C\) on the compact
set \(\{f\le 1\}\) proves the claim.
\end{proof}

\subsection{Schwarz estimate on the Fano fibration}
\label{subsec:schwarz-lower-bound}

\begin{proposition}\label{prop:schwarz-lower-bound-general}
For every compact set \(K\Subset Y\), there exists a constant \(C_K>0\) such
that
\[
    \omega_t\ge C_K^{-1}\pi^*\omega_Y
\]
on \(\pi^{-1}(K)\) for all \(t\in[-1,0)\).
\end{proposition}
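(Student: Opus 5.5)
The plan is to prove a parabolic Chern--Lu (Schwarz) inequality for
\[
u(x,t):=\tr_{\omega_t}\pi^*\omega_Y=\sum_\alpha|\partial h_\alpha|^2_{\omega_t},
\]
and then localize it to $\pi^{-1}(K)$ with the maximum principle. A bound $u\le C_K$ on $\pi^{-1}(K)$ gives the statement, because $\pi^*\omega_Y$ is semipositive and so $\pi^*\omega_Y\le(\tr_{\omega_t}\pi^*\omega_Y)\,\omega_t$.

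\textbf{Step 1: subsolution.} The target metric is the pullback of the flat metric on $\mathbb C^N$ under $\iota$. For each holomorphic $h_\alpha$, the Ricci terms cancel in the Bochner formula under $\partial_t\omega_t=-\Ric(\omega_t)$, giving
\[
(\partial_t-\Delta_{\omega_t})|\partial h_\alpha|^2_{\omega_t}=-|\nabla\partial h_\alpha|^2_{\omega_t}\le0.
\]
Hence $u$ is a subsolution. By the usual Cauchy--Schwarz refinement, $(\partial_t-\Delta_{\omega_t})\log u\le0$ wherever $u>0$.

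\textbf{Step 2: homogeneity and growth.} Since $\nabla f(h_\alpha)=\lambda_\alpha h_\alpha$ (Lemma~\ref{lem:homogeneous-flow-growth}), we have $h_\alpha\circ\psi^t=|t|^{-\lambda_\alpha/2}h_\alpha$. Thus $u(\cdot,t)$ can be written explicitly in terms of $|\partial h_\alpha|^2_\omega$ evaluated at $\psi^t(x)$ and powers of $|t|$. Corollary~\ref{cor:homogeneous-polynomial-growth}, together with the gradient part of Lemma~\ref{lem:local-mean-value-drift} (applied to the real and imaginary parts of $h_\alpha$), gives $|\partial h_\alpha|_\omega\le C(1+\rho)^{\lambda_\alpha+1}$. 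Therefore $u$ has polynomial growth in $\rho$ on each slice, uniformly for $t$ in compact subintervals of $[-1,0)$. This growth control is what legitimizes the maximum principle at spatial infinity, with barrier $\ep F$: recall $F=|t|\rho^2\circ\psi^t$ is proper and satisfies $(\partial_t-\Delta_{\omega_t})F=-n$ by \eqref{eq:parabolic-F}.

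\textbf{Step 3: localization.} The growth in Step 2 means $u(\cdot,-1)$ is not globally bounded, so a global maximum principle cannot give a uniform bound. Instead we cut off in the $Y$-direction. Let $\Theta:=\log\bigl(1+\sum_\alpha|h_\alpha|^2\bigr)$, which is the pullback of a plurisubharmonic exhaustion of $\mathbb C^N$. It satisfies $0\le\Delta_{\omega_t}\Theta\le u$ and $|\partial\Theta|^2_{\omega_t}\le u$, and it is time-independent. Consider
\[
Q:=\log u-B\Theta-\ep F,
\]
where the constant $B$ is chosen so that $B\Theta$ exceeds $\log u(\cdot,-1)$ plus a margin outside a large sublevel set of $\Theta$. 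This choice uses the polynomial growth from Step 2 together with properness of $\pi$. At an interior maximum, $\Delta Q\le0$ and $\nabla Q=0$. The bad term is $B\Delta_{\omega_t}\Theta\le Bu$. I would absorb it by using the sharper Chern--Lu inequality, which carries a good term $-|\nabla\log u|^2$-type contribution, combined with the gradient identity at the maximum point, possibly after replacing $\Theta$ by a bounded concave function of $\Theta$. Failing that, I would replace $\Theta$ by a cutoff of the form $(A-\Theta)^{-1}$ and argue on the region where it is finite.

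Once $Q(\cdot,t)\le\sup Q(\cdot,-1)+\ep n$ holds for all $t$, letting $\ep\to0$ gives $u\le C(1+\sum_\alpha|h_\alpha|^2)^B$, which is uniformly bounded on $\pi^{-1}(K)$. I expect Step 3 to be the main obstacle: controlling the sign of $\Delta\Theta$ in the maximum principle, and making the estimate uniform as $t\nearrow0$ when $\psi^t$ pushes points to infinity. The homogeneity from Step 2 should make the $t$-dependence explicit and keep all constants independent of $t$.
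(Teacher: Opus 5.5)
Your Steps 1 and 2 match the paper. Step 3, however, has a genuine gap: the localization you propose does not close. With $Q=\log u-B\Theta-\ep F$ you get $(\partial_t-\Delta_{\omega_t})Q\le B\Delta_{\omega_t}\Theta+\ep n$. Here
\[
\Delta_{\omega_t}\Theta=\frac{u}{1+|h|^2}-|\partial\Theta|^2_{\omega_t},\qquad |h|^2:=\sum_\alpha|h_\alpha|^2,
\]
is nonnegative and genuinely of size $u$. For example, on $\pi^{-1}$ of the vertex one has $\partial\Theta=0$ but $\Delta_{\omega_t}\Theta=u$.

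Because the target $\mathbb C^N$ is flat, the Chern--Lu computation has no curvature term of the form $-cu$ or $-cu^2$. The good gradient term in $(\partial_t-\Delta_{\omega_t})u\le-|\nabla^{1,0}u|^2/u$ is used up exactly in passing to $(\partial_t-\Delta_{\omega_t})\log u\le 0$. So at a maximum point of $Q$ nothing is left to absorb $Bu$, and the identity $\nabla\log u=B\nabla\Theta+\ep\nabla F$ gives you nothing to pair it with.

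Your fallback cutoffs do not help either. Replacing $\Theta$ by $\phi(\Theta)$ with $\phi$ concave only adds a negative multiple of $|\partial\Theta|^2$, which vanishes where $h=0$ and so cannot control $B\phi'u/(1+|h|^2)$. A blow-up cutoff such as $(A-\Theta)^{-1}$ has nonnegative Laplacian, so it enters with the same bad sign. As written, the central step of your argument is not established.

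The missing idea is that no cutoff in the $Y$-direction is needed: keep the barrier $F$ with coefficient $1$ rather than $\ep$. The quantity $Q:=\log(u+1)-F-(n+1)t$ satisfies $(\partial_t-\Delta_{\omega_t})Q\le-1$. Use $u+1$ rather than $u$, since $u$ may vanish.

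It is true that $u(\cdot,-1)$ is unbounded. But $F(\cdot,-1)=\rho^2$, while $\log(u(\cdot,-1)+1)=O(\log(1+\rho))$ by your Step 2, so $\sup_X Q(\cdot,-1)<\infty$. Sending $\ep\to0$ throws away exactly this. Your growth bound on $[-1,t_0]$ gives $Q\to-\infty$ as $F\to\infty$. The maximum principle on the compact sublevel sets of $F$ then yields $\log(u+1)\le C+F(x,t)$ on $X\times[-1,0)$, with $C=\sup_XQ(\cdot,-1)$ independent of $t_0$.

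Localization then comes for free. By \eqref{eq:parabolic-F} and $\scal_{\omega_t}\ge0$ one has $\partial_tF\le0$. Hence for $x$ in the compact set $\pi^{-1}(K)$, $F(x,t)\le F(x,-1)=\rho^2(x)\le C_K$. This monotonicity also disposes of your worry about $\psi^t$ pushing points to infinity as $t\nearrow0$.
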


\begin{proof}
Set
\[
    u:=\tr_{\omega_t}\pi^*\omega_Y
    =\sum_{\alpha=1}^N|\partial h_\alpha|^2_{\omega_t}.
\]
We prove that \(u\) is uniformly bounded on \(\pi^{-1}(K)\).

The parabolic Schwarz calculation gives
\[
    (\partial_t-\Delta_{\omega_t})\log(u+1)\le0.
\]
Indeed, since the \(h_\alpha\) are holomorphic, the Bochner formula along the
K\"ahler--Ricci flow gives
\[
    (\partial_t-\Delta_{\omega_t})u
    =-\sum_{\alpha=1}^N|\nabla\partial h_\alpha|_{\omega_t}^2,
\]
and
\[
    |\nabla^{1,0}u|_{\omega_t}^2
    \le
    u\sum_{\alpha=1}^N|\nabla\partial h_\alpha|_{\omega_t}^2.
\]
Hence \((\partial_t-\Delta_{\omega_t})\log(u+1)\le0\).

Since
\[
    (\partial_t-\Delta_{\omega_t})F=-n,
\]
the quantity
\[
    Q:=\log(u+1)-F-(n+1)t
\]
satisfies \((\partial_t-\Delta_{\omega_t})Q\le-1\).

We next record the polynomial growth needed to apply the maximum principle on
the noncompact space. By the gradient estimate in
Lemma~\ref{lem:local-mean-value-drift},
Corollary~\ref{cor:homogeneous-polynomial-growth}, and the standard
polynomial volume upper bound for Ricci shrinkers, after increasing the
exponent if necessary, there exist constants \(C,m>0\) such that
\[
    |\partial h_\alpha|_\omega^2(x)
    \le C(1+\rho(x))^m
\]
for every \(\alpha\). As \(\psi^t\) is generated by
\((2|t|)^{-1}\nabla f\),
\[
    h_\alpha(\psi^t(x))
    =|t|^{-\lambda_\alpha/2}h_\alpha(x).
\]
Differentiating this identity and using
\(\omega_t=|t|(\psi^t)^*\omega\), we get
\[
    |\partial h_\alpha|_{\omega_t}^2(x)
    =|t|^{\lambda_\alpha-1}
      |\partial h_\alpha|_\omega^2(\psi^t(x)).
\]
Since
\[
    F(x,t)=|t|\rho^2(\psi^t(x)),
\]
it follows that, for each fixed \(t_0<0\),
\[
    u(x,t)\le C_{t_0}(1+F(x,t))^{m_{t_0}}
    \qquad\text{on }X\times[-1,t_0].
\]
Consequently, \(Q\to-\infty\) as \(F\to\infty\), uniformly for
\(t\in[-1,t_0]\), and
\[
    \sup_XQ(\cdot,-1)<\infty.
\]
Since \(\rho\) is proper and each \(\psi^t\) is a diffeomorphism, the
sublevel sets of \(F\) in \(X\times[-1,t_0]\) are compact. Applying the
maximum principle on the compact spacetime sublevel sets \(\{F\le R\}\) and
then letting \(R\to\infty\) through regular values gives
\[
    Q(x,t)\le\sup_XQ(\cdot,-1)
    \qquad\text{for }t\in[-1,t_0].
\]
Since \(t_0<0\) is arbitrary,
\[
    \log(u(x,t)+1)\le C+F(x,t)
    \qquad\text{on }X\times[-1,0).
\]

Now let \(K\Subset Y\). Since \(\pi\) is proper, \(\pi^{-1}(K)\) is compact.
Moreover, \(\partial_tF=-|t|\scal_{\omega_t}\le0\), so
\[
    F(x,t)\le F(x,-1)=\rho^2(x)
    \qquad\text{for }x\in\pi^{-1}(K),\quad t\in[-1,0).
\]
The right-hand side is bounded on \(\pi^{-1}(K)\). Thus
\[
    u=\tr_{\omega_t}\pi^*\omega_Y\le C_K
\]
on \(\pi^{-1}(K)\), which is equivalent to
\(\pi^*\omega_Y\le C_K\omega_t\).
\end{proof}

\begin{corollary}[Restatement of Theorem~\ref{thm:local-smooth-convergence}]\label{cor:smooth-convergence-generic-schwarz}
With \(X^\circ=X\setminus E\) as above, the metrics \(\omega_t\) converge to
a K\"ahler cone metric \(\omega_0\) in
\(C^\infty_{\mathrm{loc}}(X^\circ)\) as \(t\nearrow0\).
\end{corollary}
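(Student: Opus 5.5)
The plan is to combine the Schwarz lower bound of Proposition~\ref{prop:schwarz-lower-bound-general} with a volume form upper bound, which together give a local two-sided metric bound. Interior estimates then give smooth convergence, and the soliton structure identifies the limit as a cone.

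\textbf{Step 1: two-sided bounds.} Fix a compact set $K'\Subset X^\circ$. Then $\pi(K')$ is compact in $Y$, and $\pi^*\omega_Y\ge c_{K'}\omega_{-1}$ on $K'$ because $\pi$ is biholomorphic there. Proposition~\ref{prop:schwarz-lower-bound-general} then gives
\[
\omega_t\ge C^{-1}\omega_{-1}\quad\text{on }K'\times[-1,0).
\]
For the upper bound, take the Ricci potential along the flow: $\partial_t\log(\omega_t^n/\omega_{-1}^n)=-\scal_{\omega_t}\le 0$, using $\scal\ge0$ for shrinkers, which is scale invariant. Hence $\omega_t^n\le\omega_{-1}^n$. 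The lower bound on all eigenvalues together with the upper bound on their product bounds each eigenvalue above, so $\omega_t\le C\omega_{-1}$ on $K'$.

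\textbf{Step 2: higher regularity.} With uniform equivalence to a fixed smooth metric on a slightly larger compact set, I would apply the standard local higher-order estimates for the K\"ahler--Ricci flow (Sherman--Weinkove local Calabi and curvature estimates). These give uniform $C^k$ bounds on compact subsets of $X^\circ$ for $t\in[-1/2,0)$. Convergence of the full family, and not only along subsequences, follows since $\partial_t\omega_t=-\Ric(\omega_t)$ is uniformly bounded. Equivalently, $\omega_t=\omega_{-1}+\ii\partial\bar\partial\varphi_t$ with $\dot\varphi_t$ bounded locally, so $\omega_t$ is Cauchy in $C^\infty_{\loc}$. The limit $\omega_0$ is a smooth K\"ahler metric on $X^\circ$.

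\textbf{Step 3: cone structure.} By \eqref{eq:parabolic-F}, $F(x,t)=|t|\rho^2(\psi^t(x))$ satisfies $\partial_tF=-|t|\scal_{\omega_t}$, and $|t|\scal_{\omega_t}\to0$ locally since the curvature of $\omega_t$ is locally bounded. So $F$ converges locally smoothly to some $F_0$. In the $\omega_t$-geometry the soliton identities read
\[
\nabla^2_{g_t}F=g_t-|t|\Ric(g_t)\cdot(\text{const}),\qquad |\nabla F|_{g_t}^2=2F-2|t|^2\scal_{\omega_t}\cdot(\text{const}),
\]
with normalizations taken from \eqref{eq:identitiesric}. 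The curvature terms carry a factor $|t|$ and vanish in the limit, giving $\nabla^2_{g_0}F_0=g_0$ and $|\nabla F_0|^2_{g_0}=2F_0$. I also need $F_0>0$ on $X^\circ$. I would get this from $F(x,t)\ge c\,|t|\rho^2(\psi^t x)$, using $h_\alpha(\psi^t x)=|t|^{-\lambda_\alpha/2}h_\alpha(x)$ together with the polynomial growth of Corollary~\ref{cor:homogeneous-polynomial-growth}. This yields $F\gtrsim\sum|h_\alpha(x)|^{2/\lambda_\alpha}$, which is positive away from $\pi^{-1}(0)$, and I would argue that $\pi^{-1}(0)\cap X^\circ=\emptyset$ or is handled separately. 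A metric with a positive function satisfying these two equations is locally a Riemannian cone with radial function $r=\sqrt{2F_0}$. Since $J$ is parallel and $r\partial_r=\nabla F_0$ is real holomorphic, $\omega_0$ is a K\"ahler cone metric.

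\textbf{Main obstacle.} The main obstacle is Step 1's use of uniform-in-time curvature control to drive Step 2. Getting $C^0$ equivalence for all $t$ up to $0$ is fine. The delicate parts are the lower bound $F_0>0$ in Step 3 and making sure the local estimates are applied on sets where the two-sided bound holds uniformly as $t\nearrow0$.
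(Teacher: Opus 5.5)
Your proposal is correct and follows the paper's proof essentially step for step. The Schwarz bound of Proposition~\ref{prop:schwarz-lower-bound-general} and the volume-form bound from $\scal_{\omega_t}\ge0$ give a local two-sided bound; Sherman--Weinkove then makes $\omega_t$ Cauchy in $C^\infty_{\loc}(X^\circ)$; and passing the identities $|t|\Ric(g_t)+\nabla^2F=g_t$ and $|\nabla F|^2_{g_t}=2F-2|t|^2\scal_{\omega_t}$ to the limit gives $\nabla^2F_0=g_0$ and $|\nabla F_0|^2_{g_0}=2F_0$. The only thing to drop is your attempt to prove $F_0>0$ on $X^\circ$: the paper simply works on $\{F_0>0\}$, whose complement is a discrete set of nondegenerate critical points (the vertex), and $\pi^{-1}(0)\cap X^\circ$ can indeed be nonempty (for the Gaussian shrinker it is the origin).
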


\begin{proof}
Let \(K\Subset X^\circ\). Choose \(U\Subset X^\circ\) with \(K\Subset U\).
Since \(\pi|_{X^\circ}\) is biholomorphic onto its image, \(\pi^*\omega_Y\)
is a smooth K\"ahler form on \(U\). After shrinking \(U\), there is \(c_U>0\)
such that
\[
    \pi^*\omega_Y\ge c_U\omega_{-1}
\]
on \(U\). Proposition~\ref{prop:schwarz-lower-bound-general}, applied to the
compact set \(\pi(\overline U)\subset Y\), gives
\[
    \omega_t\ge C_U^{-1}\omega_{-1}
\]
on \(U\). Since \(\scal_{\omega_t}\ge0\), the K\"ahler--Ricci flow volume
evolution gives
\[
    \partial_t\log\frac{\omega_t^n}{\omega_{-1}^n}
    =-\scal_{\omega_t}\le0,
\]
and hence \(\omega_t^n\le\omega_{-1}^n\). Combining the metric lower bound
with this volume-form upper bound gives
\[
    C_U^{-1}\omega_{-1}\le\omega_t\le C_U\omega_{-1}
\]
on \(U\). Sherman--Weinkove interior estimates
\cite[Theorem~1.1 and Corollary~1.2]{SWein} give uniform bounds for all
covariant derivatives of \(\Rm_{\omega_t}\) on \(K\times[-1/2,0)\). Since
\(\partial_t\omega_t=-\Ric(\omega_t)\), the metrics are Cauchy in
\(C^\infty(K)\) as \(t\nearrow0\).

It remains to show that the limit \(\omega_0\) is a cone metric. On every
\(K\Subset X^\circ\), the preceding curvature estimates and
\eqref{eq:parabolic-F} give
\[
    |\partial_tF|=|t|\scal_{\omega_t}\le C_K|t|.
\]
Thus \(F(\cdot,t)\) is Cauchy in \(C^0(K)\) and converges to a function
\(F_0\). The elliptic equation
\[
    \Delta_{\omega_t}F=n-|t|\scal_{\omega_t},
\]
together with the local smooth convergence of \(\omega_t\), implies by
interior elliptic estimates that
\[
    F(\cdot,t)\longrightarrow F_0
    \qquad\text{in }C^\infty_{\mathrm{loc}}(X^\circ).
\]
From the K\"ahler--Ricci shrinker equation and
\eqref{eq:identitiesric}, we have
\[
    |t|\Ric(g_t)+\nabla^2F(\cdot,t)=g_t,
    \qquad
    |\nabla F(\cdot,t)|_{g_t}^2
    =2F(\cdot,t)-2|t|^2\scal_{\omega_t}.
\]
Passing to the limit gives
\[
    \nabla^2F_0=g_0,
    \qquad
    |\nabla F_0|_{g_0}^2=2F_0.
\]
Setting \(r=\sqrt{2F_0}\), we obtain on \(\{r>0\}\)
\[
    |\nabla r|_{g_0}=1,
    \qquad
    \nabla^2r=\frac1r\bigl(g_0-\d r\otimes\d r\bigr).
\]
Thus, the flow generated by \(r\partial_r=\nabla F_0\) acts by homotheties,
and \(g_0\) has the form
\[
    g_0=\d r^2+r^2g_\Sigma.
\]
Therefore, the limit K\"ahler metric \((g_0,\omega_0)\) is a K\"ahler cone
metric.
\end{proof}

\subsection{A complex-analytic criterion for asymptotic conicality}

\begin{theorem}[Restatement of Theorem~\ref{thm:ac-criterion}]\label{thm:conical-criterion}
Let \((X,g,f)\) be a noncompact K\"ahler--Ricci shrinker. Then
\((X,g)\) is asymptotically conical if and only if the associated Fano
fibration \(\pi:X\to Y\) is biholomorphic outside a compact set.
\end{theorem}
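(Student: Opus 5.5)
The forward implication is already known: by \cite{CSunD}, an asymptotically conical K\"ahler--Ricci shrinker has a Fano fibration that is biholomorphic outside a compact set. So the plan concerns only the converse. Assume there is a compact $K_0\Subset X$ with $X\setminus K_0\subset X^\circ$. The aim is the quadratic curvature decay
\[
|\Rm_g|(x)\le C\,d_g(p,x)^{-2},
\]
after which the conical structure theorem of \cite{KW} gives asymptotic conicality.

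\textbf{Step 1: a compact slab in $X^\circ$.} Since $\rho$ is proper, I will choose $R_0$ so large that $\{\rho\ge R_0\}\cap K_0=\emptyset$, and take a compact slab $S:=\{R_0\le \rho\le 2R_0\}\subset X^\circ$ with regular boundary values. Corollary~\ref{cor:smooth-convergence-generic-schwarz} gives smooth convergence $\omega_t\to\omega_0$ on a neighborhood of $S$. In particular its proof gives a uniform bound $|\Rm_{\omega_t}|\le C_S$ on $S\times[-1,0)$. Here Proposition~\ref{prop:schwarz-lower-bound-general} is applied to the compact set $\pi(\overline U)$ for a neighborhood $U\Subset X^\circ$ of $S$.

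\textbf{Step 2: transport by self-similarity.} By definition $\omega_t=|t|(\psi^t)^*\omega$, so
\[
|\Rm_{\omega}|_\omega(\psi^t(x))=|t|\,|\Rm_{\omega_t}|_{\omega_t}(x)\le C_S|t|
\qquad\text{for } x\in S.
\]
Next I need to control $\rho\circ\psi^t$ on $S$. Recall $F(x,t)=|t|\rho^2(\psi^t(x))$ with $\partial_tF=-|t|\scal_{\omega_t}$, and $\scal_{\omega_t}\le C_S$ on $S$. Hence $F(x,t)$ stays between $R_0^2-C$ and $4R_0^2$ for $t\in[-1,0)$, so $\rho^2(\psi^t(x))\simeq |t|^{-1}$ uniformly on $S$. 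Combining the two bounds gives
\[
|\Rm_\omega|(y)\le C\rho(y)^{-2}
\qquad\text{for } y\in \bigcup_{t\in[-1,0)}\psi^t(S).
\]

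\textbf{Step 3: covering the end (main obstacle).} The crux is showing that $\bigcup_{t}\psi^t(S)$ contains $\{\rho\ge 2R_0\}$. Since $\nabla f(\rho^2)=|\nabla f|^2>0$ wherever $\nabla f\ne0$, and $|\nabla f|^2=2(f-\scal)$ is large where $\rho$ is large, the flow lines of $\nabla f$ move $\rho$ monotonically outward on the end and hit every level. For $y$ with $\rho(y)\ge 2R_0$, I would flow backward along $-\nabla f$ until the first time $\rho=2R_0$; this lands in $S$, and $y=\psi^t(x)$ for a suitable $t$. The points to check are that backward flow lines do not escape, which holds because $\rho$ decreases along them, and that they reach $\{\rho=2R_0\}$ in finite parameter time. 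The latter follows because $|\nabla f|^2\ge c\rho^2$ on $\{\rho\ge R_0\}$ for $R_0$ large, and the reparametrization $t\mapsto \int (2|t|)^{-1}dt$ covers all of $[0,\infty)$ as $t\nearrow0$.

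\textbf{Conclusion.} Since $\rho\simeq d_g(p,\cdot)$ by \cite{CaoZhou,HaslhoferMueller2011}, the estimate of Step 2 on $\{\rho\ge 2R_0\}$ is exactly the desired quadratic curvature decay. Applying \cite{KW} then yields asymptotic conicality.
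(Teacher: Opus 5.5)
Your overall route is the paper's: a uniform curvature bound for $\omega_t$ on a compact region of $X^\circ$ from Corollary~\ref{cor:smooth-convergence-generic-schwarz}, transported outward by $|\Rm_\omega|(\psi^t(x))=|t|\,|\Rm_{\omega_t}|(x)$ with $|t|\lesssim\rho^{-2}(\psi^t(x))$, and then \cite{KW}. Using a slab instead of the paper's single level set $\Sigma=\{\rho=C_0\}$ is harmless.

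The gap is in Step 3, the step you yourself flagged as the crux. You claim $|\nabla f|^2\ge c\rho^2$ on $\{\rho\ge R_0\}$, i.e.\ that $|\nabla f|^2=2(\rho^2-\scal_\omega)$ is large wherever $\rho$ is large. That is equivalent to $\scal_\omega\le(1-c')\rho^2$ near infinity. Such a bound is not available a priori for a shrinker with no curvature assumptions; the paper imposes $\limsup \scal/f<1$ as a separate hypothesis in Section~\ref{sec--7}. Without some such input, a backward flow line of $\nabla f$ starting at $y$ with $\rho(y)\ge 2R_0$ could accumulate at a critical point of $f$ in $\{\rho\ge 2R_0\}$ and never reach $S$. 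So the covering $\{\rho\ge 2R_0\}\subset\bigcup_t\psi^t(S)$ is not established by your argument.

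What you actually need is much weaker: $f$ has no critical points on $\{\rho\ge R_0\}$. Then $|\nabla f|$ has a positive lower bound on the compact annulus $\{2R_0\le\rho\le\rho(y)\}$. Hence $\rho^2$ decreases at a definite rate along the backward flow and reaches $4R_0^2$ in finite time. The paper takes this fact from \cite{SunZhang}. You can also see it directly: at a zero of $\nabla f$ we have $\xi=J\nabla f=0$, so $\ii\lambda_\alpha h_\alpha=\xi(h_\alpha)=0$ there. Since every $\lambda_\alpha>0$, this forces $\pi(x)$ to be the vertex of $Y$, so the critical set lies in the compact fiber over the vertex.

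A smaller point concerns Step 2. Your lower bound $F(x,t)\ge R_0^2-C$ is not justified, because $C$ comes from $C_S$, which depends on $R_0$. It is also unnecessary. Only the upper bound $F(x,t)\le F(x,-1)=\rho^2(x)\le 4R_0^2$ is used, and it follows from $\partial_tF=-|t|\scal_{\omega_t}\le 0$. This is exactly the estimate $e^{-T}\le C_0^2/\rho^2(x)$ in the paper.
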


\begin{proof}
If \((X,g)\) is asymptotically conical, then \(\pi\) is biholomorphic outside
a compact set by \cite{CSunD}. We prove the converse.

Assume that \(\pi\) is biholomorphic outside a compact set. Let \(E\) be the
exceptional set of \(\pi\); then \(E\) is compact. Choose a regular value
\(C_0\gg1\) of \(\rho\) with
\[
    E\subset\{\rho<C_0\},
\]
and set
\[
    \Sigma:=\{\rho=C_0\}\Subset X\setminus E.
\]
By Corollary~\ref{cor:smooth-convergence-generic-schwarz}, there is
\(A<\infty\) such that
\[
    \sup_{\Sigma\times[-1,0)}|\Rm_{\omega_t}|_{\omega_t}\le A.
\]

Let \(x\in X\) with \(\rho(x)>C_0\). Since \(f\), equivalently \(\rho\), has
no critical point on \(\{\rho\ge C_0\}\) \cite{SunZhang}, the flow generated
by \(-\frac12\nabla f\) from \(x\) reaches \(\Sigma\) in finite time. Thus,
there are \(T>0\) and \(y\in\Sigma\) such that, with \(t=-e^{-T}\),
\(\psi^t(y)=x\). Indeed,
\[
    \frac{\d}{\d s}\psi^{-e^{-s}}(y)
    =\frac12\nabla f(\psi^{-e^{-s}}(y)).
\]
Along the curve \(\psi^{-e^{-s}}(y)\), using \(\scal_\omega\ge0\),
\[
    \frac{\d}{\d s}\log\rho^2(\psi^{-e^{-s}}(y))
    =\frac{|\nabla^{1,0}f|^2}{\rho^2}(\psi^{-e^{-s}}(y))
    =1-\frac{\scal_\omega}{\rho^2}(\psi^{-e^{-s}}(y))
    \le1.
\]
Since \(\rho(y)=C_0\), this gives
\[
    T\ge\log\frac{\rho^2(x)}{C_0^2},
    \qquad
    e^{-T}\le\frac{C_0^2}{\rho^2(x)}.
\]
The scaling relation \(\omega_t=|t|(\psi^t)^*\omega\) gives
\[
    |\Rm_g|_g(x)
    =|t|\,|\Rm_{\omega_t}|_{\omega_t}(y)
    \le\frac{AC_0^2}{\rho^2(x)}
    \le Cd_g(p,x)^{-2}.
\]
The asymptotic-cone theorems in
\cite{KW} and \cite{CSunD} imply that \((X,g)\) is asymptotically conical.
\end{proof}

As an application of this complex-analytic criterion and building on the
results of Esparza \cite{esparza2025}, we obtain the following.

\begin{corollary}\label{cor:uniqueness-shrinker-structures}
Let \((X,\omega,\xi)\) be an asymptotically conical K\"ahler--Ricci shrinker.
Let \((\widetilde\omega,\widetilde\xi)\) be another K\"ahler--Ricci shrinker
on the same complex manifold. Then there is a biholomorphism \(\psi:X\to X\)
such that
\[
    \psi_*\xi=\widetilde\xi,
    \qquad
    \psi^*\widetilde\omega=\omega.
\]
\end{corollary}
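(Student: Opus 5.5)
The plan is to reduce to the uniqueness theorem of Esparza \cite{esparza2025}. As I understand it, that theorem states that on a fixed complex manifold, any two K\"ahler--Ricci shrinkers that are both asymptotically conical differ by a biholomorphism intertwining both the soliton vector fields and the K\"ahler forms. So the task is to show that $(X,\widetilde\omega,\widetilde\xi)$ is also asymptotically conical. Theorem~\ref{thm:conical-criterion} turns this into a purely complex-analytic question: it suffices to show that the Fano fibration $\widetilde\pi:X\to\widetilde Y$ attached to $(\widetilde\omega,\widetilde\xi)$ is biholomorphic outside a compact set.

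\textbf{Step 1: no compact subvarieties at infinity.} Since $(X,\omega,\xi)$ is asymptotically conical, \cite{CSunD} shows that its Fano fibration $\pi:X\to Y$ is biholomorphic outside a compact set; this is the forward direction of Theorem~\ref{thm:conical-criterion}. Choose a compact $K\Subset X$ with $\pi|_{X\setminus K}$ biholomorphic onto an open subset of the affine variety $Y$. If $V\subset X\setminus K$ were a compact positive-dimensional analytic subset, then $\pi(V)$ would be a compact positive-dimensional analytic subset of the affine variety $Y$. This is impossible, because the coordinate functions of $Y\subset\mathbb C^N$ would be constant on it. Hence $X\setminus K$ contains no compact positive-dimensional analytic subvarieties. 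Note that this property depends only on the complex structure.

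\textbf{Step 2: $\widetilde\pi$ is biholomorphic outside a compact set.} The fibers of $\widetilde\pi$ are compact and connected, since $\widetilde\pi$ is proper with connected fibers by \cite{SunZhang}. By properness, the set of points lying on a positive-dimensional fiber is a closed analytic set. Any positive-dimensional fiber meeting $X\setminus K$ would have an irreducible component that is a compact positive-dimensional analytic set. I expect this step to be the main obstacle, because that component need not lie entirely in $X\setminus K$. I would handle it using properness and connectedness. The set $\widetilde E':=\{x:\dim_x\widetilde\pi^{-1}(\widetilde\pi(x))>0\}$ is a closed analytic subset. Each positive-dimensional fiber is a compact connected analytic set. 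If infinitely many such fibers leave every compact set, then, because the fibers are compact, some fiber is entirely contained in $X\setminus K'$ for a larger compact $K'\supset K$; this uses that the fibers over points of $\widetilde Y$ near infinity are contained in $\widetilde\pi^{-1}$ of the complement of a compact set. More simply, properness gives that $\widetilde\pi^{-1}(\widetilde\pi(K))$ is compact. Any fiber not over $\widetilde\pi(K)$ is therefore disjoint from $K$, so it must be finite, and by connectedness it is a single point. Thus, outside the compact set $\widetilde\pi^{-1}(\widetilde\pi(K))$, the map $\widetilde\pi$ is proper, injective, and has connected fibers. Since $\widetilde Y$ is normal, Zariski's main theorem (in its analytic form) shows that $\widetilde\pi$ is biholomorphic there. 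Equivalently, $\widetilde\pi$ is a bimeromorphic finite map onto a normal space, hence an isomorphism over that open set.

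\textbf{Step 3: conclusion.} By Theorem~\ref{thm:conical-criterion}, $(X,\widetilde\omega)$ is asymptotically conical. Applying Esparza's uniqueness theorem to the two asymptotically conical shrinker structures on $X$ gives a biholomorphism $\psi$ with $\psi_*\xi=\widetilde\xi$ and $\psi^*\widetilde\omega=\omega$. Beyond the properness argument in Step 2, the only remaining care point is to confirm that Esparza's theorem is stated for the same complex manifold, without first fixing the vector field.
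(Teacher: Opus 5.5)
Your argument is correct, but it reaches the key intermediate claim by a different route from the paper. That claim is that the Fano fibration $\widetilde\pi$ of $(\widetilde\omega,\widetilde\xi)$ is biholomorphic outside a compact set.

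The paper simply invokes \cite[Corollary~5.18]{esparza2025}. That result says the Fano fibration of $(\widetilde\omega,\widetilde\xi)$ coincides with that of $(\omega,\xi)$. The paper then applies Theorem~\ref{thm:conical-criterion} and \cite[Theorem~1.1]{esparza2025} exactly as in your Step~3.

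You instead pass through the intrinsic condition ``no compact positive-dimensional analytic subvarieties at infinity''. Step~1 derives this condition from the first shrinker; the paper just attributes it to \cite{CSunD}. Step~2 is essentially the argument the paper gives in its proof of Theorem~\ref{thm:special-uniqueness}. Fibers over $\widetilde Y\setminus\widetilde\pi(K)$ avoid $K$, so they are finite and connected, hence points. The restriction of $\widetilde\pi$ over that open set is then a proper bijection onto a normal space, hence biholomorphic; alternatively, use $\widetilde\pi_*\mathcal O_X=\mathcal O_{\widetilde Y}$.

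What each route buys:
\begin{itemize}
\item The paper's route is shorter and gives the stronger fact that the two fibrations are literally the same. The cost is one additional external input.
\item Your route needs only general properties of Fano fibrations (proper, connected fibers, normal affine base) plus Esparza's main uniqueness theorem. It also makes the corollary and Theorem~\ref{thm:special-uniqueness} rest on one elementary argument; in the paper, the latter is deduced from the corollary.
\end{itemize}

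The speculative sentences at the start of Step~2 about positive-dimensional fibers ``leaving every compact set'' are unnecessary. Your ``more simply'' argument via the compact set $\widetilde\pi^{-1}(\widetilde\pi(K))$ is complete by itself.
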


\begin{proof}
By \cite[Corollary~5.18]{esparza2025}, the Fano fibration structure associated
to \((\widetilde\omega,\widetilde\xi)\) is the same as the one induced by
\((\omega,\xi)\) and hence is biholomorphic outside a compact set. Therefore
Theorem~\ref{thm:conical-criterion} implies that
\((X,\widetilde\omega,\widetilde\xi)\) is asymptotically conical. The
uniqueness theorem for asymptotically conical K\"ahler--Ricci shrinkers
\cite[Theorem~1.1]{esparza2025} gives the desired biholomorphism.
\end{proof}

\begin{proof}[Proof of Theorem~\ref{thm:special-uniqueness}]
The compact case was proved in \cite{TianZ1}; it therefore remains to consider the case where $X$ is noncompact. Let \(X\) be a noncompact complex manifold containing no compact positive-dimensional
analytic subvarieties at infinity, and suppose that it admits a K\"ahler--Ricci shrinker. By \cite{SunZhang}, it admits a polarized Fano
fibration structure
\[
    \pi:X\longrightarrow Y,
\]
where \(\pi\) is proper, \(Y\) is an affine variety of positive dimension, and
\(\pi_*\mathcal O_X=\mathcal O_Y\).

Choose a compact set \(K\Subset X\) such that \(X\setminus K\) contains no
compact positive-dimensional analytic subset, and set \(B:=\pi(K)\). Since $\pi$ is proper, \(\pi_*\mathcal O_X=\mathcal O_Y\), and $\pi^{-1}(Y\setminus B)$ contains no positive-dimensional compact analytic subset, $\pi$ is injective on $\pi^{-1}(Y\setminus B)$. Therefore, by the surjectivity and the properness of $\pi$, the restriction of $\pi$ to $\pi^{-1}(Y\setminus B)$ is a biholomorphism; that is, \(\pi\) is biholomorphic outside a compact set.
By Theorem~\ref{thm:conical-criterion}, every such shrinker is
asymptotically conical. Therefore, by
Corollary~\ref{cor:uniqueness-shrinker-structures}, \(X\) has at most
one K\"ahler--Ricci shrinker structure.
\end{proof}

\section{Ricci Flow Completions of Ricci Shrinkers}\label{sec:generalshrinker}
In this section, we consider a Ricci shrinker $(M^n,g,f)$ of real dimension $n$. Throughout, we assume that the scalar curvature $\scal$ is uniformly bounded above by $C_0$ and that the entropy is bounded below by $-Y$. We normalize the Ricci shrinker equation by
\begin{equation}\label{eq:normal1}
    \Ric(g)+\nabla^2 f=\frac{1}{2}g,
    \qquad
    \scal+|\nabla f|^2=f.
\end{equation}

\subsection{Spacetime completion and the time-zero potential}
We consider the associated self-similar Ricci flow $(M,(g_t)_{t \in (-\infty, 0)},(f_t)_{t \in (-\infty, 0)})$ satisfying
\[
    (g_{-1},f_{-1})=(g,f).
\]
Recall that if $\psi^t:M\to M$ denotes the family of diffeomorphisms generated by
\(
    |t|^{-1}\nabla f,
\)
normalized by $\psi^{-1}=\mathrm{id}$, then
\begin{align*}
    g_t=|t|(\psi^t)^*g,
    \qquad
    f_t=(\psi^t)^*f.
\end{align*}
Choose a base point $p$ at a minimum of $f$ on $M$, and set
\(
    F(x,t)=|t|f_t(x).
\)
By \cite[Lemma 1]{liwang2020heat1}, this function satisfies the distance estimate
\begin{equation}\label{eq:distanceestimate}
    \frac{1}{4}\left(d_{g_t}(p,x)-C(n)|t|^{1/2}\right)_+^2
    \le F(x,t)
    \le
    \frac{1}{4}\left(d_{g_t}(p,x)+C(n)|t|^{1/2}\right)^2.
\end{equation}
We also have the uniform volume upper bound from \cite[Lemma 2]{liwang2020heat1}:
for every \(t\in[-1,0)\) and \(L\ge 1\),
\begin{equation}\label{eq:volume-upper-bound}
      \left|B_{g_t}(p,L)\right|_{g_t}\le C(n)L^{n}.
\end{equation}

We next recall that the theory of noncollapsed Ricci flow limit spaces extends to the Ricci flow
\[
    (M,(g_t)_{t\in(-\infty,0)}).
\]
Indeed, this Ricci flow has entropy bounded below by $-Y$; see \cite[Section 5]{liwang2020heat1}. Moreover, it was proved in \cite{liwang2020heat1} and \cite{liwang2024heat} that all results in \cite[Section 2]{FL1} hold for this Ricci flow, except that in \cite[Theorem 2.19]{FL1} one needs to impose a polynomial growth condition on \(u\); see \cite[Theorem 4.20]{liwang2024heat}. On the other hand, the theory of \(\mathbb F\)-convergence has also been generalized to Ricci flows induced by Ricci shrinkers; see \cite{liwang2024heat}.

Thus, following the variational definition in \cite[Definition 3.2]{FL1}, we define the spacetime distance
\begin{equation}\label{eq:spacetime-distance-shrinker}
    d^*(x^*,y^*)
    :=
    \inf_{\tau\le s}
    \max\left\{
        \sqrt{t-\tau},
        d_{W_1}^{\tau}(\nu_{x^*;\tau},\nu_{y^*;\tau})
    \right\}
\end{equation}
for any \(x^*=(x,t)\), \(y^*=(y,s)\in M\times(-\infty,0)\) with \(s\le t\).

For every fixed \(t_0<0\), the \(d^*\)-distance on \(M \times(-\infty,t_0]\) is complete. This follows by the same argument as in \cite[Theorem 7.19]{FL1}. Note that the boundedness of the scalar curvature is crucial here.

As in Section 2, we then define
\(
    (Z,d_Z,\mathfrak t)
\)
to be the metric completion of \(M\times(-\infty,0)\) with respect to \(d^*\). The completion adds points only on the time slice
\[
    Z_0:=\mathfrak t^{-1}(0).
\]

The parabolic metric space \((Z,d_Z,\mathfrak t)\), which is defined over \((-\infty,0]\), shares the same properties as a noncollapsed Ricci flow limit space in the sense of \cite{FL1}. Roughly speaking, \((Z,d_Z,\mathfrak t)\) can be regarded as the Gromov--Hausdorff limit of
\(
    (M\times(-\infty,t_j],d^*)
\)
as \(t_j\nearrow 0\). Note that the Ricci flow
\(
    (M,(g_t)_{t\in(-\infty,t_j]})
\)
may not have uniformly bounded curvature. However, the heat kernel estimates in \cite{liwang2020heat1} and \cite{liwang2024heat} guarantee that the structure theory developed in \cite{FL1,fang-li-unique,fang-li2} still holds in this setting.
We can now define the intrinsic distance \(d^Z_t\) on \(Z_t\) for each \(t\in(-\infty,0]\), as in \eqref{eq:time-slice-distance}. We can also assign a base point \(\bar p\in Z_0\), defined as the limit of
\[
    p_t^*:=(p,t)
\]
as \(t\nearrow 0\). This limit always exists; see \cite[Section 7]{FL1}. Moreover, each point \((p,t)\) is an \(H\)-center of \(\bar p\), where $H$ depends on $n$ and $Y$. Thus, we obtain the pointed metric space
\[
    (Z_0,d^Z_0,\bar p).
\]
Unlike in the compact setting, $Z_0$ is noncompact.

We extend $F$ to the time-zero slice $Z_0$ by setting
\begin{align} \label{eq:Fon0}
 F(z)=\frac{1}{4}\bigl(d^Z_0(z,\bar p)\bigr)^2,
    \qquad z\in Z_0.
\end{align}

Arguing as in \cite[Proposition 6.6]{FL1}, $(Z_0, d^Z_0)$ is a complete extended metric space. In fact, we prove the following continuity statement, which, in particular, implies that $(Z_0, d^Z_0)$ is a complete metric space.

\begin{lemma}\label{lem:continuityF}
The function $F$ is continuous on $Z$ with respect to $d^*$. In particular, $d^Z_0$ is finite-valued on $Z_0$.
\end{lemma}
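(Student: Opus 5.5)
The plan is to compare everything with the auxiliary function
\[
\Theta(z):=\lim_{s\nearrow \mathfrak t(z)} d^{s}_{W_1}\bigl(\nu_{z;s},\nu_{\bar p;s}\bigr)\in[0,\infty],\qquad z\in Z .
\]
The limit exists by the monotonicity of $s\mapsto d^s_{W_1}$ along conjugate heat flows, as in \cite[Lemma 5.29]{FL1}. The argument has three steps:
\begin{enumerate}[label=\textnormal{(\roman*)}]
\item show that $F$ and $\tfrac14\Theta^2$ almost agree on the smooth part $M\times(-\infty,0)$;
\item show that $\Theta$ is continuous, indeed almost $1$-Lipschitz, with respect to $d^*$;
\item check that on $Z_0$ the function $\Theta$ coincides with $d^Z_0(\cdot,\bar p)$, so that the extension \eqref{eq:Fon0} is the continuous extension of $F$.
\end{enumerate}
Finiteness of $d^Z_0$ on $Z_0$ then follows automatically. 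This is because every $z\in Z_0$ is a $d^*$-limit of points of $M\times(-\infty,0)$, on which $\Theta$ is finite and locally bounded.

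For step (i), take $x^*=(x,t)$ with $t<0$. Then $(p,s)$ is an $H$-center of $\bar p$, and by \cite[Proposition 2.21]{FL1} (valid here by \cite{liwang2020heat1,liwang2024heat}) the measure $\nu_{x^*;s}$ has an $H$-center at distance $O(\sqrt{t-s})$ from $x$. Here I use that the worldline of $x$ moves by a controlled amount, which will follow from the shrinker structure below. Hence, for $s<t$ close to $t$,
\[
d^s_{W_1}(\nu_{x^*;s},\nu_{\bar p;s})=d_{g_s}(x,p)+O(\sqrt{|s|}).
\]
By \eqref{eq:distanceestimate}, $d_{g_s}(x,p)=2\sqrt{F(x,s)}+O(\sqrt{|s|})$.

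The bounded scalar curvature assumption now enters. Since $\partial_tF=-|t|\scal_{g_t}$, we get $|F(x,s)-F(x,t)|\le C_0|t|\,|t-s|$. Letting $s\nearrow t$ gives
\[
\bigl|\Theta(x,t)-2\sqrt{F(x,t)}\bigr|\le C\sqrt{|t|}.
\]

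For step (ii), take $z,w\in Z$ with $\mathfrak t(w)\le\mathfrak t(z)$. Choose $\tau\le\mathfrak t(w)$ that nearly realizes the infimum defining $d^*(z,w)$, so that $d^\tau_{W_1}(\nu_{z;\tau},\nu_{w;\tau})\le d^*(z,w)+\eta$ and $\sqrt{\mathfrak t(z)-\tau}\le d^*(z,w)+\eta$. The triangle inequality at time $\tau$ gives
\[
\bigl|d^\tau_{W_1}(\nu_{z;\tau},\nu_{\bar p;\tau})-d^\tau_{W_1}(\nu_{w;\tau},\nu_{\bar p;\tau})\bigr|\le d^*(z,w)+\eta .
\]
The remaining task is to show that $d^s_{W_1}(\nu_{z;s},\nu_{\bar p;s})$ changes by at most $C\sqrt{\mathfrak t(z)-\tau}$ (plus $o(1)$) as $s$ runs from $\tau$ to $\mathfrak t(z)$, and the same for $w$. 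Monotonicity gives one direction for free. For the reverse direction I would argue on smooth points first, using step (i): there the quantity is $2\sqrt{F}$ up to $O(\sqrt{|s|})$, and $F$ is almost constant in time along worldlines. The estimate then passes to general points of $Z$ by density of $M\times(-\infty,0)$ and lower semicontinuity of $W_1$ under $d^*$-convergence. This yields
\[
|\Theta(z)-\Theta(w)|\le C\,d^*(z,w)+C\sqrt{\max\{|\mathfrak t(z)|,|\mathfrak t(w)|\}}
\]
on bounded sets. Combined with step (i), $F$ on $M\times(-\infty,0)$ is uniformly continuous on $d^*$-bounded sets near $t=0$ (away from $t=0$ this follows from smoothness and \cite[Theorem 7.19]{FL1}-type completeness). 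It therefore extends continuously to $Z$, and the extension equals $\tfrac14\Theta^2$ on $Z_0$.

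For step (iii), \eqref{eq:time-slice-distance} says precisely that $\Theta(z)=d^Z_0(z,\bar p)$ for $z\in Z_0$, so the extension agrees with \eqref{eq:Fon0}.

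The main obstacle is the reverse monotonicity in step (ii). Without a lower Ricci bound, $W_1$ distances could a priori jump up as $s\nearrow\mathfrak t(z)$. My approach is to route the control through the potential. Bounded scalar curvature makes $F$ almost constant along worldlines, and \eqref{eq:distanceestimate} converts this into two-sided control of distances to the base point. If this turns out to be delicate for singular $z$, I would fall back on approximating $z$ by $H$-centers $(x_j,t_j)$ and using \cite[Proposition 2.21(ii)]{FL1}, exactly as in the proof of Lemma~\ref{lem:volume}.
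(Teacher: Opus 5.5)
Your proposal is correct and is essentially the paper's argument repackaged through $\Theta$. The paper fixes $s<0$, takes smooth $x_i^*=(x_i,t_i)\to z\in Z_0$, and chains four facts before letting $s\nearrow0$: the convergence $d^s_{W_1}(\nu_{z;s},\nu_{x_i^*;s})\to0$, the $H$-center comparison $d_{g_s}(z_s,x_i)\le C\sqrt{|s|}+\ep_i$, \eqref{eq:distanceestimate}, and $\partial_tF\in[-C_0,0]$. This is your step (ii) specialized to smooth approximants. Note that what your passage to singular points needs is genuine convergence at each fixed $s$, which follows from the definition of $d^*$ and monotonicity; mere lower semicontinuity would not give it.

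The one input you leave vague is that $(x,s)$ is an $H$-center of $(x,t)$ with $H$ independent of $s<t$. Your reverse monotonicity at smooth points really uses this, and it does not follow from the almost-constancy of $F$ along worldlines. The paper takes it from the type-I bound $|t|\scal_{g_t}\le C_0$ via \cite[Proposition~4.4]{LW26}. The same bound also gives $|F(x,s)-F(x,t)|\le C_0|t-s|$, without your extra factor $|t|$.
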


\begin{proof}
It suffices to prove continuity at points in the time-zero slice $Z_0$. Let $z\in Z_0$, and suppose that
\[
    x_i^*=(x_i,t_i)\to z
\]
with respect to $d^*$. By the definition of $d^*$, for every fixed $s<0$ we have
\begin{align*}
    \lim_{i\to\infty}
    d_{W_1}^{s}\bigl(\nu_{z;s},\nu_{x_i^*;s}\bigr)=0.
\end{align*}
Let $z_i^*=(z_i,s)$ be an $H_n$-center of $z$. Since $(x_i,s)$ is an $H$-center of $x_i^*$ by the scalar curvature bound, it follows that
\begin{align} \label{eq:continu001}
    d_{g_s}(z_i,x_i)\le C\sqrt{|s|}+\ep_i,
\end{align}
where $\ep_i\to0$ as $i\to\infty$. On the other hand, since
\[
    \partial_t F=-|t|\scal\in[-C_0,0],
\]
we have, for $i$ sufficiently large,
\begin{align} \label{eq:continu002}
    F(x_i,t_i)\le F(x_i,s)\le F(x_i,t_i)+C_0|s|.
\end{align}
Furthermore, by \eqref{eq:distanceestimate} and \eqref{eq:continu001}, we obtain
\begin{align}\label{eq:continu003}
    \left|2\sqrt{F(x_i,s)}-d_{g_s}(p,z_i)\right|
    \le C\sqrt{|s|}+\ep_i,
\end{align}
where $\ep_i\to0$ as $i\to\infty$.

Combining \eqref{eq:continu001}, \eqref{eq:continu002}, \eqref{eq:continu003}, and the definition of the time-zero distance $d^Z_0$, we obtain
\begin{align*}
    \limsup_{i\to\infty}
    \left|2\sqrt{F(z)}-2\sqrt{F(x_i,t_i)}\right|
    \le \Psi(|s|),
\end{align*}
where $\Psi(r)\to0$ as $r\to0$. Letting $s\nearrow 0$ gives
\[
    F(x_i,t_i)\to F(z).
\]
Therefore, $F$ is continuous at $z$. In particular, every point \(z\in Z_0\) satisfies
\[
    d^Z_0(z,\bar p)<\infty.
\]
For arbitrary \(z_1,z_2\in Z_0\), the triangle inequality for the extended
distance $d^Z_0$ gives
\[
    d^Z_0(z_1,z_2)
    \le d^Z_0(z_1,\bar p)+d^Z_0(\bar p,z_2)<\infty.
\]
Thus \(d^Z_0\) is finite-valued, completing the proof.
\end{proof}

\begin{definition}
Let $(M,g)$ be a Ricci shrinker. Any sequential limit of
\[
(M,\lambda_j^{-2}g,p),
    \qquad \lambda_j\to\infty,
\]
in the pointed Gromov--Hausdorff sense
is called a \textbf{tangent space at infinity} of \((M,g)\).
\end{definition}

A priori, such a tangent space may not exist; even if it exists, it need not be unique.
Using the self-similarity of \((M,(g_t)_{t\in(-\infty,0)})\), a tangent space at infinity of $M$ is the same as a sequential limit of $(M, g_{t_i},p)$ in the pointed Gromov--Hausdorff sense for some sequence $t_i\nearrow 0$. Indeed, for every sequence \(\lambda_j\to+\infty\), choose \(t_j<0\) such that
\[
    |t_j|^{-1/2}=\lambda_j.
\]
Then the map \(\psi^{t_j}\) induces an isometry from \((M,g_{t_j},p)\) to \((M,\lambda_j^{-2}g,p)\).

\subsection{Metric fibration and conical structure of the time-zero slice}\label{sec:singular-time-slice}
We investigate the self-similar structure of the tangent space \((Z_0,d^Z_0)\). As in \cite[Proposition 7.30]{FL1}, there exists a flow $\{\boldsymbol{\psi}^s\}_{s\in\mathbb R}$ on \(Z_0\) such that:
\begin{enumerate}[label=\textnormal{(\alph*)}]
\item For every \(s\in\mathbb R\),
\[
    \boldsymbol{\psi}^s(\bar p)=\bar p.
\]

\item For every \(x,y\in Z_0\),
\[
    d^Z_0(\boldsymbol{\psi}^s(x),\boldsymbol{\psi}^s(y))
    =
    e^{-s/2}d^Z_0(x,y).
\]

\item For every \(x\in Z_0\) and \(\tau>0\), the Nash entropy satisfies
\[
    \mathcal N_{\boldsymbol{\psi}^s(x)}(e^{-s}\tau)
    =
    \mathcal N_x(\tau).
\]
\end{enumerate}

More precisely, on \(M\times(-\infty,0)\), the flow \(\boldsymbol{\psi}^s\) is the spacetime flow generated by $|t|(\partial_t-\nabla f_t)$. We then extend \(\boldsymbol{\psi}^s\) to \(Z\) by taking limits with respect to \(d_Z\).
By \cite[Proposition~7.30(i)--(ii)]{FL1}, for every \(s\in\mathbb R\), we obtain a homothety
\[
    \boldsymbol{\psi}^s:
    \Sigma:=\{x\in Z_0\mid d^Z_0(x,\bar p)=1\}
    \longrightarrow
    \{x\in Z_0\mid d^Z_0(x,\bar p)=e^{-s/2}\},
\]
with homothety factor \(e^{-s/2}\), where \(\Sigma\) is equipped with the restricted distance. In particular, \(Z_0\) is homeomorphic to a cone over \(\Sigma\).

Next, we construct a natural map \(\Phi:M \to Z_0\) and establish some basic properties of $\Phi$.

\begin{lemma}\label{lem:worldline}
For every \(x\in M\), the limit
\(
    \lim_{t\nearrow 0}(x,t)
\)
exists in \((Z,d_Z)\). We therefore define a map \(\Phi:M\to Z_0\) by
\begin{align}
\Phi(x):=\lim_{t\nearrow 0}(x,t).\label{eq:analytic-projection}
\end{align}
\end{lemma}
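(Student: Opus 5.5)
The plan follows the proof of Lemma~\ref{lem:contain1}: show that the worldline $t\mapsto(x,t)$ is Cauchy in $(Z,d_Z)$ as $t\nearrow0$, and then use completeness of $Z$. What replaces the local curvature bound used there is the global scalar curvature bound $\scal\le C_0$ together with self-similarity. Under the Ricci flow, $\scal_{g_t}=|t|^{-1}\scal\circ\psi^t\le C_0|t|^{-1}$. This is exactly the parabolic scale at which the flow looks like a bounded-curvature flow in the sense relevant for $H$-centers.

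The key step is to show that for $t_0\le s<t<0$, with $t-s$ comparable to $|t|$ or smaller, the point $(x,s)$ is an $H$-center of $(x,t)$ with $H=H(n,Y,C_0)$. The standard estimate from \cite[Section~2]{FL1}, which carries over to shrinkers by \cite{liwang2020heat1,liwang2024heat}, bounds the $W_1$-distance between $\delta_x$ and $\nu_{(x,t);s}$ using only an integrated bound on the scalar curvature along the worldline. Concretely, I would use the inequality $\mathrm{Var}_s\bigl(\delta_x,\nu_{(x,t);s}\bigr)\le H(t-s)$, which holds once $\int_s^t\scal(x,\tau)\,d\tau\le C$; this is the same mechanism already used in the proof of Lemma~\ref{lem:continuityF}, where $(x_i,s)$ is asserted to be an $H$-center of $x_i^*$ ``by the scalar curvature bound''.

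The obstacle is that $\int_s^t C_0|\tau|^{-1}\,d\tau$ diverges logarithmically as $t\nearrow0$. So I would work dyadically. On each interval $[t_{k+1},t_k]$ with $t_k=-2^{-k}$, this integral is bounded by $C_0\log2$. Hence $(x,t_{k+1})$ is an $H$-center of $(x,t_k)$, and by \cite[Lemma~3.13]{FL1},
\[
d_Z\bigl((x,t_k),(x,t_{k+1})\bigr)\le\max\{1,\sqrt H\}\,2^{-k/2}.
\]
Since $\sum_k2^{-k/2}<\infty$, the sequence $\bigl((x,t_k)\bigr)_k$ is Cauchy. For intermediate times $t\in[t_{k+1},t_k]$, the same bound gives $d_Z\bigl((x,t),(x,t_{k+1})\bigr)\le C2^{-k/2}$. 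Summing the tail then shows $d_Z\bigl((x,t),(x,t')\bigr)\le C\sqrt{\max\{|t|,|t'|\}}$ for all $t,t'$ close to $0$, so the whole family $(x,t)$ is Cauchy.

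Completeness of $(Z,d_Z)$ then gives the limit. It lies in $Z_0$ by the 2-H\"older continuity of $\mathfrak t$. If the integrated scalar-curvature version of the $H$-center criterion is not directly available in the shrinker setting, I would fall back on the following argument. For the pointwise curvature $|\scal|(x,\tau)$ one has $|\tau|\,\scal_{g_\tau}(x)=\scal(\psi^\tau(x))\le C_0$. This says the flow rescaled parabolically around $(x,t)$ by $|t|$ has scalar curvature bounded by $C_0$ on the time interval $[2t,t]$. One then applies \cite[Proposition~2.21]{FL1} to this rescaled flow, which is legitimate because of its extension to shrinkers in \cite{liwang2024heat}, and scales back. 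This yields the same dyadic estimate.
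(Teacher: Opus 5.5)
Your proposal is correct and follows the paper's route. The type-I bound $0\le\scal\le C_0|t|^{-1}$ makes $(x,s)$ an $H$-center of $(x,t)$ with $H=H(n,Y,C_0)$, and then \cite[Lemma~3.13]{FL1} together with completeness of $(Z,d_Z)$ gives the limit. The only difference is that the paper cites \cite[Proposition~4.4]{LW26}, which gives the $H$-center property for all $s<t<0$ at once, so $d_Z((x,t),(x,s))\le C\sqrt{t-s}$ holds directly. Nothing diverges there: the scale-invariant quantity is the reduced length of the static curve, $\frac{1}{2\sqrt{t-s}}\int_s^t\sqrt{t-\tau}\,\scal(x,\tau)\,d\tau\le C_0$, rather than $\int_s^t\scal\,d\tau$. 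Your dyadic summation is therefore valid but not needed.
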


\begin{proof}
By our assumption on the scalar curvature, we have
\[
    0\le \scal\le C_0|t|^{-1}
\]
on \(M\times(-\infty,0)\). Thus, by \cite[Proposition 4.4]{LW26}, for every \(x\in M\) and \(s<t<0\), the point \((x,s)\) is an \(H\)-center of \((x,t)\), where \(H\) depends only on \(C_0\), \(n\), and $Y$. Hence, by \cite[Lemma 3.13]{FL1},
\[
    d_Z\bigl((x,t),(x,s)\bigr)\le C\sqrt{t-s}.
\]
It follows that \((x,t)\) converges in \((Z,d_Z)\) as \(t\nearrow 0\).
\end{proof}

\begin{lemma}\label{lem:dis}
The map \(\Phi:M\to Z_0\) has the following properties.
\begin{enumerate}[label=(\arabic*), font=\normalfont]
    \item For every \(x,y\in M\),
    \[
        d^Z_0(\Phi(x),\Phi(y))
        =
        \lim_{t\nearrow 0} d_{g_t}(x,y).
    \]
    \item For every \(x,y\in M\) and \(t<0\),
    \[
        d_{g_t}(x,y)\le d^Z_0(\Phi(x),\Phi(y))+C|t|^{1/2}.
    \]
      \item The map \(\Phi:M\to Z_0\) is continuous and proper with respect to the
    topology induced by the ambient spacetime distance \(d_Z\).
    \item The map \(\Phi:M\to Z_0\) is surjective.
\end{enumerate}
\end{lemma}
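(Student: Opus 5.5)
The plan is to reduce everything to one comparison between worldlines and $H$-centers, using the fact from Lemma~\ref{lem:worldline} that for each $x\in M$ and $s<t<0$ the point $(x,s)$ is an $H$-center of $(x,t)$, with $H=H(C_0,n,Y)$. Passing to the limit $t\nearrow0$, I will first show that $(x,s)$ is an $H$-center of $\Phi(x)$ for every $s<0$. For this I will use the continuity of conjugate heat kernels under $d_Z$-convergence, i.e. $d^s_{W_1}(\nu_{(x,t);s},\nu_{\Phi(x);s})\to0$, together with the closedness of the variance condition under $W_1$-limits.

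For item (2), I will apply the monotonicity \cite[Proposition 2.12]{FL1}: $s\mapsto d^s_{W_1}(\nu_{\Phi(x);s},\nu_{\Phi(y);s})$ is nondecreasing, and its limit as $s\nearrow0$ is $d^Z_0(\Phi(x),\Phi(y))$ by \eqref{eq:time-slice-distance}. Since $(x,s)$ and $(y,s)$ are $H$-centers, the standard estimate gives $d_{g_s}(x,y)\le d^s_{W_1}(\nu_{\Phi(x);s},\nu_{\Phi(y);s})+2\sqrt{H|s|}$, and (2) follows.

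For item (1), I also need the reverse inequality. Monotonicity of the $W_1$-distance along the backward conjugate heat flow gives $d^s_{W_1}(\nu_{(x,t);s},\nu_{(y,t);s})\le d_{g_t}(x,y)$ for $s<t$. Letting $t\nearrow0$ with $s$ fixed, and using the $W_1$-convergence of $\nu_{(x,t);s}$ to $\nu_{\Phi(x);s}$, I get $d^s_{W_1}(\nu_{\Phi(x);s},\nu_{\Phi(y);s})\le\liminf_{t\nearrow0}d_{g_t}(x,y)$. Then letting $s\nearrow0$ gives $d^Z_0\le\liminf$, while (2) gives $\limsup\le d^Z_0$. This proves (1).

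For item (3), continuity follows because $d_Z((x,t),(y,t))\le d^*$ is controlled by $d_{g_t}(x,y)$, and passing to the limit gives $d_Z(\Phi(x),\Phi(y))\le d^Z_0(\Phi(x),\Phi(y))\le d_{g_{-1}}(x,y)$. The last inequality holds because $d_{g_t}$ is nonincreasing in $t$ modulo bounded error, or more simply by (1) combined with $\Ric\ge$ the shrinker relation. I expect this point to need care; if it fails I will fall back on the worldline Lipschitz bound of Lemma~\ref{lem:worldline} on compact sets. For properness, by (2) with $t=-1$ we have $F(x,-1)=f(x)$, and $2\sqrt{F(\Phi(x))}=d^Z_0(\Phi(x),\bar p)\ge d_g(p,x)-C$, using \eqref{eq:distanceestimate} and $\Phi(p)=\bar p$. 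Thus preimages of $d^Z_0$-bounded sets are bounded in $M$. The same holds for $d_Z$-bounded sets, via the continuity of $F$ on $Z$ from Lemma~\ref{lem:continuityF} and the fact that $F(\Phi(x))=\lim_t F(x,t)$.

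For item (4), which I expect to be the main obstacle, take $z\in Z_0$ and $t_j\nearrow0$, and let $(x_j,t_j)$ be $H_n$-centers of $z$. By \eqref{eq:distanceestimate} and Lemma~\ref{lem:continuityF}, the quantity $F(x_j,t_j)$ is bounded, so $f(x_j)\le F(x_j,t_j)+C_0$ (using $\partial_tF\ge -C_0$) is bounded. Hence the $x_j$ lie in a fixed compact subset of $M$, and after passing to a subsequence $x_j\to x$. It remains to show $\Phi(x)=z$. I will estimate $d_Z(z,\Phi(x))$ by three terms: first $d_Z(z,(x_j,t_j))\le C\sqrt{|t_j|}$, because $H$-centers are $d^*$-close; second $d_Z((x_j,t_j),\Phi(x_j))\le C\sqrt{|t_j|}$ by Lemma~\ref{lem:worldline}; and third $d_Z(\Phi(x_j),\Phi(x))\to0$ by the continuity proved in (3). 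Together these give $\Phi(x)=z$.
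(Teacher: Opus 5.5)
Items (1), (2) and the boundedness half of properness are correct. The continuity argument you lead with in (3) fails, though, and both the closedness of $\Phi^{-1}(K)$ in your properness argument and the third term in your proof of (4) depend on it.

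The inequality $d^Z_0(\Phi(x),\Phi(y))\le d_{g_{-1}}(x,y)$ is not available. What the $H$-center property and $W_1$-monotonicity give is $d_{g_s}(x,y)\le d_{g_t}(x,y)+C\sqrt{t-s}$ for $s<t$. So later distances control earlier ones, which is the opposite of what you need. A shrinker has no lower Ricci bound that would reverse this; even $\Ric(g_t)\ge -K|t|^{-1}$ would only give a factor $|t|^{-K}$. Your inequality would make $\Phi$ Lipschitz into $(Z_0,d^Z_0)$. The paper obtains continuity into $d^Z_0$ only under local compactness (Lemma~\ref{lem:disa}), which is why the statement is phrased with $d_Z$.

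The repair is your fallback, made precise, and it is not a statement about compact sets. In Lemma~\ref{lem:worldline} the constant is $H=H(C_0,n,Y)$, so letting $t\nearrow0$ in the worldline bound gives $d_Z(\Phi(x),(x,s))\le C|s|^{1/2}$ uniformly in $x\in M$. For $x_i\to x$ and fixed $s<0$, \cite[Lemma~6.4]{FL1} then gives
\[
d_Z\bigl(\Phi(x_i),\Phi(x)\bigr)\le 2C|s|^{1/2}+d_{g_s}(x_i,x).
\]
Take $\limsup_i$ and then let $s\nearrow0$. This is exactly the paper's argument. For properness you only need $F$ bounded on $d_Z$-compact sets, which continuity of $F$ gives. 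Your claim for $d_Z$-bounded sets would also require the self-similar scaling.

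With this fix the proof is complete, and it differs from the paper only in organization:
\begin{itemize}
\item You get (2) directly from the $H$-center property of $(x,s)$ for $\Phi(x)$ plus monotonicity, and then (1) from a $\liminf$ bound plus (2). The paper reads (1) off the two-sided $H$-center estimate and derives (2) from $d_{g_s}\le d_{g_t}+C\sqrt{t-s}$.
\item For (4) you produce a preimage directly, using $f(x_j)\le F(x_j,t_j)+C_0$ to extract a convergent subsequence of $H$-center base points. The paper instead proves that $\Phi(M)$ is dense via a common $H$-center, then uses properness to show $\Phi(M)$ is closed.
\end{itemize}
The surjectivity arguments are the same compactness argument in a different order, and yours does not need properness.
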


\begin{proof}
As shown above, \((x,s)\) is an \(H\)-center of \((x,t)\) for every \(x\in M\) and \(s<t<0\). Passing to the limit as \(t\nearrow 0\), we see that \((x,s)\) is an \(H\)-center of \(\Phi(x)\). Therefore, by the definition of \(d^Z_0\), the first statement follows.

On the other hand, by the monotonicity of \(d_{W_1}^{t}\), we have
\[
    d_{g_s}(x,y)\le d_{g_t}(x,y)+C\sqrt{t-s}
\]
for every \(x,y\in M\) and \(s<t<0\). Letting \(t\nearrow 0\), we obtain the second statement.

We next prove item (3). Let \(x_i\to x\) in \((M,g_{-1})\). Fix
\(s<0\). As above, \((x_i,s)\) and \((x,s)\) are \(H\)-centers of
\(\Phi(x_i)\) and \(\Phi(x)\), respectively. Hence \cite[Lemma~6.14]{FL1}
gives
\[
    d_Z\bigl(\Phi(x_i),(x_i,s)\bigr)\le C|s|^{1/2},
    \qquad
    d_Z\bigl(\Phi(x),(x,s)\bigr)\le C|s|^{1/2}.
\]
Moreover, since \(d^Z_s=d_{g_s}\) on the smooth time slice \(Z_s\), by
\cite[Lemma 6.4]{FL1} we have
\[
    d_Z\bigl((x_i,s),(x,s)\bigr)
    \le d_{g_s}(x_i,x).
\]
For fixed \(s<0\), the metric \(g_s\) induces the same topology as \(g\),
so \(d_{g_s}(x_i,x)\to0\). Therefore
\[
    \limsup_{i\to\infty}d_Z\bigl(\Phi(x_i),\Phi(x)\bigr)
    \le C|s|^{1/2}.
\]
Letting \(s\nearrow0\), we obtain \(\Phi(x_i)\to\Phi(x)\) with respect to
\(d_Z\).

Fix \(z\in Z_0\), and let \(t_j\nearrow 0\). For each \(j\), choose an
\(H_n\)-center \(x_j^*=(x_j,t_j)\) of \(z\) at time \(t_j\). On the other hand, \(x_j^*\) is an
\(H\)-center of \(\Phi(x_j)\), where \(H\) is independent of \(j\). Thus
\(z\) and \(\Phi(x_j)\) admit a common center at time \(t_j\), with
uniform \(H\)-constant. The \(H\)-center estimate \cite[Lemma 6.14]{FL1} gives
\[
    d_Z\bigl(z,\Phi(x_j)\bigr)\le C |t_j|^{1/2}.
\]
Letting \(j\to\infty\) shows that \(\Phi(M)\) is dense in $Z_0$ with respect to $d_Z$.

We show that \(\Phi:M\to (Z_0,d_Z)\)
is proper. Let \(K\subset Z_0\) be \(d_Z\)-compact. By the continuity of
\(F\) and the identity \(F=\frac14 d^Z_0(\cdot,\bar p)^2\) on \(Z_0\), the
quantity \(d^Z_0(\cdot,\bar p)\) is bounded on \(K\). Hence, for
\(x\in\Phi^{-1}(K)\), item~(2), applied with \(t=-1\) and
\(y=p\), gives
\[
    d_{g_{-1}}(p,x)\le d^Z_0\bigl(\bar p,\Phi(x)\bigr)+C\le C_K.
\]
Since $\Phi^{-1}(K)$ is closed and bounded in the complete Riemannian manifold $(M,g)$, it is compact by the Hopf--Rinow theorem. Thus \(\Phi\) is proper, and hence
\(\Phi(M)\) is closed in the metric space \((Z_0,d_Z)\). Since \(\Phi(M)\)
is also dense, we have \(\Phi(M)=Z_0\).
\end{proof}

\begin{remark}
The density argument in Lemma~\ref{lem:dis} uses the extrinsic \(d_Z\)-distance,
not the intrinsic \(d^Z_0\)-distance. Indeed, a common \(H\)-center
\(x_j^*=(x_j,t_j)\) gives the same-time estimate
\[
    d_{W_1}^{t_j}\bigl(\nu_{z;t_j},
        \nu_{\Phi(x_j);t_j}\bigr)
    \le C |t_j|^{1/2},
\]
by the definition of an \(H\)-center \cite[Definition 2.13]{FL1}. Since
the \(W_1\)-distance is nondecreasing in time,
this estimate controls earlier times, but it does not control the limit
as \(s\nearrow0\) in the definition of \(d^Z_0\).
\end{remark}

\begin{lemma}\label{lem:disa}
If $(Z_0,d^Z_0)$ is locally compact, then the topologies induced by $d_Z$ and $d^Z_0$ coincide. In particular, Lemma~\ref{lem:dis} shows that if $(Z_0, d^Z_0)$ is locally compact, then $\Phi$ is surjective, continuous, and proper from $(M, g_{-1})$ to $(Z_0, d^Z_0)$.
\end{lemma}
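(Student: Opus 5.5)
We need to show that, when $(Z_0,d^Z_0)$ is locally compact, the topologies of $d_Z$ and $d^Z_0$ on $Z_0$ coincide. One direction is free. By \cite[Lemma~6.4]{FL1}, $d_Z(x,y)\le d^Z_0(x,y)$ for $x,y\in Z_0$, so the $d^Z_0$-topology is finer than the $d_Z$-topology. The identity map $\mathrm{id}:(Z_0,d^Z_0)\to(Z_0,d_Z)$ is therefore a continuous bijection. It remains to show that $d_Z$-convergence $z_i\to z$ implies $d^Z_0$-convergence.

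\textbf{Step 1: reduce to a compact set.} Suppose $z_i\to z$ in $d_Z$. By the continuity of $F$ (Lemma~\ref{lem:continuityF}) and \eqref{eq:Fon0}, $d^Z_0(z_i,\bar p)$ is bounded. Hence all $z_i$ lie in a closed $d^Z_0$-ball $\bar B_{d^Z_0}(\bar p,R)$.

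\textbf{Step 2: these balls are compact.} I would show that local compactness plus the cone structure make closed $d^Z_0$-balls about $\bar p$ compact. Take $r_0>0$ with $\bar B_{d^Z_0}(\bar p,r_0)$ compact. The homotheties $\boldsymbol\psi^s$ fix $\bar p$, scale $d^Z_0$ by $e^{-s/2}$, and are bijective. So choosing $s$ with $e^{-s/2}r_0=R$, the map $\boldsymbol\psi^s$ sends $\bar B_{d^Z_0}(\bar p,r_0)$ homeomorphically onto $\bar B_{d^Z_0}(\bar p,R)$. Hence the latter is compact in $d^Z_0$.

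\textbf{Step 3: conclude.} A continuous bijection from a compact space to a Hausdorff space is a homeomorphism. Apply this to $\mathrm{id}$ restricted to $\bar B_{d^Z_0}(\bar p,R)$, which is compact in $d^Z_0$, into $(Z_0,d_Z)$, which is Hausdorff. Since $z_i\to z$ in $d_Z$ inside this set, we get $z_i\to z$ in $d^Z_0$. Thus the two topologies agree.

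The \emph{in particular} part then follows by transporting Lemma~\ref{lem:dis}(3)--(4). Continuity and surjectivity are topological, so they carry over directly. For properness, a $d^Z_0$-compact set is $d_Z$-compact, and $\Phi$ is $d_Z$-proper.

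The main obstacle is Step 2. It requires that $\boldsymbol\psi^s$ is surjective on $Z_0$ and preserves $\bar p$, which I take from \cite[Proposition~7.30]{FL1} since $\boldsymbol\psi^{-s}$ is its inverse. Without this, local compactness near $\bar p$ alone would not control a large ball.
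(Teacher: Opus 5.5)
Your proposal is correct and follows essentially the same route as the paper. You use the one-sided bound $d_Z\le d^Z_0$ from \cite[Lemma~6.4]{FL1}, boundedness of $d^Z_0(\cdot,\bar p)$ along a $d_Z$-convergent sequence via the continuity of $F$, and compactness of every closed $d^Z_0$-ball about $\bar p$ obtained by transporting a small compact ball with the homotheties $\boldsymbol{\psi}^s$. Your final step via the compact-to-Hausdorff homeomorphism theorem is just a repackaging of the paper's subsequence argument. Your explicit treatment of the ``in particular'' clause (properness because $d^Z_0$-compact sets are $d_Z$-compact) fills in what the paper leaves implicit.
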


\begin{proof}
By \cite[Lemma~6.4]{FL1},
\[
    d_Z(z_1,z_2)\le d^Z_0(z_1,z_2),
    \qquad z_1,z_2\in Z_0.
\]
Therefore, to show that $d^Z_0$ and $d_Z$ induce the same topology, it is enough to prove that $d_Z(y_i,y)\to0$ implies $d^Z_0(y_i,y)\to0$. Lemma~\ref{lem:continuityF} implies that
\(
    F=\frac14\bigl(d^Z_0(\cdot,\bar p)\bigr)^2
\)
is \(d_Z\)-continuous on \(Z_0\). Hence \(F(y_i)\to F(y)\), and is therefore bounded. Therefore
the sequence \(\{y_i\}\) is contained in a bounded \(d^Z_0\)-ball centered
at \(\bar p\).

We claim that, under the local compactness assumption, every closed
\(d^Z_0\)-ball centered at \(\bar p\) is compact. Indeed, local compactness at
\(\bar p\) gives a compact neighborhood \(K\) of \(\bar p\). Choose \(\rho>0\)
so that \(B_{d^Z_0}(\bar p,2\rho)\subset K\). Then
\(\overline B_{d^Z_0}(\bar p,\rho)\) is a closed subset of \(K\), hence is
compact. The self-similar flow
\(\boldsymbol{\psi}^s\) constructed above satisfies
\[
    d^Z_0\bigl(\boldsymbol{\psi}^s z,\bar p\bigr)
    =
    e^{-s/2}d^Z_0(z,\bar p),
\]
and hence maps \(\overline B_{d^Z_0}(\bar p,\rho)\) homeomorphically onto
\(\overline B_{d^Z_0}(\bar p,e^{-s/2}\rho)\). Varying \(s\) proves the claim.

It follows that every subsequence of \(\{y_i\}\) has a further
subsequence converging with respect to \(d^Z_0\), say to \(z\in Z_0\).
 Since the whole
sequence \(y_i\) converges to \(y\) with respect to \(d_Z\), we must
have \(z=y\). Thus, we have proved that
\(d^Z_0\bigl(y_i,y\bigr)\to0.
\)
\end{proof}

\begin{proposition}[=Theorem~\ref{thm--locally compact criterion}]\label{prop:local-compactness-pgh}
Let $(M^n,g,f)$ be a Ricci shrinker with bounded scalar curvature. Suppose that $(Z_0,d^Z_0)$ is locally
compact. Then
\[
    (M,d_{g_t},p)
    \xrightarrow[t\nearrow0]{\pGHconvtext}
    (Z_0,d^Z_0,\bar p).
\]
In particular, $(Z_0,d^Z_0)$ is the unique tangent space at infinity of $(M,g)$ and $(Z_0,d^Z_0)$ is also a geodesic space.
\end{proposition}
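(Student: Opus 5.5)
We will show that $\Phi:(M,g_t,p)\to(Z_0,d^Z_0,\bar p)$ is itself an $\epsilon$-pointed Gromov--Hausdorff approximation on balls of any fixed radius $L$, once $t$ is close enough to $0$. By Lemma~\ref{lem:disa}, local compactness makes $\Phi:(M,g_{-1})\to(Z_0,d^Z_0)$ continuous, proper and surjective. By self-similarity, every closed ball $\overline B_{d^Z_0}(\bar p,R)$ is compact. The plan has three steps: uniform distance distortion, almost-surjectivity on balls, and the geodesic conclusion.

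\textbf{Step 1: distortion.} Fix $L$ and put $K_L:=\Phi^{-1}\bigl(\overline B_{d^Z_0}(\bar p,2L)\bigr)$, which is compact in $M$. Lemma~\ref{lem:dis}(2) gives the upper bound $d_{g_t}(x,y)\le d^Z_0(\Phi x,\Phi y)+C|t|^{1/2}$ uniformly. For the reverse inequality, monotonicity of $W_1$ gives $d_{g_s}(x,y)\le d_{g_t}(x,y)+C\sqrt{t-s}$. Hence the functions $t\mapsto d_{g_t}(x,y)+C|t|^{1/2}$ are almost monotone in $t$ and converge pointwise on $K_L\times K_L$ to the continuous function $d^Z_0(\Phi x,\Phi y)$ (Lemma~\ref{lem:dis}(1)). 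They are also equicontinuous in $(x,y)$ for $t$ near $0$, by the upper bound together with the continuity of $\Phi$. A Dini-type compactness argument then upgrades this to uniform convergence on $K_L\times K_L$. Concretely, argue by contradiction with $x_j\to x$, $y_j\to y$, $t_j\nearrow0$. Use the upper bound at time $t_j$ and the lower bound obtained from monotonicity at a fixed earlier time $s$ followed by $s\nearrow 0$. Continuity of $\Phi$ is what lets us replace $(x_j,y_j)$ by $(x,y)$ at the fixed time $s$.

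\textbf{Step 2: balls correspond.} By \eqref{eq:distanceestimate} and the continuity of $F$ (Lemma~\ref{lem:continuityF}), together with $\partial_tF\in[-C_0|t|,0]$ as in that proof, $2\sqrt{F(x,t)}$ is within $C|t|^{1/2}$ of $d_{g_t}(p,x)$, and it converges to $d^Z_0(\bar p,\Phi x)$ with error at most $C_0|t|$. This yields $\Phi\bigl(B_{g_t}(p,L)\bigr)\subset B_{d^Z_0}(\bar p,L+\Psi(|t|))$ and $\Phi^{-1}\bigl(B_{d^Z_0}(\bar p,L)\bigr)\subset B_{g_t}(p,L+\Psi(|t|))$. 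In particular $B_{g_t}(p,L)\subset K_L$ for $t$ near $0$. Surjectivity of $\Phi$ then shows that the image of the $g_t$-ball is $\Psi$-dense in the $d^Z_0$-ball. Combined with Step 1, $\Phi$ is a pointed $\Psi(|t|)$-GH approximation on these balls, which proves the convergence along the full family. Uniqueness of the tangent space at infinity follows from the identification $(M,g_{t},p)\cong(M,|t|^{-1}... )$ via $\psi^{t}$ noted after the definition of tangent spaces at infinity.

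\textbf{Step 3: geodesic property.} $Z_0$ is a complete, locally compact pointed GH limit of length spaces, hence a length space. Since its closed balls are compact, Hopf--Rinow then makes it geodesic.

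The main obstacle is Step 1, namely the uniformity of the lower bound $d_{g_t}(x,y)\ge d^Z_0(\Phi x,\Phi y)-\Psi$ on $K_L$. The only available tool is pointwise convergence plus almost-monotonicity in $t$. The first approach to try is the contradiction argument above: freeze an earlier time $s$, use continuity of $d_{g_s}$ on the compact set $K_L\times K_L$, and then send $s\nearrow0$. If that fails, the alternative is to exploit the equicontinuity of $d_{g_t}$ through the bounded scalar curvature.
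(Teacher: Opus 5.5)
Your proposal is correct and follows essentially the paper's route. $\Phi$ itself is the pointed Gromov--Hausdorff approximation. Uniform distortion on the compact set $\Phi^{-1}(\overline B_{d^Z_0}(\bar p,R))$ comes from a contradiction/compactness argument, and uniform radial control from $F$ and \eqref{eq:distanceestimate}, together with surjectivity of $\Phi$, gives the ball inclusions.

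The differences are minor. You get the lower distortion bound by freezing an earlier time $s$ and using $W_1$-monotonicity, whereas the paper uses Lemma~\ref{lem:dis}(2) to show $d_{g_{t_i}}(x_i,x)\to0$ and so reduces to a fixed pair. You also spell out the geodesic property, which the paper leaves implicit. One small slip: $\partial_tF=-|t|\scal_{g_t}\in[-C_0,0]$, not $[-C_0|t|,0]$. Hence the radial error for $2\sqrt F$ is $O(|t|^{1/2})$ rather than $C_0|t|$, but that is all your $\Psi(|t|)$ needs.
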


\begin{proof}
By the proof of Lemma~\ref{lem:disa}, every closed
$d^Z_0$-ball centered at $\bar p$ is compact. Moreover,
Lemma~\ref{lem:disa} implies that
\[
    \Phi:(M,g_{-1})\longrightarrow (Z_0,d^Z_0)
\]
is continuous, while Lemma~\ref{lem:dis}(4) implies that $\Phi$ is surjective.

For $R>0$, set
\[
    K_R
    :=
    \Phi^{-1}\left(
        \overline B_{d^Z_0}(\bar p,R)
    \right).
\]
The set $K_R$ is compact in $(M,g_{-1})$.

We first claim that
\begin{equation}\label{eq:uniform-distance-convergence}
    \lim_{t\nearrow0}
    \sup_{x,y\in K_R}
    \left|
        d_{g_t}(x,y)
        -
        d^Z_0(\Phi(x),\Phi(y))
    \right|
    =0.
\end{equation}
Suppose otherwise. Then there exist $\ep>0$, $t_i\nearrow0$, and
$x_i,y_i\in K_R$ such that
\[
    \left|
        d_{g_{t_i}}(x_i,y_i)
        -
        d^Z_0(\Phi(x_i),\Phi(y_i))
    \right|
    \ge\ep.
\]
After passing to a subsequence, we may assume that
\[
    x_i\to x,
    \qquad
    y_i\to y
\]
in $(M,g_{-1})$. By Lemma~\ref{lem:disa},
\[
    d^Z_0(\Phi(x_i),\Phi(x))\to0,
    \qquad
    d^Z_0(\Phi(y_i),\Phi(y))\to0.
\]
Lemma~\ref{lem:dis}(2) therefore gives
\[
    d_{g_{t_i}}(x_i,x)
    \le
    d^Z_0(\Phi(x_i),\Phi(x))+C|t_i|^{1/2}
    \to0,
\]
and similarly
\[
    d_{g_{t_i}}(y_i,y)\to0.
\]
Hence
\[
    \left|
        d_{g_{t_i}}(x_i,y_i)-d_{g_{t_i}}(x,y)
    \right|
    \to0.
\]
On the other hand, Lemma~\ref{lem:dis}(1) gives
\[
    d_{g_{t_i}}(x,y)
    \to
    d^Z_0(\Phi(x),\Phi(y)),
\]
while the continuity of $\Phi$ gives
\[
    d^Z_0(\Phi(x_i),\Phi(y_i))
    \to
    d^Z_0(\Phi(x),\Phi(y)).
\]
This is a contradiction, proving
\eqref{eq:uniform-distance-convergence}.

We next establish uniform radial control. Since
\[
    \partial_tF=-|t|\scal_{g_t}\in[-C_0,0],
\]
and $F(x,t)\to F(\Phi(x))$ as $t\nearrow0$, we have
\[
    0\le F(x,t)-F(\Phi(x))\le C_0|t|.
\]
Furthermore, \eqref{eq:distanceestimate} implies
\[
    \left|
        d_{g_t}(p,x)-2\sqrt{F(x,t)}
    \right|
    \le C(n)|t|^{1/2}.
\]
Since
\[
    2\sqrt{F(\Phi(x))}
    =
    d^Z_0(\bar p,\Phi(x)),
\]
we obtain
\begin{equation}\label{eq:uniform-radial-control}
    \left|
        d_{g_t}(p,x)
        -
        d^Z_0(\bar p,\Phi(x))
    \right|
    \le C_1|t|^{1/2}
\end{equation}
for every $x\in M$.

Set $\eta(t)=C_1|t|^{1/2}$. For every fixed $R>0$,
\eqref{eq:uniform-radial-control} gives
\[
    \Phi\bigl(B_{g_t}(p,R)\bigr)
    \subset
    B_{d^Z_0}(\bar p,R+\eta(t)).
\]
Conversely, using the surjectivity of $\Phi$, we obtain
\[
    B_{d^Z_0}(\bar p,R-\eta(t))
    \subset
    \Phi\bigl(B_{g_t}(p,R)\bigr).
\]
For $t$ sufficiently close to zero,
\[
    B_{g_t}(p,R)\subset K_{R+1}.
\]
Thus, by \eqref{eq:uniform-distance-convergence}, the restriction of $\Phi$
to $B_{g_t}(p,R)$ has distortion converging to zero. Together with the two
preceding inclusions, this proves that $\Phi$ is a pointed
$(R,\ep(t))$-approximation, where $\ep(t)\to0$ as $t\nearrow0$.
Therefore
\[
    (M,d_{g_t},p)
    \xrightarrow[t\nearrow0]{\pGHconvtext}
    (Z_0,d^Z_0,\bar p).
\]
\end{proof}

\begin{remark}
One can also consider a compact Ricci flow $(M, (g_t)_{t \in [-1,0)})$ such that $0$ is the first singular time and the scalar curvature has a type-I bound, that is,
\begin{align*}
    \scal \le \frac{C_0}{|t|},
\end{align*}
on $M \times [-1,0)$, for a constant $C_0$. The assumption on the scalar curvature guarantees that the map $\Phi:M \to Z_0$ as in \eqref{eq:analytic-projection} is well defined. Moreover, $\Phi: M \to Z_0$ is continuous, where $Z_0$ is equipped with the topology induced by $d_Z$, and satisfies the properties in Lemma~\ref{lem:dis}(1)--(2), by the same argument as in that lemma.

If we further assume that $(Z_0, d^Z_0)$ is locally compact, then it is in fact compact. Indeed, for any sequence $\{x_i\} \subset Z_0$, it follows from the compactness of $(Z, d_Z)$ that by passing to a subsequence, $x_i$ converges to $x \in Z_0$ under $d_Z$. Then the local compactness of $(Z_0, d^Z_0)$ at $x$ implies that $x_i$ converges to $x$ with respect to $d^Z_0$. Thus, by the same arguments as in Lemmas~\ref{lem:dis} and~\ref{lem:disa}, we conclude that $\Phi:(M, g_{-1}) \to (Z_0, d^Z_0)$ is continuous and surjective. In addition, the analogue of Proposition~\ref{prop:local-compactness-pgh} holds in this case.
In particular, if $(Z_0,d^Z_0)$ is locally
compact, then
\[
    (M,d_{g_t})
    \xrightarrow[t\nearrow0]{\GHconvtext}
    (Z_0,d^Z_0).
\]
\end{remark}

\subsection{Ricci shrinkers with maximal volume growth}
\begin{definition}
For a Ricci shrinker $(M^n,g,f)$, the asymptotic volume ratio is defined as
\begin{align*}
\operatorname{AVR}\left(M,g\right):=\lim _{r \to+\infty} \frac{\left|B_g(p,r)\right|_g}{\omega_n r^{n}},
\end{align*}
where $\omega_n$ is the volume of the unit ball in $\R^n$. The limit always exists and belongs to $[0, 1]$ (see \cite{CLY} and \cite{wang2025rigidity}). We say that a Ricci shrinker has maximal volume growth if its asymptotic volume ratio is positive.
\end{definition}

Recall that \(Z_0\) has the regular--singular decomposition in \eqref{eq:regular-singular-decomposition}.
The same argument as in \cite[Theorem 7.26]{FL1} implies that
\begin{equation}\label{eq:regular-part-nonempty}
    Z_0^{\reg}\neq\emptyset
    \quad\Longleftrightarrow\quad
    \operatorname{AVR}(M,g)>0.
\end{equation}
Moreover, by \cite[Proposition~7.30(iii)]{FL1}, the flow \(\boldsymbol{\psi}^s\) preserves the regular and singular sets:
\[
    x\in Z_0^{\reg}
    \quad
    (\text{resp. }x\in Z_0^{\sing})
    \quad\Longleftrightarrow\quad
    \boldsymbol{\psi}^s(x)\in Z_0^{\reg}
    \quad
    (\text{resp. }\boldsymbol{\psi}^s(x)\in Z_0^{\sing}).
\]

By the same argument as in Corollary~\ref{cor:smooth-convergence-generic-schwarz}, \((Z_0^{\reg},g_0^Z)\) has a metric cone structure. Here, \(g_0^Z\) denotes the smooth metric on \(Z_0^{\reg}\).
\begin{proposition}\label{prop:cone}
The function $F$ (see \eqref{eq:Fon0}) is smooth on $Z_0^{\reg}$ and satisfies
\[
    \nabla_{g_0^Z}^2F=\frac{g_0^Z}{2}.
\]
Set
\[
    \Sigma^{\mathrm{reg}}
    :=
    \{x\in Z_0^{\reg}\mid d^Z_0(\bar p,x)=1\},
\]
and let \(g_\Sigma\) be the metric induced by \(g_0^Z\). Then
\begin{equation}
(Z_0^{\reg}\setminus\{\bar p\},g_0^Z)
    =
(\Sigma^{\mathrm{reg}}\times(0,\infty),\,\d r^2+r^2g_\Sigma).
\end{equation}
\end{proposition}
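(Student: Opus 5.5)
The plan is to follow the argument of Corollary~\ref{cor:smooth-convergence-generic-schwarz}, replacing the Fano-fibration input by the regular spacetime structure of $\RR$ near $Z_0^{\reg}$.

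\emph{Step 1: local smooth picture near a regular point.} Fix $z\in Z_0^{\reg}$. Since $\RR$ is a Ricci flow spacetime, there are a neighborhood $U$ of $z$ in $Z_0^{\reg}$ and $\ep>0$ such that the $\partial_\t$-flow identifies $U\times[-\ep,0]$ with a region of $\RR$. Arguing as in Proposition~\ref{prop:regular-part-identification}, its negative-time part is $V\times[-\ep,0)$ for some open $V\subset M$. On this region the metrics $g_t$ have uniformly bounded curvature and all its derivatives, and they converge smoothly to $g^Z_0$ as $t\nearrow0$. Moreover, $\Phi|_V$ is the identification of $V$ with $U$.

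\emph{Step 2: convergence of $F$.} On $V$ we have $\partial_tF=-|t|\scal$. The curvature is bounded on $V\times[-\ep,0)$, so $|\partial_tF|\le C|t|$ there, and $F(\cdot,t)$ converges uniformly on compact subsets of $V$. By Lemma~\ref{lem:continuityF} together with Step 1, the limit is exactly $F\circ\Phi=\frac14 d^Z_0(\Phi(\cdot),\bar p)^2$, i.e.\ the function in \eqref{eq:Fon0}. Next consider the self-similar identities
\[
|t|\Ric(g_t)+\nabla^2_{g_t}F=\tfrac12 g_t,
\qquad
|\nabla F|^2_{g_t}=F-|t|^2\scal_{g_t},
\]
together with $\Delta_{g_t}F=\frac n2-|t|\scal_{g_t}$. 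These are obtained by rescaling \eqref{eq:normal1} with $F=|t|f_t$, and they give uniform $C^k$ bounds on $F(\cdot,t)$ via interior elliptic estimates. Hence $F(\cdot,t)\to F$ in $C^\infty_{\loc}$, so $F$ is smooth on $Z_0^{\reg}$, and passing to the limit yields
\[
\nabla^2_{g_0^Z}F=\tfrac12g_0^Z,
\qquad
|\nabla F|^2_{g_0^Z}=F.
\]

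\emph{Step 3: cone structure.} Set $r=2\sqrt F=d^Z_0(\cdot,\bar p)$. On $\{r>0\}$ the identities of Step 2 give $|\nabla r|=1$ and $\nabla^2 r=r^{-1}(g_0^Z-\d r\otimes\d r)$, so $\nabla F=\frac r2\partial_r$ is a homothetic vector field with $\mathcal L_{\nabla F}g_0^Z=g_0^Z$. Its flow on $Z_0^{\reg}$ should coincide with $\boldsymbol\psi^s$ up to reparametrization, since both come from $|t|(\partial_t-\nabla f_t)$. Because $\boldsymbol\psi^s$ preserves $Z_0^{\reg}$ and is defined for all $s\in\mathbb R$, the integral curves of $\partial_r$ starting on $\Sigma^{\reg}$ exist for all $r\in(0,\infty)$ and stay in $Z_0^{\reg}$. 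The map $\Sigma^{\reg}\times(0,\infty)\to Z_0^{\reg}\setminus\{\bar p\}$ obtained by flowing along $\partial_r$ is therefore a diffeomorphism. The Hessian identity then gives the standard warped form $g_0^Z=\d r^2+r^2g_\Sigma$.

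\emph{Expected main obstacle.} The delicate point is Step 3's completeness claim: the gradient flow of $r$ inside $Z_0^{\reg}$ must not run into singular points in finite parameter. I would handle this by matching it with the globally defined homothety $\boldsymbol\psi^s$ and invoking its preservation of $Z_0^{\reg}$ (\cite[Proposition~7.30(iii)]{FL1}). A secondary issue is checking that the limit function in Step 2 is genuinely $\frac14(d^Z_0)^2$ and not merely its $d_Z$-limit; here the continuity statement of Lemma~\ref{lem:continuityF} should suffice.
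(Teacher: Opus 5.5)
Your proposal is correct and follows the paper's route. The paper simply invokes the argument of Corollary~\ref{cor:smooth-convergence-generic-schwarz}: local smooth convergence on the regular part, the bound $|\partial_tF|\le C|t|$ plus elliptic estimates giving $F(\cdot,t)\to F$ in $C^\infty_{\loc}$, passage to the limit in the rescaled soliton identities, and the Hessian-to-cone computation for $r=2\sqrt F$. Your extra justifications are sound and fill in points the paper leaves implicit: the radial flow stays in $Z_0^{\reg}$ for all time because on $Z_0^{\reg}$ the homotheties $\boldsymbol{\psi}^s$ are the flow of $-\nabla_{g_0^Z}F$ and preserve $Z_0^{\reg}$, and the limit function is exactly $\frac14(d^Z_0)^2$ by Lemma~\ref{lem:continuityF}.
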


Next, we give a characterization of $\operatorname{AVR}$.

\begin{proposition}\label{prop:AVRchar}
Let $(M,g)$ be a Ricci shrinker with maximal volume growth and bounded scalar curvature. Then
\begin{align*}
    \operatorname{AVR}(M, g)=\omega_n^{-1}\abs{\{z \in Z_0 \mid d^Z_0(z, \bar p) < 1\}}_{d^Z_0}.
\end{align*}
\end{proposition}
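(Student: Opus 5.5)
We prove the identity by comparing volumes of rescaled balls along the self-similar flow and then passing to the limit slice $Z_0$. By self-similarity, $\psi^t$ is an isometry from $(M,g_t,p)$ to $(M,|t|^{-1}... )$. More precisely, $|B_{g_t}(p,1)|_{g_t}=|t|^{n/2}|B_g(p,|t|^{-1/2})|_g$, so
\[
    \lim_{t\nearrow0}|B_{g_t}(p,1)|_{g_t}=\omega_n\operatorname{AVR}(M,g).
\]
It therefore suffices to prove
\[
    \lim_{t\nearrow0}|B_{g_t}(p,1)|_{g_t}=\abs{B_{d^Z_0}(\bar p,1)}_{d^Z_0}.
\]
Here the right-hand side is the volume of the regular part with respect to $g_0^Z$, in the sense of \cite[Definition~5.33]{FL1}. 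The singular part has zero measure by the codimension estimate for $\MS$.

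\textbf{Lower bound.} Take a compact $K\subset Z_0^{\reg}\cap B_{d^Z_0}(\bar p,1)$. Since $\RR$ is a Ricci flow spacetime, the worldlines through $K$ extend to negative times. As in Proposition~\ref{prop:regular-part-identification}, this gives an open set $U\subset M$ on which $\Phi$ restricts to a diffeomorphism onto a neighborhood of $K$, with $g_t\to(\Phi^{-1})^*g_0^Z$ smoothly. By \eqref{eq:uniform-radial-control}, for $t$ close to $0$ the set $\Phi^{-1}(K)\cap U$ lies in $B_{g_t}(p,1)$. Hence $\liminf|B_{g_t}(p,1)|\ge|K|_{g_0^Z}$. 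Exhausting the regular part of the ball by such $K$ gives the inequality $\ge$.

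\textbf{Upper bound.} Split $B_{g_t}(p,1)$ into the region where $r_{\Rm}\ge\theta$ and the region where $r_{\Rm}<\theta$. The bad region has volume at most $C\theta^{2-\ep}$ by \cite[Theorem~1.12(b)]{FL1}, applied on the compact spacetime region $\{F\le 2\}$ near time $0$, which is covered by finitely many balls. On the good region, the points flow to regular points of $Z_0$ and the metrics converge smoothly, exactly as in the proof of Lemma~\ref{lem:volume}. By \eqref{eq:uniform-radial-control}, their $\Phi$-images lie in $B_{d^Z_0}(\bar p,1+\eta(t))$. This gives
\[
    \limsup_{t\nearrow0}|B_{g_t}(p,1)|\le\abs{\overline B_{d^Z_0}(\bar p,1)}+C\theta^{2-\ep}.
\]
Let $\theta\to0$. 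The sphere $\{d^Z_0(\cdot,\bar p)=1\}$ carries zero regular volume, because the cone structure from Proposition~\ref{prop:cone} gives $|B(\bar p,r)|=r^n|B(\bar p,1)|$. This completes the upper bound.

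\textbf{Main obstacle.} The delicate step is the upper bound: showing uniformly in $t$ that the good-curvature part of $B_{g_t}(p,1)$ converges smoothly and injectively into $Z_0^{\reg}$. This is where the uniform curvature-radius control on worldlines is needed, using the bounded scalar curvature through $H$-centers as in Lemma~\ref{lem:worldline}. The second delicate point is applying the covering estimate in the noncompact setting, which is handled by restricting to the bounded region $\{F\le2\}$.
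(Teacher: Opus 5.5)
Your proof is correct and follows essentially the paper's route. You reduce to time-$t$ volumes by self-similarity, confine the relevant sets to a bounded spacetime region so that \cite[Theorem~1.12(b)]{FL1} controls the part with small curvature radius, and use smooth convergence on the regular part for the remainder. The paper works with the sublevel sets $\{F(\cdot,t)<1/4\}$ and the convergence $F(\cdot,t)\to F$, where you use $B_{g_t}(p,1)$ and \eqref{eq:uniform-radial-control}; this difference is cosmetic. The one step you assert without proof is that the region near time $0$ is $d^*$-bounded (your ``compact''), which is what Theorem~1.12(b) needs. The paper supplies it by noting that each $(p,t)$ is an $H$-center of $\bar p$ and combining \cite[Lemmas~6.4 and~6.14]{FL1} with \eqref{eq:distanceestimate}.
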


\begin{proof}
By \eqref{eq:distanceestimate}, it suffices to prove that
\begin{align}\label{eq:volratio1}
    \lim_{r \to +\infty} \frac{|\{x \in M \mid f(x) < r^2/4\}|_g}{r^n}=\abs{\{z \in Z_0 \mid d^Z_0(z, \bar p) < 1\}}_{d^Z_0}.
\end{align}
For \(t\in[-1,0)\), set $\Omega_t:=\{x\in M\mid F(x,t)<1/4\}$ and $\Omega_0:=\{z\in Z_0\mid d^Z_0(z,\bar p)<1\}$.
We claim that
\begin{align}\label{eq:volratio2}
\lim_{t \nearrow 0} |\Omega_t|_{g_t}=|\Omega_0|_{d^Z_0}.
\end{align}

We first show that the sets \(\Omega_t\) remain in a uniformly bounded spacetime region. More precisely, there exists $C_1=C_1(n, Y)>0$ such that for every $t\in[-1,0)$ and $x\in\Omega_t$,
\begin{align}\label{eq:volratio3}
d^*(\bar p, (x, t)) \le C_1.
\end{align}
Indeed, $(p,t)$ is an $H$-center of $\bar p$, where $H$ depends only on $n$ and $Y$; see \cite[Corollary~5.6]{liwang2024heat}. Hence \cite[Lemma~6.14]{FL1} implies that
\[
    d^*(\bar p,(p,t))\le A_1
\]
for all $t\in[-1,0)$, where $A_1=A_1(n, Y)$. On the other hand, by \eqref{eq:distanceestimate} and \cite[Lemma~6.4]{FL1}, for every $(x,t)\in \Omega_t \times \{t\}$ we have
\[
    d^*((p,t),(x,t))\le A_2,
\]
where $A_2=A_2(n, Y)$. Therefore
\[
d^*(\bar p, (x, t)) \le A_1+A_2.
\]
Similarly, after enlarging \(C_1\) if necessary, \cite[Lemma~6.4]{FL1} gives
\begin{align}\label{eq:volratio4}
d^*(\bar p, z) \le C_1
\end{align}
for every $z\in\Omega_0$.

Combining \eqref{eq:volratio3} and \eqref{eq:volratio4} with \cite[Theorem~1.12(b)]{FL1}, we obtain, for every sufficiently small $\delta>0$,
\begin{align}\label{eq:volratio5}
\sup_{ t \in [-1, 0)}\abs{\{r_{\Rm}<\delta\} \cap \Omega_t}_{g_t}+\abs{\{r_{\Rm}<\delta \}\cap \Omega_0}_{d^Z_0} \le C(n, Y) \delta^{1.99}.
\end{align}
Thus, \eqref{eq:volratio5} and the smooth convergence on the regular part from $g_t$ to $g^Z_0$ and from $F(\cdot,t)$ to $F(\cdot,0)$ as $t\nearrow0$ imply \eqref{eq:volratio2}.

By self-similarity, we can rewrite the claim as
\begin{align}\label{eq:volratio6}
\lim_{t \nearrow 0} |t|^{n/2}\abs{\{x \in M \mid f(x) < (4|t|)^{-1}\}}_g=|\Omega_0|_{d^Z_0},
\end{align}
which is precisely \eqref{eq:volratio1}. This completes the proof.
\end{proof}

Combining Propositions~\ref{prop:cone} and~\ref{prop:AVRchar} with \cite{wang2025rigidity}, we conclude that, unless $(M,g)$ is the Gaussian soliton, the $(n-1)$-dimensional Hausdorff measure of $\Sigma^{\reg}$ is at most $(1-\ep(n))$ times that of the standard sphere $S^{n-1}$.

\section{Comparison of the Fano and Metric Fibrations for K\"ahler--Ricci Shrinkers}\label{sec--6}

Let $(X,g,\omega,J,f)$ be a K\"ahler--Ricci shrinker, and adopt the conventions of Section~\ref{sec:schwarz-generic}. We have the Fano fibration $\pi:X\to Y$ from \cite{SunZhang}; under the bounded scalar curvature assumption, we also have the metric fibration $\Phi:X\to Z_0$ constructed in Lemma~\ref{lem:worldline}. In this section, we compare these two fibrations. In particular, we show that they coincide when the shrinker metric has subquadratic Riemannian curvature growth.

\subsection{K\"ahler--Ricci shrinkers with bounded scalar curvature}

\begin{lemma}\label{lem--factor through}
    Let $X$ be a K\"ahler--Ricci shrinker with bounded scalar curvature. Then there is a commutative diagram
    \begin{equation}
    \begin{tikzcd}
        & X \arrow[dl, "\Phi"'] \arrow[dr, "\pi"] \\
        Z_0 \arrow[rr, "\Phi_0"'] && Y,
    \end{tikzcd}
\end{equation}
Moreover, $\Phi_0$ is continuous with respect to $d_Z$.

\end{lemma}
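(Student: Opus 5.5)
To obtain the factorization, I will show that \(\pi\) is constant on the fibers of \(\Phi\), and that the induced map is \(d_Z\)-continuous. Since \(\Phi:X\to Z_0\) is surjective and proper by Lemma~\ref{lem:dis}, this suffices. The quantitative input is the Schwarz estimate, Proposition~\ref{prop:schwarz-lower-bound-general}. Integrating the bound \(\omega_t\ge C_K^{-1}\pi^*\omega_Y\) along \(\omega_t\)-geodesics gives a Lipschitz bound for each homogeneous coordinate \(h_\alpha\) on \(\pi^{-1}(K)\):
\[
    |h_\alpha(x)-h_\alpha(y)|\le C_K^{1/2}\, d_{g_t}(x,y).
\]
To make this usable I must ensure that the relevant geodesics stay in \(\pi^{-1}(K)\), or else avoid geodesics altogether. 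Instead, I will use the global form of the estimate from its proof, \(\log(u+1)\le C+F\). Since \(F(\cdot,t)\le \tfrac14(d_{g_t}(p,\cdot)+C)^2\) by \eqref{eq:distanceestimate}, I get \(|\partial h_\alpha|^2_{\omega_t}\le e^{C+F}\) on the whole \(g_t\)-ball \(B_{g_t}(p,R)\). This bound is uniform in \(t\in[-1,0)\), and it avoids any need for compactness of \(\pi^{-1}(K)\). Consequently each \(h_\alpha\) is \(L_R\)-Lipschitz with respect to \(d_{g_t}\) on \(B_{g_t}(p,R)\), uniformly in \(t\). Here I need that minimizing \(g_t\)-geodesics between points of \(B_{g_t}(p,R/3)\) stay in \(B_{g_t}(p,R)\), which holds by the triangle inequality.

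For well-definedness, suppose \(\Phi(x)=\Phi(y)\). Then Lemma~\ref{lem:dis}(1) gives \(d_{g_t}(x,y)\to0\), with both points in a fixed ball \(B_{g_t}(p,R)\) for all \(t\) close to \(0\) by \eqref{eq:uniform-radial-control}. The Lipschitz bound then gives \(|h_\alpha(x)-h_\alpha(y)|\le L_R\,d_{g_t}(x,y)\to0\). Hence \(\pi(x)=\pi(y)\), because \(\iota\) is an embedding. This lets me define \(\Phi_0(z):=\pi(x)\) for any \(x\in\Phi^{-1}(z)\), which is nonempty by surjectivity, and it yields \(\pi=\Phi_0\circ\Phi\).

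For continuity with respect to \(d_Z\), I will use properness rather than metric estimates, since \(d_Z\) may be weaker than \(d^Z_0\). Let \(z_i\to z\) in \((Z_0,d_Z)\). The set \(\{z_i\}\cup\{z\}\) is compact, so by properness of \(\Phi\) (Lemma~\ref{lem:dis}(3)) the preimages lie in a compact set. I pick \(x_i\in\Phi^{-1}(z_i)\); any subsequence has a further subsequence with \(x_i\to x\). Continuity of \(\Phi\) gives \(\Phi(x)=z\). Continuity of \(\pi\) then gives \(\Phi_0(z_i)=\pi(x_i)\to\pi(x)=\Phi_0(z)\). Since every subsequence has such a further subsequence, the full sequence converges, which proves continuity (equivalently, \(\Phi_0\) is continuous because \(\Phi\) is a closed quotient map).

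I expect the main obstacle to be justifying the uniform Lipschitz bound for \(h_\alpha\) with respect to \(d_{g_t}\) in the well-definedness step. This requires the global inequality \(\log(u+1)\le C+F\) from the proof of Proposition~\ref{prop:schwarz-lower-bound-general} together with the distance comparison \eqref{eq:distanceestimate}, rather than only the compact-set statement of that proposition.
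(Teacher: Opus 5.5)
Your proof is correct and follows essentially the same route as the paper. The paper also integrates the Schwarz estimate along $d_{\omega_t}$-geodesics, in contrapositive form: it shows $d_{\omega_t}(x_1,x_2)\ge\delta>0$ whenever $\pi(x_1)\ne\pi(x_2)$, then concludes with Lemma~\ref{lem:dis}(1), and it proves $d_Z$-continuity of $\Phi_0$ by the same properness argument. The only difference is how the geodesics are confined: the paper exhibits a fixed compact sublevel set $K$ of $f$ that contains all the relevant geodesics for every $t$, whereas you apply the global bound $\log(u+1)\le C+F$ from the proof of Proposition~\ref{prop:schwarz-lower-bound-general} on a $g_t$-ball, which amounts to the same thing.
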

\begin{proof}

To show that the map $\pi$ factors through $\Phi$, it suffices to prove that, whenever \(y_1\ne y_2\) in \(Y\) and \(x_i\in\pi^{-1}(y_i)\), one has
\begin{equation}\label{eq--to prove}
    \Phi(x_1)\neq \Phi(x_2).
\end{equation}
We want to use Proposition~\ref{prop:schwarz-lower-bound-general} to show that for all \(t\in [-1,0)\),
\[
   d_{\omega_t}(x_1,x_2)\ge \delta>0.
\]
For this to work, we need to show that there is a fixed compact subset $K$ of $X$ such that each $d_{\omega_t}$-geodesic connecting $x_1$ and $x_2$ is contained in $K$. Fix $A$ large such that
\begin{equation}
    A\geq d_{\omega_{-1}}(p, x_1)+d_{\omega_{-1}}(p, x_2)+100 n.
\end{equation}
Then we claim that the set $K=\{f\leq (2A+100n)^2+10\|\scal_{\omega_{-1}}\|_{L^\infty}\}$ satisfies the property we need. Since $F(x,t)$ is decreasing in $t$, \eqref{eq:distanceestimate} and the triangle inequality imply that for each $t\in [-1,0)$, we have
\begin{equation}
  d_{\omega_t}(p,x_1)\leq A, \quad d_{\omega_t}(x_1,x_2)\leq A.
\end{equation}
Moreover, the evolution equation for $F(x,t)$ and the bounded scalar curvature assumption imply that on the boundary $\partial K$, for every $t\in [-1,0)$
\begin{equation}
    F(x,t)\geq (2A+100n)^2.
\end{equation}
By \eqref{eq:distanceestimate} and the triangle inequality, any curve starting from $x_1$ and leaving $K$ has $d_{\omega_t}$-length at least $A+1$. Therefore, a $d_{\omega_t}$-geodesic connecting $x_1$ and $x_2$ must remain inside $K$.

Letting \(t\nearrow0\) and using Lemma~\ref{lem:dis}(1), we get
\[
d^Z_0\bigl(\Phi(x_1),\Phi(x_2)\bigr)
=\lim_{t\nearrow0}d_{\omega_t}(x_1,x_2)
    \ge \delta>0.
\]
Thus we have proved \eqref{eq--to prove}.

The map $\Phi_{0}$ is continuous with respect to the topology induced by $d_Z$. If \(z_i\to z\), then
\(\{z\}\cup\{z_i\}_{i\ge1}\) is compact in the metric space \((Z_0,d_Z)\), so
properness of $\Phi$ gives compactness of its preimage under $\Phi$. Then continuity of $\pi$ implies that $\Phi_0$ is continuous.
\end{proof}

\begin{theorem}\label{thm:regular-part-identification}
Let $X$ be a K\"ahler--Ricci shrinker with bounded scalar curvature. Then
\begin{equation} \label{eq:regularmap}
     \Phi:X^\circ\to Z_0^{\reg}
\end{equation}
is a biholomorphism, where \(Z_0^{\reg}\) is equipped with the
complex structure induced by the smooth convergence of the nearby time slices, and
\begin{equation}
   \Phi(E)=Z_0^{\sing}.
\end{equation}
\end{theorem}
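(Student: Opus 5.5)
We will argue in the spirit of Lemma~\ref{lem:contain1} and Proposition~\ref{prop:regular-part-identification}, now on the noncompact shrinker flow $\omega_t=|t|(\psi^t)^*\omega$.

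\textbf{Step 1: $\Phi(X^\circ)\subset Z_0^{\reg}$.} Fix $x\in X^\circ$ and a relatively compact neighborhood $U\Subset X^\circ$ of $x$. The proof of Corollary~\ref{cor:smooth-convergence-generic-schwarz} combines Proposition~\ref{prop:schwarz-lower-bound-general} with $\omega_t^n\le\omega_{-1}^n$ to give $C_U^{-1}\omega_{-1}\le\omega_t\le C_U\omega_{-1}$ on $U$. The Sherman--Weinkove estimates then bound all derivatives of $\Rm_{\omega_t}$ on $U\times[-1/2,0)$. This gives a uniform lower bound on $r_{\Rm}(x,t)$. Arguing as in Lemma~\ref{lem:contain1}, the limit $\Phi(x)$ is a regular point, using the analogue of \cite[Theorem 7.15]{FL1} valid here by the heat kernel results of \cite{liwang2020heat1,liwang2024heat}.

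\textbf{Step 2: $\Phi|_{X^\circ}$ is holomorphic, injective and open.} Near $\Phi(x)$, the regular part of $Z$ is a smooth Ricci flow spacetime. Its time-$t$ slices are identified with $U\times\{t\}$ via worldlines, and $\Phi|_U$ is the time-zero endpoint of this identification. Hence $\Phi|_U$ is a diffeomorphism onto an open subset of $Z_0^{\reg}$, with $\Phi^*g^Z_0=\omega_0$, where $\omega_0$ is the cone limit from Corollary~\ref{cor:smooth-convergence-generic-schwarz}. The complex structure $J$ is fixed along the flow and is $\omega_t$-parallel, so the complex structure on $Z_0^{\reg}$, obtained by smooth convergence of nearby slices, pulls back to $J$. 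Thus $\Phi|_U$ is a local biholomorphism.

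For global injectivity on $X^\circ$, we use Lemma~\ref{lem--factor through}: $\pi=\Phi_0\circ\Phi$. If $\Phi(x_1)=\Phi(x_2)$ with $x_1,x_2\in X^\circ$, then $\pi(x_1)=\pi(x_2)$, and injectivity of $\pi$ on $X^\circ$ gives $x_1=x_2$.

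\textbf{Step 3: $\Phi(E)\subset Z_0^{\sing}$.} This is the main obstacle. Let $x\in E$ and suppose $z=\Phi(x)\in Z_0^{\reg}$.
\begin{enumerate}
\item Since $Z_0^{\reg}$ is open and $z$ has a product spacetime neighborhood, the worldlines ending near $z$ sweep out a neighborhood $V\ni x$ in $X$. By Lemma~\ref{lem:dis}(3), $\Phi$ is continuous and proper, so $\Phi^{-1}(W)$ is a neighborhood of $x$ for a small ball $W\ni z$.
\item Each $y\in\Phi^{-1}(W)$ flows to $z'=\Phi(y)\in W$. As in the proof of Proposition~\ref{prop:regular-part-identification}, the backward worldline from $z'$ through the regular spacetime is the unique worldline with that endpoint. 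Hence $\Phi$ is injective on $\Phi^{-1}(W)$, provided the regular neighborhood has uniform size $\epsilon$ in time.
\item Consequently $\Phi|_V$ is a homeomorphism onto $W$, and by the argument of Step 2 it is a biholomorphism.
\item Since $\pi=\Phi_0\circ\Phi$ and $\pi$ has connected compact fibers, the fiber $\pi^{-1}(\pi(x))$ lies in $\Phi^{-1}(\Phi_0^{-1}(\pi(x)))$.
\item Near $x$ we have $\pi=\Phi_0\circ\Phi$ with $\Phi$ biholomorphic, so $\Phi_0$ is holomorphic on $W$. We now rule out a positive-dimensional fiber of $\pi$ through $x$ by the null-locus volume argument of Proposition~\ref{prop:regular-part-identification}. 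A compact fiber component $V'\ni x$ of dimension $k>0$ satisfies $\int_{V'}\omega_t^k\to0$: by self-similarity, $\omega_t$ restricted to $\pi$-fibers is $|t|$ times a bounded form. On the other hand, $\omega_t\ge C^{-1}\omega_{-1}$ on $V\cap V'$, so $\int_{V'}\omega_t^k$ stays bounded below. This is a contradiction.
\end{enumerate}
Hence $\pi$ is finite near $x$. By normality of $Y$ and connectedness of the fibers, $\pi$ is biholomorphic near $x$, so $x\in X^\circ$, contradicting $x\in E$.

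The delicate point in Step 3 is justifying the fiber-volume decay. I would first try the weights: the $h_\alpha$ scale as $|t|^{-\lambda_\alpha/2}$, and the class of $\omega_t$ on a compact fiber is $|t|$ times the class $-K$ restricted to the fiber, which tends to zero.

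\textbf{Step 4: surjectivity and conclusion.} By Lemma~\ref{lem:dis}(4), $\Phi$ is surjective. Steps 1 and 3 give $\Phi^{-1}(Z_0^{\reg})=X^\circ$ and $\Phi(E)\subset Z_0^{\sing}$. Therefore $\Phi(X^\circ)=Z_0^{\reg}$ and $\Phi(E)=Z_0^{\sing}$. Together with Step 2, $\Phi:X^\circ\to Z_0^{\reg}$ is a bijective local biholomorphism, hence a biholomorphism.
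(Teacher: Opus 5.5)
Your proposal is correct and follows the paper's argument. The key step $\Phi(E)\subset Z_0^{\sing}$ is exactly the paper's contradiction between two facts:
\begin{itemize}
\item the two-sided bound $C^{-1}\omega_{-1}\le\omega_t\le C\omega_{-1}$ on a neighborhood of a point whose worldline ends at a regular point;
\item the decay $\int_{V}\omega_t^k=|t|^k\int_V\omega_{-1}^k\to0$ on a positive-dimensional fiber component $V$.
\end{itemize}
Justify the decay by isotopy (cohomological) invariance, as in your last paragraph. Do not rely on a pointwise bound on $(\psi^t)^*\omega$ along fibers; that is not available in general. After this step, surjectivity of $\Phi$ and the flow-box identification finish the proof, as in the paper.

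The only deviation is that you get global injectivity on $X^\circ$ from the factorization $\pi=\Phi_0\circ\Phi$ of Lemma~\ref{lem--factor through}, whereas the paper uses the flow-box argument of Lemma~\ref{lem:contain1}; both work. Items (1)--(4) of Step~3 and the holomorphy of $\Phi_0$ are not needed.
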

\begin{proof}
Recall that we let $E$ denote the exceptional set of $\pi$ and $X^\circ=X\setminus E$.
If $(X,g)$ does not have maximal volume growth, then both sides of \eqref{eq:regularmap} are empty by
\eqref{eq:regular-part-nonempty} and
Proposition~\ref{prop:maximal-volume-growth-characterization}, so $E=X$ and $\Phi(E)=Z_0=Z_0^{\sing}$. In what follows, we may assume that $(X,g)$ has maximal
volume growth; equivalently, both sides are nonempty.

By Corollary~\ref{cor:smooth-convergence-generic-schwarz}, $\omega_t$ converges locally smoothly on $X\setminus E$ and hence yields an embedding $X\setminus E\hookrightarrow Z_0^{\reg}$. The same argument as in Proposition~\ref{prop:regular-part-identification} implies that $\Phi(X\setminus E)=Z_0^{\reg}$. Indeed, a point $z\in Z_0^{\reg}$ is the limit of $(x,t)$ with $r_{\Rm}(x,t)>c>0$ for all $t\in [-\ep,0)$. Hence, by the Ricci flow equation, there is a neighborhood $U_x$ of $x$ and a constant $C_x>0$ such that, for every $t\in [-1,0)$, we have on $U_x$
\begin{equation}\label{eq:two-sided-metric-bound}
C_x^{-1}\omega_{-1}\leq \omega_t=|t|(\psi^t)^*\omega_{-1}\leq C_x\omega_{-1}.
\end{equation}
If $x\in E$, then let $V_x$ denote a $k$-dimensional irreducible component of $\pi^{-1}(\pi(x))$ through $x$, where $k\geq1$. By \eqref{eq:two-sided-metric-bound}, we would have
\begin{equation}
    \vol(V_x,\omega_t^k)\geq C_x^{-k} \vol(V_x\cap U_x, \omega_{-1}^k).
\end{equation}
On the other hand, we always have
\begin{equation}
    \vol(V_x,\omega_t^k)=|t|^k\vol(V_x,\omega_{-1}^k)\to 0,
\end{equation}
a contradiction. Hence $x\in X\setminus E$.

Moreover, by definition, if $\Phi(x)\in Z_0^{\reg}$, then $r_{\Rm}(x,t)>c>0$ for all $t\in [-\ep,0)$ and hence $x\in X^\circ$ by the previous argument. Therefore, $\Phi(E)\subset Z_0^{\sing}$. Since $\Phi$ is surjective by Lemma~\ref{lem:dis}, we have $\Phi(E)=Z_0^{\sing}$.

Fix \(z\in Z_0^{\reg}\). By the regularity theorem for Ricci-flow limit spaces
\cite[Theorem~1.5]{FL1}, \(z\) has a regular spacetime flow box
\(\mathcal U\). If \(U_0=\mathcal U\cap Z_0\), then the flow of
\(\partial_{\mathfrak t}\) identifies \(\mathcal U\) with
\(U_0\times(-\ep,0]\).
The flow of $\partial_{\mathfrak t}$ gives a biholomorphism between different time slices $\mathcal{U}\cap (X\times \{t\})$ for $t<0$, and the complex structure on \(Z_0^{\reg}\) is the
smooth limit of these structures. Hence the above local diffeomorphism is
biholomorphic.
\end{proof}

Let $J_0$ denote the complex structure on $Z_0^{\reg}$. Then $\xi_0:=J_0\nabla_{g_0^Z}F$ is a holomorphic Killing vector field on $(Z_0^{\reg}, g_0^Z, J_0)$.

\begin{proposition}\label{prop-equivariant}
The vector field $\xi_0$ is complete and generates a one-parameter group
$\{\widehat h_s\}_{s\in\mathbb R}$ of holomorphic isometries of
$(Z_0^{\reg},g_0^Z,J_0)$. Moreover, if
$\{h_s\}_{s\in\mathbb R}$ denotes the flow generated by
$\xi=J\nabla f$ on $X$, then
\begin{equation}\label{eq:Phi-equivariant}
    \Phi\circ h_s=\widehat h_s\circ\Phi
    \quad\text{on }\quad X^\circ,
\end{equation}
and
\begin{equation}\label{eq:Phi-pushforward-xi}
\d\Phi_x(\xi_x)=\xi_{0,\Phi(x)}
\qquad\text{for every }x\in X^\circ.
\end{equation}
\end{proposition}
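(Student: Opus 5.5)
The plan is to avoid working on the singular space $Z_0$ at all. Instead, I would transport the flow $h_s$ from $X^\circ$ to $Z_0^{\reg}$ through the biholomorphism $\Phi|_{X^\circ}$ of Theorem~\ref{thm:regular-part-identification}.

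\textbf{Step 1: $h_s$ is a holomorphic isometry of every $\omega_t$.} On a K\"ahler--Ricci shrinker, $\xi=J\nabla f$ is a holomorphic Killing field of $\omega$, and it commutes with $\nabla f$. Hence $h_s$ commutes with the self-similar maps $\psi^t$. Since $\omega_t=|t|(\psi^t)^*\omega$, this gives $h_s^*\omega_t=\omega_t$ for all $t\in[-1,0)$. Moreover $h_s$ is holomorphic and preserves $f$, so it also preserves $F(\cdot,t)$.

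\textbf{Step 2: $h_s$ preserves $X^\circ$.} Each generator satisfies $\xi(h_\alpha)=\sqrt{-1}\lambda_\alpha h_\alpha$. Therefore $\pi\circ h_s=\hat\theta_s\circ\pi$, where $\hat\theta_s$ is the linear torus action $h_\alpha\mapsto e^{\sqrt{-1}\lambda_\alpha s}h_\alpha$ on $\iota(Y)$. Consequently the largest open set on which $\pi$ is biholomorphic onto its image is $h_s$-invariant, so $h_s(X^\circ)=X^\circ$ and $h_s(E)=E$.

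\textbf{Step 3: define $\widehat h_s$ and check its properties.} Set
\[
\widehat h_s:=\Phi\circ h_s\circ(\Phi|_{X^\circ})^{-1}.
\]
This is a one-parameter group of biholomorphisms of $(Z_0^{\reg},J_0)$, and \eqref{eq:Phi-equivariant} holds by construction. To see that it is isometric, use that $\Phi^*g_0^Z$ is the $C^\infty_{\loc}(X^\circ)$-limit of $g_t$ (Corollary~\ref{cor:smooth-convergence-generic-schwarz} together with the biholomorphism statement). Each $g_t$ is $h_s$-invariant, hence so is the limit. The generator of $\widehat h_s$ is $\d\Phi(\xi)$, and it is complete because $\widehat h_s$ is a global flow. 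So it only remains to show $\d\Phi(\xi)=\xi_0$, which is \eqref{eq:Phi-pushforward-xi}.

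\textbf{Step 4: compute the generator.} For each $t$, I claim $J\nabla_{g_t}F(\cdot,t)=\xi$ on $X$. Indeed,
\[
\nabla_{g_t}\bigl(|t|(\psi^t)^*f\bigr)=(\psi^t)^*\nabla_g f=\nabla f,
\]
using $g_t=|t|(\psi^t)^*g$ and the fact that $\psi^t$ is generated by a multiple of $\nabla f$, so it preserves $\nabla f$. The proof of Corollary~\ref{cor:smooth-convergence-generic-schwarz} gives
\[
(g_t,J,F(\cdot,t))\longrightarrow(\Phi^*g_0^Z,\Phi^*J_0,F\circ\Phi)
\]
in $C^\infty_{\loc}(X^\circ)$; here one identifies $F_0$ with $F\circ\Phi$ using the continuity of $F$ on $Z$ (Lemma~\ref{lem:continuityF}). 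Passing to the limit yields
\[
\xi=\Phi^*\bigl(J_0\nabla_{g_0^Z}F\bigr),
\]
which is exactly $\d\Phi_x(\xi_x)=\xi_{0,\Phi(x)}$.

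The main point needing care is this identification of the limiting potential $F_0$ with $F\circ\Phi$ on $X^\circ$, together with the compatibility of the complex structure $J_0$ with the smooth limit. Both should follow from Theorem~\ref{thm:regular-part-identification} and the convergence $F(x,t)\to F(\Phi(x))$.
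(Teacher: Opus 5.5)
Your proposal is correct and follows the paper's proof essentially step for step. You get invariance of $X^\circ$ from the $\mathbb T$-equivariance of $\pi$, define $\widehat h_s:=\Phi\circ h_s\circ(\Phi|_{X^\circ})^{-1}$ via Theorem~\ref{thm:regular-part-identification}, and pass $h_s^*g_t=g_t$, $h_s^*J=J$ and $\xi=J\nabla_{g_t}F(\cdot,t)$ to the smooth limit on $X^\circ$. Your explicit identification of the limit potential $F_0$ with $F\circ\Phi$ via Lemma~\ref{lem:continuityF} (modulo the K\"ahler normalization of $F$) spells out a point the paper uses only implicitly.
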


\begin{proof}
Since the Fano fibration $\pi:X\to Y$ is $\mathbb T$-equivariant, both
$X^\circ$ and its complement $E$ are invariant under $h_s$. We may
therefore define
\[
    \widehat h_s
    :=
    \Phi\circ h_s\circ\Phi^{-1}
    \colon Z_0^{\reg}\longrightarrow Z_0^{\reg},
\]
where we use
Theorem~\ref{thm:regular-part-identification}.

The vector fields $\xi$ and $\nabla f$ commute. Consequently, $h_s$
commutes with the self-similar diffeomorphisms defining $g_t$, and for
every $t\in[-1,0)$ we have
\[
    h_s^*g_t=g_t,
    \qquad
    h_s^*J=J,
    \qquad
    \xi=J\nabla_{g_t}F(\cdot,t).
\]
Passing to the limit as $t\nearrow0$ on $X^\circ$, using the smooth
convergence of $g_t$, $J$, and $F(\cdot,t)$ and the biholomorphic
identification by $\Phi$, gives
\[
    \widehat h_s^*g_0^Z=g_0^Z,
    \qquad
    \widehat h_s^*J_0=J_0,
\]
and
\[
    \Phi_*\xi
    =J_0\nabla_{g_0^Z}F
    =\xi_0.
\]
Thus $\{\widehat h_s\}_{s\in\mathbb R}$ is precisely the flow generated
by $\xi_0$. In particular, $\xi_0$ is complete, every
$\widehat h_s$ is a holomorphic isometry, and
\eqref{eq:Phi-equivariant} holds by definition.
\end{proof}

\begin{proposition}\label{prop:extended-torus-action}
Let $X$ be a K\"ahler--Ricci shrinker with bounded scalar curvature and
maximal volume growth. Suppose that $(Z_0,d^Z_0)$ is locally compact.
Then $Z_0^{\reg}$ is dense in $(Z_0,d^Z_0)$, and the following
statements hold.
\begin{enumerate}[label=\textnormal{(\roman*)}]
    \item The flow generated by $\xi_0$ extends to an
    isometric $\mathbb R$-action on $(Z_0,d^Z_0)$. The closure of its
    image in $\operatorname{Isom}(Z_0,d^Z_0)$ coincides with the compact torus $\mathbb T$ generated by $\xi$ and the map $\Phi$ is $\mathbb T$-equivariant.

    \item The link
    \[
        \Sigma
        :=
        \left\{
            z\in Z_0:d^Z_0(z,\bar p)=1
        \right\}
    \]
    is connected.
\end{enumerate}
\end{proposition}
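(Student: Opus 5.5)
Density of $Z_0^{\reg}$ will come first, because the extension arguments below rest on it. With $Z_0$ locally compact, Lemma~\ref{lem:disa} shows that $d_Z$ and $d^Z_0$ define the same topology on $Z_0$. By the same lemma and Lemma~\ref{lem:dis}, $\Phi:(X,g_{-1})\to(Z_0,d^Z_0)$ is a continuous surjection. Maximal volume growth forces $\pi$ to be birational (Proposition~\ref{prop:maximal-volume-growth-characterization}), so $E$ is a proper analytic subset and $X^\circ$ is dense in $X$. Continuity and surjectivity of $\Phi$ then make $\Phi(X^\circ)=Z_0^{\reg}$ (Theorem~\ref{thm:regular-part-identification}) dense in $Z_0$. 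As a cross-check, one can also run the local argument of Lemma~\ref{lem:volume}, the curvature-radius density estimate \cite[Theorem~1.12(b)]{FL1}, which puts regular points in every ball.

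For (i), I will avoid extending $\widehat h_s$ by uniform continuity and instead work directly with the whole torus. Each $\gamma\in\mathbb T$ acts on $X$ by holomorphic isometries of every $g_t$, since $\mathbb T$ commutes with $\nabla f$. Lemma~\ref{lem:dis}(1) then gives
\[
d^Z_0(\Phi(\gamma x),\Phi(\gamma y))=\lim_{t\nearrow0}d_{g_t}(\gamma x,\gamma y)=\lim_{t\nearrow 0}d_{g_t}(x,y)=d^Z_0(\Phi(x),\Phi(y)).
\]
Setting $\gamma\cdot\Phi(x):=\Phi(\gamma x)$ is therefore well defined: if $\Phi(x)=\Phi(y)$, the right-hand side vanishes. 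By surjectivity of $\Phi$ it defines an isometric action of $\mathbb T$ on all of $Z_0$, and $\Phi$ is $\mathbb T$-equivariant by construction. For $\gamma=h_s$ this action agrees with $\widehat h_s$ on $Z_0^{\reg}$ by Proposition~\ref{prop-equivariant}, so it is the unique continuous extension of the $\xi_0$-flow.

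For the closure statement, I will use that $\mathbb T$ is the closure of $\{h_s\}$ in the isometry group of $(X,g)$. The homomorphism $\mathbb T\to\operatorname{Isom}(Z_0,d^Z_0)$ is continuous for the compact-open topology: if $\gamma_i\to\gamma$, then $\Phi(\gamma_i x)\to\Phi(\gamma x)$ uniformly on compact sets, because $\Phi$ is continuous and proper. Its image is therefore compact and contains $\{\widehat h_s\}$, so it equals the closure of $\{\widehat h_s\}$. The remaining point, which I expect to be the main obstacle, is injectivity of $\mathbb T\to\operatorname{Isom}(Z_0)$. If $\gamma$ acts trivially on $Z_0$, then it acts trivially on $Z_0^{\reg}$, so $\gamma$ is the identity on the open set $X^\circ$, because $\Phi|_{X^\circ}$ is injective. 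Being a holomorphic isometry of $X$, $\gamma$ is then trivial. Hence the closure is isomorphic to $\mathbb T$.

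For (ii), I will use the cone structure and the fact that $\Sigma$ is the level set $\{F=1/4\}$. The natural route is through $Y$: the link of the affine cone $Y$ with respect to a Reeb-type radial function is connected because $Y$ is irreducible and normal. I would rather argue inside $Z_0$, via $\Phi$ and the level sets of $F(\cdot,t)$. Here $X$ is connected and has one end; a connected end follows from maximal volume growth and the connectedness of $X\setminus\pi^{-1}(K)$ for the affine irreducible $Y$. By \eqref{eq:uniform-radial-control} and the homothety flow $\boldsymbol\psi^s$, $Z_0\setminus\{\bar p\}\cong\Sigma\times(0,\infty)$. It therefore suffices that $Z_0\setminus\overline B_{d^Z_0}(\bar p,1/2)$ is connected. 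This set is the image under the continuous map $\Phi$ of $\{x: d^Z_0(\bar p,\Phi(x))>1/2\}$, a set squeezed between $\{F(\cdot,t)>1/4+\eta\}$ and $\{F(\cdot,t)>1/4-\eta\}$. Connectedness of the latter for $t$ close to $0$ follows from self-similarity together with the one-endedness of $X$, where $f$ has no critical points far out. The squeezing then passes connectedness to the image: the image of the connected superlevel set contains the annulus region and lies in a slightly larger one, and letting $\eta\to0$ closes the argument using the product structure.
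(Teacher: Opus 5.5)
Your proposal is correct. Density of $Z_0^{\reg}$ is handled exactly as in the paper, but parts (i) and (ii) take genuinely different routes.

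\textbf{Part (i).} The paper proceeds in several steps:
\begin{itemize}
\item it extends each $\widehat h_s$ from the dense set $Z_0^{\reg}$;
\item it gets compactness of the closure $T$ from compactness of point stabilizers in the isometry group of the proper space $(Z_0,d^Z_0)$;
\item it shows, via invariance of the curvature radius, that $T$ preserves $Z_0^{\reg}$ and $g_0^Z$;
\item it invokes Myers--Steenrod and the closed subgroup theorem to see that $T$ is a torus;
\item only then does it compare $T$ with $\mathbb T$, through a homomorphism $\mathbb T\to T$ that it asserts exists ``since $\Phi$ is continuous''.
\end{itemize}
You instead define the $\mathbb T$-action on $Z_0$ directly, using Lemma~\ref{lem:dis}(1) and the fact that $\mathbb T$ acts by isometries of every $g_t$. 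This makes equivariance tautological. Compactness and the torus structure of the closure then come for free, since the closure is the continuous injective image of $\mathbb T$. In fact your construction supplies the homomorphism $\mathbb T\to T$ that the paper's last step uses without writing it down. In the injectivity step, say why $\gamma x\in X^\circ$ for $x\in X^\circ$: either by $\mathbb T$-equivariance of $\pi$, or because $\Phi^{-1}(Z_0^{\reg})=X^\circ$ by Theorem~\ref{thm:regular-part-identification}. The paper's route yields an intrinsic description of the closure as Riemannian isometries of $(Z_0^{\reg},g_0^Z)$. Yours is shorter and avoids the Lie-theoretic machinery.

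\textbf{Part (ii).} The paper uses three ingredients: connectedness of $X^\circ$, the Riemannian cone structure of $Z_0^{\reg}$ (Proposition~\ref{prop:cone}), and density of $\Sigma^{\reg}$ in $\Sigma$. Your argument bypasses the regular part entirely. One-endedness of $X$ gives connectedness of $S_t=\{F(\cdot,t)>c\}$. The radial control $0\le F(x,t)-F(\Phi(x))\le C_0|t|$ gives $\{F>c\}\subset\Phi(S_t)\subset\{F>c-C_0|t|\}$. The topological cone structure of $Z_0$ then finishes.

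A few points to tighten:
\begin{itemize}
\item No limit $\eta\to0$ is needed. If $\Sigma=\Sigma_1\sqcup\Sigma_2$ were a separation, then $\{F>c-C_0|t|\}$ would split into two disjoint open cone pieces, each meeting the connected set $\Phi(S_t)$. That is already a contradiction at one fixed $t$.
\item The real input for one-endedness is that $\pi$ has connected fibers, which follows from $\pi_*\mathcal O_X=\mathcal O_Y$. Then preimages of the connected sets $Y\setminus K$ are connected. Maximal volume growth is not what does the work here, so attribute the step correctly.
\item Your constants $1/4\pm\eta$ do not match the level $d^Z_0=1/2$. This is cosmetic.
\end{itemize}

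A genuine advantage of your route is that (ii) never uses the regular part. It therefore does not depend on maximal volume growth at all.

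Finally, your ``cross-check'' via Lemma~\ref{lem:volume} is not available under these hypotheses. That lemma relies on the compact-flow noncollapsing estimate \eqref{eq:volume-noncollapsing}. Your main density argument does not need it.
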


\begin{proof}
By Lemma~\ref{lem:disa}, the map
\[
    \Phi:(X,g_{-1})\longrightarrow(Z_0,d^Z_0)
\]
is continuous and surjective. Since $E$ is a proper complex analytic
subset of $X$, the set $X^\circ=X\setminus E$ is dense in $X$.
Therefore, for every $z\in Z_0$, we may choose $x\in X$ with
$\Phi(x)=z$ and a sequence $x_j\in X^\circ$ converging to $x$.
Theorem~\ref{thm:regular-part-identification} then gives
\[
    \Phi(x_j)\in Z_0^{\reg},
    \qquad
    \Phi(x_j)\xrightarrow[]{d^Z_0}z.
\]
Hence $Z_0^{\reg}$ is dense in $Z_0$.

The proof of Lemma~\ref{lem:disa} also shows that every closed
$d^Z_0$-ball centered at $\bar p$ is compact. It follows that
$(Z_0,d^Z_0)$ is proper, and hence complete. Let
$\{\widehat h_s\}_{s\in\mathbb R}$ be the action on $Z_0^{\reg}$
constructed in Proposition~\ref{prop-equivariant}. Since every
$\widehat h_s$ is an isometry and $Z_0^{\reg}$ is dense, it extends
uniquely to an isometry of $(Z_0,d^Z_0)$. The group law passes to these
extensions. Thus, they define
an isometric $\mathbb R$-action on $Z_0$.

On $Z_0^{\reg}$ we have
\[
    \xi_0(F)
    =
    \left\langle
        J_0\nabla F,\nabla F
    \right\rangle_{g_0^Z}
    =0.
\]
Thus, the action preserves $F$ on $Z_0^{\reg}$, and hence on all of
$Z_0$ by continuity. Since
\[
    F(z)=\frac12d^Z_0(z,\bar p)^2,
\]
the vertex $\bar p$ is the unique zero of $F$, and the extended action
fixes $\bar p$. The factor \(1/2\), rather than \(1/4\), in the definition of \(F\) reflects the K\"ahler convention adopted here.

For a proper metric space, the stabilizer of a point in its isometry
group is compact. Therefore,
\[
    T
    :=
    \overline{
        \{\widehat h_s:s\in\mathbb R\}
    }
    \subset
    \operatorname{Isom}(Z_0,d^Z_0)_{\bar p}
\]
is compact, where the closure is taken in the compact-open topology.
The curvature radius is invariant under $\widehat h_s$ and continuous
on $Z_0$ after being extended by zero on $Z_0^{\sing}$. Consequently,
every element of $T$ preserves $Z_0^{\reg}$. Since the restriction of $d^Z_0$ to $Z_0^{\reg}$ locally coincides with the Riemannian distance induced by $g_0^Z$ (see \cite[Proposition 6.8]{FL1}), each element of $T$ preserves the Riemannian metric $g_0^Z$ \cite[Theorem 5.6.15]{petersen2006}. Restriction therefore
defines a continuous injective homomorphism
\[
    T
    \longrightarrow
    \operatorname{Isom}(Z_0^{\reg},g_0^Z).
\]
Since $T$ is compact, this map is a homeomorphism onto a compact, hence
closed, subgroup of
$\operatorname{Isom}(Z_0^{\reg},g_0^Z)$. By the Myers--Steenrod
theorem and the closed subgroup theorem, $T$ is a compact Lie group.
Moreover, $T$ is abelian and connected, because it is the closure of
the image of the connected abelian group $\mathbb R$. Consequently,
$T$ is a compact torus. It remains to show that the torus $T$ coincides with the torus $\mathbb T$ generated by $\xi$. Since $\Phi$ is continuous, there is a surjective group homomorphism from $\mathbb T$ to $T$. It remains to show that this map is injective. If $h\in\mathbb T$ acts as the identity on $Z_0$, then in particular it fixes $Z_0^{\reg}=X^\circ$ and hence it is the identity map on the whole $X$.

It remains to prove \textnormal{(ii)}. Set
\[
    \Sigma^{\reg}:=\Sigma\cap Z_0^{\reg}.
\]
We first show that $\Sigma^{\reg}$ is dense in $\Sigma$. Let
$z\in\Sigma$ and choose $z_j\in Z_0^{\reg}$ with
$z_j\to z$ in $d^Z_0$. Set
\[
    r_j:=d^Z_0(z_j,\bar p),
    \qquad
    s_j:=\log r_j,
    \qquad
    \widetilde z_j:=\boldsymbol{\psi}^{s_j}(z_j).
\]
Then $r_j\to1$ and $s_j\to0$. Since the homotheties
$\boldsymbol{\psi}^s$ preserve $Z_0^{\reg}$ and satisfy
\[
    d^Z_0\bigl(
        \boldsymbol{\psi}^s(w),\bar p
    \bigr)
    =
    e^{-s}d^Z_0(w,\bar p),
\]
we have
\[
    \widetilde z_j\in\Sigma^{\reg},
    \qquad
    \widetilde z_j\longrightarrow z.
\]
Thus
\[
    \overline{\Sigma^{\reg}}=\Sigma.
\]

Finally, $X^\circ$ is connected, because it is the complement of a
proper complex analytic subset of the connected complex manifold $X$.
By Theorem~\ref{thm:regular-part-identification},
$Z_0^{\reg}$ is connected. The cone decomposition in
Proposition~\ref{prop:cone} gives
\[
    Z_0^{\reg}\setminus\{\bar p\}
    \cong
    \Sigma^{\reg}\times(0,\infty),
\]
so $\Sigma^{\reg}$ is connected. Its closure $\Sigma$ is therefore
connected.
\end{proof}

\subsection{K\"ahler--Ricci shrinkers with subquadratic Riemannian curvature growth}
\begin{theorem}\label{thm:bounded-curvature-homeomorphism}
    Suppose $X$ has subquadratic Riemannian curvature growth, that is, $\lim_{x\to \infty}f^{-1}(x) |\Rm|(x)=0$. Then the
map \(\Phi_0:Z_0\to Y\) constructed in Lemma~\ref{lem--factor through} is a homeomorphism, where \(Y\) is endowed with its
analytic topology and \(Z_0\) is endowed with the topology induced by the
ambient spacetime distance \(d_Z\).
\end{theorem}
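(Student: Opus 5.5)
We must show that $\Phi_0:(Z_0,d_Z)\to Y$ is a homeomorphism. Continuity is already given by Lemma~\ref{lem--factor through}, and surjectivity follows from $\pi=\Phi_0\circ\Phi$ together with the surjectivity of $\pi$. The two remaining points are injectivity and continuity of the inverse.

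\textbf{Injectivity.} Injectivity amounts to showing that every fiber $\pi^{-1}(y)$ is collapsed to a point by $\Phi$. By Lemma~\ref{lem:dis}(1), it suffices to prove
\[
\lim_{t\nearrow0}\operatorname{diam}_{\omega_t}\pi^{-1}(y)=0
\qquad\text{for each }y\in Y .
\]
Since $\pi^{-1}(y)$ is a compact connected analytic subvariety, I plan to bound its diameter via its area.

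The key identity is
\[
\operatorname{Vol}(V,\omega_t^k)=|t|^k\operatorname{Vol}(V,\omega^k_{-1})\to0
\]
for every $k$-dimensional irreducible component $V$. This already appeared in the proof of Theorem~\ref{thm:regular-part-identification}; it holds because $\pi$ is $\mathbb T$-equivariant and $\psi^t$ permutes fibers by the weighted action. The subquadratic curvature hypothesis enters through rescaling. By the same transport argument as in Theorem~\ref{thm:conical-criterion}, on any fixed compact $K\supset\pi^{-1}(y)$ we get
\[
|\Rm_{\omega_t}|_{\omega_t}(x)=|t|^{-1}\,|\Rm_g|(\psi^t(x))\cdot|t|^{?}\ldots
\]
More precisely, $\omega_t=|t|(\psi^t)^*\omega$ and $f(\psi^t(x))\sim|t|^{-1}F(x,t)$, so
\[
|\Rm_{\omega_t}|(x)=|t|^{-1}|\Rm|(\psi^t x)=o\bigl(|t|^{-1}f(\psi^t x)\bigr)=o(1)\,F(x,t).
\]
Since $F\le\rho^2$ is bounded on $K$, this gives $\sup_K|\Rm_{\omega_t}|\to0$.

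With curvature tending to zero and the noncollapsing of Lemma~\ref{lem:local-mean-value-drift} type (heat kernel $\kappa$-noncollapsing), a standard monotonicity argument for complex subvarieties applies. Lelong-type monotonicity for the area of a $k$-dimensional complex subvariety in a ball where curvature is small gives
\[
\operatorname{Vol}\bigl(V\cap B_{\omega_t}(x,r)\bigr)\ge c\,r^{2k}
\]
for $r\le1$ and $x\in V$. Combined with the vanishing total area of $V$, a covering argument then yields $\operatorname{diam}_{\omega_t}V\to0$. Connectedness of the fiber chains the components together. Hence $\Phi(\pi^{-1}(y))$ is a single point and $\Phi_0$ is injective.

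The main obstacle is justifying that $d_{\omega_t}$-balls centered on $V$ stay inside a fixed compact $K$ where the curvature estimate holds. For this I will reuse the geodesic confinement argument of Lemma~\ref{lem--factor through}, which bounds $F$ and shows that short curves cannot leave $\{f\le C\}$. A second point to check is that the monotonicity constant depends only on a curvature bound at scale $1$ for $\omega_t$, which holds since the curvature tends to zero.

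\textbf{Inverse continuity.} For inverse continuity, take $y_i\to y$ in $Y$. Properness of $\pi$ places the points $x_i\in\pi^{-1}(y_i)$ in a compact set, so a subsequence converges to some $x\in\pi^{-1}(y)$. Continuity of $\Phi$ then gives $\Phi(x_i)\to\Phi(x)=\Phi_0^{-1}(y)$ in $d_Z$. Since every subsequence has a further subsequence converging to the same limit, the whole sequence converges. Alternatively, $\Phi_0^{-1}$ is a closed map because $\Phi$ is proper and $\pi$ is continuous.
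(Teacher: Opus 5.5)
Your injectivity step has a genuine gap. It begins with the curvature estimate, which is off by a factor of $|t|^{-2}$. Since $F(x,t)=|t|\rho^2(\psi^t x)$, you have $|t|^{-1}f(\psi^t x)\approx|t|^{-2}F(x,t)$. So the hypothesis $|\Rm|=o(f)$ only gives
\[
\sup_{K}|\Rm_{\omega_t}|_{\omega_t}\le|t|^{-1}\Lambda(t),
\qquad
\Lambda(t):=\sup_{\{f\le C|t|^{-1}\}}|\Rm_g|,
\qquad
|t|\Lambda(t)\to0,
\]
that is, $o(|t|^{-2})$ rather than $o(1)$.

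The curvature of $\omega_t$ really does blow up, already on the central fiber. Take a minimum point $p$ of $f$. It is fixed by $\psi^t$. It lies over the vertex $o$ of $Y$, because $h_\alpha(\psi^t p)=|t|^{-\lambda_\alpha/2}h_\alpha(p)$ forces $h_\alpha(p)=0$. For a non-Gaussian shrinker it satisfies $\scal_{\omega_t}(p)=|t|^{-1}\scal_\omega(p)=|t|^{-1}\rho^2(p)\to\infty$.

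Hence there is no unit-scale Lelong monotonicity for $\omega_t$. Monotonicity is available only at the $\omega_t$-scale $r_t\simeq\sqrt{|t|/\Lambda(t)}\to0$, which is the $g$-scale $\rho_t$ of Lemma~\ref{lem:fano-fiber-diameter-collapse}. Redo your covering at this scale for a $k$-dimensional component $V$ with $\operatorname{Vol}(V,\omega_t^k)\simeq|t|^k$. You get $N\lesssim|t|^k r_t^{-2k}\simeq\Lambda(t)^k$ balls and
\[
\operatorname{diam}_{\omega_t}V\lesssim N r_t\simeq|t|^{1/2}\Lambda(t)^{k-1/2}=o\bigl(|t|^{1-k}\bigr),
\]
which tends to zero only when $k=1$.

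The missing idea is a reduction to complex dimension one. The paper proves $\diam_{g_t}(P)\to0$ only for connected compact analytic curves $P$. This is Lemma~\ref{lem:fano-fiber-diameter-collapse}, proved by exactly this scale-$\rho_t$ monotonicity and covering, using that $\operatorname{Area}_\omega(\psi^t(P))$ is constant. It then joins two points $x_1,x_2$ of a fiber of $\pi$ by a finite chain of rational curves $P_1,\dots,P_m$ (\cite[Corollary~1.4]{HM07}), fixed independently of $t$. This gives $d_{g_t}(x_1,x_2)\le\sum_j\diam_{g_t}(P_j)\to0$. What matters is a fixed finite chain of compact curves, not rationality as such.

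Your confinement of balls via the argument of Lemma~\ref{lem--factor through} is fine. So is your inverse-continuity argument via properness of $\pi$ and continuity of $\Phi$; it agrees with the paper's properness argument.
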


We use the subquadratic Riemannian curvature condition to show that
the compact fibers of the Fano fibration collapse with respect to $d_{\omega_t}$. It follows that \(\Phi\) is constant along
each fiber of \(\pi\), and hence that \(\Phi_0:Z_0\to Y\) is injective. The homeomorphism property then follows from the properness of $\Phi_0$.

\begin{lemma}
\label{lem:fano-fiber-diameter-collapse}
Let \((X,g,J,f)\) be a K\"ahler--Ricci shrinker with subquadratic Riemannian curvature growth. If \(P\subset X\) is a connected compact analytic curve, then its extrinsic diameter satisfies
\[
    \lim_{t\nearrow0}\diam_{d_{g_t}}(P)=0.
\]
\end{lemma}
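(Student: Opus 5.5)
The idea is to play two facts against each other. The $\omega_t$-area of $P$ decays linearly in $|t|$. The curvature of $g_t$ near $P$ is $o(|t|^{-2})$, so the curvature scale is much larger than $|t|$. The monotonicity formula for the minimal surface $P$ then forces a large area if the diameter stays bounded below.

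\emph{Step 1: area.} Since $\psi^t$ is isotopic to the identity, $\psi^t(P)$ is homologous to $P$. Because $\omega$ is closed,
\[
\mathrm{Area}_{\omega_t}(P)=|t|\int_P(\psi^t)^*\omega=|t|\int_{\psi^t(P)}\omega=|t|\,\langle[\omega],[P]\rangle=:A_P|t| .
\]
The integral identity uses only the pullback formula, and the map $\psi^t|_P:P\to\psi^t(P)$ has degree one.

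\emph{Step 2: curvature near $P$.} Fix $D\ge1$ and let $U_t$ be the $g_t$-neighborhood of radius $D$ of $P$. On $P$ we have $F(\cdot,t)\le F(\cdot,-1)\le C_P$, since $F$ is nonincreasing in $t$. The distance estimate \eqref{eq:distanceestimate} then gives $F\le C_{P,D}$ on $U_t$ for all $t\in[-1,0)$. Set $y=\psi^t(x)$; then $f(y)=F(x,t)/|t|$.

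Subquadratic growth gives, for each $R$, a bound $|\Rm_g|(y)\le\varepsilon(R)f(y)+C_R$, where $\varepsilon(R)\to0$ as $R\to\infty$. Since $|\Rm_{g_t}|(x)=|t|^{-1}|\Rm_g|(\psi^t x)$, this yields on $U_t$
\[
|\Rm_{g_t}|\le |t|^{-1}C_R+\varepsilon(R)C_{P,D}|t|^{-2}=:\eta(t)|t|^{-2}.
\]
Choosing $R=R(t)\to\infty$ slowly makes $\eta(t)\to0$. Hence the curvature scale on $U_t$ is at least $r_t:=\eta(t)^{-1/2}|t|$, with $r_t/|t|\to\infty$ and also $r_t\to0$ if $R$ is chosen suitably. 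Only $r_t\gg|t|$ is really needed; we cap $r_t\le\delta/10$ below.

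\emph{Step 3: monotonicity and covering.} Suppose that along some sequence $t_i\nearrow0$ we have $\diam_{d_{g_{t_i}}}(P)\ge\delta>0$. Set $r=\min\{r_{t_i},\delta/10\}$. Because $P$ is connected, we can pick $N\ge c\delta/r$ points $x_k\in P$ whose $g_{t_i}$-balls of radius $r$ are pairwise disjoint.

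On each ball $B_{g_{t_i}}(x_k,r)$ the curvature is at most $r^{-2}$ up to a constant. We pull back by the exponential map to the tangent ball, where the injectivity radius is no issue. The lifted metric has bounded geometry at scale $r$ (harmonic coordinates), and the lifted piece of $P$ through the origin is still a $J$-holomorphic, hence minimal, surface. The monotonicity formula gives area at least $cr^2$ on the lift. Since a neighborhood of the origin maps injectively, this also bounds the area on $P$ from below, so $\mathrm{Area}_{\omega_{t_i}}(P\cap B(x_k,r))\ge c r^2$.

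Summing over the disjoint balls gives $A_P|t_i|\ge c\,\delta\, r$. If $r=\delta/10$, the left side tends to zero while the right side does not. Otherwise $r=r_{t_i}=\eta^{-1/2}|t_i|$, which gives $A_P\ge c\delta\,\eta(t_i)^{-1/2}\to\infty$. Either way we reach a contradiction.

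The main obstacle I expect is Step 3: applying monotonicity rigorously at the curvature scale without an injectivity radius bound. I would handle this by lifting to the tangent space as described, checking that the lifted curve passes through the origin and is properly immersed in the lifted ball.
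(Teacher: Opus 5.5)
Your overall strategy matches the paper's proof. The area of $P$ is conserved under $\psi^t$: your $A_P|t|$ in $g_t$ is the paper's constant $A$ in $g$, after rescaling. Subquadratic growth pushes the curvature scale well above $|t|$: your $\eta(t)\to0$ is the paper's $|t|K(t)\to0$. Monotonicity at that scale, together with connectedness, then converts bounded area into small diameter. Your chain of points with $d_{g_t}(x,x_k)=3kr$ is a clean way to do the packing.

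However, the step you flagged as the main obstacle is not resolved by your fix. Lifting by $\exp_{x_k}$ does give a K\"ahler ball with bounded geometry, in which the preimage of $P$ is a properly embedded analytic, hence minimal, curve. So monotonicity gives area at least $cr^2$ upstairs. But $\exp_{x_k}$ is only a local isometry on that ball and may be many-to-one. The area of $P\cap B(x_k,r)$ can therefore be smaller than the upstairs area by the covering multiplicity. The region near the origin on which $\exp_{x_k}$ is injective has size comparable to $\mathrm{inj}(x_k)$, which you do not control, so you only obtain $\mathrm{Area}(P\cap B(x_k,r))\ge c\,\mathrm{inj}(x_k)^2$. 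This is not a technicality. In the flat manifold $T^2_\varepsilon\times\mathbb C$, the curve $T^2_\varepsilon\times\{0\}$ has infinite curvature scale, diameter of order one and area of order $\varepsilon$. So the inequality $\mathrm{Area}\ge c\,\delta r$ is false without a noncollapsing input.

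The missing ingredient is the $\kappa$-noncollapsing of Ricci shrinkers \cite[Theorem 22]{liwang2020heat1}, which the paper uses. Combined with $|\Rm|\le Cr^{-2}$ on the ball, it gives $\mathrm{inj}\ge c\,r$ (Cheeger--Gromov--Taylor). Monotonicity then applies directly in $X$, giving $\mathrm{Area}(P\cap B(x_k,r))\ge cr^2$.

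You also need to respect the range of scales that theorem covers. The paper keeps $\rho_t\le c$ in the fixed metric $g$. In your $g_t$-picture, the cap $r\le\delta/10$ corresponds to the scale $\delta/(10\sqrt{|t|})$ in $g$, which is unbounded. Cap instead at $r\le c\sqrt{|t_i|}$, so that the scale in $g$ stays below $1$. In that case your summation gives $A_P|t_i|\ge c\,\delta\sqrt{|t_i|}$, which still contradicts $t_i\nearrow0$. With this change the argument goes through.
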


\begin{proof}
Set $P_t:=\psi^t(P)$. Since $\psi^t$ is holomorphic and isotopic to
the identity,
\[
\operatorname{Area}_{\omega}(P_t)
=\operatorname{Area}_{\omega}(P)=:A.
\]
Moreover, compactness of $P$ and the evolution equation for the
potential imply that, for some $A_0<\infty$,
\[
P_t\subset\{f\le A_0|t|^{-1}\}
\]
for all $t$ sufficiently close to zero. Indeed, this can be seen from
\begin{align*}
    \frac{\d}{\d t} f(\psi^t(x))=\frac{|\na f|^2(\psi^t(x))}{2|t|} \le \frac{f(\psi^t(x))+n}{|t|}.
    \end{align*}
    
Define
\[
K(t):=
\sup_{\{f\le 2A_0|t|^{-1}\}}|\operatorname{Rm}_g|.
\]
The subquadratic-curvature assumption gives
\[
|t|K(t)\longrightarrow0.
\]
Set
\[
\rho_t:=c\min\{1,K(t)^{-1/2}\},
\]
where $c>0$ is sufficiently small. Since
$|\nabla\sqrt {f+n}|\le 1$, after decreasing $c$ if necessary, every ball
$B_g(y,2\rho_t)$ with $y\in P_t$ is contained in
$\{f\le2A_0|t|^{-1}\}$. Curvature control and the $\kappa$-noncollapsing of the
shrinker (see \cite[Theorem 22]{liwang2020heat1}) give bounded geometry at scale comparable to $\rho_t$.
The analytic curve $P_t$ defines a stationary integral $2$-varifold
with density at least one. The monotonicity formula (see \cite[Section~2.3]{ColdingDeLellis2003} and \cite[Section~3]{Scharrer2022}) therefore
gives
\[
\operatorname{Area}_{\omega}
\bigl(P_t\cap B_g(y,c\rho_t)\bigr)
\ge c_1\rho_t^2
\]
for every $y\in P_t$, with uniform constants.
Choose a maximal $2c\rho_t$-separated set
$\{y_1,\ldots,y_N\}\subset P_t$. The balls
$B_g(y_i,c\rho_t)$ are pairwise disjoint, so
\[
Nc_1\rho_t^2\le A,
\qquad
N\le C\rho_t^{-2}.
\]
The balls $B_g(y_i,2c\rho_t)$ cover the connected set $P_t$, and the
intersection graph of this relative cover is connected. Hence
\[
\operatorname{diam}_g(P_t)
\le C N\rho_t
\le C\rho_t^{-1}.
\]
Finally, self-similarity gives
\[
\begin{aligned}
\operatorname{diam}_{g_t}(P)
&=\sqrt{|t|}\operatorname{diam}_g(P_t)\\
&\le C\sqrt{|t|}\rho_t^{-1}\\
&\le C\left(\sqrt{|t|}+\sqrt{|t|K(t)}\right)
\longrightarrow0.
\end{aligned}
\]
\end{proof}

\begin{proof}[Proof of Theorem~\ref{thm:bounded-curvature-homeomorphism}]
Given $x_1,x_2\in X$ with $\pi(x_1)=\pi(x_2)$, by \cite[Corollary 1.4]{HM07}, we can connect $x_1$ and $x_2$ by a chain of rational curves and then Lemma~\ref{lem:fano-fiber-diameter-collapse}
 gives,
\[
    d^Z_0\bigl(\Phi(x_1),\Phi(x_2)\bigr)
    =
    \lim_{t\nearrow0}d_{g_t}(x_1,x_2)
    =
    0.
\]
Hence \(\Phi\) is constant on the fibers of \(\pi\), and therefore $\Phi_0:Z_0\to Y$ in Lemma~\ref{lem--factor through} is injective. Clearly $\Phi_0$ is surjective since $\pi$ is surjective. $\Phi_0$ is also proper since $\pi$ is proper and $\Phi$ is surjective.
Therefore, \(\Phi_0\) is a
homeomorphism.
\end{proof}

\begin{proof}[Proof of Theorem~\ref{bounded curvature-homome}]
    This follows from Lemma~\ref{lem--factor through}, Theorem~\ref{thm:regular-part-identification}, and Theorem~\ref{thm:bounded-curvature-homeomorphism}.
\end{proof}

\subsection{More conjectures on K\"ahler--Ricci shrinkers}
Several conjectures on K\"ahler--Ricci shrinkers were proposed in \cite[Section~6]{SunZhang}. Building on the results established in the preceding sections, we formulate further conjectures on K\"ahler--Ricci shrinkers and prove partial results toward them in the subsequent sections.

Let $X$ be a K\"ahler--Ricci shrinker with bounded scalar curvature. Then we have a surjective map $\Phi:X\to Z_0$ as in Lemma~\ref{lem:dis}. It is natural to conjecture the following.
\begin{conjecture}\label{conjecture1}
 $Z_0$ with the topology induced by $d^Z_0$ admits a polarized affine cone structure and the map $\Phi$ realizes the polarized Fano fibration structure on $X$.
\end{conjecture}

\begin{conjecture}
 $(Z_0,d^Z_0)$ is a metric cone and coincides with the metric completion of $(Z_0^{\reg},g_0^Z)$ if the K\"ahler--Ricci shrinker has maximal volume growth.
\end{conjecture}

\section{Volume Estimates for K\"ahler--Ricci Shrinkers}\label{sec--7}

It is conjectured in \cite[Conjecture 6.2]{SunZhang} that every K\"ahler--Ricci shrinker admits a unique tangent space at infinity and that this space is a metric cone. As a noncompact analogue of Theorem~\ref{thm--all}, we prove some results in this direction under additional assumptions.

Let $(X^n,g,\omega,\xi=J\nabla f)$ be a K\"ahler--Ricci shrinker. We will use the same notation as in \cite[Section~3]{SunZhang}. Let $\mathbb T$ denote the compact torus containing the holomorphic isometric action generated by the soliton vector field $J\nabla f$. Using the convexity of the momentum map, \cite[Lemma~3.3]{SunZhang} shows that one can perturb $J\nabla f$ to obtain an element of $\lie(\mathbb T)$ which generates an $S^1$-action and still admits a proper Hamiltonian potential, that is,
\begin{equation}
    \Lambda_{\mathbb Q}:=\{\eta\in\lie(\mathbb T)\mid u_\eta\text{ is proper and bounded below, and }\eta\text{ generates an }S^1\text{-action}\}\neq\emptyset.
\end{equation}
Moreover, we can choose $\eta$ sufficiently close to $J\nabla f$ such that
\begin{equation}\label{eq:hamiltonian-comparison}
   \frac12 f-C\leq u_{\eta}\leq 2f+C.
\end{equation}
By \cite[Proposition~3.5]{SunZhang}, we can fix $z_0\in \mathbb Q$ sufficiently large such that all critical values of $u_\eta$ are less than $z_0-1$. Then, for every regular value $z\geq z_0$ of $u_\eta$, the K\"ahler quotient
 \begin{equation}
     X_z:=u_{\eta}^{-1}(z)/S^1
 \end{equation}
is biholomorphic to $X_{z_0}$. Henceforth, we identify these K\"ahler quotients with the K\"ahler orbifold $X_{z_0}$. The K\"ahler reduction construction in \cite{SunZhang} yields a family of smooth orbifold K\"ahler forms $\omega_z$ parametrized by $z$, all defined on $X_{z_0}$. There exist an effective $\mathbb Q$-divisor $D$ and a $\mathbb Q$-line bundle $L$ on $X_{z_0}$ such that, for every regular value $z\geq z_0$ of $u_\eta$,
 \begin{equation}
     -(K_{X_{z_0}}+D)+zL \text{ is K\"ahler}.
 \end{equation}
In particular, $L$ is nef. By the Kawamata base-point-freeness theorem, $L$ is semiample; that is, there exists a fibration $\pi_{z_0}$ and a positive integer $\ell$ such that
\begin{equation}\label{eq-semiample-fibration}
  \pi_{z_0}:X_{z_0}\to X'_{z_0}\subset\mathbb P^N,
  \qquad
  \ell L=\pi_{z_0}^*\left(\left.\mathcal O_{\mathbb P^N}(1)\right|_{X'_{z_0}}\right).
\end{equation}
By \cite[Section~3]{SunZhang}, we obtain a polarized Fano fibration
 \begin{equation}\label{eq-fano-fibration}
     \pi:X\to Y,
 \end{equation}
where $Y$ is a normal affine cone. By construction,
 \begin{equation}
     \dim Y=\dim X_{z_0}'+1=\mathrm{nd}(L)+1,
 \end{equation}
where $\mathrm{nd}(L)$ denotes the numerical dimension of $L$, that is, the maximal integer $k$ such that $c_1(L)^k\neq 0$ in $H^{2k}(X_{z_0},\mathbb R)$.


\subsection{General volume growth estimates}
Using the K\"ahler reduction method, we obtain the following noncompact analogue of \eqref{eq:numerical-noncollapsing}, which gives a complex-analytic criterion for a K\"ahler--Ricci shrinker to have Euclidean volume growth.

\begin{proposition}\label{prop:maximal-volume-growth-characterization}
    Let $(X,g)$ be a K\"ahler--Ricci shrinker. The following are equivalent:
\begin{enumerate}[label=(\arabic*), font=\normalfont]
    \item $\operatorname{AVR}(X,g)>0$;
    \item $\dim Y=\dim X$;
    \item $L$ is a big line bundle on $X_{z_0}$.
\end{enumerate}
\end{proposition}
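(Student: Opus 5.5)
(2)$\Leftrightarrow$(3) is algebraic. By construction $\dim Y=\mathrm{nd}(L)+1$ and $\dim X_{z_0}=n-1$, so $\dim Y=\dim X$ exactly when $\mathrm{nd}(L)=\dim X_{z_0}$. For a nef $\mathbb Q$-line bundle, maximal numerical dimension is equivalent to $c_1(L)^{n-1}>0$. This holds if and only if $L$ is big. Since $L$ is semiample, one can also argue via \eqref{eq-semiample-fibration}: $L$ is big if and only if $\pi_{z_0}$ is generically finite, that is, $\dim X'_{z_0}=n-1$.

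The substantive part is (1)$\Leftrightarrow$(3), which I would prove by the coarea formula along the proper Hamiltonian $u_\eta$. For regular values $z\ge z_0$, the level set $u_\eta^{-1}(z)$ is an $S^1$-orbibundle over $X_z\cong X_{z_0}$. The Duistermaat--Heckman/coarea identity gives
\[
\frac{\d}{\d z}\bigl|\{u_\eta\le z\}\bigr|_{\omega}
= c\int_{X_{z_0}}\frac{\omega_z^{n-1}}{(n-1)!}
= c'\bigl(-(K_{X_{z_0}}+D)+zL\bigr)^{n-1},
\]
where the constant $c$ is the period of the $S^1$-action. The key point is that the $\omega^n$-volume of the slab $\{z\le u_\eta\le z+\d z\}$ equals the orbit length times $\d z$ times the quotient volume, because $\d u_\eta=\iota_\eta\omega$. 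The right-hand side is a polynomial in $z$ whose leading term is $\binom{n-1}{k}\,z^{k}\,L^{k}\cdot\bigl(-(K+D)\bigr)^{n-1-k}$, where $k=\mathrm{nd}(L)$.

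Integrating, $|\{u_\eta\le z\}|\sim C z^{k+1}$ with $C>0$. I would justify $C>0$ via nefness of $L$ and positivity of the Kähler class for large $z$; alternatively, the coefficient is $\lim_z z^{-k}\int\omega_z^{n-1}$, which is positive by a direct positivity argument, since $\omega_z/z$ is semipositive. By \eqref{eq:hamiltonian-comparison} together with $f\simeq d_g(p,\cdot)^2/4$ from \eqref{eq:distanceestimate}, we have $B_g(p,r)\supset\{u_\eta\le r^2/16-C\}$ and $B_g(p,r)\subset\{u_\eta\le r^2+C\}$, up to constants. Hence $|B_g(p,r)|\asymp r^{2(k+1)}$. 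Since $\operatorname{AVR}$ exists, it is positive if and only if $2(k+1)=2n$, that is, $k=n-1$, which is (3).

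The main obstacle is making the coarea step rigorous. The orbifold points of $X_{z_0}$ and the measure-zero set of nonfree orbits must be handled, and the identification of $[\omega_z]$ must hold as cohomology classes on the fixed orbifold, so that $\int\omega_z^{n-1}$ is the stated intersection polynomial. For this I would rely on the Duistermaat--Heckman linear variation $[\omega_z]=[\omega_{z_0}]+(z-z_0)c_1$, which is built into the reduction of \cite{SunZhang}. Only finitely many critical values occur, all below $z_0$, so the compact region $\{u_\eta\le z_0\}$ contributes a bounded volume and does not affect the growth rate.
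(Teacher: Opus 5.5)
Your proposal is correct and follows the paper's route: the coarea formula along $u_\eta$, identification of $\int_{u_\eta^{-1}(s)}|\nabla u_\eta|^{-1}$ with $T\cdot\vol(-(K_{X_{z_0}}+D)+sL)$ (a polynomial of degree $\mathrm{nd}(L)$), and the comparison $u_\eta\simeq f\simeq d_g(p,\cdot)^2$. The equivalence (2)$\Leftrightarrow$(3) comes, as you say, from $\dim Y=\mathrm{nd}(L)+1$. For the positivity of the leading coefficient, which your sketch leaves somewhat vague, write $-(K_{X_{z_0}}+D)=B-z_0L$ with $B$ K\"ahler; the coefficient of $z^{k}$ is then $\binom{n-1}{k}L^{k}\cdot B^{n-1-k}$, which is positive because $L$ is nef with $L^{k}\neq0$.
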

\begin{proof}
This is a direct consequence of the coarea formula. Since $u_\eta$ has only finitely many critical values and its critical set has zero top-dimensional Hausdorff measure, we have
\begin{equation}
\begin{aligned}
\left|\{\min u_\eta \le u_\eta \le r\}\right|_g
&=\int_{\min u_\eta}^{r}\int_{u_\eta^{-1}(s)}
\frac{1}{|\nabla u_\eta|}\,\d\mathscr H_g^{2n-1}\,\d s\\
&=T\int_{\min u_\eta}^{r}
\left|u_\eta^{-1}(s)/S^1\right|_{\omega_s}\,\d s\\
&=C(z_0)+T\int_{z_0}^{r}
\vol\bigl(-(K_{X_{z_0}}+D)+sL\bigr)\,\d s,
\end{aligned}
\end{equation}
where $T>0$ is the period of the $S^1$-action. Since $L$ is nef, $\vol(-(K_{X_{z_0}}+D)+sL)$ is a polynomial in $s$ whose degree is given by the numerical dimension of $L$. Since $u_\eta$ is comparable to $f$ by \eqref{eq:hamiltonian-comparison}, and $f$ is comparable to $d(p,\cdot)^2$ by \cite{CaoZhou}, the desired result follows.
\end{proof}
The same argument yields the following.

\begin{proposition}
    Let $X$ be a noncompact K\"ahler--Ricci shrinker. Set $k:=\dim Y=\mathrm{nd}(L)+1$. Then there exists a constant \(C>0\) satisfying
\[
C^{-1}\leq \frac{|B_g(p,r)|_g}{\omega_{2k} r^{2k}} \leq C
\]
for all \(r\geq 1\). Moreover, if $J\nabla f$ generates an $S^1$-action, then
\[
\lim_{r\to+\infty}\frac{|B_g(p,r)|_g}{\omega_{2k} r^{2k}}\in(0,\infty).
\]
\end{proposition}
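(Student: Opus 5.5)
The plan is to rerun the coarea computation from the proof of Proposition~\ref{prop:maximal-volume-growth-characterization} and read off the exact growth rate. For regular values $s\ge z_0$ of $u_\eta$, the K\"ahler quotient $X_s$ is identified with $X_{z_0}$, and the reduced class is $-(K_{X_{z_0}}+D)+sL$. This gives
\[
\bigl|\{u_\eta\le r\}\bigr|_g=C(z_0)+T\int_{z_0}^{r}P(s)\,\d s,
\qquad
P(s):=\vol\bigl(-(K_{X_{z_0}}+D)+sL\bigr).
\]
Because this class is K\"ahler for all such $s$, $P(s)=\frac{1}{(n-1)!}\bigl(-(K_{X_{z_0}}+D)+sL\bigr)^{n-1}$ is a polynomial. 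Expanding binomially, the coefficient of $s^j$ is a nonnegative multiple of $c_1(L)^j\cdot\bigl(-(K_{X_{z_0}}+D)\bigr)^{n-1-j}$. All terms with $j>\mathrm{nd}(L)=k-1$ vanish. The top coefficient, for $j=k-1$, is strictly positive: if we write $-(K_{X_{z_0}}+D)=\beta_s-sL$ with $\beta_s$ K\"ahler and expand, only $c_1(L)^{k-1}\cdot\beta_s^{n-k}$ survives, and this is positive because $L$ is semiample, so $c_1(L)^{k-1}$ is represented by the pullback of a power of the Fubini--Study form whose support has dimension $k-1$. Hence $P(s)=a s^{k-1}+O(s^{k-2})$ with $a>0$, and
\[
\bigl|\{u_\eta\le r\}\bigr|_g=\tfrac{Ta}{k}r^{k}+O(r^{k-1}).
\]

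For the two-sided bound I would combine \eqref{eq:hamiltonian-comparison} with the Cao--Zhou estimate $f\simeq\tfrac14 d_g(p,\cdot)^2$. Together these sandwich $B_g(p,r)$ between $\{u_\eta\le c r^2\}$ and $\{u_\eta\le Cr^2\}$ for $r\ge r_0$. The polynomial expansion above then gives $C^{-1}r^{2k}\le|B_g(p,r)|_g\le Cr^{2k}$ for $r\ge r_0$, and the range $1\le r\le r_0$ is handled by compactness together with $\kappa$-noncollapsing.

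For the limit statement, when $J\nabla f$ itself generates an $S^1$-action I would take $\eta=J\nabla f$, so that $u_\eta=f+\mathrm{const}$. This is legitimate because $J\nabla f\in\Lambda_{\mathbb Q}$: $f$ is proper and bounded below. The sublevel-set volume $|\{f\le r^2/4\}|_g$ is then exactly $\tfrac{Ta}{k}(r^2/4)^k+O(r^{2k-2})$, so $|\{f\le r^2/4\}|_g/r^{2k}$ converges to a positive finite constant.

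The main obstacle is transferring this limit from sublevel sets of $f$ to metric balls, since Cao--Zhou only gives $d_g(p,x)=2\sqrt f+O(1)$. Writing $V(\rho):=|\{2\sqrt f\le\rho\}|_g$, we get $V(r-C)\le|B_g(p,r)|_g\le V(r+C)$. Since $V(\rho)=b\rho^{2k}+O(\rho^{2k-2})$, we have $V(r\pm C)/r^{2k}\to b$, and the ratio of ball volumes converges as well. I would also double check the positivity of the leading coefficient in the orbifold setting, using semiampleness via \eqref{eq-semiample-fibration}: pulling back the Fubini--Study form makes $c_1(L)^{k-1}$ a nonzero semipositive current on an orbifold, and pairing it with a K\"ahler class gives a positive number.
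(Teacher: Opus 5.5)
Your proposal is correct and follows the paper's own route. The paper proves this statement by invoking ``the same argument'' as Proposition~\ref{prop:maximal-volume-growth-characterization}: the coarea formula over level sets of $u_\eta$, the fact that $\vol(-(K_{X_{z_0}}+D)+sL)$ is a polynomial of degree $\mathrm{nd}(L)$, and the comparisons \eqref{eq:hamiltonian-comparison} and $f\asymp d_g(p,\cdot)^2$. You have correctly supplied the details it leaves implicit, namely the positivity of the leading coefficient and, in the $S^1$ case, the choice $\eta=J\nabla f$ together with $d_g(p,\cdot)=c\sqrt f+O(1)$ to pass from sublevel sets to balls.

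The only slip is cosmetic. In the K\"ahler normalization $\scal_\omega+|\nabla^{1,0}f|^2=f+n$ used here, the constant is $c=\sqrt2$ rather than $2$ (that is, $f\approx\tfrac12 d_g^2$), and this does not affect the argument.
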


\subsection{Volume noncollapsing estimates}\label{sec:volume-noncollapsing-assumptions}
In this subsection, we consider a K\"ahler--Ricci shrinker $(X, g, f)$ satisfying
\begin{enumerate}
    \item $\operatorname{AVR}(X,g)>0$;
    \item the scalar curvature satisfies
    \[
        \limsup_{x\to\infty}\frac{\scal_g(x)}{f(x)}<1;
    \]
    \item $J\nabla f$ generates an $S^1$-action.
\end{enumerate}
Using the K\"ahler reduction method again, we reduce the problem to a volume noncollapsing estimate for compact K\"ahler metrics and obtain the following estimate for certain classes of K\"ahler--Ricci shrinkers.
\begin{theorem}\label{thm-shrinker-volume-noncollapsing}
Under the above assumptions, for every $\ep>0$ and $\sigma \in (0,1)$, there exists a constant $C_\ep$, depending on $\ep$ and the K\"ahler--Ricci shrinker, such that
    \begin{equation}\label{eq:shrinker-volume-noncollapsing}
        \left|B_g(x,\sigma r)\right|_g\geq C_\ep \sigma^{2n+\ep} r^{2n}
    \end{equation}
for all $x \in X$ with $r=d(p, x) \ge 1$.
\end{theorem}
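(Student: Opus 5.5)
Since $J\nabla f$ itself generates an $S^1$-action, we take $\eta=J\nabla f$, so that $u_\eta=f+\text{const}$. The reduced spaces $X_z=f^{-1}(z)/S^1$ for $z\ge z_0$ are then all identified with the fixed compact orbifold $X_{z_0}$. On it the reduced forms $\omega_z$ lie in the classes $-(K_{X_{z_0}}+D)+zL$ with $L$ big and nef, by Proposition~\ref{prop:maximal-volume-growth-characterization} and assumption (1). We rescale by setting $\tilde\omega_z:=z^{-1}\omega_z$, whose class $L+z^{-1}(-(K_{X_{z_0}}+D))$ converges to the big nef class $L$ as $z\to\infty$. This is the analogue of the compact Kähler--Ricci flow approaching a big and nef class. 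Following \cite{SunZhang}, we write the $z$-dependence, after the change of variables $t\leftrightarrow -z^{-1}$, as a twisted Kähler--Ricci flow on $X_{z_0}$. Concretely, $\tilde\omega_z=\hat\omega_z+\ii\partial\bar\partial\varphi_z$, where $\hat\omega_z$ is a smooth reference form in the class, and $\varphi$ satisfies a parabolic Monge--Ampère equation
\[
\partial_s\varphi=\log\frac{(\hat\omega_s+\ii\partial\bar\partial\varphi)^{n-1}e^{\theta}}{\Omega}
\]
with a density $e^\theta$ coming from $|\nabla f|^{-2}$ and the divisor $D$.

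The core step is a uniform bound $\tilde\omega_z^{\,n-1}\le e^{\Theta}\Omega$ with $e^{\Theta}\in L^p(\Omega)$ for some $p>1$, uniformly in $z\ge z_0$. To obtain it, I would first bound $\partial_s\varphi$ from above by the maximum principle applied to $\partial_s\varphi-A\varphi$, as in the standard Kähler--Ricci flow argument, using the parabolic equation $(\partial_t-\Delta)F=-n$ type identities transported to the quotient. Assumption (2), $\scal\le(1-\delta)f$ near infinity, gives $|\nabla f|^2\ge\delta f$, so the factor $|\nabla f|^{-2}\sim z^{-1}$ in the reduction density is controlled two-sidedly. The remaining density is the contribution of the orbifold/divisor $D$, which is in $L^p$ because $D$ is an effective $\mathbb Q$-divisor with klt-type coefficients. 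Once this $L^p$ bound holds, the noncollapsing theorems of \cite{GPSS2,GPSS-ii} for Kähler metrics in a family of classes tending to a big nef class give
\[
|B_{\tilde\omega_z}(q,s)|_{\tilde\omega_z}\ge c_\ep s^{2n-2+\ep},\qquad s\le 1,
\]
uniformly in $z$ and $q\in X_{z_0}$. I expect the upper bound for $\partial_s\varphi$ to be the main obstacle. The cure is to use $\scal\ge0$ together with the bound on $\scal/f$, trying first the barrier $\partial_s\varphi+\varphi$ exactly as in Song--Tian.

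To lift the estimate, let $r=d(p,x)$, so that $f(x)\approx r^2/4$ by \eqref{eq:distanceestimate}. Consider the slab $S=\{|f-f(x)|\le c\sigma r\cdot r\}$, which has width about $\sigma r$ in distance since $|\nabla f|\approx\sqrt f\approx r$ by assumption (2). In the rescaled metric $r^{-2}g$, each level set $f^{-1}(z)$ in $S$ is an $S^1$-bundle over $(X_{z_0},\tilde\omega_z)$ up to bounded factors. The $S^1$-orbits have length at most $C|\xi|/r\lesssim 1$ in rescaled units; where they are longer than $\sigma$ we only use a $\sigma$-portion of them. Horizontal lifts of $\tilde\omega_z$-curves have the same length, so the preimage of a quotient ball of radius $c\sigma$ around the image of $x$, intersected with the slab and with an orbit arc of length $c\sigma$, lies inside $B_g(x,\sigma r)$. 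I would also check that vertical/horizontal drift as $z$ varies stays $O(\sigma r)$, using $\omega_z$ varying smoothly in $z$ and the Duistermaat--Heckman-type formula. By the coarea formula, as in Proposition~\ref{prop:maximal-volume-growth-characterization}, we get
\[
|B_g(x,\sigma r)|\gtrsim \int_{|z-f(x)|\lesssim\sigma r^2}|\nabla f|^{-1}\cdot(\text{orbit arc})\cdot z^{n-1}\,c_\ep\sigma^{2n-2+\ep}\,\d z .
\]
The integration range contributes about $\sigma r^2$, the factor $|\nabla f|^{-1}$ about $r^{-1}$, the orbit arc about $\sigma r$, and $z^{n-1}$ about $r^{2n-2}$. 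Altogether this gives $\gtrsim\sigma^{2n+\ep}r^{2n}$, which is \eqref{eq:shrinker-volume-noncollapsing}.
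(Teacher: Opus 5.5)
Your architecture is the paper's. With $\eta=J\nabla f$ you pass to the twisted flow for $\theta_t=\omega_z/z$ on $X_{z_0}$, combine a uniform $L^p$ bound on $\theta_t^{n-1}$ with the volume lower bound coming from bigness of $L$ to invoke \cite{GPSS2}, and lift by the coarea formula in $f$ and along the $S^1$-orbits. Your exponent count in the last display matches the paper's. The $z$-drift you worry about is irrelevant, since the quotient estimate is uniform in the center and is applied level by level at the projection of $x_z$.

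The gap is the step you yourself flag: the upper bound on $\partial_t\varphi_t$ is never established, and the maximum-principle route you suggest does not close as stated. Differentiating the potential equation gives
\[
(\partial_t-\Delta_{\theta_t})\partial_t\varphi_t=\tr_{\theta_t}\bigl(\partial_t\hat\theta_t\bigr)+\partial_t\log\bigl(1+n\tau-\tau\scal\bigr).
\]
The last term contains $\tau\,\partial_t\scal=z\,\partial_z\scal$, the derivative of $\scal$ along the $\nabla f/|\nabla f|^2$-lines that identify the level sets. By the soliton identity $\nabla\scal\propto\Ric(\nabla f,\cdot)$, this is a multiple of $z\Ric(\nabla f,\nabla f)/|\nabla f|^2$. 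Hypotheses that constrain only $\scal$ give this quantity neither a sign nor a bound. A Song--Tian-type barrier ($\partial_t\varphi+\varphi$, $t\partial_t\varphi-\varphi$, \ldots) is therefore left with an uncontrolled source term.

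The missing idea is that no maximum principle is needed, because the reduction gives an exact pointwise formula for $\partial_t^2\varphi_t$. With the normalization $c_t=z-\log z$, one has $\partial_t\varphi_t=\log\bigl(z^{-n}|\nabla^{1,0}f|^2V\bigr)-\psi$. Here $V$ and $\psi$ are the densities of $\omega_z^{n-1}$ and $\Omega$ in a fixed orbifold chart, and $\psi$ is independent of $t$. The reduction computation of \cite{SunZhang} gives $\partial_z\log\bigl(|\nabla^{1,0}f|^2V\bigr)=1-z|\nabla^{1,0}f|^{-2}$. Using $\partial_t=z^2\partial_z$ and $|\nabla^{1,0}f|^2=z+n-\scal$ on the level set, one gets
\[
\partial_t^2\varphi_t=\frac{\scal(n\tau-1)-n^2\tau}{\tau(1+n\tau-\tau\scal)}\le0\qquad\text{whenever }n\tau\le1,
\]
using only $\scal\ge0$. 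Hence $\partial_t\varphi_t\le\sup_{X_{z_0}}\partial_t\varphi_{t_0}$. Assumption (2) gives $1+n\tau-\tau\scal\ge\ep_0$, which turns this into $\theta_t^{n-1}=e^{\partial_t\varphi_t}(1+n\tau-\tau\scal)^{-1}\Omega\le C\Omega$. So the two ingredients you name, $\scal\ge0$ and $\limsup\scal/f<1$, are the right ones. They enter through this pointwise monotonicity in $t$, not through a barrier argument.
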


Let
\[
    \mathcal F=f-\log |\nabla u_{\eta}|^2.
\]
The function $\mathcal F$ is $S^1$-invariant and hence descends to a function on $X_{z_0}$. Under the identifications above, let $D_z$ denote the divisor on $X_z$ corresponding to $D$. By \cite[Lemma~3.4]{SunZhang}, we have a family of smooth orbifold K\"ahler forms on $X_{z_0}$ satisfying
\begin{equation}\label{eq:omega-z-equation}
    \Ric(\omega_z)=2\pi [D_z]+\omega_z-z\partial_z\omega_z-\ii\partial \pp \mathcal{F}.
\end{equation}
Let
\begin{equation}\label{eq:quotient-time-variable}
    t=-\tau=-\frac{1}{z}\in[t_0,0),
    \qquad
    t_0=-\tau_0=-\frac{1}{z_0},
\end{equation}
and set
\begin{equation}\label{eq:theta-definition}
    \theta_t=\frac{\omega_z}{z}.
\end{equation}
Then \eqref{eq:omega-z-equation} can be rewritten as a twisted K\"ahler--Ricci flow equation
\begin{equation}
    \partial_t \theta_t=-\Ric(\theta_t)+2\pi [D]-\ii\partial\pp \mathcal{F}.
\end{equation}
The cohomology class satisfies
\begin{equation}
\begin{aligned}
     [\theta_t]&=\frac{1}{z}(-(K_{X_{z_0}}+D)+zL)\\
     &=\frac{1}{z}(-(K_{X_{z_0}}+D)+z_0L+(z-z_0)L)\\
     &=\tau \tau_0^{-1}[\theta_{t_0}]+(1+\tau_0^{-1} t)c_1(L).
\end{aligned}
\end{equation}

Via the semiample fibration \eqref{eq-semiample-fibration}, we can choose a smooth semipositive form $\alpha\in c_1(L)$ on $X_{z_0}$. Then we can find a smooth orbifold volume form $\Omega$ such that on $X_{z_0}$,
\begin{equation}\label{eq:reference-volume-curvature}
    \ii\partial\pp \log (\Omega)=-\tau_0^{-1}\theta_{t_0}+\tau_0^{-1} \alpha-2\pi [D].
\end{equation}
Here, we emphasize that our orbifold structure on $X_{z_0}$ allows a complex codimension-one branching locus, which is given by the support of the divisor $D$. Thus, a smooth orbifold volume form $\Omega$ has conical singularities along $D$, which accounts for the divisorial term on the right-hand side of \eqref{eq:reference-volume-curvature}. We write
\begin{equation}
\theta_t=\tau\tau_0^{-1}\theta_{t_0}+(1+\tau_0^{-1}t)\alpha+\ii\partial\pp \varphi_t.
\end{equation}
The parabolic equation for $\theta_t$ can be written as the parabolic equation for the scalar function
\begin{equation}\label{eq:reduced-potential-flow}
\partial_t\varphi_t=\log\left(\frac{\theta_t^{n-1}}{\Omega}\right)-\mathcal{F}+c_t.
\end{equation}
Equivalently,
\begin{equation}
    \theta_t^{n-1}=e^{\partial_t\varphi_t+\mathcal{F}-c_t}\Omega.
\end{equation}
where $c_t$ is a constant.

The preceding arguments apply to arbitrary K\"ahler--Ricci shrinkers. From now on, assume that $J\nabla f$ generates an $S^1$-action.
Under this assumption, we do not need to perturb the vector field; therefore,
\[
    \eta=J\nabla f,
    \qquad
    u_\eta=f,
\]
with the normalization $\scal+|\nabla^{1,0}f|^2=f+n$, where $\scal$ is the scalar curvature of $X$.

By $S^1$-invariance, the relevant quantities descend to functions on the quotient \(f^{-1}(z)/S^1\), which is identified with \(X_{z_0}\). On this space we have
\begin{equation}\label{eq:gradient-f-level}
  f\equiv z,
  \qquad
  |\nabla^{1,0} f|^2=z+n-\scal.
\end{equation}
In \eqref{eq:reduced-potential-flow}, choose
\[
    c_t=z-\log z=\tau^{-1}+\log\tau.
\]
Then
\begin{equation}\label{eq:twisted-potential-flow}
\partial_t\varphi_t=\log\left(\frac{\theta_t^{n-1}}{\Omega}\right)+\log(1+n\tau-\scal\tau).
\end{equation}
Equivalently,
\begin{equation}\label{eq:twisted-measure}
    \theta_t^{n-1}=e^{\partial_t\varphi_t}(1+n\tau-\scal\tau)^{-1} \Omega.
\end{equation}


We now compute \(\partial_t^2\varphi_t\) under the normalization given by \eqref{eq:twisted-potential-flow}. Locally on an
orbifold uniformizing chart of the quotient, write
\[
    \omega_z^{n-1}
    =V(\sqrt{-1})^{(n-1)^2}\Omega_R\wedge\overline{\Omega_R},
    \qquad
    \Omega=e^\psi(\sqrt{-1})^{(n-1)^2}
    \Omega_R\wedge\overline{\Omega_R},
\]
where the second identity expresses the fixed reference volume form
\(\Omega\) in the fixed chart on \(X_{z_0}\). Thus, the local density
\(\psi\) depends only on the spatial variable and is independent of \(t\);
only \(V\), the density of \(\omega_z^{n-1}\), varies with \(z\) (equivalently
with \(t\)). Since \(\theta_t=z^{-1}\omega_z\),
\eqref{eq:gradient-f-level} and \eqref{eq:twisted-potential-flow} imply
\[
    \partial_t\varphi_t
    =\log(z^{-n}|\nabla^{1,0} f|^2V)-\psi.
\]
We now use the computation in \cite[proof of Lemma~3.4, equations
(3.11)--(3.13)]{SunZhang}. In the notation of that proof, the local functions
\(h\) and \(F_{\mathrm{SZ}}\) satisfy
\[
    h^{-1}=|\nabla f|^2=2|\nabla^{1,0} f|^2,
    \qquad
    F_{\mathrm{SZ}}=f-\log(h^{-1}V).
\]
Moreover, \cite[(3.13)]{SunZhang} gives
\[
    h^{-1}\partial_zF_{\mathrm{SZ}}=2z.
\]
Since \(f=z\) on the quotient, this is equivalent to
\[
\partial_z\log(|\nabla^{1,0} f|^2V)=1-z|\nabla^{1,0} f|^{-2}.
\]
Using \(\partial_t=z^2\partial_z\), we obtain
\begin{equation}\label{eq:potential-second-derivative}
    \begin{aligned}
    \partial_t^2\varphi_t
    &=z^2\partial_z\log(z^{-n}|\nabla^{1,0}f|^2V)  \\
    &=z^2\left(-\frac{n}{z}+1-z|\nabla^{1,0} f|^{-2}\right) \\
    &=-\tau^{-3}|\nabla^{1,0} f|^{-2}+\tau^{-2}-n\tau^{-1}.
\end{aligned}
\end{equation}

\begin{lemma}\label{lem:potential-bound}
    Suppose \(J\nabla f\) generates an \(S^1\)-action and
    \(
        \limsup_{x\to\infty}\scal_g(x)/f(x)<1.
    \)
    For the normalization fixed in
    \eqref{eq:twisted-potential-flow}, there exists a constant \(C\) such that
    \[
        \partial_t\varphi_t\leq C
    \]
    for all \(t\in [t_0,0)\).
\end{lemma}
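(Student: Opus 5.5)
The approach is to show that $\partial_t\varphi_t$ is nonincreasing in $t$ at each fixed point of $X_{z_0}$, so that it is bounded by its value at the initial time $t_0$. I will do this by computing the sign of $\partial_t^2\varphi_t$ from \eqref{eq:potential-second-derivative}. There is a preliminary step: I will enlarge $z_0$, which is harmless since the identifications of $X_z$ with $X_{z_0}$ hold for all regular $z\ge z_0$. By the hypothesis $\limsup_{x\to\infty}\scal_g/f<1$ there is $\delta\in(0,1)$ with $\scal\le(1-\delta)f$ on $\{f\ge z_0-1\}$. I also require $z_0>2n$. Then on the quotient $f^{-1}(z)/S^1$, $z\ge z_0$, we have $\scal\tau\le 1-\delta$ with $\tau=1/z$.

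The main computation uses \eqref{eq:gradient-f-level} to write $|\nabla^{1,0}f|^2=\tau^{-1}+n-\scal$ and substitutes this into \eqref{eq:potential-second-derivative}. A direct simplification gives
\[
\partial_t^2\varphi_t=\tau^{-2}-n\tau^{-1}-\frac{\tau^{-2}}{1+(n-\scal)\tau}
=-\frac{\scal\,\tau^{-1}+n(n-\scal)}{1+(n-\scal)\tau}.
\]
For the denominator, the choice of $z_0$ gives $1+n\tau-\scal\tau\ge\delta>0$. For the numerator, $\scal\ge0$ \cite{Chen2009Strong} and $\tau^{-1}\ge z_0>n$ give $\scal\,\tau^{-1}+n^2-n\scal=\scal(\tau^{-1}-n)+n^2>0$. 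Hence $\partial_t^2\varphi_t\le0$ pointwise on $X_{z_0}\times[t_0,0)$. This identity holds at every point of the quotient for each regular $z\ge z_0$, and hence for all $z\ge z_0$ since all values $\ge z_0$ are regular. Because the spatial point is held fixed as $t$ varies, which is how $\psi$ and $V$ were set up in the derivation of \eqref{eq:potential-second-derivative}, integrating in $t$ gives $\partial_t\varphi_t(x)\le\partial_t\varphi_{t_0}(x)$.

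It remains to bound $\partial_t\varphi_{t_0}$ uniformly on the compact orbifold $X_{z_0}$. By \eqref{eq:twisted-potential-flow},
\[
\partial_t\varphi_{t_0}=\log\bigl(\theta_{t_0}^{n-1}/\Omega\bigr)+\log(1+n\tau_0-\scal\tau_0).
\]
The second term is at most $\log(1+n\tau_0)$. For the first term, both $\theta_{t_0}^{n-1}$ and $\Omega$ are smooth positive orbifold volume forms. In orbifold charts, including along the branching divisor $D$, their ratio is therefore a smooth positive function on a compact space, and so it is bounded.

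I expect the only delicate point to be this last one. One must check that $\Omega$ is a genuine smooth orbifold volume form in the same orbifold structure as $\theta_{t_0}$, so that the conical behaviour along $D$ cancels in the ratio; this is exactly how $\Omega$ was chosen in \eqref{eq:reference-volume-curvature}. Granting that, we obtain $\partial_t\varphi_t\le C$ for all $t\in[t_0,0)$, as claimed.
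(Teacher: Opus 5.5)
Your proof is correct and follows the paper's argument. You substitute \eqref{eq:gradient-f-level} into \eqref{eq:potential-second-derivative}, use $\scal\ge0$ together with $n\tau\le1$ to get $\partial_t^2\varphi_t\le0$, and then bound $\partial_t\varphi_t$ by its value at the initial time. Your explicit bound at the initial time on the compact orbifold, and your positivity argument for the denominator, only fill in details the paper leaves implicit. The positivity of the denominator in fact needs no hypothesis, since $1+n\tau-\scal\tau=\tau|\nabla^{1,0}f|^2>0$ on regular level sets.
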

\begin{proof}
By the computation above,
\[
    \partial_t^2\varphi_t
    =
    -\tau^{-3}|\nabla^{1,0} f|^{-2}+\tau^{-2}-n\tau^{-1}.
\]
Using \eqref{eq:gradient-f-level}, we have
\[
    |\nabla^{1,0} f|^{-2}=\frac{\tau}{1+n\tau-\tau\scal},
\]
and hence
\[
    \begin{aligned}
        \partial_t^2\varphi_t
    &=
    \tau^{-2}-n\tau^{-1}
    -\tau^{-2}(1+n\tau-\tau\scal)^{-1}\\
    &=
    \frac{\scal(n\tau-1)-n^2\tau}
    {\tau(1+n\tau-\tau\scal)}
    \le 0,
    \end{aligned}
\]
provided that $n\tau\le1$, since the scalar curvature is nonnegative on a Ricci shrinker \cite{Chen2009Strong}.
Hence, we obtain a uniform upper bound for $\partial_t\varphi_t$.
\end{proof}

\begin{proof}[Proof of Theorem~\ref{thm-shrinker-volume-noncollapsing}]
Using the scalar-curvature assumption, after decreasing $\tau_0$ if necessary, there exists $\ep_0>0$ such that \(1+n\tau-\tau\scal\ge \ep_0\). The measure identity~\eqref{eq:twisted-measure} and Lemma~\ref{lem:potential-bound} then give, for all $t\in[t_0,0)$,
\[
    \frac{\theta_t^{n-1}}{\Omega}
    \le C(\ep_0).
\]
Since $\Omega$ is smooth in the orbifold sense, we obtain the uniform $L^p$-integrability of the
volume form $\theta_t^{n-1}$ with respect to a fixed smooth volume form, for some $p>1$. Moreover, since the shrinker has maximal volume growth, Proposition~\ref{prop:maximal-volume-growth-characterization} implies that the global volume $\int_{X_{z_0}}\theta_t^{n-1}$ has a uniform lower bound.
Using \cite[Theorem~8.1 and Proposition~9.1]{GPSS2}, we obtain
a uniform volume noncollapsing estimate for the quotient metrics $\theta_t$. Namely, there exists
a constant $C_\ep>0$ such that, for all relevant quotient metrics
$\theta_t$, all $x\in X_{z_0}$, and all $\sigma\in (0,1)$,
\begin{equation}\label{eq:quotient-volume-noncollapsing}
    \left|B_{\theta_t}(x,\sigma)\right|_{\theta_t}
    \geq C_\ep \sigma^{2n-2+\ep}.
\end{equation}

We now derive the corresponding volume noncollapsing estimate for the shrinker
metric $\omega$. Let $x\in X$ and set $r=d(p,x)\ge1$.
For each
\[
    z\in \big((1-\sigma/2)f(x),(1+\sigma/2)f(x)\big),
\]
let $x_z$ be the point on the integral curve of $\nabla f$ through $x$ satisfying
\(
    f(x_z)=z.
\)
Then
\[
    d(x,x_z)\asymp \left|\sqrt{f(x)}-\sqrt{z}\right|.
\]
Since we only need to consider the case where $r$ is sufficiently large, the
asymptotics of the soliton potential \cite{CaoZhou} imply that, on the region
under consideration,
\[
    f(x)\asymp r^2, \qquad |\nabla f|\asymp r.
\]
In particular, after decreasing the numerical constants if necessary, we have
\[
    d(x,x_z)\leq \frac{\sigma r}{10}
\]
for all
\[
    z\in \big((1-\sigma/100)f(x),(1+\sigma/100)f(x)\big).
\]

Let $\hat g_z$ denote the restriction of $g$ to the level set $f^{-1}(z)$, and
let $B_{\hat g_z}(x_z,\rho)$ be the intrinsic ball in $f^{-1}(z)$ with respect to
$\hat g_z$. The triangle inequality then gives
\[
    B_{\hat g_z}\left(x_z,\frac{\sigma r}{100}\right)
    \subset B_g(x,\sigma r).
\]
Since $|\nabla f|\asymp r$ on this region, the coarea formula gives
\begin{equation}\label{eq-coarea1}
\begin{aligned}
    \left|B_g(x,\sigma r)\right|_g
    &\geq
    \int_{(1-\sigma/100)f(x)}^{(1+\sigma/100)f(x)}
    \int_{B_{\hat g_z}(x_z,\sigma r/100)}
    \frac{1}{|\nabla f|}\,\d V_{\hat g_z}\,\d z  \\
    &\geq
    \frac{1}{C r}
    \int_{(1-\sigma/100)f(x)}^{(1+\sigma/100)f(x)}
    \left|B_{\hat g_z}\left(x_z,\frac{\sigma r}{100}\right)\right|_{\hat g_z}\,\d z.
\end{aligned}
\end{equation}

We next estimate the level-set volume from below using the $S^1$-fibration
structure. Let
\[
    \Phi_z:f^{-1}(z)\to X_z:=f^{-1}(z)/S^1
\]
be the quotient map, and let $g_z$ be the quotient metric. Denote by
$\bar x_z=\Phi_z(x_z)$ the projection of $x_z$.
Since $\Phi_z$ is a Riemannian submersion away from a measure-zero singular set,
we can horizontally lift minimizing geodesics in the quotient. Thus, for every
\[
    \bar y\in B_{g_z}\left(\bar x_z,\frac{\sigma r}{200}\right),
\]
there is a point $y\in f^{-1}(z)$ above $\bar y$ such that
\[
    d_{\hat g_z}(x_z,y)\leq \frac{\sigma r}{200}.
\]
Moreover, since the $S^1$-orbits have speed
\(
    |J\nabla f|=|\nabla f|\asymp r,
\)
an orbit segment of length comparable to $\sigma r$ through such a point remains
inside
\[
    B_{\hat g_z}\left(x_z,\frac{\sigma r}{100}\right).
\]
Applying the coarea formula to the Riemannian submersion
$\Phi_z:f^{-1}(z)\to X_z$, we obtain
\begin{equation}\label{eq-coarea2}
\begin{aligned}
    \left|B_{\hat g_z}\left(x_z,\frac{\sigma r}{100}\right)\right|_{\hat g_z}
    &\geq
    c\sigma r\,
    \left|B_{g_z}\left(\bar x_z,\frac{\sigma r}{200}\right)\right|_{g_z}\\
    &\geq
    c\sigma r^{2n-1}
    \left|B_{g_z/z}\left(\bar x_z,\frac{\sigma}{200}\right)\right|_{g_z/z}.
\end{aligned}
\end{equation}
Therefore, the volume noncollapsing estimate for the shrinker metric follows from \eqref{eq-coarea1}, \eqref{eq-coarea2}, and \eqref{eq:quotient-volume-noncollapsing}.
\end{proof}

\subsection{Uniqueness of the tangent space at infinity}

As in the preceding section, we consider the associated K\"ahler--Ricci flow $(X,(g_t)_{t\in(-\infty,0)})$ for a shrinker satisfying the three conditions in Section~\ref{sec:volume-noncollapsing-assumptions}. We first prove the following volume estimate.

\begin{lemma}\label{lem:noncollapsed1}
For every \(t\in[-1,0)\), \(\ep>0\), and \(\sigma\in(0,1)\), there exists a constant \(c_\ep>0\), depending on \((X,g,f)\) and \(\ep\), such that
\[
    \left|B_{g_t}(x,\sigma)\right|_{g_t}\ge c_\ep \ell^{-\ep}\sigma^{2n+\ep}>0
\]
for every \(x\in X\) with
\[
    d_{g_t}(p,x)=\ell\ge 2\sigma.
\]
\end{lemma}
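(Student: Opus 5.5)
The plan is to reduce the statement to Theorem~\ref{thm-shrinker-volume-noncollapsing} by self-similarity. Since $g_t=|t|(\psi^t)^*g$ and $\psi^t$ is a diffeomorphism, the map $\psi^t$ is an isometry from $(X,g_t)$ onto $(X,|t|g)$. It therefore sends $B_{g_t}(x,\sigma)$ to $B_g(y,\sigma|t|^{-1/2})$, where $y=\psi^t(x)$, and
\[
|B_{g_t}(x,\sigma)|_{g_t}=|t|^{n}\,|B_g(y,\sigma|t|^{-1/2})|_g .
\]
The next step is to locate $y$ relative to $p$. Since $p$ is a minimum of $f$, the gradient flow fixes it, so $\psi^t(p)=p$. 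Hence $d_g(p,y)=|t|^{-1/2}d_{g_t}(p,x)=\ell|t|^{-1/2}=:R$. Writing $\sigma|t|^{-1/2}=(\sigma/\ell)R$, we have $\sigma':=\sigma/\ell\in(0,1/2]$. This is exactly the configuration of Theorem~\ref{thm-shrinker-volume-noncollapsing}, which gives, when $R\ge1$,
\[
|B_g(y,\sigma' R)|_g\ge C_\ep \sigma'^{2n+\ep}R^{2n}.
\]
Multiplying by $|t|^n$ yields $C_\ep(\sigma/\ell)^{2n+\ep}\ell^{2n}=C_\ep\ell^{-\ep}\sigma^{2n+\ep}$, which is the claimed bound.

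The remaining case is $R<1$, that is, $y$ lies in the fixed compact region $B_g(p,1)$. Here the ball $B_g(y,\sigma'R)$ has radius $s=\sigma|t|^{-1/2}\le R/2<1/2$. By $\kappa$-noncollapsing on compact sets (\cite[Theorem 22]{liwang2020heat1}), or simply by bounded geometry on $B_g(p,2)$, we get $|B_g(y,s)|_g\ge c s^{2n}$. Then
\[
|t|^n c s^{2n}=c\sigma^{2n}\ge c\,\ell^{-\ep}\sigma^{2n+\ep}\cdot \ell^{\ep}\sigma^{-\ep}.
\]
Since $\sigma\le \ell/2$, the factor $(\ell/\sigma)^\ep\ge 1$, so the bound holds in this case as well.

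The main point to check is that Theorem~\ref{thm-shrinker-volume-noncollapsing} is really uniform in $\sigma\in(0,1)$ with constants independent of the base point $y$, and that $\psi^t(p)=p$. The latter holds because $\nabla f(p)=0$. If $p$ were not a critical point, the distance estimate \eqref{eq:distanceestimate} would still give $d_g(p,\psi^t p)\le C$, which only perturbs $R$ by a bounded amount and is absorbed into the constants.
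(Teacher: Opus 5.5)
Your argument is correct and matches the paper's proof. Like the paper, you pull back by the isometry $\psi^t\colon(X,g_t)\to(X,|t|g)$, which fixes the minimum point $p$, and apply Theorem~\ref{thm-shrinker-volume-noncollapsing} with $\sigma/\ell\le 1/2$ in place of $\sigma$ and $r=\ell|t|^{-1/2}$. Your separate treatment of the case $\ell|t|^{-1/2}<1$ fills in a detail the paper leaves implicit. You should drop the closing fallback for a non-critical $p$: it is unnecessary, and it is inaccurate, since $d_g(p,\psi^t p)$ would then typically grow like $|t|^{-1/2}$ rather than stay bounded.
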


\begin{proof}
By the self-similarity of \((X,g_t)\), we have
\[
    \left|B_{g_t}(x,\sigma)\right|_{g_t}
    =
    |t|^n\left|B_{g_{-1}}(x',\sigma |t|^{-1/2})\right|_{g_{-1}},
\]
where \(x'=\psi^t(x)\). Since
\[
    d_{g_{-1}}(p,x')
    =
    |t|^{-1/2}d_{g_t}(p,x)
    =
    \ell |t|^{-1/2},
\]
the conclusion follows from Theorem~\ref{thm-shrinker-volume-noncollapsing}.
\end{proof}

We next prove that sequential pointed Gromov--Hausdorff limits of \((X,d_{g_t},p)\) exist as \(t\nearrow 0\).

\begin{lemma}\label{lem:sequence}
For every sequence \(t_j\nearrow 0\), after passing to a subsequence, we have
\[
   (X,d_{g_{t_j}},p)
   \xrightarrow[t_j\nearrow 0]{\pGHconvtext}
   (X_{\GH},d_{\GH},p_{\GH}),
\]
and $(X_{\GH},d_{\GH})$ is a locally compact metric space.
\end{lemma}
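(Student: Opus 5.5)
The plan is to apply Gromov's precompactness criterion for pointed proper length spaces. It suffices to show that for every $R>0$ and $\delta\in(0,R)$ there is $N(R,\delta)$, independent of $j$, such that every $\delta$-separated subset of $B_{g_{t_j}}(p,R)$ has at most $N(R,\delta)$ points. Each $(X,d_{g_{t_j}})$ is a complete Riemannian manifold, hence a proper length space. Gromov's theorem then produces, after passing to a subsequence, a pointed limit $(X_{\GH},d_{\GH},p_{\GH})$. This limit is a complete length space in which every closed ball is totally bounded, hence compact. In particular, $(X_{\GH},d_{\GH})$ is locally compact.

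For the uniform packing bound, fix $R$ and $\delta$, and let $\{x_1,\dots,x_N\}\subset B_{g_{t_j}}(p,R)$ be $\delta$-separated. Then the balls $B_{g_{t_j}}(x_i,\delta/2)$ are pairwise disjoint and all lie in $B_{g_{t_j}}(p,R+1)$. The upper bound \eqref{eq:volume-upper-bound} gives
\[
\bigl|B_{g_{t_j}}(p,R+1)\bigr|_{g_{t_j}}\le C(n)(R+1)^{2n}.
\]
For the lower bound on each small ball, I will use Lemma~\ref{lem:noncollapsed1} with $\sigma=\delta/8$. For every $x_i$ with $\ell_i:=d_{g_{t_j}}(p,x_i)\ge 2\sigma$, this lemma gives
\[
\bigl|B_{g_{t_j}}(x_i,\delta/2)\bigr|_{g_{t_j}}\ge\bigl|B_{g_{t_j}}(x_i,\sigma)\bigr|_{g_{t_j}}\ge c_\ep \ell_i^{-\ep}\sigma^{2n+\ep}\ge c_\ep R^{-\ep}(\delta/8)^{2n+\ep}.
\]
Here I use that $\ell_i<R$ and that we may take $R\ge1$.

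The points near the base point need separate treatment, and I expect this to be the only real obstacle. Points with $d_{g_{t_j}}(p,x_i)<2\sigma=\delta/4$ are pairwise less than $\delta/2$ apart, so at most one such $x_i$ can occur, since the set is $\delta$-separated. That single point contributes at most one to the count and needs no volume estimate. Combining the two volume bounds, I obtain
\[
N\le 1+C(n)(R+1)^{2n}\,c_\ep^{-1}R^{\ep}(8/\delta)^{2n+\ep},
\]
which is independent of $j$. It remains to check that the constant in Lemma~\ref{lem:noncollapsed1} is uniform in $t$. This follows from its self-similar proof, which reduces to Theorem~\ref{thm-shrinker-volume-noncollapsing} with shrinker-dependent constants only. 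The restriction $\sigma<1$ in that lemma is harmless because $\delta$ may be assumed small.
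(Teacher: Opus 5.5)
Your proposal is correct and follows essentially the same route as the paper. Both arguments obtain a uniform packing bound in $B_{g_{t_j}}(p,R)$ from the volume upper bound \eqref{eq:volume-upper-bound} and Lemma~\ref{lem:noncollapsed1}, set aside the at most one center near $p$, and then use Gromov precompactness and the compactness of closed balls in the limit; the only differences are cosmetic, since you work with $\delta$-separated sets and radius $\delta/8$ where the paper uses disjoint $\sigma$-balls shrunk to radius $\sigma/2$.
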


\begin{proof}
It suffices to show that, for every \(L>1\) and \(\sigma\in(0,1)\), the maximal number of mutually disjoint balls of radius \(\sigma\), with centers contained in \(B_{g_{t_j}}(p,L)\), is bounded by a constant independent of \(j\).

Fix \(j\), and suppose that
\[
    \{B_{g_{t_j}}(y_i,\sigma)\}_{1\le i\le N}
\]
are mutually disjoint balls with \(y_i\in B_{g_{t_j}}(p,L)\). At most one of the points \(y_i\) can lie in \(B_{g_{t_j}}(p,\sigma)\); denote this point, if it exists, by \(y_N\). For the remaining points, we consider the mutually disjoint balls
\[
    \{B_{g_{t_j}}(y_i,\sigma/2)\}_{1\le i\le N-1}.
\]
By \eqref{eq:volume-upper-bound}, Lemma~\ref{lem:noncollapsed1}, and a standard ball-packing argument, \(N-1\) is bounded by a constant independent of \(j\).

Moreover, this ball-packing estimate
gives uniform total boundedness on every bounded ball for the family
\((X,d_{g_t},p)\), \(t\nearrow0\). Therefore, every closed bounded ball in the
pointed Gromov--Hausdorff limit obtained is
compact.
\end{proof}

To use the spacetime completion for a Ricci shrinker discussed in Section~\ref{sec:generalshrinker}, we now assume in addition that the shrinker has bounded scalar curvature. Thus, we consider a K\"ahler--Ricci shrinker $(X, g, f)$ satisfying
\begin{enumerate}
    \item $\operatorname{AVR}(X,g)>0$;
    \item the scalar curvature is bounded;
    \item $J\nabla f$ generates an $S^1$-action.
\end{enumerate}
We set
\[
    (Z,d_Z,\mathfrak t, \bar p)
\]
to be the metric completion of \(X\times(-\infty,0)\) with respect to \(d^*\) defined in \eqref{eq:spacetime-distance-shrinker}. As in Proposition~\ref{prop:embh}, we obtain an isometric embedding $h$ from $(Z_0,d^Z_0)$ into $(X_{\GH},d_{\GH})$. Because $(Z_0,d^Z_0)$ is complete, its image $h(Z_0)$ is closed in $X_{\GH}$; in particular, $(Z_0,d^Z_0)$ is locally compact. Proposition~\ref{prop:local-compactness-pgh} therefore yields the uniqueness of the pointed Gromov--Hausdorff limit in the following theorem. By self-similarity, this also implies Theorem~\ref{thm:tangentspace}. Moreover, Lemma~\ref{lem:disa} gives a continuous map from $X$ to $(Z_0,d^Z_0)$.

\begin{theorem}\label{thm:tangent1}
The pointed Gromov--Hausdorff limit of
\(
    (X,d_{g_t},p)
\)
as \(t\nearrow 0\) exists and is isometric to \((Z_0,d^Z_0,\bar p)\), which is a locally compact metric space. Moreover, we have a continuous proper map
\begin{equation}
    \Phi:X\to(Z_0,d^Z_0).
\end{equation}
\end{theorem}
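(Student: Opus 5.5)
The plan is to combine sequential compactness (Lemma~\ref{lem:sequence}) with the isometric embedding of the canonical slice, and then invoke Proposition~\ref{prop:local-compactness-pgh}.

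First, fix any sequence $t_j\nearrow0$. By Lemma~\ref{lem:sequence}, after passing to a subsequence, $(X,d_{g_{t_j}},p)$ converges in the pointed Gromov--Hausdorff sense to a locally compact space $(X_{\GH},d_{\GH},p_{\GH})$.

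Second, repeat the construction of Proposition~\ref{prop:embh}, now in the pointed noncompact setting. The steps are as follows.
\begin{enumerate}
\item Choose a countable $d^Z_0$-dense subset $\{z_i\}\subset Z_0$. Separability of $(Z_0,d^Z_0)$ follows from the ball-packing bound: for each $R$, a $d^Z_0$-separated subset of $B_{d^Z_0}(\bar p,R)$ yields, via $H$-centers and the definition \eqref{eq:time-slice-distance}, a separated subset of a bounded ball of $X_{\GH}$, so it is finite.
\item For each $z_i$, pick $H_{2n}$-centers $x^*_{i,j}$ at time $t_j$. These stay in bounded $g_{t_j}$-balls around $p$, since $d_{g_{t_j}}(p,x_{i,j})\le d^Z_0(\bar p,z_i)+C|t_j|^{1/2}$ by the $H$-center property of $(p,t)$ for $\bar p$ and the monotonicity of $W_1$.
\item Extract a diagonal subsequence along which these centers converge, and define $h(z_i)$ as the limit. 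Then $h(\bar p)=p_{\GH}$, and $h$ is distance preserving on $\{z_i\}$ by \eqref{eq:time-slice-distance}.
\item Extend $h$ by density to an isometric embedding of $Z_0$.
\end{enumerate}

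Third, deduce local compactness of $Z_0$. Since $(Z_0,d^Z_0)$ is complete by Lemma~\ref{lem:continuityF}, the image $h(Z_0)$ is closed in the locally compact space $X_{\GH}$. Hence $Z_0$ is locally compact, and in fact closed $d^Z_0$-balls are compact.

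Fourth, conclude. Proposition~\ref{prop:local-compactness-pgh} now applies and gives convergence of the whole family $(X,d_{g_t},p)\to(Z_0,d^Z_0,\bar p)$ as $t\nearrow0$, without passing to subsequences. In particular every sequential limit $X_{\GH}$ is isometric to $Z_0$, so $h$ is a posteriori surjective; no separate surjectivity argument in the style of Proposition~\ref{prop:surj} is needed. The map $\Phi:X\to(Z_0,d^Z_0)$ is continuous and proper by Lemma~\ref{lem:disa} together with Lemma~\ref{lem:dis}(3),(4), because $d_Z$ and $d^Z_0$ induce the same topology once local compactness holds.

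The main obstacle is the second step, carrying the embedding argument over to the noncompact setting. The $H$-center points must stay in a fixed bounded region uniformly in $j$; otherwise they could escape and the pointed limits would not exist. This is exactly where the radial estimate \eqref{eq:distanceestimate}, the potential $F$, and the $H$-center property of $(p,t)$ for $\bar p$ are used. It also requires the heat-kernel machinery of \cite{liwang2024heat} so that the monotonicity and $H$-center lemmas of \cite{FL1} hold for the noncompact flow.
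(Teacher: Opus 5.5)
Your proposal is correct and takes essentially the paper's route. It uses sequential compactness from Lemma~\ref{lem:sequence}, the isometric embedding of $(Z_0,d^Z_0)$ into each sequential limit as in Proposition~\ref{prop:embh}, and closedness of the image (by completeness) to get local compactness, then concludes with Proposition~\ref{prop:local-compactness-pgh} and Lemma~\ref{lem:disa}. Your added check that the $H$-centers stay in bounded $g_{t_j}$-balls, via the $H$-center property of $(p,t)$ for $\bar p$ and $W_1$-monotonicity, is exactly the detail the paper's ``as in Proposition~\ref{prop:embh}'' leaves implicit in the noncompact setting.
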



We also have the following analogue of Theorem~\ref{thm:exclude}.
\begin{proposition}\label{prop:excludecylinder}
For $(Z,d_Z,\t)$, no tangent flow is isometric to $\overline{\mathcal C}^{2n-2}$.
\end{proposition}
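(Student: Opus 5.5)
We follow the proof of Theorem~\ref{thm:exclude}, replacing the compact cohomological argument with the self-similar structure and the Fano fibration. Suppose a tangent flow at some $z_0\in Z$ is isometric to $\overline{\mathcal C}^{2n-2}$. Since the singular set lies in $Z_0$, we have $z_0\in Z_0$. In addition $z_0\neq\bar p$: by self-similarity, every tangent flow at $\bar p$ is a shrinking soliton whose base point has the full conical (dilation-invariant) structure. More precisely, the normalized potential forces the tangent flow at $\bar p$ to be the flow generated by $(M,g,f)$ itself, which is not a cylinder because $\operatorname{AVR}>0$.

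Next we extract a holomorphic structure on the model. By the smooth convergence of \cite[Theorem~1.5]{FL1}, which holds in this shrinker setting by \cite{liwang2024heat}, the complex structure $J$ converges to a parallel complex structure $J_\infty$ on $S^2\times\R^{2n-2}$. By \cite[Lemma~11.7]{FL1}, $J_\infty$ preserves the splitting, so the model is biholomorphic to $\mathbb P^1\times\C^{n-1}$. We choose $H_{2n}$-centers $q_{-\tau_0}$ of $z_0$ for small $\tau_0$. Lemma~\ref{lem--rigidity} then gives an open set $U\subset X$ (at time $-\tau_0$) biholomorphic to $\mathbb P^1\times B^{n-1}_1$, fibered by holomorphic rational curves $\mathbb P^1(w)$, all homologous. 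Each such curve has area of order $\tau_0$ with respect to $\omega_{-\tau_0}$.

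The step replacing the null-locus argument is to show that these curves lie in the exceptional set $E$. For the holomorphic curve $P=\mathbb P^1(0)$, self-similarity gives the exact relation
\[
\operatorname{Area}_{\omega_t}(P)=|t|\operatorname{Area}_\omega(\psi^t(P))=|t|\operatorname{Area}_{\omega}(P),
\]
because $\psi^t$ is a holomorphic map isotopic to the identity and $[\omega]$ pairs with $[P]$ invariantly. This is the same computation as in Lemma~\ref{lem:fano-fiber-diameter-collapse}. It shows that the $|t|^{-1}\omega_t$-area of every compact curve is constant, so the analogue of \eqref{eq:fiber-volume-bound} is automatic and we cannot extract a contradiction from an area bound. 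We therefore use $\pi$ instead. Since $P$ is a compact curve and $Y$ is affine, $\pi$ is constant on $P$; the same holds for each $\mathbb P^1(w)$. Consequently $U\subset E$ has nonempty interior, and since $E$ is a proper analytic subset when $\pi$ is birational, this is a contradiction. Birationality holds because $\operatorname{AVR}>0$, by Proposition~\ref{prop:maximal-volume-growth-characterization}.

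The main obstacle is therefore only the construction of $U$, namely making Lemma~\ref{lem--rigidity} applicable. This requires $C^3$-closeness of $(g_{-\tau_0},J)$ to the model on a region of scale $\sqrt{\tau_0}$ around $q_{-\tau_0}$, uniformly in the noncompact setting. We obtain it from \cite{liwang2024heat} together with the bounded scalar curvature assumption, which is needed to identify $F$-convergence with smooth convergence on the regular part. Once $U$ exists, the contradiction is purely complex-analytic and uses neither the strong uniqueness theorem nor any cohomological limit.
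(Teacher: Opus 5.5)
Your argument is correct and matches the paper's proof. You construct $U\cong\mathbb P^1\times B^{n-1}$ via Lemma~\ref{lem--rigidity}, observe that $\pi$ is constant on each compact curve $\mathbb P^1(a)$ because $Y$ is affine, and conclude that $U\subset E$, which contradicts $E$ being a proper analytic subset; as you correctly note, that properness comes from $\operatorname{AVR}>0$. Your preliminary step showing $z_0\neq\bar p$, and your explanation of why the area argument of Theorem~\ref{thm:exclude} gives nothing here, are accurate but not needed.
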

\begin{proof}
Suppose, to the contrary, that a tangent flow of \((Z,d_Z,\t)\) at some point
\(z_\ast\in Z\) is isometric to \(\overline{\mathcal C}^{2n-2}\). Since the
flow is smooth on \(X\times (-\infty,0)\), we must have \(z_\ast\in Z_0\).

As in the proof of Theorem~\ref{thm:exclude}, the smooth convergence to the
tangent flow gives a limiting parallel complex structure on
\(S^2\times \mathbb R^{2n-2}\). This complex structure preserves the splitting,
and hence the cylindrical model is biholomorphic to
\(
	\mathbb P^1\times \mathbb C^{n-1}.
\)
Therefore, by Lemma~\ref{lem--rigidity}, after choosing a sufficiently small
scale in the tangent-flow convergence and pulling back to the original flow,
there exists an open set \(U\subset X\) and a biholomorphism
\[
	U\simeq \mathbb P^1\times B^{n-1}.
\]
Here we use that the associated K\"ahler--Ricci flow has the fixed complex
structure \(J\). For \(a\in B^{n-1}\), denote the corresponding holomorphic
sphere by
\[
	\mathbb P^1(a):=\mathbb P^1\times\{a\}\subset U.
\]
Let
\[
	\pi:X\to Y
\]
be the Fano fibration, and let \(E\subset X\) be the exceptional set, so that
\[
	\pi|_{X\setminus E}:X\setminus E\longrightarrow Y\setminus \pi(E)
\]
is biholomorphic. Since
\(Y\) is affine, the restriction of $\pi$ to each $\mathbb P^1(a)$ is constant, and hence $U\subset E$. This is impossible because \(E\), being a proper analytic subset of \(X\), has empty interior. Thus, no tangent flow of \((Z,d_Z,\t)\) is isometric to \(\overline{\mathcal C}^{2n-2}\).
\end{proof}

As in Theorem~\ref{thm:exclude1}, it follows from Proposition~\ref{prop:excludecylinder} that $\MS=\MS^{2n-4}$. Combining this with the proof of Theorem~\ref{thm:codimension-four} and using Theorem~\ref{thm-shrinker-volume-noncollapsing}, we obtain the following result.
\begin{theorem}
The Hausdorff dimension of the singular set \(Z_0^{\sing}\) of \(Z_0\) satisfies
\begin{equation}\label{eq:dimension-shrinker-singular}
    \dim_{\mathscr H}(Z_0^{\sing})\le 2n-4.
\end{equation}
\end{theorem}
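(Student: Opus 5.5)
We follow the compact proof of Theorem~\ref{thm:codimension-four}, localized to bounded regions of the noncompact slice $Z_0$ and using the self-similar structure. Proposition~\ref{prop:excludecylinder} excludes $\overline{\mathcal C}^{2n-2}$ as a tangent flow. The argument of Theorem~\ref{thm:exclude1} then gives $\MS=\MS^{2n-4}$; the static-cone case is excluded as before because tangent flows at points of $Z_0$ have finite arrival time. Since Hausdorff dimension is countably stable and $Z_0^{\sing}=\bigcup_{R\in\mathbb N}Z_0^{\sing}\cap \overline B_{d^Z_0}(\bar p,R)$, it suffices to prove $\mathscr H^{2n-4+\sigma}_{d^Z_0}\bigl(\MS^{\ep}\cap \overline B_{d^Z_0}(\bar p,R)\bigr)=0$ for each fixed $R$ and each $\sigma>2\ep$.

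The first step is the analogue of Lemma~\ref{lem:volume}. We will show that $|B_{d^Z_0}(z,r)|_{d^Z_0}\ge c_{\ep,R}\,r^{2n+\ep}$ for all $z\in \overline B_{d^Z_0}(\bar p,R)$ and $r\in(0,1]$. Take $H_{2n}$-centers $p_j$ of $z$ at times $t_j\nearrow0$. Lemma~\ref{lem:noncollapsed1} gives a volume lower bound for $B_{g_{t_j}}(p_j,r/2)$; it is uniform because $d_{g_{t_j}}(p,p_j)$ is bounded by $R+C$. Points near $p$, where Lemma~\ref{lem:noncollapsed1} does not directly apply, are handled by the $\kappa$-noncollapsing at bounded distance. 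As in \eqref{eq:volratio3}, the balls $B_{g_{t_j}}(p_j,r/2)$ lie in a $d_Z$-bounded region around $\bar p$. There we may apply \cite[Theorem~1.12(b)]{FL1} after covering that bounded region by finitely many spacetime balls, which is legitimate since bounded $d_Z$-regions are compact by the completeness discussion in Section~\ref{sec:generalshrinker}. This removes the region $\{r_{\Rm}<\theta r\}$. On the good region the worldlines converge to points of $Z_0^{\reg}$, and the containment argument \eqref{eq:good-region-contained-in-limit-ball} goes through verbatim using Lemma~\ref{lem:dis}(1)--(2). Passing to the limit by smooth convergence gives the bound.

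The second step is the quantitative stratum estimate. For $z_0\in \overline B_{d^Z_0}(\bar p,R)$, \cite[Corollary~4.24(b)]{fang-li2} holds in the shrinker setting by the heat kernel theory of \cite{liwang2020heat1,liwang2024heat}. It gives \eqref{eq:quantitative-stratum-volume-local} on spacetime balls of fixed radius. The set $\overline B_{d^Z_0}(\bar p,R+1)$ is compact for $d^Z_0$ by Theorem~\ref{thm:tangent1} and Lemma~\ref{lem:disa}, hence compact for $d_Z$. So finitely many such balls cover it, and we obtain
\[
\bigl|\{y\in Z_0\cap \overline B_{d^Z_0}(\bar p,R+1): d^Z_0(y,\MS^{\ep}\cap \overline B_{d^Z_0}(\bar p,R))<\delta\}\bigr|_{d^Z_0}\le C_{\ep,R}\delta^{4-\ep},
\]
using $d_Z\le d^Z_0$ from \cite[Lemma~6.4]{FL1}. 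We then take a maximal $\delta$-separated set in $\MS^{\ep}\cap \overline B_{d^Z_0}(\bar p,R)$ and compare the disjoint half-balls with the volume lower bound. This gives $N_\delta\le C\delta^{-(2n-4+2\ep)}$, and therefore the Hausdorff content estimate $\mathscr H^{2n-4+\sigma}_{2\delta}\le C\delta^{\sigma-2\ep}\to0$. Finally, \eqref{eq:qualitative-quantitative-union} and countable subadditivity complete the argument as in the compact case.

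The main obstacle is the uniformity in the first step. Lemma~\ref{lem:noncollapsed1} was derived from Theorem~\ref{thm-shrinker-volume-noncollapsing} under $\limsup \scal/f<1$, which follows from bounded scalar curvature. The lower bound it gives carries a factor $\ell^{-\ep}$ and requires $\ell\ge2\sigma$. For the centers near the vertex we would use the bounded geometry of $g_t$ near $p$ at scales below $\sqrt{|t|}$ together with self-similarity. Alternatively, and more simply, since $\bar p$ is a single point we may excise a small ball $B_{d^Z_0}(\bar p,\eta)$ and use the homotheties $\boldsymbol{\psi}^s$, which preserve $Z_0^{\sing}$ and scale $d^Z_0$. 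Together these show that it suffices to estimate the singular set in an annulus $\{1\le d^Z_0(\cdot,\bar p)\le 2\}$, where all constants are uniform.
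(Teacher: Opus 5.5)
Your proposal is correct and is essentially the paper's argument: exclude $\overline{\mathcal C}^{2n-2}$ by Proposition~\ref{prop:excludecylinder} to get $\MS=\MS^{2n-4}$, then rerun the quantitative-stratification and ball-packing proof of Theorem~\ref{thm:codimension-four}, localized to compact $d^Z_0$-balls, with the volume lower bound now supplied by Theorem~\ref{thm-shrinker-volume-noncollapsing} via Lemma~\ref{lem:noncollapsed1}. The paper leaves the localization and the vertex implicit, and your homothety reduction to the annulus $\{1\le d^Z_0(\cdot,\bar p)\le 2\}$ handles the vertex cleanly; you should drop the appeal to $\kappa$-noncollapsing near $p$, since it only controls $g_t$-scales of order $\sqrt{|t|}$ and so gives nothing at a fixed scale as $t\nearrow 0$.
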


By Theorem~\ref{thm:tangent1}, $(Z_0,d^Z_0)$ is locally compact, so Lemma~\ref{lem:disa} applies. Combining Theorem~\ref{thm:regular-part-identification} and Theorem~\ref{thm:bounded-curvature-homeomorphism}, we obtain the following, which verifies Conjecture~\ref{conjecture1} under extra assumptions.

\begin{corollary}
    Let $X$ be a K\"ahler--Ricci shrinker with maximal volume growth and bounded Riemannian curvature such that $J\nabla f$ generates an $S^1$-action. Then $\Phi_0:(Z_0,d^Z_0) \to Y$ is a homeomorphism and induces a biholomorphism from $Z_0^{\reg}$ to $\pi(X^\circ)$.
\end{corollary}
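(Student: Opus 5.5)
The corollary assembles results already proved, once we check their hypotheses. Bounded Riemannian curvature gives bounded scalar curvature and also subquadratic curvature growth: \(f\) grows like \(d_g(p,\cdot)^2/4\) by \eqref{eq:distanceestimate}, so \(f^{-1}|\Rm|\to0\) at infinity. Together with maximal volume growth and the \(S^1\)-action assumption, the three hypotheses preceding Theorem~\ref{thm:tangent1} hold. That theorem shows \((Z_0,d^Z_0)\) is locally compact. Lemma~\ref{lem:disa} then shows the topologies of \(d_Z\) and \(d^Z_0\) on \(Z_0\) coincide.

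For the homeomorphism, Theorem~\ref{thm:bounded-curvature-homeomorphism} applies under subquadratic curvature growth. It shows \(\Phi_0:Z_0\to Y\) is a homeomorphism when \(Z_0\) carries the \(d_Z\)-topology. By the coincidence of topologies above, the same holds for \((Z_0,d^Z_0)\). This settles the first claim.

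For the biholomorphism statement, Theorem~\ref{thm:regular-part-identification} says \(\Phi|_{X^\circ}:X^\circ\to Z_0^{\reg}\) is a biholomorphism, and \(\Phi(E)=Z_0^{\sing}\). Since \(\pi=\Phi_0\circ\Phi\), we get \(\Phi_0|_{Z_0^{\reg}}=\pi|_{X^\circ}\circ(\Phi|_{X^\circ})^{-1}\). This is a composition of biholomorphisms, because \(\pi|_{X^\circ}\) is biholomorphic onto the open set \(\pi(X^\circ)\) by the definition of \(X^\circ\). The image is therefore \(\pi(X^\circ)\).

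The only point needing care is that \(\Phi_0(Z_0^{\reg})\) is exactly \(\pi(X^\circ)\), i.e.\ that \(\pi(E)\) and \(\pi(X^\circ)\) are disjoint. Injectivity of \(\Phi_0\) and the decomposition \(Z_0=\Phi(X^\circ)\sqcup\Phi(E)\) give this directly, so I expect no real obstacle; the main thing to keep straight is which topology each result uses.
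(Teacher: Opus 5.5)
Your proposal is correct and follows the paper's argument: local compactness of $(Z_0,d^Z_0)$ from Theorem~\ref{thm:tangent1}, coincidence of the $d_Z$- and $d^Z_0$-topologies from Lemma~\ref{lem:disa}, and then Theorems~\ref{thm:bounded-curvature-homeomorphism} and~\ref{thm:regular-part-identification}, with bounded curvature supplying both bounded scalar curvature and subquadratic growth. Your closing disjointness remark is harmless but not needed, since $\Phi_0(Z_0^{\reg})=\Phi_0(\Phi(X^\circ))=\pi(X^\circ)$ holds directly.
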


\section{End Structures and Four-Dimensional Ricci Shrinkers}\label{sec--8}
We use the same notation and conventions as in Section~\ref{sec:generalshrinker}. Here $n$ stands for the real dimension of a Ricci shrinker. Throughout this section, we assume that the scalar curvature is bounded above by $C_0$ and that the entropy is bounded below by $-Y$.

\subsection{Splitting of tangent flows and decomposition by ends}
We prove the following splitting result. The definition of $k$-splitting is given in \cite[Definition 8.1]{FL1}.

\begin{proposition}\label{prop:1splitting}
    For every $z\in Z_0\setminus \{\bar p\}$, every tangent flow at $z$ is $1$-splitting. In particular, when $n=4$, the tangent flow at $z$ is unique.
\end{proposition}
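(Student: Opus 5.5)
Fix $z\in Z_0\setminus\{\bar p\}$, a scale sequence $r_i\searrow0$, and a tangent flow $(Z',d_{Z'},z',\t')$ obtained by parabolically rescaling $(Z,d_Z,\t)$ at $z$ by $r_i^{-1}$. The plan is to produce, from the rescaled potentials, a function on the regular part of $Z'$ with unit parallel gradient, and then read off the splitting from \cite[Section~8]{FL1}. Each rescaled flow is again a (limit of) smooth flows on $M\times(-\infty,0)$ with bounded scalar curvature. I would set
\[
    F_i(x,t):=r_i^{-1}\Bigl(F(x,r_i^2t)-F(z)\Bigr),\qquad \text{with } F(z)=\tfrac14 d^Z_0(z,\bar p)^2>0 .
\]
From $|t|\Ric(g_t)+\nabla^2F=\tfrac12 g_t$ and $|\nabla F|^2_{g_t}=F-|t|^2\scal_{g_t}$, the rescaled quantities satisfy $\nabla^2F_i=\tfrac12 r_i g^{(i)}-r_i^2|t|\Ric^{(i)}$ and $\partial_tF_i=-r_i^3|t|\scal^{(i)}$. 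Moreover $|\nabla F_i|^2=r_i^{-2}\cdot r_i^2\bigl(F-O(r_i^2)\bigr)\to F(z)$ in the rescaled metric, uniformly on bounded regions.

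First I need $C^0$ control. By Lemma~\ref{lem:continuityF}, $F$ is continuous on $Z$, and the bound $|\partial_tF|\le C_0$ controls its time oscillation. For spatial oscillation, \eqref{eq:distanceestimate} gives $|\nabla\sqrt F|\le\tfrac12$, so $F_i$ is uniformly Lipschitz on rescaled bounded balls. Its value at the $H_n$-centers of $z$ stays bounded, since $\sqrt{F}$ is $d^*$-Lipschitz up to $O(\sqrt{|t|})$ errors, again by \eqref{eq:distanceestimate} and the $H$-center comparisons.

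Next I pass to the limit. On the regular part $\RR'$ of $Z'$, the convergence is smooth by \cite[Theorem~1.5]{FL1}. Local derivative estimates from the equation $\nabla^2F_i=O(r_i)$ then give $F_i\to F'$ in $C^\infty_{\loc}(\RR')$. The limit satisfies $\nabla^2F'=0$, $\partial_{\t'}F'=0$ and $|\nabla F'|^2\equiv F(z)>0$. Hence $\nabla F'/\sqrt{F(z)}$ is a unit parallel time-independent vector field on $\RR'$, and $F'$ is a spacetime-Lipschitz function that is affine along geodesics.

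From here I invoke the splitting criterion of \cite{FL1}: a tangent flow whose regular part carries a smooth function with $\nabla^2u=0$, $|\nabla u|=1$ and $\partial_tu=0$, extending Lipschitz-continuously to $Z'$, is $1$-splitting. This is the analogue of the Cheeger--Colding splitting in the $\mathbb F$-limit setting, phrased in \cite[Section~8]{FL1} via almost-splitting maps, and the $F_i$ themselves serve as $\ep_i$-splitting maps with $\ep_i\to0$. The main obstacle I expect is verifying the integral hypotheses of that criterion, namely smallness of $\int|\nabla^2F_i|^2\,d\nu$ and of $\int\bigl||\nabla F_i|^2-F(z)\bigr|\,d\nu$ against the conjugate heat kernel measures on the rescaled flows. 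Pointwise these are $O(r_i)$ only where the curvature is controlled. I would handle the singular region using the bounds $|\Ric|\le|\Rm|$, $\scal\le C_0/|t|$ and the weighted integral $r_i^2\int|t|^2|\Ric|^2\,d\nu$, which can be controlled by integrating by parts $\int|\Ric|^2\,d\nu$ against the heat kernel with the curvature-scale $L^p$ bounds of \cite[Theorem~1.12]{FL1}.

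For $n=4$, the $1$-splitting tangent flow is $\mathbb R\times N^3$, where $N^3$ is a noncollapsed $3$-dimensional (orbifold) shrinking soliton limit. By the classification of such solitons, the possibilities are $\mathbb R^3/\Gamma$, $S^3/\Gamma$, $S^2\times\mathbb R$, or its $\mathbb Z_2$ quotient. Uniqueness then follows from the rigidity and uniqueness of cylindrical tangent flows \cite{fang-li-unique,liwang2024rigidity}, combined with the facts that the pointed Nash entropy limit $\mathcal N_z(0)$ is independent of the sequence and that these models have distinct entropies. The one exception is the pair of flat cones, which are distinguished by $|\Gamma|$ read off from the volume density.
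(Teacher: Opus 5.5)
Your proof of the $1$-splitting is the paper's argument. The paper rescales the potential as $f_i=s_i^{-1/2}r_i^{-1}\bigl(F(\cdot,r_i^2t)-s_i\bigr)$ with $s_i=F(z_i,-r_i^2)$ evaluated at an $H_n$-center, so the base value is $0$ and $|\nabla f_i|^2\to1$ automatically. It then passes \eqref{eq:iden01}--\eqref{eq:iden03} to the limit on $\RR'$. Subtracting $F(z)$ instead is fine, given the bound $|\sqrt{s_i}-\sqrt{F(z)}|\le Cr_i$ that you sketch. Measured in $g^{(i)}$, the coefficients are $\nabla^2F_i=\tfrac12 r_ig^{(i)}-r_i|t|\Ric$ and $\partial_tF_i=-r_i|t|\scal_{g^{(i)}}$, not $r_i^2$ and $r_i^3$ (the Ricci tensor is scale invariant); this slip is harmless. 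The ``main obstacle'' you anticipate does not arise. The paper never checks integral almost-splitting hypotheses against $\nu$; it reads the splitting directly off the parallel unit field $\nabla f_\infty$ on $\RR'$, where convergence is smooth, so nothing has to be estimated near the singular set. If you do want the almost-splitting-map route, the natural input is $\frac{d}{dt}\int\scal\,d\nu_t=2\int|\Ric|^2\,d\nu_t$ together with $\scal\le C_0/|t|$. The curvature-radius bounds of \cite[Theorem~1.12]{FL1} are not needed, and by themselves they stop short of $L^2$.

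The real problem is the $n=4$ uniqueness step.

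\emph{The list is off.} By \cite[Theorem~1.10]{FL1} the singular set has codimension at least four. So a tangent flow splitting as $\R\times N^3$ has no singular points at all: each would come with a whole line of them, violating the codimension bound, exactly as the paper argues in Theorem~\ref{thm:canonical}. Thus $N^3$ is a smooth complete shrinker, $\R\times\R^3/\Gamma$ with $\Gamma\neq1$ cannot occur, and the flat case is just $\R^4$. Also, $S^2\times\R$ has two free $\mathbb Z_2$-quotients, so both $\RP^2\times\R^2$ and $(S^2\times_{\mathbb Z_2}\R)\times\R$ appear.

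\emph{The entropy step fails.} The Nash entropy of a free quotient only records the order of the group. $\RP^2\times\R^2$ and $(S^2\times_{\mathbb Z_2}\R)\times\R$ are both free $\mathbb Z_2$-quotients of $S^2\times\R^2$ by potential-preserving isometries, so they have the same entropy. Likewise $S^3/\Gamma\times\R$ and $S^3/\Gamma'\times\R$ with $|\Gamma|=|\Gamma'|$ (e.g.\ $\Gamma=\mathbb Z_8$, $\Gamma'=Q_8$) are not separated by $\mathcal N_z(0)$. So ``these models have distinct entropies'' is false, and the exception you single out (flat cones) is the wrong one.

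\emph{The missing idea is connectedness.} The rescalings $(Z,r^{-1}d_Z,z,r^{-2}\t)$ depend continuously on $r$ and form a precompact family, so the set of tangent flows at $z$ is connected. The models on the list are finitely many (by the entropy bound) and pairwise at positive distance. Hence all tangent flows at $z$ coincide. This is what makes the paper's terse ``hence unique'' rigorous. Alternatively, you would need a strong-uniqueness theorem that explicitly covers every quotient cylinder on the list, not just the round ones.
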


\begin{proof}
Let $(Z',d_{Z'},z',\t')$ be a tangent flow at $z$. Thus $(Z',d_{Z'},z',\t')$ arises as a pointed Gromov--Hausdorff limit of
\[
    \bigl(Z,r_i^{-1}d_Z,z,r_i^{-2}(\t-\t(z))\bigr)
\]
for some sequence $r_i\searrow0$. Moreover, the regular part of $Z'$ is given by a Ricci flow spacetime
\[
    (\RR',\t',\partial_{\t'},g^{Z'}_t),
\]
on which the convergence is smooth.

Choose an $H_n$-center of $z$:
\[
    (z_i,-r_i^2)\in M\times\{-r_i^2\}.
\]
Define
\begin{align*}
s_i&=F(z_i,-r_i^2),\\
g_i(t)&=r_i^{-2}g_{r_i^2t}, \\
f_i(\cdot,t)&=s_i^{-\frac12}r_i^{-1}
    \bigl(F(\cdot,r_i^2t)-s_i\bigr).
\end{align*}
The following identities are straightforward to verify:
\begin{align}
    \partial_t f_i
    &=
    s_i^{-\frac12} r_i t\, \scal_{g_i(t)},  \label{eq:iden01}\\
    \nabla^2_{g_i(t)} f_i(\cdot,t)
    &=
    \frac12 s_i^{-\frac12}r_i g_i(t)
    +
    s_i^{-\frac12}r_i t\,\Ric_{g_i(t)},\label{eq:iden02}\\
    |\nabla_{g_i(t)} f_i(\cdot,t)|_{g_i(t)}^2
    &=
    1+s_i^{-\frac12}r_i f_i(\cdot,t)
    -s_i^{-1}r_i^2t^2\scal_{g_i(t)}. \label{eq:iden03}
\end{align}
By \eqref{eq:distanceestimate} and the definition of $d^Z_0$, we have
\begin{align*}
    \lim_{i\to\infty}s_i
    =
    \frac14\bigl(d^Z_0(\bar p,z)\bigr)^2
    >0.
\end{align*}
Passing to the limit in \eqref{eq:iden01}--\eqref{eq:iden03}, and using the smooth convergence on the regular part, we conclude that $f_i$ converges smoothly on $\RR'$ to a function $f_\infty$ satisfying
\begin{align*}
    \partial_{\t'}f_\infty=0,
    \qquad
    \nabla^2_{g^{Z'}} f_\infty=0,
    \qquad
    |\nabla_{g^{Z'}} f_\infty|_{g^{Z'}}^2=1.
\end{align*}
Therefore, $\nabla_{g^{Z'}} f_\infty$ gives a parallel unit vector field on $\RR'$, and hence induces a splitting direction. Thus, every tangent flow at $z$ is $1$-splitting.

Now suppose $n=4$. By the classification of three-dimensional Ricci shrinkers \cite{Hamilton1995Formation,Naber2010,NiWallach2008}, the time $-1$ slice
\[
    \bigl(\RR'_{-1},g^{Z'}_{-1}\bigr)
\]
is isometric to one of the following standard models:
\begin{align*}
    \R^4,\quad
    S^3/\Gamma\times \R,\quad
    S^2\times \R^2,\quad
    \mathbb{RP}^2\times \R^2,\quad
    \bigl(S^2\times_{\mathbb Z_2}\R\bigr)\times \R.
\end{align*}
Hence, any such tangent flow is unique. This proves the proposition.
\end{proof}


Next, we consider a special case in which an end of a Ricci shrinker is asymptotic to a cylinder. We begin by recalling the relevant definition.

\begin{definition}[$\ep$-closeness]\label{def:close}
Let $(M_i,g_i,x_i)$, $i=1,2$, be two pointed Riemannian manifolds. We say that $(M_1,g_1,x_1)$ is \textbf{$\ep$-close} to $(M_2,g_2,x_2)$ if there exists a map
\[
    \varphi:B_{g_2}(x_2,\ep^{-1})\to M_1
\]
such that $\varphi(x_2)=x_1$ and
\begin{equation*}
    \bigl\|r^{-2}\varphi^*g_1-g_2\bigr\|_
    {C^{[\ep^{-1}]}\left(B_{g_2}(x_2,\ep^{-1})\right)}
    <\ep,
\end{equation*}
where
\[
    r^{-2}=\scal_{g_1}(x_1),
\]
and the $C^{[\ep^{-1}]}$-norm is taken with respect to $g_2$. In particular, we implicitly assume that $\scal_{g_1}(x_1)>0$. Moreover, we say that a vector field $V_1$ on $M_1$ is $\ep$-close to a vector field $V_2$ on $M_2$ if 
\begin{equation*}
    \bigl\|r(\varphi^{-1})_*(V_1)- V_2\bigr\|_
    {C^{[\ep^{-1}]}\left(B_{g_2}(x_2,\ep^{-1})\right)}
    <\ep.
\end{equation*}
\end{definition}

Choose \(L\ge 2C_0\), to be enlarged later if necessary. By \eqref{eq:normal1}, the level set \(\{f=s\}\) is regular for every \(s\ge L\). In particular, \(\{f=L\}\) has finitely many components
\[
    \Sigma_1,\ldots,\Sigma_k.
\]
Each component \(\Sigma_i\) bounds an end \(E_i\) of \(M\). For each end \(E_i\), we define the corresponding subset \(Z_0(E_i)\subset Z_0\) to consist of all points \(z\in Z_0\setminus\{\bar p\}\) which arise as limits, with respect to \(d^*\), of points \((x,t)\in E_i^*\), where
\begin{align}
    E_i^*
    :=
    \left\{(x,t)\in M\times[-1,0)\;:\; \psi^t(x)\in E_i\right\}.
\end{align}

\begin{lemma}\label{lem:unionend}
After increasing \(L\) if necessary, we have
\begin{align*}
    Z_0\setminus\{\bar p\}
    =
    \bigsqcup_{i=1}^k Z_0(E_i).
\end{align*}
\end{lemma}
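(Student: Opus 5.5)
We need to prove two things: every $z\in Z_0\setminus\{\bar p\}$ lies in some $Z_0(E_i)$, and the sets $Z_0(E_i)$ are pairwise disjoint. The plan is to transport everything to the region $\{f\ge L\}$ via the worldline map $\Phi$ and the radial control for $F$.

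\textbf{Covering.} Fix $z\in Z_0\setminus\{\bar p\}$. By Lemma~\ref{lem:dis}(4) we can write $z=\Phi(x)$ with $x\in M$, so $z=\lim_{t\nearrow0}(x,t)$ in $d_Z$. We need $\psi^t(x)\in\bigcup_i E_i$ for $t$ close to $0$. By Lemma~\ref{lem:continuityF} and the definition $F(x,t)=|t|f(\psi^t(x))$,
\[
|t|f(\psi^t(x))=F(x,t)\to F(z)=\tfrac14 d^Z_0(z,\bar p)^2>0 .
\]
Hence $f(\psi^t(x))\ge F(z)/(2|t|)\to\infty$, so eventually $\psi^t(x)\in\{f\ge L\}=\bigcup_i E_i$. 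Since $t\mapsto\psi^t(x)$ is continuous and the $E_i$ are the disjoint components of $\{f\ge L\}$ (each $\Sigma_i$ is a regular level set), the curve eventually stays in a single $E_i$. Therefore $(x,t)\in E_i^*$ for $t$ near $0$, and $z\in Z_0(E_i)$.

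\textbf{Disjointness.} Suppose $z\in Z_0(E_i)\cap Z_0(E_j)$ with $i\ne j$. Then there are $(x_k,t_k)\in E_i^*$ and $(y_k,s_k)\in E_j^*$, both converging to $z$ in $d^*$. Set $a:=d^Z_0(z,\bar p)>0$.

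First we synchronize times. Fix $s<0$ small. By the argument in the proof of Lemma~\ref{lem:continuityF}, $(x_k,s)$ and $(y_k,s)$ are $H$-centers of the respective points, and both are close to an $H_n$-center of $z$ at time $s$. This gives
\[
d_{g_s}(x_k,y_k)\le C\sqrt{|s|}+o(1).
\]
By monotonicity, $\partial_tF\le0$, so $F(x_k,s)\ge F(x_k,t_k)\to a^2/4$. Since $\psi^{t}$ preserves each end as long as $f\ge L$ along the path, and $\nabla f$ is outward on $\{f\ge L\}$, we also get $(x_k,s)\in E_i^*$ and $(y_k,s)\in E_j^*$.

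Now rescale: $\psi^s$ is an isometry from $(M,g_s)$ to $(M,|s|g)$. So $\psi^s(x_k)\in E_i$ and $\psi^s(y_k)\in E_j$ are joined by a $g$-geodesic $\gamma$ of length at most
\[
|s|^{-1/2}\bigl(C\sqrt{|s|}+o(1)\bigr)\le 2C .
\]
Along $\gamma$, $\sqrt f$ is $1$-Lipschitz, because $|\nabla f|^2\le f$. At the endpoints $f\approx a^2/(4|s|)$, which is huge. Hence $f\ge a^2/(8|s|)>L$ along all of $\gamma$ for $|s|$ small. But $\gamma$ connects two different components of $\{f\ge L\}$ while staying inside that set, which is a contradiction.

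\textbf{Main obstacle.} The delicate step is the time synchronization: we must confirm that the $H$-center comparison at a common time $s$ genuinely yields the bound $d_{g_s}(x_k,y_k)\le C\sqrt{|s|}$, and that membership in $E_i^*$ persists when passing from time $t_k$ to the earlier time $s$. The latter uses that $\nabla f$ points outward on $\{f\ge L\}$, so the backward flow $\psi^t$ keeps points in their end. This is where enlarging $L$ (for instance $L\ge 2C_0$, so that $|\nabla f|^2\ge f/2$ there) is used.
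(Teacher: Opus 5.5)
Your proof is correct and follows essentially the paper's route. For covering, you use $F(z)=\frac14 d^Z_0(z,\bar p)^2>0$ to force $f\to\infty$ along points converging to $z$; you do this via a worldline through a point of $\Phi^{-1}(z)$, where the paper uses $H_n$-centers. For disjointness, you compare at a fixed earlier time $s$ using the $H$-center bound $d_{g_s}(x_k,y_k)\le C\sqrt{|s|}+o(1)$ and the monotonicity of $F$ in $t$. Your direct inequality $F(x_k,s)\ge F(x_k,t_k)$ places both $\psi^s(x_k)$ and $\psi^s(y_k)$ deep inside $\{f\ge L\}$, which streamlines the paper's contrapositive case split; that case split rests on the equivalent estimate $f(\psi^{t}(x))\le (|s|/|t|)\,f(\psi^s(x))$. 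As a result, the paper's time synchronization and its lower bound on $d_{g_{-1}}(x_l',y_l')$ become unnecessary in your version.
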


\begin{proof}
We first prove that
\[
    Z_0\setminus\{\bar p\}
    \subset
    \bigcup_{i=1}^k Z_0(E_i).
\]
Let \(z\in Z_0\) with \(z\ne \bar p\), and choose an \(H_n\)-center
\(
    (x_i,-2^{-i})
\)
of \(z\). By Lemma~\ref{lem:continuityF} and \eqref{eq:distanceestimate}, we have
\begin{align*}
    \lim_{i\to\infty}
    2\sqrt{F(x_i,-2^{-i})}
    =
    d^Z_0(\bar p,z)
    >
    0.
\end{align*}
Writing
\(
    x_i'=\psi^{-2^{-i}}(x_i),
\)
we obtain
\(\lim_{i\to\infty} f(x_i')=+\infty\).
Thus, for all sufficiently large \(i\),
\[
    x_i'\in \bigcup_{j=1}^k E_j.
\]
After passing to a subsequence, we may assume that \(x_i'\in E_j\) for some fixed \(j\). Hence \(z\in Z_0(E_j)\), proving the desired inclusion.

It remains to show that the union is disjoint. Suppose, toward a contradiction, that for some \(i\ne j\),
\[
    z\in Z_0(E_i)\cap Z_0(E_j).
\]
Then there exist two sequences
\[
    x_l^*=(x_l,t_l),
    \qquad
    y_l^*=(y_l,s_l),
\]
both converging to \(z\) with respect to \(d^*\), such that
\[
    x_l':=\psi^{t_l}(x_l)\in E_i,
    \qquad
    y_l':=\psi^{s_l}(y_l)\in E_j.
\]
After passing to a subsequence, we may assume that $t_l \ge s_l$ for all $l$. Since $d^*((y_l, s_l), (y_l, t_l)) \le C(Y, C_0) \sqrt{t_l-s_l}$ and $\psi^{t_l}(y_l)\in E_j$, we may further assume that $s_l=t_l$ by considering $(y_l, t_l)$ instead of $(y_l,s_l)$.

Arguing as above, there exists a constant \(c_0>0\) such that, for all sufficiently large \(l\),
\begin{align} \label{eq:union1}
    \min\{f(x_l'),f(y_l')\}
    \ge
    \frac{c_0}{|t_l|}.
\end{align}
Since \(x_l'\) and \(y_l'\) lie in different ends, \eqref{eq:union1} implies that
\begin{align} \label{eq:union2}
    d_{g_{-1}}(x_l',y_l')
    \ge
    \frac{c_1}{\sqrt{|t_l|}}
\end{align}
for some constant \(c_1>0\).

On the other hand, since \(d^*(x_l^*,y_l^*)\to0\), we have
\begin{align*}
    d_{W_1}^{s}\bigl(\nu_{x_l^*;s},\nu_{y_l^*;s}\bigr)
    \xrightarrow{l\to\infty}0
\end{align*}
for a fixed \(s\in[-1,0)\), to be chosen below. By the scalar curvature bound, it follows that
\begin{align*}
    d_{g_s}(x_l,y_l)\le C\sqrt{|s|}.
\end{align*}
Equivalently,
\begin{align} \label{eq:union3}
    d_{g_{-1}}\bigl(\psi^s(x_l),\psi^s(y_l)\bigr)\le C
\end{align}
for all sufficiently large \(l\).

By the definition of \(\psi^t\), we have
\[
    \psi^{-|t_l|/|s|}\circ \psi^s=\psi^{t_l}.
\]
Set
\[
    a_l^s:=\psi^s(x_l),
    \qquad
    b_l^s:=\psi^s(y_l).
\]
After increasing \(L\) if necessary, we may assume that
\begin{align} \label{eq:union4}
    \max\{f(a_l^s),f(b_l^s)\}\le L.
\end{align}
Indeed, otherwise \eqref{eq:union3} would force \(x_l'\) and \(y_l'\) to lie in the same end.

Using
\[
    \partial_t f(\psi^t(x))
    =
    |t|^{-1}|\nabla f|^2
    \le
    |t|^{-1}f(\psi^t(x)),
\]
we obtain
\[
    f(\psi^t(x))\le |t|^{-1}f(x).
\]
Applying this estimate to \(a_l^s\) and \(b_l^s\), we get
\begin{align} \label{eq:union5}
    \max\{f(x_l'),f(y_l')\}
    \le
    \frac{|s|}{|t_l|}
    \max\{f(a_l^s),f(b_l^s)\}
    \le
    L\frac{|s|}{|t_l|}.
\end{align}
Together with the standard distance estimate for the potential function, \eqref{eq:union5} implies
\[
    d_{g_{-1}}(x_l',y_l')
    \le
    C\frac{\sqrt{L|s|}}{\sqrt{|t_l|}}.
\]
Choosing \(|s|>0\) sufficiently small gives a contradiction to \eqref{eq:union2}. Therefore
\[
    Z_0(E_i)\cap Z_0(E_j)=\emptyset
\]
for \(i\ne j\), completing the proof.
\end{proof}

\begin{definition}[Cylindrical end]\label{def:cylindricalend}
An end \(E\) of a Ricci shrinker \((M^n,g,f)\) is called \textbf{cylindrical} with respect to \(S^{n-1}/\Gamma\times\R\) if, for every \(\ep>0\), there exists a compact set \(K\subset M\) such that every \(x\in E\setminus K\) satisfies
\[
    \left|\scal(x)-\frac{n-1}{2}\right|\le \ep
\]
and \((M,g,x)\) is \(\ep\)-close to \(S^{n-1}/\Gamma\times\R\). Here \(\Gamma\le \mathrm{O}(n)\) is a finite group acting freely on \(S^{n-1}\), and \(S^{n-1}/\Gamma\times\R\) is equipped with the rescaled product metric whose scalar curvature is normalized to be \(1\).
\end{definition}

\subsection{Cylindrical ends and tangent-flow characterizations}

We next prove the following result, which is analogous to \cite[Theorem 1.6]{munteanu2019structure}.

\begin{proposition}\label{prop:cylindricalend}
Suppose that a tangent flow $(Z',d_{Z'},z',\t')$ at some point
$z\in Z_0(E_j)$ has a time $-1$ slice
\[
    (\RR'_{-1},g^{Z'}_{-1})
\]
that is isometric to $S^{n-1}/\Gamma\times\R$. Then $E_j$ is a cylindrical end with respect to
$S^{n-1}/\Gamma\times\R$.
\end{proposition}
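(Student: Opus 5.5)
The plan is to turn the tangent-flow information at the single point $z$ into an $\ep$-closeness statement at one far-out point of $E_j$ at a single time, and then let the rigidity of round cylinders for Ricci shrinkers \cite{liwang2024rigidity} propagate it to all of $E_j$. This mirrors the argument of \cite[Theorem~1.6]{munteanu2019structure}.

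\emph{Step 1 (from the tangent flow to one far-out point).} Let $r_i\searrow0$ realize the tangent flow, and let $(z_i,-r_i^2)$ be $H_n$-centers of $z$, as in the proof of Proposition~\ref{prop:1splitting}. The time $-1$ slice of the tangent flow is the smooth manifold $S^{n-1}/\Gamma\times\R$, so $Z'$ has no singular points at negative times. The smooth convergence on the regular part \cite[Theorem~1.5]{FL1} then gives the following: for every $\ep>0$ and all large $i$, the rescaled flow $r_i^{-2}g_{r_i^2t}$ near $(z_i,-r_i^2)$ is $\ep$-close, in the sense of Definition~\ref{def:close}, to the standard shrinking cylinder on $[-2,-1/2]$. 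By the proof of Lemma~\ref{lem:unionend}, we may assume $z_i':=\psi^{-r_i^2}(z_i)\in E_j$ after passing to a subsequence, with $f(z_i')\simeq d^Z_0(\bar p,z)^2/(4r_i^2)\to\infty$. Using $g_t=|t|(\psi^t)^*g$, this translates into the statement that $(M,g,z_i')$ is $\ep$-close to $S^{n-1}/\Gamma\times\R$, with $|\scal(z_i')-\tfrac{n-1}{2}|\le\ep$ by the scalar-curvature normalization. By Proposition~\ref{prop:1splitting} and \eqref{eq:iden02}--\eqref{eq:iden03}, the normalized gradient $\nabla f/|\nabla f|$ is $\ep$-close at $z_i'$ to the $\R$-direction of the cylinder.

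\emph{Step 2 (propagation along the end).} I would apply the rigidity theorem of \cite{liwang2024rigidity}: there is $\ep_0=\ep_0(n,Y)$ such that if a Ricci shrinker is $\ep_0$-close to $S^{n-1}/\Gamma\times\R$ at a point $q$ with $f(q)$ sufficiently large, then the shrinker is $\ep$-close to the cylinder at every point of the component of $\{f\ge f(q)\}$ containing $q$, with $\ep\to0$ as $f\to\infty$. This is the Colding--Minicozzi type ``once a cylinder, always a cylinder'' statement, and I expect this to be its available form.

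Applying it at $q=z_i'$ for one fixed large $i$ covers the region $E_j\cap\{f\ge f(z_i')\}$. The point is that $\{f=s\}\cap E_j$ is connected for $s\ge L$: the flow of $\nabla f/|\nabla f|^2$ identifies $E_j$ with $\Sigma_j\times[L,\infty)$. The compact set $K$ in Definition~\ref{def:cylindricalend} is then $\{f\le f(z_i')\}$. Since $f(z_i')\to\infty$ is allowed, any given $\ep$ can be reached.

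\emph{Main obstacle.} I expect the hardest step to be matching the hypotheses of \cite{liwang2024rigidity}. That result is likely stated for closeness on a large but fixed scale around a point of large $f$, possibly in terms of the level set $\{f=f(q)\}$ rather than a ball. A ball of radius $\ep^{-1}$ in $g$ around $z_i'$ need not contain a whole level-set component, so one may have to show first that the whole slice $\Sigma$ containing $z_i'$ is close to $S^{n-1}/\Gamma$. My fallback is to use the tangent flows at the other points of $Z_0(E_j)$ at radius $d^Z_0(\bar p,z)$. By Proposition~\ref{prop:1splitting} each of these is one of the five listed models. A tangent flow equal to $S^{n-1}/\Gamma\times\R$ is an open condition, by Nash-entropy continuity and the gap in entropy between the models. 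It is also closed, by upper semicontinuity of entropy. So connectedness of this sphere in $Z_0(E_j)$, which follows from connectedness of $\Sigma_j$ and continuity and surjectivity of $\Phi$, forces cylindrical tangent flows along the entire sphere. Compactness of the sphere then gives uniform closeness along the whole level set, after which the rigidity theorem applies.
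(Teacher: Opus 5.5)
Your Step~1 is correct and extracts the right information from one tangent flow. Since $(M,r_i^{-2}g_{-r_i^2},z_i)\cong(M,g,\psi^{-r_i^2}(z_i))$, you obtain points $z_i'\in E_j$ such that
\begin{itemize}
\item $f(z_i')\to\infty$;
\item $(M,g,z_i')$ is $\ep_i$-close to $S^{n-1}/\Gamma\times\R$ with $\ep_i\to0$;
\item $\scal(z_i')\to\frac{n-1}{2}$.
\end{itemize}

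Step~2, however, invokes the rigidity of \cite{liwang2024rigidity} in a stronger form than is available, and stronger than what the paper uses. What one gets here is Proposition~\ref{prop:alternative}, which is logically independent of the present statement: closeness to the cylinder at one point of $E_j$ implies only that $E_j$ is \emph{either conical or} cylindrical. A ``once a cylinder, always a cylinder'' statement at a single point should not be expected. Heuristically, take a neck whose cross-sectional radius is slightly above critical, $r_0^2=2(n-2)(1+\delta)$. It is $\ep$-close at the natural scale, but under the self-similar flow its normalized squared radius behaves like $2(n-2)(1+\delta/|t|)$. Hence $\scal(\psi^t(\cdot))$ decays like $|t|$ further out, which is exactly the conical branch.

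The repair is already contained in your Step~1. Apply Proposition~\ref{prop:alternative} at one $z_i'$. Then exclude the conical alternative: $\scal(z_i')\to\frac{n-1}{2}>0$ along a sequence tending to infinity in $E_j$, which is incompatible with quadratic curvature decay. With this change your argument is complete.

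Your fallback paragraph is then unnecessary, and it would not work as stated. Nash-entropy continuity at a fixed scale does not make ``having a cylindrical tangent flow'' an open condition, since the limiting density is only semicontinuous. Also, local compactness of $(Z_0,d^Z_0)$ is not known in this generality.

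The paper argues differently. It takes $H_n$-centers $(x_i,-2^{-i})$ of $z$ and applies the strong uniqueness theorem for cylindrical tangent flows \cite[Theorem~8.15]{fang-li-unique}. This gives closeness to the cylinder at \emph{every} small scale, not only along the sequence realizing the tangent flow, together with $d_{g_{t_{i+1}}}(x_i,x_{i+1})\le C\sqrt{|t_i|}$. Pulled back to $(M,g)$, this produces necks $U_i^t$ around $\psi^t(x_i)$, $t\in[t_i,t_{i+1}]$, which overlap consecutively with almost parallel axes. Their union is a half-infinite tube, hence an end, and Lemma~\ref{lem:unionend} identifies it with $E_j$.

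So the paper relies on strong uniqueness plus direct neck-chaining, with no appeal to Li--Wang rigidity. Your route needs the tangent flow only along one sequence of scales, but delegates propagation along the end to Proposition~\ref{prop:alternative} and to the bounded-scalar-curvature pinching argument behind it.
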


\begin{proof}
Let $t_i=-2^{-i}$, and let $(x_i,t_i)$ be an $H_n$-center of $z$. By the strong uniqueness theorem \cite[Theorem 8.15]{fang-li-unique}, we have
\begin{align}\label{eq:cylin001}
    d_{g_{t_{i+1}}}(x_i,x_{i+1})\le C\sqrt{|t_i|} \quad \text{and} \quad \lim_{i \to \infty}\scal(x_i, t_i)|t_i|=\frac{n-1}{2}.
\end{align}

Equivalently, after pulling back by the shrinker diffeomorphisms, we have
\begin{align}\label{eq:distancecontrol}
    d_g\bigl(\psi^{t_{i+1}}(x_i),\psi^{t_{i+1}}(x_{i+1})\bigr)\le C \quad \text{and} \quad \lim_{i \to \infty}\scal_g(\psi^{t_{i}}(x_i))=\frac{n-1}{2}.
\end{align}
By the assumption on the tangent flow, if $i$ is sufficiently large, then for every
$t\in[t_i,t_{i+1}]$ the pointed manifold
\[
    \bigl(M,g,\psi^t(x_i)\bigr)
\]
is $\ep_i$-close to $S^{n-1}/\Gamma\times\R$, where $\ep_i \to 0$. By Definition~\ref{def:close}, there exists an open set $U_i^t$ around $\psi^t(x_i)$, whose geometry is modeled on $S^{n-1}/\Gamma\times\R$. Equation~\eqref{eq:distancecontrol} implies that $U_i^{t_{i+1}} \cap U_{i+1}^{t_{i+1}} \ne \emptyset$ and the axes of $U_i^{t_{i+1}}$ and $U_{i+1}^{t_{i+1}}$ are almost parallel on their intersection. Thus, $\bigcup_i\bigcup_{t \in [t_i,t_{i+1}]}U_i^t$ forms an end of $(M, g)$. Since $z\in Z_0(E_j)$, Lemma~\ref{lem:unionend} implies that this end must be $E_j$. Therefore, along the end $E_j$, the geometry is asymptotic to
$S^{n-1}/\Gamma\times\R$. Hence $E_j$ is cylindrical with respect to
$S^{n-1}/\Gamma\times\R$.
\end{proof}

Conversely, the following result follows immediately from Definition~\ref{def:cylindricalend}.

\begin{proposition}\label{prop:cylindricalend1}
Suppose that $E_j$ is a cylindrical end with respect to
$S^{n-1}/\Gamma\times\R$. Then every tangent flow
$(Z',d_{Z'},z',\t')$ at every point $z\in Z_0(E_j)$ has a time $-1$ slice
\[
    (\RR'_{-1},g^{Z'}_{-1})
\]
that is isometric to $S^{n-1}/\Gamma\times\R$.
\end{proposition}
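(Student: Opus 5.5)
The plan is to compute the tangent flow at $z\in Z_0(E_j)$ directly from the cylindrical geometry of the end, using self-similarity to turn blow-ups at $z$ into pointed limits of $(M,g)$ along the end. Fix $z\in Z_0(E_j)$ and $r_i\searrow0$ realizing a tangent flow $(Z',d_{Z'},z',\t')$. As in the proof of Proposition~\ref{prop:1splitting}, choose $H_n$-centers $(z_i,-r_i^2)$ of $z$ and set $x_i':=\psi^{-r_i^2}(z_i)$. By Lemma~\ref{lem:continuityF} and \eqref{eq:distanceestimate}, $f(x_i')\,r_i^2\to\frac14 d^Z_0(\bar p,z)^2>0$, so $f(x_i')\to\infty$. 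The argument of Lemma~\ref{lem:unionend}, namely disjointness of the $Z_0(E_i)$ together with the distance bound coming from $d^*$-convergence, then places $x_i'\in E_j$ for all large $i$. In particular $x_i'\to\infty$ inside $E_j$.

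Next I would use Definition~\ref{def:cylindricalend} to control the geometry around these points. Because $\scal\to\frac{n-1}{2}$ along $E_j$, the rescaled flow $g_i(t)=r_i^{-2}g_{r_i^2t}$ near $(z_i,-1)$ is, after pulling back by $\psi$, isometric to $(M,g)$ near $\psi^{r_i^2t}(z_i)$, with time reparametrized. For $t$ in any compact subset of $(-\infty,0)$, these points stay in $E_j$ and go to infinity, since $f$ at these points is comparable to $|t|^{-1}f(x_i')\,r_i^2/|t|$ up to bounded factors. The $\ep$-closeness in the definition of a cylindrical end therefore gives smooth local convergence, on every compact spacetime region around the base point, of $(M,g_i(t),z_i)$ to the shrinking cylinder flow on $S^{n-1}/\Gamma\times\R$. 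The normalization $\scal=\frac{n-1}{2}$ at time $-1$ matches the shrinker normalization, and scale $\sqrt{|t|}$ at time $t$ follows from self-similarity.

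It remains to identify this smooth local limit with the tangent flow. Since uniform curvature bounds hold on these regions, the curvature radius is bounded below, so the points lie in the regular part, and by \cite[Theorem~1.5]{FL1} the $\mathbb F$/Gromov--Hausdorff convergence to $Z'$ is smooth there. The time $-1$ slice $\RR'_{-1}$ therefore contains an open region isometric to a large piece of $S^{n-1}/\Gamma\times\R$, and by exhaustion the entire component of the regular part through the base point is the cylinder. Since the cylinder is complete and the tangent flow at time $-1$ has connected regular part with full measure, I expect $\RR'_{-1}\cong S^{n-1}/\Gamma\times\R$.

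The main obstacle is this last completeness step: excluding regions of $Z'_{-1}$ that are not reached by the pointed smooth limit. I would handle it by noting that the conjugate heat kernel based at the limit point is supported on the whole time slice, while the smooth pointed limit already carries total mass one (Gaussian mass on the cylinder). Consequently no additional regular region can exist. This is consistent with the paper's claim that the statement "follows immediately" from the definition.
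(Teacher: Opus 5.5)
Your proposal is correct, and it is essentially the argument the paper intends when it says the statement follows immediately from Definition~\ref{def:cylindricalend}: self-similarity identifies the blow-ups $r_i^{-2}g_{r_i^2t}$ around the $H_n$-centers $(z_i,-r_i^2)$ of $z$ with $(M,|t|g)$ near $\psi^t(x_i')\in E_j$; the cylindrical closeness gives smooth convergence to the shrinking cylinder there; and smooth convergence on the regular part of the tangent flow then identifies $\RR'_{-1}$ with that cylinder. The one step to make explicit is the full-mass claim. It should come from the $H_n$-center concentration $\nu_{z;-r_i^2}\bigl(B_{g_{-r_i^2}}(z_i,C_\delta r_i)\bigr)\ge1-\delta$, with these balls lying in the cylindrical region, and not from the Gaussian form of the limit density alone, because on a single component the additive constant in the potential is not determined locally; alternatively, connectedness of $\RR'_{-1}$ for a tangent flow already closes this step.
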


We have the following rough classification of the ends.

\begin{itemize}
    \item $E_i$ is \textbf{conical}. That is, the curvature on $E_i$ decays quadratically. Equivalently, the Ricci curvature on $E_i$ tends to zero at infinity; see \cite[Theorem 1.1]{MW}.

    \item $E_i$ is \textbf{cylindrical} with respect to
    $S^{n-1}/\Gamma\times\R$. Since the entropy is assumed to be bounded below by $-Y$, there are only finitely many possible groups $\Gamma$ up to conjugacy.

    \item $E_i$ is \textbf{generic} if it is neither conical nor cylindrical.
\end{itemize}
By Propositions~\ref{prop:1splitting},~\ref{prop:cylindricalend}, and~\ref{prop:cylindricalend1}, the above end types can be characterized by tangent flows at the time-zero slice.

\begin{itemize}
    \item $E_i$ is conical if and only if $Z_0(E_i)$ is smooth.

    \item $E_i$ is cylindrical if and only if there exists a tangent flow
    $(Z',d_{Z'},z',\t')$ at some singular point $z\in Z_0(E_i)$ such that
    \[
        (\RR'_{-1},g^{Z'}_{-1})
    \]
    is isometric to $S^{n-1}/\Gamma\times\R$.

    \item $E_i$ is generic if it is neither conical nor cylindrical. If $n=4$, then $E_i$ is generic if and only if every tangent flow
    $(Z',d_{Z'},z',\t')$ at every singular point $z\in Z_0(E_i)$ has a time $-1$ slice
    \[
        (\RR'_{-1},g^{Z'}_{-1})
    \]
    that is isometric to one of the following models:
    \begin{align*}
        S^2\times\R^2,\qquad
        \mathbb{RP}^2\times\R^2,\qquad
        \bigl(S^2\times_{\mathbb Z_2}\R\bigr)\times\R.
    \end{align*}
\end{itemize}

The following characterization of conical and cylindrical ends follows from
\cite[Theorems~1.2 and~6.1]{liwang2024rigidity}. A similar argument appears in \cite[Theorem 5.4]{LW26}.

\begin{proposition}\label{prop:alternative}
There exists a constant
\[
    \bar\ep=\bar\ep(n,C_0,Y)>0
\]
such that the following holds. If there exists a point $x\in E_i$ such that
$(M,g,x)$ is $\bar\ep$-close to $S^{n-1}/\Gamma\times\R$, then $E_i$ is either conical or cylindrical with respect to
$S^{n-1}/\Gamma\times\R$.
\end{proposition}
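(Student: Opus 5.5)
The plan is to combine the rigidity theorem for round cylinders \cite[Theorems~1.2 and~6.1]{liwang2024rigidity} with the end decomposition of Lemma~\ref{lem:unionend} and the tangent-flow characterizations of Propositions~\ref{prop:1splitting}, \ref{prop:cylindricalend}, and~\ref{prop:cylindricalend1}. As a rigidity statement, \cite{liwang2024rigidity} says roughly the following. If a shrinker (with entropy bounded below by $-Y$ and scalar curvature bounded by $C_0$) is $\bar\ep$-close to $S^{n-1}/\Gamma\times\R$ at one point $x$, then closeness improves and propagates along the integral curves of $\nabla f$ toward infinity. The result is that the whole region $\{\psi^t(x)\}_{t\nearrow0}$, together with a neck neighborhood of it, becomes asymptotically cylindrical. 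The alternative is that the neck ``closes up'' and the nearby region becomes conical, i.e. the curvature decays. I will first fix $\bar\ep$ as the constant from that rigidity theorem, shrinking it so that an $\bar\ep$-neck contained in $E_i$ (with $f(x)\ge L$) stays inside $E_i$.

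The key step is to follow the forward orbit $x_s:=\psi^{s}(x)$, $s\nearrow0$, which remains in $E_i$. I will show that the sequence $(x,s)$ has a $d^*$-limit $z=\Phi(x)\in Z_0(E_i)$; this uses Lemma~\ref{lem:worldline} and the definition of $Z_0(E_i)$, and $z\ne\bar p$ because $f(x)\ge L>0$ and by Lemma~\ref{lem:continuityF}. Two cases then arise.

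\textbf{Case (a):} $z$ is a regular point of $Z_0$. In this case the curvature stays bounded along the worldline, so $|\Rm_g|(x_s)\le C|s|$. By openness of $Z_0^{\reg}$ and the conical structure of Proposition~\ref{prop:cone}, the neck must in fact become flat near $z$. I then want to upgrade this to all of $Z_0(E_i)$ being smooth. The tool is the rigidity of \cite[Theorem~6.1]{liwang2024rigidity}: a neck that is initially $\bar\ep$-close and does not remain cylindrical forces a whole annular neighborhood of the end to have Ricci curvature tending to zero. Then \cite[Theorem~1.1]{MW} gives that $E_i$ is conical.

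\textbf{Case (b):} $z$ is singular. In this case I use the improved closeness from \cite[Theorem~1.2]{liwang2024rigidity}. It shows that $(M,g,x_s)$ remains $\Psi(\bar\ep)$-close, and in fact increasingly close, to $S^{n-1}/\Gamma\times\R$, with $\scal(x_s)\to\frac{n-1}{2}$. Rescaling, every tangent flow at $z$ then has time $-1$ slice $S^{n-1}/\Gamma\times\R$, since the strong uniqueness of tangent flows \cite{fang-li-unique} allows one to read the tangent flow off the $H$-centers along the worldline. Proposition~\ref{prop:cylindricalend} then yields that $E_i$ is cylindrical.

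The main obstacle is the dichotomy itself: showing that one $\bar\ep$-neck cannot lead to a ``mixed'' end, for instance one that is cylindrical at $z$ but generic elsewhere in $Z_0(E_i)$, or one that degenerates to $S^2\times\R^2$. I expect to handle this by invoking the rigidity theorem in its global form. That form says that an $\bar\ep$-neck at some point of an end of a shrinker with bounded scalar curvature implies that the entire end is either asymptotically cylindrical or has $\Ric\to0$. This is exactly the mechanism used in \cite[Theorem~5.4]{LW26}, and I would follow that argument, using Lemma~\ref{lem:unionend} to identify the end.
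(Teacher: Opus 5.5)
Your proposal has a genuine gap. The dichotomy, which is the whole content of the proposition, is never established.

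Splitting according to whether $z=\Phi(x)$ is regular or singular only relabels the two possible outcomes along a single worldline. The ``global form'' of the rigidity theorem that you invoke at the end says that an $\bar\ep$-neck anywhere in an end forces the end to be asymptotically cylindrical or to have $\Ric\to0$. That is exactly Proposition~\ref{prop:alternative} combined with \cite{MW}, so the argument is circular at its key step. Concretely:
\begin{itemize}
\item In Case (b) you assert that singularity of $z$ makes the neck persist along $x_s$ with $\scal(x_s)\to\frac{n-1}{2}$. You give no reason why $\scal(x_s)$ cannot drift away from $\frac{n-1}{2}$, and that is precisely what must be proved.
\item In Case (a), passing from one regular worldline to all of $Z_0(E_i)$ is exactly the ``mixed end'' problem you flag but do not resolve.
\item The claim that the neck ``must become flat near $z$'' is unjustified. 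Proposition~\ref{prop:cone} gives a cone structure, not flatness, and a cone over a small $S^{n-1}/\Gamma$ is not flat.
\item The bound $\scal\le C_0$, on which $\bar\ep$ is supposed to depend, is never used. This is a symptom of the missing mechanism.
\end{itemize}

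The paper's sketch supplies that mechanism in two steps.

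First, by \cite[Theorem~6.1]{liwang2024rigidity}, the neck at $x$ propagates to every $y$ on the cross-section $\Sigma=\{f=f(x)\}\cap E_i$, and then to every $\psi^t(y)$ with $t\in[-1,0)$. Since $f$ has no critical points on $\{f\ge L\}$ and $f(\psi^t(y))\to\infty$, the sets $\psi^t(\Sigma)$ sweep out all of $E_i\cap\{f\ge f(x)\}$. Thus every far point of the end lies on such a neck, and this is what rules out mixed ends.

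Second, run the scalar-curvature dynamics along these necks. On a neck $\partial_t\scal\approx\frac{2}{n-1}\scal^2$. Set $\sigma:=\scal_g(\psi^t(y))=|t|\,\scal_{g_t}(y)$ and $s=-\log|t|$; then $\sigma$ approximately solves $\frac{d\sigma}{ds}=\sigma\bigl(\frac{2\sigma}{n-1}-1\bigr)$, whose fixed point $\frac{n-1}{2}$ is unstable.
\begin{itemize}
\item A detectable upward deviation would push $\sigma$ above $C_0$ in bounded $s$-time, which is impossible. This is where $C_0$ enters $\bar\ep$.
\item A downward deviation forces $\sigma\lesssim|t|$, i.e.\ quadratic curvature decay, so the end is conical.
\item Otherwise $\sigma$ stays near $\frac{n-1}{2}$ along all flow lines from $\Sigma$, and \cite[Theorem~1.2]{liwang2024rigidity} upgrades this to a cylindrical end.
\end{itemize}
Your regular/singular case split for $\Phi(x)$ is then a consequence of this analysis, not a substitute for it.
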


Roughly speaking, if $(M,g,x)$ is $\bar\ep$-close to
$S^{n-1}/\Gamma\times\R$, then, for every $y\in\Sigma=\{f=f(x)\}$ and every
$t\in[-1,0)$, the pointed manifold
\[
    (M,g,\psi^t(y))
\]
is $\ep$-close to $S^{n-1}/\Gamma\times\R$, where $\ep>0$ is sufficiently small. If $\scal(y)$ is larger than $\frac{n-1}{2}$ by a detectable amount for some $y\in\Sigma$, then $\scal(\psi^t(y))$ would exceed $C_0$ for some $t>-1$, which is impossible. Thus, either $\scal(\psi^t(y))$ remains close to $\frac{n-1}{2}$ for all $y\in\Sigma$ and all $t\in[-1,0)$, which implies that $E_i$ is cylindrical, or else $\scal(\psi^t(y))$ decays like $|t|$, which implies that $E_i$ is conical.

\subsection{Canonical neighborhoods}

In this subsection, we consider a four-dimensional Ricci shrinker $(M^4,g,f)$ whose scalar curvature is bounded above by $C_0$ and whose entropy is bounded below by $-Y$. By \cite{munteanu2015geometry}, these assumptions imply that $(M,g)$ has uniformly bounded curvature.

As in Section~\ref{sec:generalshrinker}, let $E_1,\ldots,E_k$ be the ends of $M$, each having as boundary a connected component of $\{f=L\}$ for some sufficiently large constant $L>0$.
We consider the following spacetime region:
\begin{align*}
    \mathcal U(\sigma)
    =
    \{(x,t)\in M\times[-1,0)\mid F(x,t)\ge \sigma\}.
\end{align*}

\begin{theorem}[Canonical neighborhood theorem]\label{thm:canonical}
For every $\sigma>0$ and $\ep>0$, there exists a constant
\[
    \delta=\delta(\sigma,\ep,M,g,f)>0
\]
such that the following holds. If $(x,t)\in \mathcal U(\sigma)$ and
\[
    \scal(x,t)>\delta^{-2},
\]
then $(M,g_t,x)$ is $\ep$-close to a standard model $N^3\times \R$, where $N$ is a three-dimensional $\kappa$-solution. Moreover, $\na_{g_t} F/|\na_{g_t} F|_{g_t}$ is $\ep$-close to the unit splitting vector field on $\R$; see Definition~\ref{def:close}.
\end{theorem}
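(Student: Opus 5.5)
I would argue by contradiction and compactness, in the style of Perelman's canonical neighborhood theorem, with the splitting direction supplied by the normalized potential exactly as in Proposition~\ref{prop:1splitting}.

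Suppose the statement fails for some $\sigma,\ep>0$. Then there are points $(x_i,t_i)\in\mathcal U(\sigma)$ with $Q_i:=\scal(x_i,t_i)\to\infty$ such that $(M,g_{t_i},x_i)$ is not $\ep$-close to any $N^3\times\R$ with the required control on $\nabla F$. Since $\scal\le C_0|t|^{-1}$ on the self-similar flow, we must have $t_i\nearrow0$ and $Q_i|t_i|\le C_0$. Moreover $F(x_i,t_i)\ge\sigma$, and $F$ is bounded on bounded $d^*$-regions. After using self-similarity to push the points to a fixed annular region, we may assume $F(x_i,t_i)\in[\sigma,\sigma^{-1}]$ for some bounded range; otherwise we rescale by $\boldsymbol\psi^s$, which preserves all the hypotheses. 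Consider the rescaled flows
\[
g^i(s):=Q_i\,g_{t_i+Q_i^{-1}s},\qquad s\in[-Q_i|t_i|+\text{(bounded)},\,0).
\]
By \cite{munteanu2015geometry}, $M$ has bounded curvature, so $|\Rm|\le C|t|^{-1}$ on the flow. Hence the rescaled flows have bounded curvature at the base point on a backward time interval of length comparable to $Q_i|t_i|$. That length may stay bounded, so I would instead work in $\mathbb F$-convergence (available for shrinker flows by \cite{liwang2024heat}) and pass to a limit flow based at $(x_i,t_i)$ at scale $Q_i^{-1/2}$, with $\scal=1$ at the base point. I would also define
\[
f_i:=s_i^{-1/2}Q_i^{1/2}\bigl(F(\cdot,t_i+Q_i^{-1}s)-s_i\bigr),\qquad s_i:=F(x_i,t_i)\ge\sigma .
\]

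The identities \eqref{eq:iden01}--\eqref{eq:iden03}, with $r_i=Q_i^{-1/2}\to0$ and $s_i\ge\sigma$, give $\partial f_i\to0$, $\nabla^2f_i\to0$ and $|\nabla f_i|\to1$ locally. On the regular part of the limit, $\nabla f_\infty$ is therefore a parallel unit vector field, and the limit splits as $N^3\times\R$. This also yields the $\ep$-closeness of $\nabla F/|\nabla F|$ to the $\R$-direction. The limit is ancient, being a blow-up at points whose curvature scale tends to zero relative to $|t_i|$. It is $\kappa$-noncollapsed by the entropy bound, has bounded curvature on compact time intervals since $|\Rm|\le C\scal$ at high curvature, and has nonnegative curvature operator on $N^3$ by Hamilton--Ivey pinching in the three-dimensional factor. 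Smoothness of the limit should follow from the global curvature bound $|\Rm|\le C|t|^{-1}$ together with $Q_i|t_i|\ge$ ... ; here I expect to need a point-picking argument.

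The main obstacle is obtaining a smooth, complete ancient limit with bounded curvature at the base scale. The ratio $Q_i|t_i|\le C_0$ is bounded, so the backward interval after rescaling is only of bounded length, and curvature at nearby points may be much larger than $Q_i$. My first attempt would be Perelman's point-picking: replace $(x_i,t_i)$ by a nearby point of almost maximal curvature in a parabolic neighborhood of size $A_iQ_i^{-1/2}$. The new point still lies in $\mathcal U(\sigma/2)$, because $F$ is Lipschitz in spacetime at these scales, so this is harmless. I would then use the fact that $F$-levels are preserved under the self-similar flow to extend backward in time, using the flow $\psi$ to transport curvature bounds from earlier times. Alternatively, I would use the tangent-flow classification of Proposition~\ref{prop:1splitting}: every blow-up limit is a $1$-splitting $\mathbb F$-limit whose $3$-dimensional factor is an ancient noncollapsed flow. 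By the codimension-four structure of the singular set, which is empty in the splitting factor for three dimensions, this factor is smooth, hence a $\kappa$-solution. Either route gives the contradiction once the limit is identified as $N^3\times\R$.
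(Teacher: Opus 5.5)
Your second route is the paper's argument. The steps are:
\begin{itemize}
\item argue by contradiction and take a blow-up limit at scale $\scal(x_i,t_i)^{-1/2}$, using the compactness theory of \cite{FL1} (which plays the role of your $\mathbb F$-convergence);
\item show, by the analogues of \eqref{eq:iden01}--\eqref{eq:iden03}, that the normalized potential $f_i$ converges on the regular part to a function with parallel unit gradient;
\item use the codimension-four bound on the singular set of a $1$-split four-dimensional limit to obtain a smooth $N^3\times\R$ with $N$ a $\kappa$-solution.
\end{itemize}
The convergence of $f_i$ needs only $s_i\ge\sigma$ and $Q_i|t_i|\le C_0$; the latter controls the extra terms produced by the base time $t_i$. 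So your reduction to $F\in[\sigma,\sigma^{-1}]$ is harmless but unnecessary. The splitting comes from this $f_i$ computation, not from Proposition~\ref{prop:1splitting}, which concerns tangent flows at fixed points of $Z_0$.

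\textbf{The obstacle is misdiagnosed.} The rescaled flow $Q_i\,g_{t_i+Q_i^{-1}s}$ exists for all $s<Q_i|t_i|$. Its backward interval is infinite; it is the \emph{forward} interval, from the base time to the singular time, whose length $Q_i|t_i|$ is at most $C_0$. That is why the limit is ancient. It is not because the curvature scale is small relative to $|t_i|^{1/2}$; in fact $Q_i^{-1}\ge|t_i|/C_0$.

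The bound $|\Rm_{g_t}|\le C|t|^{-1}$ rescales to $|\Rm_{g^i(s)}|\le C(Q_i|t_i|+|s|)^{-1}$ for $s\le 0$, uniformly in space. This, not $|\Rm|\le C\scal$, bounds the curvature of the limit on $(-\infty,-\eta]$. If $\liminf Q_i|t_i|>0$, Hamilton's compactness theorem with $\kappa$-noncollapsing already gives a smooth limit up to the base time, with no structure theory needed.

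\textbf{The point-picking route should be dropped.} As written it fails: moving the base point to a nearby point of almost maximal curvature does not preserve the failure of the conclusion, so no contradiction results. Perelman instead picks among points where the canonical-neighborhood property fails.

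\textbf{The delicate case is $Q_i|t_i|\to0$.} Here the base time becomes the final time of the limit flow. The claim that codimension four plus a split line makes the singular set empty does not suffice on its own; your route asserts it exactly as the paper does. Codimension four allows a two-dimensional singular final slice, as for the shrinking $S^2\times\R^2$, which is even $2$-split.

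What excludes this near the base point is the normalization $\scal(x_i,t_i)=Q_i$:
\begin{itemize}
\item monotonicity of $s\mapsto\int\scal\,d\nu_{x_i^*;s}$ bounds these integrals by $1$ after rescaling;
\item with $\scal\ge0$, this passes to the limit and forces every tangent flow at the limit base point to be a static cone;
\item the splitting and codimension four then make that cone flat, so the base point is regular;
\item combined with the $\kappa$-solution structure on the earlier slices, this gives regularity on the balls where $\ep$-closeness is required.
\end{itemize}
You should make this step explicit rather than rely on the codimension count alone.
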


\begin{proof}
Suppose otherwise. Then there exist $\sigma>0$, $\ep>0$, and a sequence
\[
    x_i^*=(x_i,t_i)\in \mathcal U(\sigma)
\]
such that
\[
    r_i^{-2}:=\scal(x_i,t_i)\to+\infty,
\]
but either $(M,g_{t_i},x_i)$ is not $\ep$-close to $N^3\times\R$ for any three-dimensional $\kappa$-solution $N$, or $\na_{g_{t_i}}F/|\na_{g_{t_i}}F|_{g_{t_i}}$ is not $\ep$-close to the unit splitting vector field.

We perform the parabolic rescaling at $x_i^*$ by defining
\begin{align*}
    g_i(t)&=r_i^{-2}g_{t_i+r_i^2t},\\
    d_i^*&=r_i^{-1}d^*,\\
    \t_i&=r_i^{-2}(\t-t_i).
\end{align*}
By the weak compactness theorem \cite[Theorem 1.3]{FL1}, after passing to a subsequence, we have pointed Gromov--Hausdorff convergence
\begin{align*}
    \bigl(M\times(-\infty,0],d_i^*,x_i^*,\t_i\bigr)
    \xrightarrow[i\to\infty]{\pGHconvtext}
    (Z',d_{Z'},x',\t').
\end{align*}
Moreover, by \cite[Theorem 1.5]{FL1}, the limit $Z'$ admits a Ricci flow spacetime structure
\[
    (\RR',\t',\partial_{\t'},g^{Z'}_t)
\]
on its regular part, and the convergence is smooth on $\RR'$.

Set
\[
    s_i=F(x_i,t_i).
\]
By the assumption $x_i^*\in\mathcal U(\sigma)$, we have $s_i\ge \sigma$. Define
\[
    f_i(\cdot,t)
    =
    s_i^{-\frac12}r_i^{-1}
    \bigl(F(\cdot,t_i+r_i^2t)-s_i\bigr).
\]
As in the proof of Proposition~\ref{prop:1splitting}, using the identities analogous to
\eqref{eq:iden01}--\eqref{eq:iden03}, together with the type-I bound for the shrinker Ricci flow, we conclude that $f_i$ subconverges smoothly on $\RR'$ to a function $f_\infty$ satisfying
\begin{align*}
    \partial_{\t'} f_\infty=0,
    \qquad
    \nabla^2_{g^{Z'}} f_\infty=0,
    \qquad
    |\nabla_{g^{Z'}} f_\infty|_{g^{Z'}}^2=1.
\end{align*}
Thus $\nabla_{g^{Z'}}f_\infty$ is a parallel unit vector field on $\RR'$, and hence induces a splitting direction.

Since the singular set of $Z'$ has codimension at least four, see \cite[Theorem 1.10]{FL1}, and since the limit is four-dimensional and splits off a line, the singular set must be empty. Hence $Z'=\RR'$, and the limiting Ricci flow spacetime splits as
\[
    (\RR',g^{Z'}_t)
    =
    (\RR''\times\R,g''_t+g_E).
\]
The flow $(\RR'',g''_t)_{t\le 0}$ is a three-dimensional $\kappa$-solution. By the smooth convergence, this implies that, for all sufficiently large $i$, the pointed manifolds $(M,g_{t_i},x_i)$ are $\ep$-close to $N^3\times\R$ for some three-dimensional $\kappa$-solution $N$, and the normalization of $\na_{g_{t_i}} F$ is $\ep$-close to the unit vector field on the final splitting factor, contradicting our choice of $x_i^*$.

Therefore, the desired constant $\delta$ exists.
\end{proof}

\subsection{Sequential compactness and identification of the tangent space at infinity}

Next, we employ the same argument as in \cite{Li26} to prove sequential compactness of the time slices.

\begin{proposition}\label{prop:sequential}
For every sequence $\{t_j\}$ with $t_j\nearrow0$, after passing to a subsequence if necessary, we have
\begin{align} \label{eq:grsequence-4d}
    (M,d_{g_{t_j}},p)
    \xrightarrow[j\to\infty]{\pGHconvtext}
    (X_{\GH},d_{\GH},p_{\GH}),
\end{align}
where $d_{g_t}$ denotes the distance induced by $g_t$.
\end{proposition}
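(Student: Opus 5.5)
By Gromov's precompactness criterion, it suffices to show the following. For every $R\ge1$ and $\sigma\in(0,1)$, the maximal number of pairwise disjoint $d_{g_t}$-balls of radius $\sigma$ with centers in $B_{g_t}(p,R)$ is bounded independently of $t\in[-1,0)$. As in Lemma~\ref{lem:sequence}, we may discard the balls whose centers lie in $B_{g_t}(p,2\sigma)$. By \eqref{eq:volume-upper-bound}, their number is controlled by a volume argument once we have a uniform lower bound at a smaller scale. By \eqref{eq:distanceestimate}, every remaining center $y$ satisfies $(y,t)\in\mathcal U(c\sigma^2)$ for $t$ close to zero. Using Lemma~\ref{lem:unionend}, we can therefore treat each end $E_i$ separately. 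So I will estimate, for a fixed end, the number of disjoint $\sigma$-balls centered in $\{(y,t): \psi^t(y)\in E_i,\ c\sigma^2\le F(y,t)\le R^2\}$.

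\textbf{Conical and cylindrical ends.} If $E_i$ is conical, then $|\Rm_g|\le C d_g(p,\cdot)^{-2}$ on $E_i$. Rescaling shows that $g_t$ has curvature bounded by $C\sigma^{-2}$ on the relevant annulus, uniformly in $t$. Combined with $\kappa$-noncollapsing \cite[Theorem~22]{liwang2020heat1}, this gives $|B_{g_t}(y,\sigma)|\ge c\sigma^4$, and packing against \eqref{eq:volume-upper-bound} bounds the count.

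If $E_i$ is cylindrical with respect to $S^3/\Gamma\times\R$, the end at time $t$ is close to a neck of cross-sectional radius $\sim\sqrt{|t|}$ along a radial segment of length $\le R$. A maximal $\sigma$-separated set along this segment, with $\sigma\gg\sqrt{|t|}$, has at most $CR/\sigma$ points, since the neck's cross-sections have tiny diameter.

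\textbf{Generic ends (the main obstacle).} Here, following \cite{Li26}, I split the annular region at time $t$ into $\{\scal_{g_t}\le\delta^{-2}\}$ and $\{\scal_{g_t}>\delta^{-2}\}$, with $\delta=\delta(c\sigma^2,\ep)$ given by Theorem~\ref{thm:canonical}. On the low-curvature part, type-I bounds and pseudolocality (or simply the fact that $|\Rm|\le C\scal$ near such points, by the canonical neighborhood structure) give curvature $\le C\delta^{-2}$ on balls of radius $\sim\delta\ll\sigma$. Noncollapsing then yields a volume lower bound $c\delta^4$ for a ball inside each $B_{g_t}(y,\sigma)$, which handles those centers.

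For centers with $\scal>\delta^{-2}$, Theorem~\ref{thm:canonical} gives a neighborhood $\ep$-close to $N^3\times\R$, with the $\R$-factor aligned with $\nabla F$. By Proposition~\ref{prop:alternative}, $N^3$ cannot be a round quotient neck on a generic end. Hence $N$ is a neck $S^2\times\R$ (or a quotient) or a Bryant soliton, so the region is modeled on $S^2\times\R^2$ or $\text{Bryant}\times\R$. The key step is a uniform local $L^1$ estimate
\[
\int_{B_{g_t}(p,R)\cap\{F\ge c\sigma^2\}}\scal_{g_t}\,\d V_{g_t}\le C(R,\sigma),
\]
which I plan to derive from $\Delta_{g_t}F=n/2-|t|\scal$ (the real analogue of \eqref{eq:parabolic-F}) integrated over sublevel sets of $F$, with boundary terms bounded by $|\nabla F|\le C\sqrt F$ and \eqref{eq:volume-upper-bound}. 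In the models $S^2\times\R^2$ and $\text{Bryant}\times\R$, each ball of radius $\sigma$ contains a set with $\int\scal\ge c(\sigma)>0$, independent of the curvature scale. Indeed, the integral of $\scal$ over a piece of $S^2\times\R^2$ of size $\sigma$ is about $\sigma^2$, scale-invariantly, since it equals the integral of the $2$-dimensional curvature times the area of the flat factor. The $L^1$ bound then caps the number of disjoint such balls. Making this lower bound on $\int_{B(y,\sigma)}\scal$ uniform is the step I expect to be hardest. It requires propagating the canonical neighborhood structure from scale $\scal^{-1/2}$ up to scale $\sigma$, using the $\nabla F$-alignment to follow the splitting line.

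With uniform packing on every ball, Gromov's theorem gives the subsequential pointed limit \eqref{eq:grsequence-4d}.
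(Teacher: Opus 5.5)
Your reduction to the annuli $\{F(\cdot,t)\ge c\sigma^2\}$, the splitting into ends, the conical and cylindrical cases, and the treatment of low-curvature centers in the generic case are fine and essentially match the paper. The high-curvature generic case, however, has two genuine gaps.

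\textbf{First gap: the $L^1$ bound.} It cannot come from $\Delta_{g_t}F=\frac n2-|t|\scal_{g_t}$, because the factor $|t|$ sits in front of $\scal$. Integrating over $\{F(\cdot,t)\le A\}$ gives
\[
|t|\int_{\{F(\cdot,t)\le A\}}\scal_{g_t}\,\d V_{g_t}
=\frac n2\bigl|\{F(\cdot,t)\le A\}\bigr|_{g_t}-\int_{\{F(\cdot,t)=A\}}|\nabla F|
\le CA^{2},
\]
that is, only $\int\scal_{g_t}\,\d V_{g_t}\le CA^2|t|^{-1}$, which degenerates as $t\nearrow0$. The boundary term enters with a favorable sign, so an upper bound on $|\nabla F|$ does not help. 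By self-similarity, the uniform bound you need is equivalent to $\int_{\{f\le r^2\}}\scal_g\,\d V_g\le Cr^2$ on the shrinker. The identity $\scal+\Delta f=\frac n2$ only yields $Cr^4$. This scale-invariant estimate is a genuinely four-dimensional input. The paper takes it from \cite[Theorem~6.31]{fang-li2}, applied on a spacetime ball $B^*(\bar p,C_3)$ containing all of $V(\sigma)$.

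\textbf{Second gap: the lower bound on $\int\scal$.} The uniform lower bound $\int_{B_{g_t}(y,\sigma)}\scal\ge c(\sigma)$ at every high-curvature center is not merely hard; your argument does not support it. Theorem~\ref{thm:canonical} fixes the model only on a ball of radius $\ep^{-1}\scal^{-1/2}$. Following $\nabla F$ recovers extent of order $\sigma$ in only one of the two flat directions. Nothing in your argument prevents the slice $\{F(\cdot,t)=s\}\cap E_{j,t}$ from having intrinsic diameter much smaller than $\sigma$ (for instance, a short capped neck). In that case $\int_{B_{g_t}(y,\sigma)}\scal$ is at most of order $\sigma$ times that diameter.

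The missing idea is the paper's dichotomy. Fix an integral curve $\gamma$ of $\nabla F/|\nabla F|^2$, and let $l_t(s)$ be the largest intrinsic distance to $\gamma(s)$ within the connected slice $\{F(\cdot,t)=s\}\cap V^j_t(\sigma)$.
\begin{itemize}
\item If $l_t(s)<\ep$, points on that slice lie within $2\ep$ of $\gamma$. They are therefore covered by $O(\sigma^{-1}\ep^{-1})$ balls centered along $\gamma$, whatever the geometry.
\item If $l_t(s)\ge\ep$ and $B_{g_t}(x,\ep)$ contains no low-curvature point, take a level-set curve of length at least $\ep/2$ through neck and Bryant pieces. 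In a tubular neighborhood it carries $\int|\scal|\,\d\mathscr H^3$ bounded below by $c\,\ep$. Transport it to nearby levels along $\nabla F/|\nabla F|^2$, using $C^{-1}\le|\nabla F|\le C$ and $|\nabla^2F|\le C$, and apply the coarea formula. This gives $\mu_t(B_{g_t}(x,2\ep))\ge c_\ep$ for $\mu_t=(|\scal|+1)\,\d V_{g_t}$.
\item If the ball does contain a low-curvature point, the same lower bound follows from noncollapsing.
\end{itemize}
The $L^1$ bound then caps the number of such net points.

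Finally, your list of factors $N$ omits Perelman's ancient ovals. The paper excludes them by flowing forward to a region close to $S^3\times\R$ or $\RP^3\times\R$ and invoking Proposition~\ref{prop:alternative}.
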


\begin{proof}
By \eqref{eq:distanceestimate}, it suffices to prove the following uniform covering property: for every $\sigma\in(0,1)$ and $\ep>0$, there exists a constant $C_\ep$ such that, for all sufficiently small $|t|$, the set
\[
    V_t(\sigma)
    :=
    \{x\in M\mid \sigma\le F(x,t)\le \sigma^{-1}\}
\]
admits an $\ep$-net with at most $C_\ep$ elements.

For each end $E_j$, set
\[
    E_{j,t}:=\{x\in M\mid \psi^t(x)\in E_j\}.
\]
For $t$ sufficiently close to $0$, we have
\[
    V_t(\sigma)\subset \bigcup_{j=1}^k E_{j,t}.
\]
We write
\[
    V_t^j(\sigma):=V_t(\sigma)\cap E_{j,t}.
\]
It suffices to prove the desired covering estimate for each $V_t^j(\sigma)$. We consider the following three cases.

\medskip

\noindent\textbf{Case 1: \(E_j\) is conical.}
In this case, there exists a constant $C$ independent of $t$ such that
\[
    |\Rm|(x,t)\le C
\]
for all $x\in V_t^j(\sigma)$. Since every Ricci shrinker is $\kappa$-noncollapsed, see \cite[Theorem 22]{liwang2020heat1}, a standard ball-packing argument shows that $V_t^j(\sigma)$ admits an $\ep$-net with at most
\[
    C\sigma^{-4}\ep^{-4}
\]
elements, where $C$ is independent of $t$.

\medskip

\noindent\textbf{Case 2: \(E_j\) is cylindrical with respect to \(S^3/\Gamma\times\R\).}
In this case, for every $\ep_1>0$, if $|t|$ is sufficiently small, then
\[
    (M,g_t,x)
\]
is $\ep_1$-close to $S^3/\Gamma\times\R$ for every $x\in V_t^j(\sigma)$. As in the proof of Theorem~\ref{thm:canonical}, the vector field $\nabla F$ induces an almost splitting direction. Hence $\nabla F$ is almost parallel to the axial direction of each cylindrical model $S^3/\Gamma\times\R$. It follows that, for all sufficiently small $|t|$, the set $V_t^j(\sigma)$ admits an $\ep$-net with at most
\[
    C\sigma^{-1}\ep^{-1}
\]
elements.

\medskip

\noindent\textbf{Case 3: \(E_j\) is generic.}
In this case, fix $\ep_1>0$ to be a small universal constant. By Theorem~\ref{thm:canonical}, if $x\in V_t^j(\sigma)$ and $\scal(x,t)$ is sufficiently large, then
\[
    (M,g_t,x)
\]
is $\ep_1$-close to $N^3\times\R$, where $N$ is a three-dimensional $\kappa$-solution.

We first claim that, provided $\ep_1$ is sufficiently small, $N$ cannot be a standard spherical space form $S^3/\Gamma$. Indeed, otherwise, by the self-similarity of the shrinker, there would exist a point $x'\in E_j$ such that
\[
    (M,g,x')
\]
is $\ep_1$-close to $S^3/\Gamma\times\R$. Proposition~\ref{prop:alternative} would then imply that $E_j$ is either conical or cylindrical, contradicting the assumption that $E_j$ is generic.

Therefore, for every $x\in V_t^j(\sigma)$ with sufficiently large $\scal(x,t)$, the pointed manifold $(M,g_t,x)$ is $\ep_1$-close to $N^3\times\R$, where, up to finite quotient, $N$ is one of the following:
\[
    S^2\times\R,\qquad
    \text{the Bryant steady soliton } \mathrm{Br}^3,\qquad
    \text{or Perelman's ancient oval on } S^3.
\]
Here, we may assume that the last ancient oval, with respect to the based point, is not close to the standard $S^3$, $\RP^3$ or $\mathrm{Br}^3$.

The ancient oval case can also be excluded (after adjusting $\ep_1$ if necessary). Indeed, if this case occurred, then there would exist some $t'>t$ such that
\[
    (M,g_{t'},x)
\]
is $\ep_2$-close to the standard $S^3\times\R$ or to the standard $\mathbb{RP}^3\times\R$, where $\ep_2 \to 0$ as $\ep_1 \to 0$. This follows because $\ep$-closeness at a given time slice implies closeness at later times, by the uniqueness of the Ricci flow \cite{chenzhu2006uniqueness} and a standard limiting argument. Since $\psi^{t'}(x) \in E_j$, arguing as above, this would again contradict Proposition~\ref{prop:alternative} and the genericity of $E_j$.

We now consider the spacetime subset
\[
    V(\sigma)
    :=
    \{x^*=(x,t)\mid t\in[-1,0),\ x\in V_t(\sigma)\}.
\]
We first show that
\[
    V(\sigma)\subset B^*(\bar p,C_3)
\]
for some constant $C_3>0$. Indeed, $(p,t)$ is an $H$-center of $\bar p$, where $H$ depends only on $Y$; see \cite[Corollary 5.6]{liwang2024heat}. Hence \cite[Lemma~6.14]{FL1} implies that
\[
    d^*(\bar p,(p,t))\le C_1
\]
for all $t\in[-1,0)$, where $C_1$ depends only on the entropy. On the other hand, by \eqref{eq:distanceestimate} and \cite[Lemma~6.4]{FL1}, for every $x^*=(x,t)\in V(\sigma)$ we have
\[
    d^*((p,t),x^*)\le C_2,
\]
where $C_2$ depends only on $\sigma$ and the entropy. Therefore
\[
    V(\sigma)\subset B^*(\bar p,C_1+C_2).
\]
We fix $C_3=C_1+C_2$.

It follows from \cite[Theorem 6.31]{fang-li2} that
\begin{align} \label{eq:scalarl1}
    \int_{V_t^j(\sigma)} |\scal|\,\d V_{g_t}
    \le
    \int_{B^*(\bar p,C_3)\cap (M\times\{t\})}
    |\scal|\,\d V_{g_t}
    \le C_4,
\end{align}
where $C_4$ is independent of $t$. Although \cite[Theorem 6.31]{fang-li2} is stated for four-dimensional closed Ricci flows, the proof generalizes verbatim to four-dimensional Ricci flows with bounded curvature on each compact time interval; see also \cite[Remark 6.32]{fang-li2}.

Next, let
\[
    \gamma(s)\subset V_t^j(\sigma),\qquad s\in[\sigma,\sigma^{-1}],
\]
be an integral curve of $\nabla F/|\na F|^2$. Here, we parametrize $\gamma$ so that $F(\gamma(s), t)=s$. We cover $\gamma$ by a family of balls
\[
    \{B_{g_t}(y^a,\eta\ep)\}_{1\le a\le N},
\]
where $y^a\in\gamma$ are consecutive and
\[
    N\le C\sigma^{-1}\eta^{-1}\ep^{-1}.
\]
Here $\eta>0$ is a small universal constant to be chosen later. Define the measure
\[
    \mu_t(A)
    :=
    \int_A (|\scal|+1)\,\d V_{g_t}.
\]
For $s\in[\sigma,\sigma^{-1}]$, define
\[
l_t(s):=\max_{y \in \{F(\cdot, t)=s\} \cap V_t^j(\sigma)} d_{g_{t, s}}(y, \gamma(s)),
\]
where $g_{t, s}$ is the metric on $F(\cdot, t)=s$ induced by $g_t$. Note that $l_t(s)$ is finite by the connectedness of $\{F(\cdot, t)=s\} \cap V_t^j(\sigma)$.

\medskip

\noindent\textbf{Claim.}
For every $x\in V_t^j(\sigma)$ with $s=F(x,t)$, if $l_t(s)\ge \ep$, then
\[
    \mu_t(B_{g_t}(x,2\ep))\ge c_\ep,
\]
where $c_\ep>0$ is independent of $t$ and $x$.
\medskip

We follow the proof of \cite[Proposition 3.2]{Li26}. There are two key inputs in that proof. The first is the canonical neighborhood theorem, which is supplied here by Theorem~\ref{thm:canonical}. Moreover, in the present generic case we have already excluded the spherical space-form models $S^3/\Gamma\times\R$. Hence, every sufficiently high-curvature point is modeled on one of
\[
    S^2\times\R^2,\qquad
    \mathbb{RP}^2\times\R^2,\qquad
    \bigl(S^2\times_{\mathbb Z_2}\R\bigr)\times\R,\qquad \text{or} \qquad
    \mathrm{Br}^3 \times \R.
\]
The second input is the scalar curvature estimate \eqref{eq:scalarl1}.

For the reader's convenience, we sketch the argument. There are two subcases. If the scalar curvature $\scal(y, t) \le \bar r^{-2}$ for some $y \in B_{g_t}(x,\ep)$, where $\bar r$ is a small universal constant, then by the canonical neighborhood theorem (Theorem~\ref{thm:canonical}), $\scal(\cdot, t) \le \eta^{-2} \bar r^{-2}$ on $B_{g_t}(y,\eta \bar r)$. Using the $\kappa$-noncollapsing property, we obtain a uniform volume lower bound for $B_{g_t}(y,\ep)$ and hence for $B_{g_t}(x,2\ep)$. This yields a uniform lower bound for
\[
    \mu_t(B_{g_t}(x,2\ep)).
\]
This corresponds to Case 1 of the proof of \cite[Proposition 3.2]{Li26}.

Otherwise, every point in $B_{g_t}(x,\ep)$ has large scalar curvature greater than $\bar r^{-2}$. By Theorem~\ref{thm:canonical} and the discussion above, $B_{g_t}(x,\ep)$ is then covered by pieces modeled on
\[
    S^2\times\R^2,\qquad
    \mathbb{RP}^2\times\R^2,\qquad
    \bigl(S^2\times_{\mathbb Z_2}\R\bigr)\times\R,\qquad \text{or} \qquad
    \mathrm{Br}^3 \times \R.
\]
Note that Theorem~\ref{thm:canonical} indicates that the last $\R$-factor is induced by a rescaling of $\nabla F$. Therefore, the level set
\[
   \{F(\cdot, t)=s\} \cap B_{g_t}(x,\ep)
\]
is covered by pieces modeled on
\[
    S^2\times\R,\qquad
    \mathbb{RP}^2\times\R,\qquad
    S^2\times_{\mathbb Z_2}\R,\qquad \text{or} \qquad
    \mathrm{Br}^3.
\]

Since $l_t(s) \ge \ep$, the same argument as in Case 2 of the proof of \cite[Proposition 3.2]{Li26} then gives
\begin{align} \label{eq:typec01a}
\int_{\{F(\cdot, t)=s\}\cap B_{g_t}(x,\ep)} |\scal| \,\d\mathscr H^{3}_{g_t} \ge c_\ep,
\end{align}
where $\mathscr H^{3}_{g_t}$ is the $3$-dimensional Hausdorff measure induced by $g_t$. More precisely, by the definition of $l_t(s)$ and our assumption, there exists $x' \in \{F(\cdot, t)=s\} \cap V_t^j(\sigma)$ such that $d_{g_{t,s}}(x', \gamma(s)) \ge \ep$. From
\[
d_{g_{t,s}}(x, x')+d_{g_{t,s}}(x, \gamma(s)) \ge d_{g_{t,s}}(x', \gamma(s)) \ge \ep,
\]
we conclude that either $d_{g_{t,s}}(x, x')$ or $d_{g_{t,s}}(x, \gamma(s))$ is greater than $\ep/2$.

Without loss of generality, assume that $d_{g_{t,s}}(x,\gamma(s))\ge \ep/2$; the other case is analogous. Choose a unit-speed curve $\xi(\tau)$ on $\{F(\cdot,t)=s\}\cap V_t^j(\sigma)$ from $x$ to $\gamma(s)$ whose length is at least $\ep/2$. In particular, $\xi(\tau)$, for $\tau \in [0, \ep/2]$, is contained in $\{F(\cdot, t)=s\}\cap B_{g_t}(x,\ep)$. Thus, by analyzing the canonical models along $\xi$, the integral
\[
\int_{U_{\xi}} |\scal| \,\d\mathscr H^{3}_{g_t}
\]
is bounded below by the length of $\xi$, where $U_{\xi}$ is a tubular neighborhood of $\xi(\tau),\,\tau \in [0, \ep/2]$ in $\{F(\cdot, t)=s\}\cap B_{g_t}(x,\ep)$.

On the other hand, since $t^2 \scal+|\na F|^2=F$ and $|t| \scal \le C_0$, we have
\begin{align} \label{eq:typec01ab}
C_1^{-1} \le |\na F| \le C_1
\end{align}
on $V_t^j(\sigma)$. In addition, by $|t| \Ric+\na^2 F=g/2$ and the fact that $|t||\Ric| \le C_2$, we conclude that
\begin{align} \label{eq:typec01ac}
|\na^2 F| \le C_3 \quad \text{on} \quad V_t^j(\sigma).
\end{align}

For every $s'\in[s-\eta\ep,s+\eta\ep]$, let $\xi^{s'}$ to be the curve in $\{F(\cdot, t)=s'\}$, obtained by flowing $\xi$ along the vector field $\na F/|\na F|^2$. Then it follows from \eqref{eq:typec01ab} and \eqref{eq:typec01ac} that the length of $\xi^{s'}$ is at least $\ep/4$, provided that $\eta$ is sufficiently small.

Thus, for every such $s'$, a similar argument yields
\begin{align} \label{eq:typec01c}
\int_{\{F(\cdot, t)=s'\}\cap B_{g_t}(x,\ep)} |\scal| \,\d\mathscr H^{3}_{g_t} \ge c_\ep.
\end{align}
Using the coarea formula and \eqref{eq:typec01ab}, we conclude from \eqref{eq:typec01c} that
\begin{align} \label{eq:typec01}
    \mu_t(B_{g_t}(x,2\ep))
    \ge
    \int_{B_{g_t}(x,\ep)} |\scal|\,\d V_{g_t}
    \ge
    c_\ep.
\end{align}
This proves the claim.

Next, we show that $V_t^j(\sigma)$ admits a $4\ep$-net with at most $C_\ep$ elements. We choose a maximal $4\ep$-separated subset $\{x_i\}_{1 \le i \le N'}$ of $V_t^j(\sigma)$. We say that $x_i$ is of type (i) if
\[
l_t(F(x_i, t)) \ge \ep.
\]
Otherwise, we say that $x_i$ is of type (ii). Without loss of generality, we assume that $\{x_i\}_{1 \le i \le N''}$ are of type (i) and $\{x_i\}_{N''+1 \le i \le N'}$ are of type (ii).

The claim and \eqref{eq:scalarl1} imply that $N'' \le C_\ep$. Moreover, for each $x_i$ with $N''+1\le i\le N'$, we can choose $y_a$ so that $d_{g_t}(x_i, y_a)<2\ep$, provided that $\eta$ is small. Thus, $\{x_i\}_{1 \le i \le N''} \cup \{y_a\}_{1 \le a \le N}$ form a $4\ep$-net with at most $C_\ep$ elements. This completes the proof in Case 3.

Combining the three cases and summing over the finitely many ends $E_1,\ldots,E_k$, we obtain a uniform $\ep$-net for $V_t(\sigma)$ with at most $C_\ep$ elements. Hence the time slices are sequentially precompact in the pointed Gromov--Hausdorff topology, and the proposition follows.
\end{proof}

As before, we identify the possible pointed Gromov--Hausdorff limit. We assume that
\[
    (X_{\GH},d_{\GH},p_{\GH})
\]
is a sequential pointed Gromov--Hausdorff limit obtained in Proposition~\ref{prop:sequential} along a sequence $\{t_i\}$ with $t_i\nearrow0$. As in Proposition~\ref{prop:embh}, we obtain an isometric embedding from $(Z_0,d^Z_0)$ into $(X_{\GH},d_{\GH})$. Since $(Z_0,d^Z_0)$ is complete, its isometric image is closed in $(X_{\GH},d_{\GH})$; hence $(Z_0,d^Z_0)$ is locally compact. Proposition~\ref{prop:local-compactness-pgh} therefore gives the following identification of the pointed Gromov--Hausdorff limit, which implies Theorem~\ref{thm:4d}.

\begin{theorem}
\label{thm:tangentunique}
The pointed Gromov--Hausdorff limit of
\(
    (M,d_{g_t},p)
\)
as \(t\nearrow0\) exists and is isometric to \((Z_0,d^Z_0,\bar p)\). Therefore, the tangent space at infinity of $(M,g)$ exists, is unique, and is isometric to $(Z_0,d^Z_0)$.
\end{theorem}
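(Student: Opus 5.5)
The plan is to combine sequential compactness (Proposition~\ref{prop:sequential}) with the local-compactness criterion (Proposition~\ref{prop:local-compactness-pgh}). The link between them is an isometric embedding of the canonical slice into any sequential limit.

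\emph{Step 1.} Fix an arbitrary sequence $t_i\nearrow0$. By Proposition~\ref{prop:sequential}, a subsequence gives a pointed limit $(X_{\GH},d_{\GH},p_{\GH})$. This limit is proper: the uniform $\ep$-net bound on $V_t(\sigma)$, together with \eqref{eq:distanceestimate}, gives uniform total boundedness of $g_t$-balls $B_{g_t}(p,R)$ for each $R$. Closed balls in the limit are therefore compact.

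\emph{Step 2.} Construct an isometric embedding $h:(Z_0,d^Z_0)\to(X_{\GH},d_{\GH})$ with $h(\bar p)=p_{\GH}$, as in Proposition~\ref{prop:embh}.
\begin{itemize}
\item For $z\in Z_0$, choose $H_n$-centers $(x_{z,i},t_i)$.
\item By \eqref{eq:distanceestimate} and Lemma~\ref{lem:continuityF}, we have $d_{g_{t_i}}(p,x_{z,i})\to d^Z_0(\bar p,z)$. The centers therefore stay in a bounded ball, and they subconverge in the proper limit.
\item The definition \eqref{eq:time-slice-distance} and Wasserstein monotonicity give $d_{g_{t_i}}(x_{z,i},x_{w,i})\to d^Z_0(z,w)$.
\item Separability of $Z_0$ is obtained as in Proposition~\ref{prop:embh}: $\ep$-separated sets inside $d^Z_0$-balls about $\bar p$ map to $\ep$-separated sets inside compact balls, so they are finite.
\item Define $h$ on a countable dense set by a diagonal argument, then extend by completeness. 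Since $(p,t)$ is an $H$-center of $\bar p$, we get $h(\bar p)=p_{\GH}$.
\end{itemize}

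\emph{Step 3.} Since $(Z_0,d^Z_0)$ is complete, $h(Z_0)$ is closed in the proper space $X_{\GH}$. Closed $d^Z_0$-balls are then compact, so $(Z_0,d^Z_0)$ is locally compact. Proposition~\ref{prop:local-compactness-pgh} now gives convergence of the whole family $(M,d_{g_t},p)\to(Z_0,d^Z_0,\bar p)$ as $t\nearrow0$, independent of the sequence. The conclusion about tangent spaces at infinity follows from the self-similarity remark after the definition of tangent spaces at infinity: $\psi^{t}$ identifies $(M,g_t,p)$ with $(M,|t|^{-1}g,p)$.

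\emph{Main obstacle.} The delicate point is Step 2 in the noncompact setting. One must check that the $H$-centers of points of a bounded $d^Z_0$-ball remain at uniformly bounded $g_{t_i}$-distance from $p$, so that limits exist in $X_{\GH}$. This uses the radial control \eqref{eq:uniform-radial-control}-type argument: $F$ is continuous and $\partial_tF\in[-C_0,0]$. Note also that surjectivity of $h$ is not needed here, because Proposition~\ref{prop:local-compactness-pgh} provides the Gromov--Hausdorff approximation directly via $\Phi$.
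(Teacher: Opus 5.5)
Your proposal is correct and follows the paper's argument essentially step for step: sequential compactness from Proposition~\ref{prop:sequential}, an isometric embedding of $(Z_0,d^Z_0)$ into the proper limit as in Proposition~\ref{prop:embh}, local compactness because the image of the complete space $Z_0$ is closed, and then Proposition~\ref{prop:local-compactness-pgh}; you are also right that surjectivity of $h$ is not needed. One small slip: since $g_t=|t|(\psi^t)^*g$, the map $\psi^t$ identifies $(M,g_t,p)$ with $(M,|t|g,p)$, not with $(M,|t|^{-1}g,p)$, so the correspondence is $\lambda^{-2}=|t|$.
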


\begingroup
\small
\setlength{\labelsep}{0.35em}
\bibliographystyle{alpha_nobreak}
\bibliography{ref_master_format_updated}

@book{McDuffSalamon2012,
  author    = {Dusa McDuff and Dietmar Salamon},
  title     = {J-Holomorphic Curves and Symplectic Topology},
  edition   = {2},
  series    = {American Mathematical Society Colloquium Publications},
  volume    = {52},
  publisher = {American Mathematical Society},
  address   = {Providence, RI},
  year      = {2012},
  isbn      = {978-0-8218-8746-2}
}

@article{bamler3,
  title = {Structure theory of non-collapsed limits of {Ricci} flows},
  author = {Bamler, R. H.},
  journal = {arXiv:2009.03243},
  year = {2020}
}

@incollection{Cao96,
  title = {Existence of gradient {K{\"a}hler--Ricci} solitons},
  author = {Cao, H.-D.},
  booktitle = {Elliptic and Parabolic Methods in Geometry},
  pages = {1--16},
  publisher = {A K Peters},
  address = {Wellesley, MA},
  year = {1996}
}

@article{CaoZhou,
  title = {On complete gradient shrinking {Ricci} solitons},
  author = {Cao, H.-D. and Zhou, D.},
  journal = {Journal of Differential Geometry},
  volume = {85},
  number = {2},
  pages = {175--186},
  year = {2010}
}

@article{Chen2009Strong,
  title = {Strong uniqueness of the {Ricci} flow},
  author = {Chen, B.-L.},
  journal = {Journal of Differential Geometry},
  volume = {82},
  number = {2},
  pages = {363--382},
  year = {2009},
  doi = {10.4310/jdg/1246888488}
}

@article{CLY,
  title = {A necessary and sufficient condition for {Ricci} shrinkers to have positive {AVR}},
  author = {Chow, B. and Lu, P. and Yang, B.},
  journal = {Proceedings of the American Mathematical Society},
  volume = {140},
  number = {6},
  pages = {2179--2181},
  year = {2012}
}

@article{Cifarelli25,
  title = {Explicit complete {Ricci}-flat metrics and {K{\"a}hler--Ricci} solitons on direct sum bundles},
  author = {Cifarelli, C.},
  journal = {arXiv:2410.23645},
  year = {2024}
}

@article{CCD,
  title = {On finite time {Type I} singularities of the {K{\"a}hler--Ricci} flow on compact {K{\"a}hler} surfaces},
  author = {Cifarelli, C. and Conlon, R. J. and Deruelle, A.},
  journal = {Journal of the European Mathematical Society},
  volume = {28},
  number = {2},
  pages = {463--504},
  year = {2026},
  doi = {10.4171/JEMS/1485}
}

@article{ColdingDeLellis2003,
  title = {The min-max construction of minimal surfaces},
  author = {Colding, T. H. and De Lellis, C.},
  journal = {Surveys in Differential Geometry},
  volume = {8},
  pages = {75--107},
  year = {2003},
  doi = {10.4310/SDG.2003.v8.n1.a3}
}

@article{CT,
  title = {{K{\"a}hler} currents and null loci},
  author = {Collins, T. C. and Tosatti, V.},
  journal = {Inventiones Mathematicae},
  volume = {202},
  number = {3},
  pages = {1167--1198},
  year = {2015}
}

@article{CSunD,
  title = {Classification results for expanding and shrinking gradient {K{\"a}hler--Ricci} solitons},
  author = {Conlon, R. J. and Deruelle, A. and Sun, S.},
  journal = {Geometry \& Topology},
  volume = {28},
  number = {1},
  pages = {267--351},
  year = {2024}
}

@article{DancerWang11,
  title = {On {Ricci} solitons of cohomogeneity one},
  author = {Dancer, A. S. and Wang, M. Y.},
  journal = {Annals of Global Analysis and Geometry},
  volume = {39},
  number = {3},
  pages = {259--292},
  year = {2011},
  doi = {10.1007/s10455-010-9233-1}
}

@article{EndersMullerTopping,
  title = {On {Type I} singularities in {Ricci} flow},
  author = {Enders, J. and M{\"u}ller, R. and Topping, P. M.},
  journal = {Communications in Analysis and Geometry},
  volume = {19},
  number = {5},
  pages = {905--922},
  year = {2011}
}

@article{esparza2025,
  title = {Uniqueness of asymptotically conical shrinking gradient {K{\"a}hler--Ricci} solitons},
  author = {Esparza, C.},
  journal = {arXiv:2502.13521},
  year = {2025}
}

@article{fang-li-unique,
  title = {Strong uniqueness of tangent flows at cylindrical singularities in {Ricci} flow},
  author = {Fang, H. and Li, Y.},
  journal = {arXiv:2510.20320},
  year = {2025}
}

@article{fang-li2,
  title = {Singular sets in noncollapsed {Ricci} flow limit spaces},
  author = {Fang, H. and Li, Y.},
  journal = {arXiv:2510.26317},
  year = {2025}
}

@article{FL1,
  title = {On the structure of noncollapsed {Ricci} flow limit spaces},
  author = {Fang, H. and Li, Y.},
  journal = {arXiv:2510.12398},
  year = {2025}
}

@article{FIK03,
  title = {Rotationally symmetric shrinking and expanding gradient {K{\"a}hler--Ricci} solitons},
  author = {Feldman, M. and Ilmanen, T. and Knopf, D.},
  journal = {Journal of Differential Geometry},
  volume = {65},
  number = {2},
  pages = {169--209},
  year = {2003},
  doi = {10.4310/jdg/1090511686}
}

@article{FutakiWang11,
  title = {Constructing {K{\"a}hler--Ricci} solitons from {Sasaki--Einstein} manifolds},
  author = {Futaki, A. and Wang, M.-T.},
  journal = {Asian Journal of Mathematics},
  volume = {15},
  number = {1},
  pages = {33--52},
  year = {2011},
  doi = {10.4310/AJM.2011.v15.n1.a3}
}

@article{Futaki21,
  title = {Irregular {Eguchi--Hanson} type metrics and their soliton analogues},
  author = {Futaki, A.},
  journal = {Pure and Applied Mathematics Quarterly},
  volume = {17},
  number = {1},
  pages = {27--53},
  year = {2021},
  doi = {10.4310/PAMQ.2021.v17.n1.a2}
}

@article{grauert,
  title = {{\"U}ber {Modifikationen} und exzeptionelle analytische {Mengen}},
  author = {Grauert, H.},
  journal = {Mathematische Annalen},
  volume = {146},
  number = {4},
  pages = {331--368},
  year = {1962}
}

@article{guedj-to,
  title = {{K{\"a}hler} families of {Green}'s functions},
  author = {Guedj, V. and T{\^o}, T. D.},
  journal = {Journal de l'\'Ecole polytechnique---Math\'ematiques},
  volume = {12},
  pages = {319--339},
  year = {2025},
  doi = {10.5802/jep.291}
}

@article{GPSS2,
  title = {Sobolev inequalities on {K{\"a}hler} spaces},
  author = {Guo, B. and Phong, D. H. and Song, J. and Sturm, J.},
  journal = {arXiv:2311.00221},
  year = {2023}
}

@article{GPSS-ii,
  title = {Diameter estimates in {K{\"a}hler} geometry {II}: removing the small degeneracy assumption},
  author = {Guo, B. and Phong, D. H. and Song, J. and Sturm, J.},
  journal = {Mathematische Zeitschrift},
  volume = {308},
  number = {3},
  pages = {43},
  year = {2024},
  doi = {10.1007/s00209-024-03600-x}
}

@article{GPSS1,
  title = {Diameter estimates in {K{\"a}hler} geometry},
  author = {Guo, B. and Phong, D. H. and Song, J. and Sturm, J.},
  journal = {Communications on Pure and Applied Mathematics},
  volume = {77},
  number = {8},
  pages = {3520--3556},
  year = {2024},
  doi = {10.1002/cpa.22196}
}

@article{HM07,
  title = {On {Shokurov}'s rational connectedness conjecture},
  author = {Hacon, C. D. and McKernan, J.},
  journal = {Duke Mathematical Journal},
  volume = {138},
  number = {1},
  pages = {119--136},
  year = {2007},
  doi = {10.1215/S0012-7094-07-13813-4}
}

@article{Hamilton1995Formation,
  title = {The formation of singularities in the {Ricci} flow},
  author = {Hamilton, R. S.},
  journal = {Surveys in Differential Geometry},
  volume = {2},
  pages = {7--136},
  year = {1995}
}

@article{HaslhoferMueller2011,
  title = {A compactness theorem for complete {Ricci} shrinkers},
  author = {Haslhofer, R. and M{\"u}ller, R.},
  journal = {Geometric and Functional Analysis},
  volume = {21},
  number = {5},
  pages = {1091--1116},
  year = {2011},
  doi = {10.1007/s00039-011-0137-4}
}

@article{hwang,
  title = {An application of {Cartan}'s equivalence method to {Hirschowitz}'s conjecture on the formal principle},
  author = {Hwang, J.-M.},
  journal = {Annals of Mathematics},
  volume = {189},
  number = {3},
  pages = {979--1000},
  year = {2019}
}

@incollection{koiso1990,
  title = {On rotationally symmetric {Hamilton}'s equation for {K{\"a}hler--Einstein} metrics},
  author = {Koiso, N.},
  booktitle = {Recent Topics in Differential and Analytic Geometry},
  series = {Advanced Studies in Pure Mathematics},
  volume = {18-I},
  pages = {327--337},
  publisher = {Academic Press},
  address = {Boston, MA},
  year = {1990},
  doi = {10.2969/aspm/01810327}
}

@article{KW,
  title = {Rigidity of asymptotically conical shrinking gradient {Ricci} solitons},
  author = {Kotschwar, B. and Wang, L.},
  journal = {Journal of Differential Geometry},
  volume = {100},
  number = {1},
  pages = {55--108},
  year = {2015},
  doi = {10.4310/jdg/1427202764}
}

@article{Li10,
  title = {On rotationally symmetric {K{\"a}hler--Ricci} solitons},
  author = {Li, C.},
  journal = {arXiv:1004.4049},
  year = {2010}
}

@article{liwang2020heat1,
  title = {Heat kernel on {Ricci} shrinkers},
  author = {Li, Y. and Wang, B.},
  journal = {Calculus of Variations and Partial Differential Equations},
  volume = {59},
  number = {6},
  pages = {194},
  year = {2020},
  doi = {10.1007/s00526-020-01861-y}
}

@article{liwang2024heat,
  title = {Heat kernel on {Ricci} shrinkers ({II})},
  author = {Li, Y. and Wang, B.},
  journal = {Acta Mathematica Scientia},
  volume = {44},
  number = {5},
  pages = {1639--1695},
  year = {2024},
  doi = {10.1007/s10473-024-0502-7}
}

@article{liwang2024rigidity,
  title = {Rigidity of the round cylinders in {Ricci} shrinkers},
  author = {Li, Y. and Wang, B.},
  journal = {Journal of Differential Geometry},
  volume = {127},
  number = {2},
  pages = {817--897},
  year = {2024},
  doi = {10.4310/jdg/1717772425}
}

@article{Li26,
  title = {{Gromov--Hausdorff} convergence of time-slices of singular {Ricci} flows in dimension three},
  author = {Li, Y.},
  journal = {arXiv:2606.13301},
  year = {2026}
}

@article{LW26,
  title = {On {K{\"a}hler} {Ricci} shrinker surfaces},
  author = {Li, Y. and Wang, B.},
  journal = {Acta Mathematica},
  volume = {236},
  number = {1},
  pages = {1--50},
  year = {2026},
  doi = {10.4310/ACTA.2026.v236.n1.a1}
}

@article{munteanu2012analysis,
  title = {Analysis of weighted {Laplacian} and applications to {Ricci} solitons},
  author = {Munteanu, O. and Wang, J.},
  journal = {Communications in Analysis and Geometry},
  volume = {20},
  number = {1},
  pages = {55--94},
  year = {2012},
  doi = {10.4310/CAG.2012.v20.n1.a3}
}

@article{munteanu2015geometry,
  title = {Geometry of shrinking {Ricci} solitons},
  author = {Munteanu, O. and Wang, J.},
  journal = {Compositio Mathematica},
  volume = {151},
  pages = {2273--2300},
  year = {2015}
}

@article{MW,
  title = {Conical structure for shrinking {Ricci} solitons},
  author = {Munteanu, O. and Wang, J.},
  journal = {Journal of the European Mathematical Society},
  volume = {19},
  number = {11},
  pages = {3377--3390},
  year = {2017}
}

@article{munteanu2019structure,
  title = {Structure at infinity for shrinking {Ricci} solitons},
  author = {Munteanu, O. and Wang, J.},
  journal = {Annales scientifiques de l'\'Ecole normale sup\'erieure},
  volume = {52},
  number = {4},
  pages = {891--925},
  year = {2019},
  doi = {10.24033/asens.2400}
}

@article{Naber2010,
  title = {Noncompact shrinking four solitons with nonnegative curvature},
  author = {Naber, A.},
  journal = {Journal f{\"u}r die reine und angewandte Mathematik (Crelle's Journal)},
  volume = {645},
  pages = {125--153},
  year = {2010}
}

@article{NiWallach2008,
  title = {On a classification of gradient shrinking solitons},
  author = {Ni, L. and Wallach, N.},
  journal = {Mathematical Research Letters},
  volume = {15},
  number = {5},
  pages = {941--955},
  year = {2008}
}

@book{petersen2006,
  title = {Riemannian Geometry},
  author = {Petersen, P.},
  series = {Graduate Texts in Mathematics},
  volume = {171},
  publisher = {Springer},
  edition = {2nd},
  year = {2006},
  doi = {10.1007/978-0-387-29403-2}
}

@article{Scharrer2022,
  title = {Some geometric inequalities for varifolds on {Riemannian} manifolds based on monotonicity identities},
  author = {Scharrer, C.},
  journal = {Annals of Global Analysis and Geometry},
  volume = {61},
  pages = {691--719},
  year = {2022},
  doi = {10.1007/s10455-021-09822-0}
}

@article{SWein,
  title = {Interior derivative estimates for the {K{\"a}hler--Ricci} flow},
  author = {Sherman, M. and Weinkove, B.},
  journal = {Pacific Journal of Mathematics},
  volume = {257},
  number = {2},
  pages = {491--501},
  year = {2012}
}

@article{SunZhang,
  title = {{K{\"a}hler--Ricci} shrinkers and {Fano} fibrations},
  author = {Sun, S. and Zhang, J.},
  journal = {arXiv:2410.09661},
  year = {2024}
}

@article{TianZ1,
  title = {A new holomorphic invariant and uniqueness of {K{\"a}hler--Ricci} solitons},
  author = {Tian, G. and Zhu, X.},
  journal = {Commentarii Mathematici Helvetici},
  volume = {77},
  number = {2},
  pages = {297--325},
  year = {2002},
  doi = {10.1007/s00014-002-8341-3}
}

@article{vu2024,
  title = {Uniform diameter and non-collapsing estimates for {K{\"a}hler} metrics},
  author = {Vu, D.-V.},
  journal = {The Journal of Geometric Analysis},
  volume = {36},
  pages = {75},
  year = {2026},
  doi = {10.1007/s12220-026-02322-2}
}

@article{wang2025rigidity,
  title = {Rigidity and $\epsilon$-regularity theorems of {R}icci shrinkers},
  author = {Wang, J. and Wang, Y.},
  journal = {Calculus of Variations and Partial Differential Equations},
  volume = {64},
  number = {2},
  pages = {42},
  year = {2025},
  doi = {10.1007/s00526-024-02903-5}
}

@article{Yang12,
  title = {A characterization of noncompact {Koiso}-type solitons},
  author = {Yang, B.},
  journal = {International Journal of Mathematics},
  volume = {23},
  number = {5},
  pages = {1250054},
  year = {2012},
  doi = {10.1142/S0129167X12500541}
}

@article{chenzhu2006uniqueness,
title={Uniqueness of the {R}icci flow on complete noncompact manifolds},
author={Chen, B.-L. and Zhu, X.-P.},
journal={Journal of differential geometry},
volume={74},
number={1},
pages={119--154},
year={2006},
publisher={Lehigh University}
}
\endgroup

\vspace{10pt}

Yu Li, Institute of Geometry and Physics, University of Science and Technology of China, No. 96 Jinzhai Road, Hefei, Anhui Province, 230026, China; Hefei National Laboratory, No. 5099 West Wangjiang Road, Hefei, Anhui Province, 230088, China; Email: yuli21@ustc.edu.cn.

\vspace{6pt}

Junsheng Zhang, Courant Institute of Mathematical Sciences, New York University, 251 Mercer St, New York, NY 10012; Email: jz7561@nyu.edu.

\end{document}